
\documentclass{amsart}


\usepackage[pdfborder={0 0 0}, draft=false, pagebackref, unicode=true]{hyperref}
\usepackage{esint,amssymb,mathtools,amsthm}

\def\multichoose#1#2{\ensuremath{\left(\kern-.3em\left(\genfrac{}{}{0pt}{}{#1}{#2}\right)\kern-.3em\right)}}

\newtheorem{thm}[equation]{Theorem} 
\newtheorem{lem}[equation]{Lemma}
\newtheorem{cor}[equation]{Corollary}

\theoremstyle{definition}
\newtheorem{defn}[equation]{Definition}
\newtheorem{rmk}[equation]{Remark}

\numberwithin{equation}{section}

\usepackage{cleveref}
\crefname{equation}{}{}
\crefname{section}{}{}
\crefname{lem}{}{}
\crefrangelabelformat{equation}{(#3#1#4--#5#2#6)}
\crefrangelabelformat{section}{#3#1#4--#5#2#6}

\newcommand\abs[2][empty]{\csname#1\endcsname \lvert{#2}\csname#1\endcsname\rvert}
\newcommand\doublebar[2][empty]{\csname#1\endcsname \lVert{#2}\csname#1\endcsname\rVert}

\newcommand\mat[1]{\boldsymbol{#1}}
\newcommand\arr[1]{\boldsymbol{\dot{#1}}}

\newcommand\dist{\mathop{\mathrm{dist}}\nolimits}
\newcommand\Div{\mathop{\mathrm{div}}\nolimits}
\newcommand\Tr{\mathop{\smash{\boldsymbol{\rlap{$\arr{\phantom{T}}$}\mathrm{Tr}}}\vphantom{T}}\nolimits}

\newcommand\Trace{\mathop{\mathrm{Tr}}\nolimits}
\newcommand\M{\mathop{\smash{\arr{\mathrm{M}}}\vphantom{M}}\nolimits}

\newcommand\vecM{\mathop{\smash{\vec{\mathrm{M}}}\vphantom{M}}\nolimits}
\newcommand\scalarM{\mathop{\mathrm{M}}\nolimits}

\newcommand\supp{\mathop{\mathrm{supp}}\nolimits}
\newcommand\diam{\mathop{\mathrm{diam}}\nolimits}

\newcommand\re{\mathop{\mathrm{Re}}\nolimits}

\newcommand\R{\mathbb{R}}

\newcommand\N{\mathbb{N}}
\newcommand\1{\mathbf{1}}
\newcommand\D{\mathcal{D}}
\newcommand\s{\mathcal{S}}

\newcommand\XX{\mathfrak{X}}
\newcommand\YY{\mathfrak{Y}}
\newcommand\DD{\mathfrak{D}}
\newcommand\NN{\mathfrak{N}}

\newcommand\pureH{\parallel}

\newcommand\dmn{{n+1}}
\newcommand\pdmn{{(n+1)}}
\newcommand\dmnMinusOne{n}

\begin{document}

\title[The $L^p$ Neumann problem]{The $L^p$ Neumann problem for higher order elliptic equations}

\author{Ariel Barton}
\address{Ariel Barton, Department of Mathematical Sciences,
			309 SCEN,
			University of Ar\-kan\-sas,
			Fayetteville, AR 72701}
\email{aeb019@uark.edu}

\begin{abstract}
We solve the Neumann problem in the half space~$\mathbb{R}^{n+1}_+$, for higher order elliptic differential equations with variable self-adjoint $t$-independent coefficients, and with boundary data in $L^p$, where $\max(1,\frac{2n}{n+2}-\varepsilon) < p < 2$. 

We also establish nontangential and area integral estimates on layer potentials with inputs in $L^p$ or $\dot W^{\pm1,p}$ for a similar range of~$p$, based on known bounds for $p\geq2$; in this case we may relax the requirement of self-adjointess.

\end{abstract}

\keywords{Elliptic equation, higher-order differential equation, Neumann problem, layer potentials}

\subjclass[2010]{Primary
35J30, 
Secondary
31B10, 
35C15
}

\maketitle

\tableofcontents

\section{Introduction}

In this paper we study the Neumann boundary value problem and layer potentials for higher order elliptic differential operators of the form
\begin{equation}\label{eqn:divergence}
Lu = (-1)^m \sum_{\abs{\alpha}=\abs{\beta}=m} \partial^\alpha (A_{\alpha\beta} \partial^\beta u),\end{equation}
where $m$ is a positive integer, and 
with coefficients $\mat A$ that are $t$-independent in the sense that
\begin{equation}\label{eqn:t-independent}\mat A(x,t)=\mat A(x,s)=\mat A(x) \quad\text{for all $x\in\R^n$ and all $s$, $t\in\R$}.\end{equation}
Our coefficients may be merely bounded measurable in the $n$ horizontal variables. 
Second order operators with $t$-independent coefficients have been studied extensively; see, for example, \cite{
KenP93, AusT95,
KenR09,Rul07, 
AusAM10A, 
AlfAAHK11, Bar13, AusM14, HofKMP15A, HofKMP15B, HofMitMor15, BarM16A, MaeM16, AusS16, MaeM17
}. 
Higher order operators with $t$-independent coefficients have been studied by Hofmann and Mayboroda together with the author of the present paper in \cite{BarHM17,BarHM19A,BarHM19B,BarHM18,BarHM18p,Bar19p}.

Specifically, in \cite{BarHM18,BarHM18p}, we established the following result. Suppose that $L$ is an operator of the form~\eqref{eqn:divergence} associated to coefficients $\mat A$ that are $t$-independent, bounded, self-adjoint in the sense that $A_{\alpha\beta}=\overline{A_{\beta\alpha}}$ whenever $\abs\alpha=\abs\beta=m$, and satisfy the boundary G\r{a}rding inequality
\begin{equation}
\label{eqn:elliptic:slices}
\re\sum_{\abs\alpha=\abs\beta=m}\int_{\R^n}
\overline{\partial^\alpha\varphi(x,t)}\, A_{\alpha\beta}(x)\,\partial^\beta\varphi(x,t)\,dx \geq \lambda\doublebar{\nabla^m \varphi(\,\cdot\,,t)}_{L^2(\R^n)}^2
\end{equation}
for all $t\in \R$, all smooth test functions~$\varphi$ that are compactly supported in~$\R^\dmn$, and some $\lambda>0$ independent of $t$ and~$\varphi$. Then for every $\arr g\in L^2(\R^n)$ there is a solution~$w$, unique up to adding polynomials of degree~$m-1$, to the $L^2$ Neumann problem 
\begin{equation}
\label{eqn:neumann:regular:2}
\left\{\begin{gathered}\begin{aligned}
Lw&=0 \text{ in }\R^\dmn_+
,\\
\M_{\mat A}^+ w &\owns \arr g,
\end{aligned}\\
\doublebar{\mathcal{A}_2^+(t\nabla^m \partial_t w)}_{L^2(\R^n)} + \doublebar{\widetilde N_+(\nabla^{m}w)}_{L^2(\R^n)}
\leq C\doublebar{\arr g}_{L^2(\R^\dmnMinusOne)}
.\end{gathered}\right.\end{equation}

Here 
$\widetilde N_+$ is the modified nontangential maximal operator introduced in \cite{KenP93} and given (in $\R^\dmn_+$) by 
\begin{equation}
\label{dfn:NTM:modified:+}
\widetilde N_+ H(x) = \sup
\biggl\{\biggl(\fint_{B((y, s),{s}/2)} \abs{H(z,t)}^2\,dz\,dt\biggr)^{1/2}:
s>0,\>
\abs{x-y}< s
\biggr\}
.\end{equation}
$\mathcal{A}_2^+$ is the Lusin area integral given  by
\begin{equation}\label{dfn:lusin:+}\mathcal{A}_2^+ H(x) = \biggl(\int_0^\infty \int_{\abs{x-y}<t} \abs{H(y,t)}^2 \frac{dy\,dt}{t^\dmn}\biggr)^{1/2} 
.\end{equation} 
We adopt the convention that if a $t$ appears inside the argument of a tent space operator such as $\mathcal{A}_2^+$, then it denotes the $\pdmn$th coordinate function.

$\M_{\mat A}^+ w$ denotes the Neumann boundary values of $w$, and is the equivalence class of functions given by
\begin{equation}
\label{eqn:neumann:intro}
\arr g\in \M_{\mat A}^+ w \text{ if }
\sum_{\abs\gamma=m-1}\int_{\R^n} \partial^\gamma\varphi(x,0)\,g_\gamma(x)\,dx
= \sum_{\abs\alpha=\abs\beta=m} \int_{\R^\dmn_+} \partial^\alpha \varphi\,A_{\alpha\beta}\,\partial^\beta w\end{equation}
for all smooth test functions~$\varphi$ that are compactly supported in~$\R^\dmn$. An integration by parts argument shows that the right hand side depends only on the behavior of $\varphi$ near the boundary, and so $\M_{\mat A}^+ w$ is well defined as an operator on the space $\{\nabla^{m-1}\varphi\big\vert_{\partial\R^\dmn_+}:\varphi\in C^\infty_0(\R^\dmn)\}$. 

In the second order case $2m=2$, $\scalarM_{\mat A}^+ w$ consists of a single distribution; however, if $m\geq 2$, then by equality of mixed partials $\M_{\mat A}^+ w$ contains many arrays of distributions, and so is indeed an equivalence class. This is the formulation of Neumann boundary data used in \cite{Bar17,BarHM17,BarHM19B,BarHM18,BarHM18p,Bar19p}, and is closely related to the Neumann boundary values for the bilaplacian in \cite{CohG85,Ver05,She07B,MitM13B} and for general constant coefficient systems in \cite{MitM13A,Ver10,Ver14}. We refer the reader to \cite{BarM16B,BarHM17} for further discussion of higher order Neumann boundary data.

In the present paper we extend from results for $L^2$ boundary data to $L^p$ boundary data for appropriate $p<2$.
The first of the two main results of the present paper is the following theorem. (The second main result is Theorem~\ref{thm:potentials} below.)
\begin{thm}\label{thm:neumann:selfadjoint}
Suppose that $L$ is an elliptic operator of the 
form~\eqref{eqn:divergence} 
of order~$2m$ associated with coefficients $\mat A$ that are bounded, $t$-independent in the sense of formula~\eqref{eqn:t-independent}, satisfy the ellipticity condition~\eqref{eqn:elliptic:slices}, and are
self-adjoint in the sense that $A_{\alpha\beta}(x)=\overline{A_{\beta\alpha}(x)}$ for all $\abs\alpha=\abs\beta=m$ and all $x\in\R^n$.

Then there is a positive number $\varepsilon>0$, depending only on the dimension $\dmn$, the order $2m$ of the operator~$L$, the constant $\lambda$ in the bound~\eqref{eqn:elliptic:slices}, and~$\doublebar{\mat A}_{L^\infty(\R^n)}$, with the following significance. Suppose that $p$ satisfies 
\begin{equation}
\label{eqn:p:regular}
\max\biggl(1,\frac{2n}{n+2}-\varepsilon\biggr)<p<2
.\end{equation}
Then for every $\arr g\in L^p(\R^n)$, there is a solution $w$, unique up to adding polynomials of degree at most $m-1$, to the $L^p$-Neumann problem
\begin{equation}
\label{eqn:neumann:regular:p:selfadjoint}
\left\{\begin{gathered}\begin{aligned}
Lv&=0 \text{ in }\R^\dmn_+
,\\
\M_{\mat A}^+ v &\owns \arr g,
\end{aligned}\\
\doublebar{\mathcal{A}_2^+(t\nabla^m \partial_t w)}_{L^p(\R^n)} + \doublebar{\widetilde{N}_+(\nabla^{m}w)}_{L^p(\R^n)}
\leq C_p\doublebar{\arr g}_{L^p(\R^\dmnMinusOne)}
\end{gathered}\right.\end{equation}
where $C_p$ depends only on $p$, $n$, $m$, $\lambda$, and $\doublebar{\mat A}_{L^\infty(\R^n)}$.
\end{thm}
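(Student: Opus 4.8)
The plan is to solve the Neumann problem by inverting a boundary operator built from the single layer potential, and then to read off the nontangential and area‑integral bounds directly from Theorem~\ref{thm:potentials}. Let $\s^L$ denote the single layer potential for $L$, so that $L(\s^L\arr h)=0$ in $\R^\dmn_+$, and let $N_{\mat A}^+\arr h:=\M_{\mat A}^+(\s^L\arr h)$ be the associated Neumann boundary operator, interpreted on the appropriate quotient of $L^p(\R^n)$ by the finite‑dimensional space of arrays coming from polynomials of degree at most $m-1$. If one shows that $N_{\mat A}^+$ is bounded and invertible on $L^p(\R^n)$ in this sense for $p$ in the range~\eqref{eqn:p:regular}, the theorem follows at once: for $\arr g\in L^p$ set $w=\s^L\bigl((N_{\mat A}^+)^{-1}\arr g\bigr)$; then $Lw=0$ in $\R^\dmn_+$ and $\M_{\mat A}^+w\owns\arr g$ by construction, while the estimate on $\doublebar{\mathcal A_2^+(t\nabla^m\partial_t w)}_{L^p}+\doublebar{\widetilde N_+(\nabla^m w)}_{L^p}$ is precisely the $L^p$ layer‑potential bound of Theorem~\ref{thm:potentials} applied to the input $(N_{\mat A}^+)^{-1}\arr g$.

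Two of the three ingredients are essentially in hand. Boundedness of $N_{\mat A}^+$ on $L^p(\R^n)$ for $p$ in the stated range is part of (or an immediate consequence of) Theorem~\ref{thm:potentials}, since the Neumann trace of $\s^L$ is dominated by the same nontangential estimates. Invertibility at the endpoint $p=2$ follows from the $L^2$ Neumann theory of~\cite{BarHM18,BarHM18p} recalled in~\eqref{eqn:neumann:regular:2}; this is the step that uses self‑adjointness, through a Rellich‑type identity relating the Neumann and regularity (tangential) boundary traces of $\s^L\arr h$, which forces the two to have comparable $L^2(\R^n)$ norms and hence makes $N_{\mat A}^+$ invertible on $L^2$.

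The heart of the proof is propagating invertibility from $p=2$ down to $p>\max(1,\tfrac{2n}{n+2}-\varepsilon)$. For $p$ in a small interval $(2-\delta,2]$ this is the standard stability of invertibility under interpolation (a Sneiberg‑type argument), using the $L^p$ boundedness just noted. To reach the sharp lower exponent one uses the structure behind Theorem~\ref{thm:potentials}: a function localized to a ball $B$ of radius $r$ and normalized in $L^p$ sits in the negative Sobolev space $\dot W^{-1,2}(\R^n)$ with norm $\lesssim r^{\,1+n/2-n/p}$, and $p=\tfrac{2n}{n+2}$ is exactly the exponent at which this local embedding is scale invariant --- this is the arithmetic origin of the threshold. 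One therefore decomposes $\arr g$ into such localized pieces, applies the $L^2$ theory together with off‑diagonal decay estimates for $\s^L$ and its Neumann trace away from $B$ (again provided by the machinery of Theorem~\ref{thm:potentials}), and sums to obtain $\doublebar{(N_{\mat A}^+)^{-1}\arr g}_{L^p}\lesssim\doublebar{\arr g}_{L^p}$ for $\tfrac{2n}{n+2}\le p<2$; a further self‑improvement --- the $L^2$ estimates for null solutions of $L$ being stable under a small increase of exponent --- yields the extra $\varepsilon$ and shows the admissible set of $p$ is open. I expect this step, especially establishing the uniform off‑diagonal bounds and the piece‑by‑piece estimate down to the endpoint, to be the principal obstacle; the remainder is bookkeeping together with quotation of Theorem~\ref{thm:potentials} and the $L^2$ theory.

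Finally, for uniqueness modulo polynomials of degree at most $m-1$: if $Lw=0$ in $\R^\dmn_+$, $\M_{\mat A}^+w\owns0$, and $\doublebar{\mathcal A_2^+(t\nabla^m\partial_t w)}_{L^p}+\doublebar{\widetilde N_+(\nabla^m w)}_{L^p}<\infty$, then a Green‑type representation formula valid for such solutions (again via the layer‑potential calculus) expresses $w$ as $\s^L$ applied to a representative of its Neumann data; since that data is trivial and $N_{\mat A}^+$ is injective on $L^p$ in the above sense, $\nabla^m w\equiv0$, so $w$ is a polynomial of degree at most $m-1$.
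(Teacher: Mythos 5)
Your overall strategy (invert a Neumann boundary operator built from a layer potential, then quote Theorem~\ref{thm:potentials} for the interior estimates, and use a Green representation for uniqueness) is in the right family, but it is not the paper's argument, and as written it has genuine gaps at exactly the two places you identify as the crux. First, your $p=2$ step is unsupported: you propose to invert $N_{\mat A}^+=\M_{\mat A}^+\s^L$ on $L^2$ via a Rellich-type identity making the Neumann and regularity traces of $\s^L\arr h$ comparable. No such Rellich identity is available for higher order operators with rough $t$-independent coefficients; the $L^2$ theory of \cite{BarHM18,BarHM18p} recalled in \eqref{eqn:neumann:regular:2} is proved by other means and yields well posedness of the Neumann problem, which does \emph{not} by itself give injectivity of $\M_{\mat A}^+\s^L$: if $\M_{\mat A}^+\s^L\arr h\owns\arr 0$, Neumann uniqueness kills $\nabla^m\s^L\arr h$ in $\R^\dmn_+$, but in $\R^\dmn_-$ one is then facing a homogeneous \emph{regularity} problem, so invertibility of $\M_{\mat A}^+\s^L$ needs information about a second boundary value problem that you have not supplied. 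Second, your extrapolation of invertibility from $p=2$ down to $\frac{2n}{n+2}-\varepsilon$ is only a sketch; constructing $(N_{\mat A}^+)^{-1}\arr g$ for localized pieces and summing with off-diagonal decay does not by itself give the a priori $L^p$ estimate and uniqueness needed for invertibility (Shen-type extrapolation requires weak reverse H\"older estimates for solutions of localized boundary problems, not just decay of the potentials), and this is precisely the hard analysis that the present paper deliberately avoids redoing. Finally, your uniqueness argument misquotes the Green formula: for a null solution with $\M_{\mat A}^+w\owns\arr 0$ the representation (Theorem~\ref{thm:green}, which itself has to be \emph{proved} for $p<2$ using the new bounds) gives $\1_+\nabla^m w=-\nabla^m\D^{\mat A}(\Tr_{m-1}^+w)$, i.e.\ $w$ is represented by the \emph{double} layer of its Dirichlet trace, so one needs injectivity of $\M_{\mat A}^+\D^{\mat A}$ on $\dot W\!A^{1,p}_{m-1}(\R^n)$, not of $N_{\mat A}^+$.

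For contrast, the paper's route (Theorems~\ref{thm:neumann:subregular} and~\ref{thm:neumann:p:regular}) does no invertibility extrapolation at all for the $L^p$ problem: self-adjointness is used only to invoke the subregular ($\dot W^{-1,p'}$, $p'\geq 2$) Neumann well posedness of \cite{Bar19p}, where the Shen-type work was already carried out; the framework of \cite{Bar17} converts that well posedness into invertibility of $\M_{\mat A^*}^+\D^{\mat A^*}$ on the corresponding spaces, and a duality argument then gives invertibility of $\M_{\mat A}^+\D^{\mat A}:\dot W\!A^{1,p}_{m-1}(\R^n)\to(\dot W\!A^{0,p'}_{m-1}(\R^n))^*$ for $p<2$. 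Solutions are $w=\D^{\mat A}\bigl((\M_{\mat A}^+\D^{\mat A})^{-1}\arr g\bigr)$, with the estimates coming from the bounds \eqref{eqn:D:N:intro} and \eqref{eqn:D:lusin:intro} of Theorem~\ref{thm:potentials}, and uniqueness from the $p<2$ Green formula of Theorem~\ref{thm:green} together with injectivity of $\M_{\mat A}^+\D^{\mat A}$. If you wish to salvage your single layer approach you would need, in addition to Theorem~\ref{thm:potentials}, an independent proof of invertibility of $\M_{\mat A}^+\s^L$ on (a quotient of) $L^p$, which is not currently known in this generality and is not what the $L^2$ or subregular theories provide.
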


\subsection{The history of the Neumann problem}

We now discuss the history of the Neumann problem with boundary data in a Lebesgue space. The Neumann problem for the Laplacian with $L^p$ boundary data is traditionally the problem of finding a function $u$ such that
\begin{equation*}-\Delta u=0 \text{ in }\Omega,\quad \nu\cdot \nabla u=g\text{ on }\partial\Omega, \quad \doublebar{N_\Omega(\nabla u)}_{L^p(\partial\Omega)} \leq C \doublebar{g}_{L^p(\partial\Omega)}.\end{equation*}
Here $N_\Omega H(X)=\sup\{\abs{H(Y)}:\abs{X-Y}<2\dist(Y,\partial\Omega)\}$ is the standard nontangential maximal operator in~$\Omega$ and $\nu$ is the unit outward normal to~$\partial\Omega$. We observe that if $\Delta u=0$ in $\Omega$ and $u$ and $\partial\Omega$ are sufficiently smooth, then
\begin{equation*}\int_{\partial\Omega} \varphi\,\nu\cdot \nabla u\,d\sigma=\int_\Omega \nabla\varphi\cdot \nabla u\end{equation*}
and so the formulation of higher order Neumann boundary values~\eqref{eqn:neumann:intro} is in the spirit of the original harmonic Neumann problem.
The harmonic Neumann problem with $L^2$ boundary data was shown to be well posed in \cite{JerK81B} for all bounded Lipschitz domains~$\Omega$, and the Neumann problem with $L^p$ data for $p$ with $1<p<2+\varepsilon$ was shown to be well posed in \cite{DahK87}, where $\varepsilon>0$ depends on~$\Omega$. 

In \cite{KenP93}, the $L^p$ Neumann problem for more general second order equations
\begin{equation*}-\Div(\mat A\nabla u)=0 \text{ in }\Omega,\quad \nu\cdot\mat A\nabla u=g\text{ on }\partial\Omega, \quad \doublebar{\widetilde N_\Omega(\nabla u)}_{L^p(\partial\Omega)} \leq C \doublebar{g}_{L^p(\partial\Omega)}\end{equation*}
was shown to be well posed for $1<p< 2+\varepsilon$ in starlike Lipschitz domains with coefficients that are bounded, elliptic, real, symmetric, and independent of the radial coordinate. (This situation is very similar to the case of $t$-independent coefficients in the domain above a Lipschitz graph.) Here $\widetilde N_\Omega$ is a suitable modification of~$N_\Omega$; we remark that if $\Omega=\R^\dmn_+$ then $\widetilde N_\Omega=\widetilde N_+$ is given by formula~\eqref{dfn:NTM:modified:+}.

The case of real nonsymmetric $t$-independent coefficients was addressed in \cite{KenR09,Rul07}, in which the $L^p$ Neumann problem was solved in two dimensions for all $p$ with $1<p<1+\varepsilon$. (As shown in the appendix to \cite{KenR09}, there exist bounded real nonsymmetric $t$-independent coefficients for which the $L^2$ Neumann problem is ill posed.) Well posedness of the $L^2$ Neumann problem in the domain above a Lipschitz graph was shown to be stable under $t$-independent perturbation in \cite{AusAM10A} (and, under certain additional assumptions, in \cite{AlfAAHK11}), and some additional extrapolation type results were established in \cite{AusM14}.

The $L^p$ Neumann problem for a second order system of equations may be written as 
\begin{equation}
\label{eqn:neumann:2:system}
\left\{\begin{gathered}
(L\vec u)_j=\sum_{\alpha=1}^\dmn \sum_{\beta=1}^\dmn \sum_{k=1}^N \partial_{x_\alpha}(A_{\alpha\beta}^{jk}\partial_{x_\beta} u_k)=0 \text{ in }\Omega\text{ for }1\leq j\leq N,
\\
\vecM_{\mat A}^\Omega \vec u =\vec  g
,\quad \doublebar{N_\Omega(\nabla \vec u)}_{L^p(\partial\Omega)} \leq C_p\doublebar{\vec g}_{L^p(\partial\Omega)}
\end{gathered}\right.
\end{equation}
where $\vecM_{\mat A}^\Omega \vec u $ is given by
\begin{equation*}
\vecM_{\mat A}^\Omega \vec u=\vec g \quad \text{if} \quad
\sum_{j=1}^N\int_{\partial\Omega} \varphi_j\,g_{j}\,d\sigma
= \sum_{\alpha=1}^\dmn \sum_{\beta=1}^\dmn \sum_{j=1}^N \sum_{k=1}^N \int_\Omega \partial_{x_\alpha} \varphi_j\,A^{jk}_{\alpha\beta}\,\partial_{x_\beta} u_k
\end{equation*}
for all $\vec\varphi\in C^\infty_0(\R^\dmn)$.
As observed in \cite{She07B}, the traction boundary value problem for the Lam\'e system of elastostatics may be written in this form. The traction problem and the Neumann problem for the Stokes system, with boundary data in~$L^p(\partial\Omega)$, $2-\varepsilon<p<2+\varepsilon$, were shown to be well posed in \cite{DahKV88,FabKV88}; in \cite{She07B} Shen observed that their arguments apply to general second order  systems with real symmetric constant coefficients that satisfy an appropriate ellipticity condition. The traction boundary problem was shown to be well posed for $L^p$ boundary data, $1<p<2$, in \cite{DahK90}; their arguments relied on the fact that the Lam\'e system is defined in three dimensions, and applies to many more general three-dimensional (but not higher-dimensional) systems. In \cite{She07B}, Shen showed that if $\Omega\subset\R^\dmn$ is a Lipschitz domain with $\dmn\geq 4$, then for any second order elliptic system with real symmetric constant coefficients, the $L^p$ Neumann problem~\eqref{eqn:neumann:2:system} is well posed whenever $\frac{2n}{n+2}-\varepsilon <p<2$.

Turning to higher order equations, the $L^p$ Neumann problem for the biharmonic equation  is given by
\begin{equation*}(-\Delta)^2 u=0 \text{ in }\Omega,
\quad \vecM_\rho^\Omega u \owns \vec g, \quad \doublebar{N_\Omega(\nabla^2 u)}_{L^p(\partial\Omega)} \leq C_p \doublebar{\vec g}_{L^p(\partial\Omega)}
\end{equation*}
where
\begin{equation*}
\vecM_\rho^\Omega u\owns \vec g \quad\text{if}\quad
\int_{\Omega} \rho\Delta u\Delta \varphi +(1-\rho)\sum_{j,k=1}^\dmn \partial_{x_jx_k}u\,\partial_{x_jx_k}\varphi
= \int_{\partial\Omega} \vec g\cdot \nabla \varphi
\,d\sigma\end{equation*}
for all sufficiently smooth test functions~$\varphi$. The constant $\rho$ is called the Poisson ratio; we remark that an appropriate choice of coefficients $\mat A_\rho$ for the biharmonic equation yields that 
\begin{equation*}\M_{\mat A_\rho}^+u=\vecM_\rho^{\R^\dmn_+}u,\end{equation*}
where $\M_{\mat A}^+u$ is given by formula~\eqref{eqn:neumann:intro}. The biharmonic Neumann problem was shown to be well posed in bounded Lipschitz domains for for $p$ sufficiently close to~$2$ in \cite{Ver05} in dimension $\dmn\geq 2$, and for $p$ with $\frac{2n}{n+2}-\varepsilon<p<2$ in \cite{She07B} in dimension $\dmn\geq 4$. (The case of $C^1$ domains in $\R^2$ was considered earlier in \cite{CohG85}.)

Finally, the $L^2$ Neumann problem~\eqref{eqn:neumann:regular:2} was shown to be well posed in \cite{BarHM18,BarHM18p}. 

We observe that Shen's paper \cite{She07B} yields well posedness of the $L^p$ Neumann problem, for both the biharmonic equation and for constant coefficient second order systems, for the same range of $p$ as in our Theorem~\ref{thm:neumann:selfadjoint}. The present paper builds heavily on our preceding paper \cite{Bar19p}, and the techniques of \cite{Bar19p} owe much to the techniques of Shen. However, we remark that the arguments of \cite{Bar19p} are more closely related to those of Shen's earlier paper \cite{She06A} concerning the Dirichlet problem than to those of the later paper \cite{She07B} concerning the Neumann problem.

Our proof of Theorem~\ref{thm:neumann:selfadjoint} will involve well posedness of the subregular Neumann problem as established in \cite{Bar19p}. The subregular Neumann problem is the Neumann problem with boundary data in $\dot W^{-1,p}(\R^n)$. Here $\dot W^{-1,p}(\R^n)$ is the dual space to $\dot W^{1,p'}(\R^n)$,  the homogeneous Sobolev space in $\R^n$ with $\doublebar{\varphi}_{\dot W^{1,p'}(\R^n)}=\doublebar{\nabla_\pureH \varphi}_{L^{p'}(\R^n)}$, where $1/p+1/p'=1$ and where $\nabla_\pureH$ denotes the gradient in $\R^n$ (rather than $\R^\dmn$). We will discuss the main result of \cite{Bar19p} in more detail in Section~\ref{sec:neumann}. Here we only mention that subregular Neumann problems 
have received relatively little study; see \cite{Ver05} (the harmonic and biharmonic problems), \cite{AusM14,AusS16} (second order equations with $t$-independent coefficents), and \cite{BarHM18,BarHM18p,Bar19p} (higher order equations with $t$-independent coefficients).

The sharp range of $p$ for which a higher order $L^p$ Neumann problem is well posed is not known, even for special cases such as the biharmonic Neumann problem. However, results for related problems are somewhat suggestive. Specifically, the range of $p$ for which the biharmonic $\dot W^{1,p}$ Dirichlet problem 
\begin{equation*}(-\Delta)^2 u=0 \text{ in }\Omega,
\quad \nabla u=\vec f\text{ on }\partial\Omega,
\quad
\doublebar{N_\Omega(\nabla^2 u)}_{L^p(\partial\Omega)}\leq C\doublebar{\vec f}_{\dot W^{1,p}(\partial\Omega)}\end{equation*}
is well posed in all Lipschitz domains $\Omega\subset\R^\dmn$
is known to be $[6/5,2]$ in dimension $\dmn=4$,  is $[4/3,2]$ in dimension $\dmn=5$, $6$, or~$7$, and is known to be a subset of $[4/3,2]$ in dimension $\dmn\geq 8$.
See \cite{Ver90,She06A,She06B} for the well posedness results, \cite[Section~5]{DahKV86} and \cite[Theorem~10.7]{PipV92} for ill posedness results for the $L^{p'}$ Dirichlet problem, and \cite{KilS11A} for duality between the $L^{p'}$ and $\dot W^{1,p}$ Dirichlet problems for the bilaplacian.

This suggests that the $L^p$ Neumann problem~\eqref{eqn:neumann:regular:p:selfadjoint} is probably not well posed for the full range $1<p\leq 2$ in dimension $4$ and higher.

\subsection{Layer potentials}

We will prove Theorem~\ref{thm:neumann:selfadjoint} using the method of layer potentials. In the second order case $2m=2$, the double and single layer potentials are explicitly defined integral operators and are given by
\begin{align*}
\D^{\mat A}_\Omega f(X)&=\int_{\partial\Omega} \overline{\nu(Y)\cdot A^*(Y)\nabla E^{L^*}(Y,X)}\,f(Y)\,d\sigma(Y)
,\\
\s^L_\Omega g(X)&=\int_{\partial\Omega} E^L(X,Y)\,g(Y)\,d\sigma(Y)
\end{align*}
where $\nu$ is the unit outward normal vector to the domain~$\Omega\subset\R^\dmn$ and $E^L$ is the fundamental solution for the operator~$L$ in $\R^\dmn$. For reasonably well behaved domains $\Omega$ and inputs $f$ and~$g$, the outputs $\D^{\mat A}_\Omega f$ and $\s^L_\Omega g$ are locally Sobolev functions and satisfy $L(\D^{\mat A}_\Omega) =L(\s^L_\Omega g)=0$ away from~$\partial\Omega$. Certain other properties of layer potentials (in particular, the Green's formula and jump relations) are well known. It is possible to generalize layer potentials to the case of higher order operators. This may be done using integral kernels composed of various derivatives of higher order fundamental solutions (see \cite{Agm57,CohG83,CohG85,Ver05,She07B,MitM13A,MitM13B}) or by using the Lax-Milgram lemma to construct operators with appropriate properties (see \cite{Bar17,BarHM17} or Section~\ref{sec:dfn:potential} below).

If the operator $\arr f\to \M_{\mat A}^\Omega\D^{\mat A}_\Omega\arr f$ is invertible $\DD\to\NN$, for some function spaces $\DD$ and~$\NN$, where $\M_{\mat A}^\Omega$ is an appropriate Neumann boundary operator, then the function $u=\D^{\mat A}_\Omega((\M_{\mat A}^\Omega \D^{\mat A}_\Omega)^{-1}\arr g)$ is a solution to the Neumann problem 
\begin{equation*}Lu=0\text{ in }\Omega,\quad \M_{\mat A}^\Omega=\arr g\end{equation*}
with boundary data~$\arr g$. Furthermore, we may establish bounds on $u$ (such as the nontangential bound $\doublebar{\widetilde N_\Omega(\nabla^m u)}_{L^p(\partial\Omega)}\leq C_p\doublebar{\arr g}_\NN$) by establishing the corresponding bound $\doublebar{\widetilde N_\Omega(\nabla^m \D^{\mat A}_\Omega\arr f)}_{L^p(\partial\Omega)}\leq C_p\doublebar{\arr f}_\DD$ on the double layer potential.

Similarly, if $\arr g\to\Trace^\Omega\nabla^{m-1}\s^L_\Omega\arr g$ is invertible $\NN\to\DD$, then solutions to the Dirichlet problem
\begin{equation*}Lu=0\text{ in }\Omega,\quad \Trace^\Omega\nabla^{m-1}u=\arr f\end{equation*}
exist for all $\arr f\in\DD$.

This is the classic method of layer potentials. This method of constructing solutions to the Dirichlet or Neumann problem was used in  \cite{FabJR78,Ver84,DahK87,FabMM98,Zan00,May05} in the case of harmonic functions (that is, in the case $L=-\Delta$), in \cite{DahKV88,FabKV88,Fab88,Gao91,She07B} for second order constant coefficient systems, in \cite{AlfAAHK11,Bar13,HofMitMor15,HofKMP15B,BarM16A} for second order operators with variable $t$-inde\-pen\-dent coefficients, in \cite{Agm57,CohG83,CohG85,Ver05,She07B,MitM13A,MitM13B} for higher order operators with constant coefficients, and in \cite{BarHM18} for higher order operators with variable $t$-independent coefficients.

We will construct solutions to the problem~\eqref{eqn:neumann:regular:p:selfadjoint} by showing that $\M_{\mat A}^+\D^{\mat A}$ is invertible $\M_{\mat A}^+\D^{\mat A}:\dot W\!A^{1,p}_{m-1}(\R^n) \to (\dot W\!A^{0,p'}_{m-1}(\R^n))^*$, where $\dot W\!A^{j,p}_{m-1}(\R^n)$ is  the space of all arrays of functions in $\dot W^{j,p}(\R^n)$ (or $L^p(\R^n)$ if $j=0$) that can arise as the gradient $\nabla^{m-1}$ of order $m-1$ of a common function. If $m\geq 2$, then by equality of mixed partials $\dot W\!A^{j,p}_{m-1}(\R^n)$ is a proper subspace of $\dot W^{j,p}(\R^n)$. Then $(\dot W\!A^{0,p'}_{m-1}(\R^n))^*$ is a quotient space of~$L^p(\R^n)$ whose elements are equivalence classes of $L^p$ functions; in light of the definition~\eqref{eqn:neumann:intro} of Neumann boundary values, $\M_{\mat A}^+ \D^{\mat A}$ is naturally such an equivalence class.

Invertibility of the operator $\M_{\mat A}^+\D^{\mat A}:\dot W\!A^{1,p}_{m-1}(\R^n) \to (\dot W\!A^{0,p'}_{m-1}(\R^n))^*$ yields existence of solutions to the problem~\eqref{eqn:neumann:regular:p:selfadjoint} if in addition we have the estimates
\begin{equation*}\doublebar{\mathcal{A}_2^+(t\nabla^m \partial_t \D^{\mat A}\arr f)}_{L^p(\R^n)} + \doublebar{\widetilde{N}_+(\nabla^{m}\D^{\mat A}\arr f)}_{L^p(\R^n)}
\leq C_p\doublebar{\arr f}_{\dot W\!A^{1,p}_{m-1}(\R^\dmnMinusOne)}.\end{equation*}
Thus, we must establish these estimates for $p$ and $\mat A$ as in Theorem~\ref{thm:neumann:selfadjoint}. In fact, we will establish these estimates for $\mat A$ satisfying weaker conditions. (In particular, we will not need $\mat A$ to be self-adjoint to bound layer potentials.) Furthermore, we will establish estimates on the single layer potential and additional estimates on the double layer potential.

To discuss known results for higher order layer potentials, and to state the bounds on layer potentials to be established in this paper, we establish some terminology. We will consider coefficients $\mat A$ that satisfy the ellipticity condition
\begin{equation}
\label{eqn:elliptic}
\re\int_{\R^\dmn}
\sum_{\abs\alpha=\abs\beta=m}\overline{\partial^\alpha\varphi(x,t)}\, A_{\alpha\beta}(x)\,\partial^\beta\varphi(x,t)\,dx\,dt \geq \lambda\doublebar{\nabla^m \varphi}_{L^2(\R^\dmn)}^2
\end{equation}
for all $\varphi\in C^\infty_0(\R^\dmn)$ and some $\lambda>0$ independent of~$\varphi$. Observe that the condition~\eqref{eqn:elliptic} is weaker than the condition~\eqref{eqn:elliptic:slices} of Theorem~\ref{thm:neumann:selfadjoint}.

Meyers's reverse H\"older inequality for gradients of solutions is well known. In \cite{Cam80,AusQ00} it was generalized to operators of higher order. That is, if $L$ is an operator of order $2m$, $m\geq 1$, of the form~\eqref{eqn:divergence} and associated to bounded coefficients $\mat A$ that satisfy the ellipticity condition~\eqref{eqn:elliptic}, then there is a constant $\varepsilon>0$ such that if $2<p<2+\varepsilon$, then 
\begin{equation*}\biggl(\int_{B(X_0,r)} \abs{\nabla^m u}^p\biggr)^{1/p}
\leq \frac{c(0,L,p,2)}{r^{\pdmn(1/2-1/p)}} \biggl(\int_{B(X_0,2r)} \abs{\nabla^m u}^2\biggr)^{1/2}\end{equation*}
whenever $u\in \dot W^{m,2}(B(X_0,2r))$ and $Lu=0$ in $B(X_0,2r)$. 

In \cite[Section~9, Lemma~2]{FefS72} it was shown that if $L=-\Delta$, then the $L^2$ norm on the right hand side may be replaced by a $L^q$ norm for any $q<2$. The argument generalizes to arbitrary elliptic operators; see \cite[Theorem~24]{Bar16}. Furthermore, the Gagliardo-Nirenberg-Sobolev and Caccioppoli inequalities allow bounds on lower order derivatives to be established; see \cite[Section~4]{Bar16}.

Thus, we define 
$p_{j,L}^+$ to be the extended real number such that, whenever $p$ and $q$ satisfy $0<q<p<p_{j,L}^+$, there is a constant $c(j,L,p,q)<\infty$ such that 
\begin{equation}\label{eqn:Meyers}\biggl(\int_{B(X_0,r)} \abs{\nabla^{m-j}u}^p\biggr)^{1/p}
\leq \frac{c(j,L,p,q)}{r^{\pdmn(1/q-1/p)}} \biggl(\int_{B(X_0,2r)} \abs{\nabla^{m-j}u}^q\biggr)^{1/q}
\end{equation}
whenever $u\in \dot W^{m,2}(B(X_0,2r))$ and $Lu=0$ in $B(X_0,2r)$. 
We define $p_{j,L}^-$ by
\begin{equation}
\label{eqn:Meyers:p-}
\frac{1}{p_{j,L}^-}+\frac{1}{p_{j,L}^+}=1
.\end{equation}
By the results mentioned above, $p_{j,L}^+$ exists whenever $0\leq j\leq m$.
By \cite[Theorem~49]{AusQ00}, \cite[Section~4]{Bar16}, and \cite[Propositions~3.3 and~3.6]{Bar19p}, if $\mat A$ is bounded, $t$-inde\-pen\-dent in the sense of formula~\eqref{eqn:t-independent}, and elliptic in the sense of formula~\eqref{eqn:elliptic}, then there are numbers $\varepsilon>0$ and $\widetilde \varepsilon>0$, depending only on the order $2m$ of the operator~$L$, the ambient dimension $\dmn$, the number $\lambda$ in the ellipticity condition~\eqref{eqn:elliptic}, and the norm $\doublebar{\mat A}_{L^\infty(\R^n)}$ of the coefficients, such that the numbers $p_{j,L}^+$ satisfy
\begin{equation*}
\left\{\begin{aligned}
p_{0,L}^+&=\infty
, & p_{1,L}^+&=\infty 
&&\text{ if }\dmn=2,
\\
p_{0,L}^+&\geq 2+\varepsilon
, & p_{1,L}^+&=\infty 
&&\text{ if }\dmn=3,\\
p_{0,L}^+&\geq 2+\varepsilon
, & p_{1,L}^+&\geq\frac{2n}{n-2}+\varepsilon
&&\text{ if }\dmn\geq 4.
\end{aligned}\right.\end{equation*}
Therefore, there is a $\widetilde\varepsilon>0$ depending only on $n$ and $\varepsilon$ such that
\begin{equation}\label{eqn:Meyers:bound}
\left\{\begin{aligned}
p_{0,L}^-&=1
, & p_{1,L}^-&=1 
&&\text{ if }\dmn=2,
\\
p_{0,L}^-&\leq 2-\widetilde\varepsilon
, & p_{1,L}^-&=1 
&&\text{ if }\dmn=3,\\
p_{0,L}^-&\leq 2-\widetilde\varepsilon
, & p_{1,L}^-&\leq\frac{2n}{n+2}-\widetilde\varepsilon
&&\text{ if }\dmn\geq 4.
\end{aligned}\right.\end{equation}

\begin{rmk}\label{rmk:Meyers} 
If $p<2+\varepsilon$, or if $p<\infty$ and $\dmn=2$, then by again by \cite[Section~4]{Bar16} and \cite[Section~3]{Bar19p}, the numbers $c(0,L,p,q)$ in the bound~\eqref{eqn:Meyers} may be bounded by constants depending only on $p$, $q$ and the standard parameters $m$, $\dmnMinusOne$, $\lambda$, and $\doublebar{\mat A}_{L^\infty}$. The same is true of the numbers $c(1,L,p,q)$ if 
$\dmn\leq 3$ and $p<\infty$ or $\dmn\geq 4$ and $p<\frac{2n}{n-2}+\varepsilon$.
\end{rmk}

We may now discuss old and new bounds on layer potentials.
In \cite{BarHM17,BarHM19A,BarHM18p,Bar19p}, Hofmann, Mayboroda and the author of the present paper showed that if $L$ is an operator of the form~\eqref{eqn:divergence} associated to bounded elliptic $t$-independent coefficients, then there is a $\varepsilon>0$ such that
\begin{align}
\label{eqn:S:N:+}
\doublebar{\widetilde N_*(\nabla^m\s^L\arr g)}_{L^p(\R^n)}
&\leq C(0,L,p)\doublebar{\arr g}_{L^p(\R^n)}, & 2-\varepsilon&< p<p_{0,L}^+
,\allowdisplaybreaks
\\
\label{eqn:D:N:+}
\doublebar{\widetilde N_*(\nabla^m\D^{\mat A}\arr \varphi)}_{L^p(\R^n)}
&\leq C(0,L,p)\doublebar{\arr \varphi}_{\dot W\!A^{1,p}_{m-1}(\R^n)}, & 2-\varepsilon&< p<p_{0,L}^+
,\allowdisplaybreaks
\\
\label{eqn:S:lusin:+}
\doublebar{\mathcal{A}_2^*(t\nabla^m\partial_t\s^L\arr g)}_{L^p(\R^n)}
&\leq C(1,L,p)\doublebar{\arr g}_{L^p(\R^n)}, & 2-\varepsilon&< p<p_{1,L}^+
,\allowdisplaybreaks
\\
\label{eqn:D:lusin:+}
\doublebar{\mathcal{A}_2^*(t\nabla^m\partial_t\D^{\mat A}\arr \varphi)}_{L^p(\R^n)}
&\leq C(1,L,p)\doublebar{\arr \varphi}_{\dot W\!A_{m-1}^{1,p}(\R^n)}, & 2&\leq p<p_{1,L}^+
,\allowdisplaybreaks
\\
\label{eqn:S:lusin:rough:+}
\doublebar{\mathcal{A}_2^*(t\nabla^m\s^L_\nabla\arr h)}_{L^p(\R^n)}
&\leq C(1,L,p)\doublebar{\arr h}_{L^p(\R^n)}, & 2-\varepsilon&< p<p_{1,L}^+
,\allowdisplaybreaks
\\
\label{eqn:D:lusin:rough:+}
\doublebar{\mathcal{A}_2^*(t\nabla^m\D^{\mat A}\arr f)}_{L^p(\R^n)}
&\leq C(1,L,p)\doublebar{\arr f}_{\dot W\!A_{m-1}^{0,p}(\R^n)}, & 2&\leq p<p_{1,L}^+
,\allowdisplaybreaks
\\
\label{eqn:S:N:rough:+}
\doublebar{\widetilde N_*(\nabla^{m-1}\s^L_\nabla\arr h)}_{L^p(\R^n)}
&\leq C(1,L,p)\doublebar{\arr h}_{L^p(\R^n)}, & 2-\varepsilon&< p<p_{1,L}^+
,\allowdisplaybreaks
\\ 
\label{eqn:D:N:rough:+}
\doublebar{\widetilde N_*(\nabla^{m-1}\D^{\mat A}\arr f)}_{L^p(\R^n)}
&\leq C(1,L,p)\doublebar{\arr f}_{\dot W\!A_{m-1}^{0,p}(\R^n)}, & 2-\varepsilon&< p<p_{1,L}^+
\end{align}
where $p_{j,L}^+$ is as in the bound~\eqref{eqn:Meyers}, and $C(j,L,p)$ is a constant depending only on $m$, $n$, $\lambda$, $\doublebar{\mat A}_{L^\infty}$, $p$, and the number $c(j,L,p,2)$ in the bound~\eqref{eqn:Meyers}. These bounds played a crucial role in solving the $L^2$ Neumann problem~\eqref{eqn:neumann:regular:2} (and the subregular problem of \cite{Bar19p}).

Here
\begin{align}
\label{dfn:NTM:modified:*}
\widetilde N_* H(x) & = \sup
\biggl\{\biggl(\fint_{B((y, s),\abs{s}/2)} \abs{H(z,t)}^2\,dz\,dt\biggr)^{1/2}:
s\in\R,\>
\abs{x-y}< \abs{s}
\biggr\}
,\\
\label{dfn:lusin:*}
\mathcal{A}_2^* H(x) &= \biggl(\int_{-\infty}^\infty \int_{\abs{x-y}<\abs{t}} \abs{H(y,t)}^2 \frac{dy\,dt}{\abs{t}^\dmn}\biggr)^{1/2} \end{align}
are two-sided analogues of the nontangential and area integral operators of formulas~\eqref{dfn:NTM:modified:+} and~\eqref{dfn:lusin:+}.

The second of the two main results of the present paper is the following theorem, in which we expand the range of the parameter~$p$ in the bounds \cref{eqn:D:N:rough:+,eqn:S:N:rough:+,eqn:D:lusin:rough:+,eqn:S:lusin:rough:+,eqn:D:N:+,eqn:S:N:+,eqn:D:lusin:+,eqn:S:lusin:+} to include more values below~$2$.

\begin{thm}\label{thm:potentials} Suppose that $L$ is an operator of the form~\eqref{eqn:divergence} of order~$2m$ associated with bounded coefficients $\mat A$ that are $t$-independent in the sense of formula~\eqref{eqn:t-independent} and satisfy the ellipticity condition~\eqref{eqn:elliptic} for some $\lambda>0$.

Then the double and single layer potentials $\D^{\mat A}$, $\s^L$ and $\s^L_\nabla$, originally defined as in Section~\ref{sec:dfn:potential} below, extend by density to operators that satisfy the following bounds for all $p$ in the given ranges and all inputs $\arr f$, $\arr g$, $\arr h$, and $\arr\varphi$ in the indicated spaces.
\begin{align}
\label{eqn:S:N:intro}
\doublebar{\widetilde N_*(\nabla^m\s^L\arr g)}_{L^p(\R^n)}
&\leq C(1,L^*,p')\doublebar{\arr g}_{L^p(\R^n)}, & p_{1,L^*}^-&< p<2 
,\\ 
\label{eqn:D:N:intro}
\doublebar{\widetilde N_*(\nabla^m\D^{\mat A}\arr \varphi)}_{L^p(\R^n)}
&\leq C(1,L^*,p')\doublebar{\arr \varphi}_{\dot W\!A^{1,p}_{m-1}(\R^n)}, & p_{1,L^*}^-&<p<2 
,\allowdisplaybreaks
\\
\label{eqn:S:lusin:intro}
\doublebar{\mathcal{A}_2^*(t\nabla^m\partial_t\s^L\arr g)}_{L^p(\R^n)}
&\leq C(1,L^*,p')\doublebar{\arr g}_{L^p(\R^n)}, & p_{1,L^*}^-&< p<2 
,\allowdisplaybreaks
\\
\label{eqn:D:lusin:intro}
\doublebar{\mathcal{A}_2^*(t\nabla^m\partial_t\D^{\mat A}\arr \varphi)}_{L^p(\R^n)}
&\leq C(1,L^*,p')\doublebar{\arr \varphi}_{\dot W\!A_{m-1}^{1,p}(\R^n)}, & p_{1,L^*}^-&< p<2 
,\allowdisplaybreaks
\\
\label{eqn:S:lusin:rough:intro}
\doublebar{\mathcal{A}_2^*(t\nabla^m\s^L_\nabla\arr h)}_{L^p(\R^n)}
&\leq C(0,L^*,p')\doublebar{\arr h}_{L^p(\R^n)}, & p_{0,L^*}^-&< p<2 
,\allowdisplaybreaks
\\
\label{eqn:D:lusin:rough:intro}
\doublebar{\mathcal{A}_2^*(t\nabla^m\D^{\mat A}\arr f)}_{L^p(\R^n)}
&\leq C(0,L^*,p')\doublebar{\arr f}_{\dot W\!A_{m-1}^{0,p}(\R^n)}, & p_{0,L^*}^-&<p<2 
,\allowdisplaybreaks
\\
\label{eqn:S:N:rough:intro}
\doublebar{\widetilde N_*(\nabla^{m-1}\s^L_\nabla\arr h)}_{L^p(\R^n)}
&\leq C(0,L^*,p')\doublebar{\arr h}_{L^p(\R^n)}, & p_{0,L^*}^-&< p<2 
,
\\
\label{eqn:D:N:rough:intro}
\doublebar{\widetilde N_*(\nabla^{m-1}\D^{\mat A}\arr f)}_{L^p(\R^n)}
&\leq C(0,L^*,p')\doublebar{\arr f}_{\dot W\!A_{m-1}^{0,p}(\R^n)}, & p_{0,L^*}^-&< p<2 
.\end{align}
Here the numbers $p_{j,L}^-$ are as in formulas \cref{eqn:Meyers:p-,eqn:Meyers}, and in particular satisfy the bounds~\eqref{eqn:Meyers:bound}. The constants $C(j,L^*,p')$ depend only on the standard parameters $m$, $n$, $\lambda$, $\doublebar{\mat A}_{L^\infty(\R^n)}$, the number $p$, and the constants $c(j,L^*,p',2)$ in the bound~\eqref{eqn:Meyers}, where $1/p+1/p'=1$.
\end{thm}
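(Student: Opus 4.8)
All eight estimates lie below the exponent~$2$, while the known bounds \cref{eqn:S:N:+,eqn:D:N:+,eqn:S:lusin:+,eqn:D:lusin:+,eqn:S:lusin:rough:+,eqn:D:lusin:rough:+,eqn:S:N:rough:+,eqn:D:N:rough:+} include the endpoint $p=2$; so the natural strategy is to start from the $L^2$ estimates---valid for $L$ and, since $L^*$ is again an operator of the form~\eqref{eqn:divergence} with bounded $t$-independent coefficients satisfying~\eqref{eqn:elliptic}, also for~$L^*$---and push below~$2$ by a real-variable extrapolation argument of the type used in \cite{Bar19p} (following Shen). The appearance of the adjoint~$L^*$, the conjugate exponent~$p'$, and the Meyers constants $c(j,L^*,p',2)$ in the statement is the fingerprint of this method: the hypothesis that drives the sub-$2$ extrapolation for a potential associated with~$L$ is a local $L^{p'}$--$L^2$ reverse-H\"older (Caccioppoli-plus-Meyers) estimate for null solutions of the \emph{adjoint} operator~$L^*$, which is exactly the bound~\eqref{eqn:Meyers} of Remark~\ref{rmk:Meyers} for~$L^*$, valid for $p'<p_{j,L^*}^+$, i.e. for $p>p_{j,L^*}^-$. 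Since the potentials are initially only defined on the dense subspaces of Section~\ref{sec:dfn:potential}, I would prove each bound on those subspaces---where the potentials are genuine locally-Sobolev null solutions and the manipulations below are classical---and then extend by density.

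For the four Lusin-area estimates \cref{eqn:S:lusin:intro,eqn:D:lusin:intro,eqn:S:lusin:rough:intro,eqn:D:lusin:rough:intro} I would argue as follows. For fixed input the map $\arr g\mapsto t\nabla^m\partial_t\s^L\arr g$ (and its analogues for $\D^{\mat A}$ and $\s^L_\nabla$) is \emph{linear} into the tent space whose norm is $\doublebar{\mathcal{A}_2^*(\,\cdot\,)}_{L^p}$, so the target is the bound from $L^p$ (or $\dot W\!A^{1,p}_{m-1}$, $\dot W\!A^{0,p}_{m-1}$) into that tent space, which at $p=2$ is the known estimate. I would then verify the two further hypotheses of the real-variable lemma: (i) an interior $L^2$ reverse-H\"older estimate for the quantity $t\nabla^m\partial_t u$ with $u$ a null solution---this is where $\partial_t u$ being a null solution (by $t$-independence), together with Caccioppoli and the Meyers inequality~\eqref{eqn:Meyers} for~$L^*$, enters, and where the index $j\in\{0,1\}$ (hence $p_{j,L^*}^\pm$) is fixed by how many derivatives survive after a Caccioppoli step; and (ii) an off-diagonal decay estimate for the layer potential kernel, available from the $L^2$ off-diagonal bounds of the relevant resolvents in \cite{BarHM17,Bar19p} and Caccioppoli. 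Applying the (dual form of the) extrapolation lemma then yields the bound for $p_{j,L^*}^-<p<2$ with the stated constant.

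The four nontangential estimates \cref{eqn:S:N:intro,eqn:D:N:intro,eqn:S:N:rough:intro,eqn:D:N:rough:intro} I would deduce from the Lusin-area estimates just established together with boundary-trace (jump-relation) bounds. For a null solution~$u$ one has a pointwise comparison of the form $\widetilde N_*(\nabla^{m-j}u)\lesssim \mathcal{M}\bigl(\Trace^\pm \nabla^{m-j}u\bigr)+\mathcal{A}_2^*(t\nabla^m\partial_t u)$ (up to the usual averaged versions, with $\mathcal{M}$ a Hardy--Littlewood-type operator), valid because $\partial_t(\nabla^{m-j}u)$ is integrable in~$t$ over $(t_0,\infty)$ with square-function control while $\nabla^{m-j}u$ has a boundary trace. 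Inserting $u=\s^L\arr g$, $u = \D^{\mat A}\arr f$, etc., it remains to bound $\Trace^\pm\nabla^{m-j}$ of the potentials on the relevant spaces; these boundary operators are again treated for $p<2$ by the same extrapolation scheme (or are already available in \cite{Bar19p}).

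The main obstacle, I expect, is not any single estimate but the careful packaging needed to feed the layer potentials into the real-variable machinery: linearizing the sublinear operators $\mathcal{A}_2^*$ and $\widetilde N_*$ so that ``the operator'' is genuinely linear and the extrapolation applies uniformly in the linearization; establishing the off-diagonal decay of the potentials in exactly the norms the lemma requires; and, above all, the bookkeeping that assigns to each of the eight estimates the correct Meyers index~$j$ (hence the correct range $p_{j,L^*}^-<p<2$ and the constant $c(j,L^*,p',2)$) and the correct adjoint data---the single layer $\s^L$ pairing with~$L^*$, and the double layer $\D^{\mat A}$ and rough single layer $\s^L_\nabla$ with~$\mat A^*$ and the corresponding traces. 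Once these are in place, the proof is the assembly of the ingredients above.
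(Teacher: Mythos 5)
Your outline has the right flavor (Shen-type real-variable techniques, the role of the Meyers exponents for the adjoint), but its central mechanism does not exist in the form you describe, and this is a genuine gap rather than a packaging issue. The good-$\lambda$/extrapolation lemmas of \cite{Bar19p} that you invoke extrapolate \emph{upward}: they take an $L^2$ bound together with a local reverse H\"older self-improvement for the quantity being estimated and produce $L^q$ bounds for $q>2$. There is no companion lemma that takes the known $L^2$ bounds on $\D^{\mat A}$, $\s^L$, $\s^L_\nabla$ plus interior regularity of $L^*$-null solutions and directly outputs $L^p$ bounds for $p<2$; the adjoint operator, the conjugate exponent, and the constants $c(j,L^*,p',2)$ enter only after one dualizes. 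Dualizing $\widetilde N_*(\nabla^m\s^L\,\cdot\,)$, $\mathcal{A}_2^*(t\nabla^m\partial_t\D^{\mat A}\,\cdot\,)$, etc., against suitable test inputs produces, via the relations \cref{eqn:S:Newton:dual,eqn:D:Newton:dual,eqn:S:Newton:rough:dual,eqn:S:Newton:dual:vertical,eqn:D:Newton:dual:vertical}, the Dirichlet and Neumann boundary values of the Newton potential $\Pi^{L^*}$; this is exactly the route the paper takes. The upward extrapolation of \cite{Bar19p}, together with a decomposition in the spirit of \cite[Lemma~4.1]{HofMayMou15}, is applied to $\Pi^{L^*}$ in the dual range $p'>2$ (Sections \ref{sec:newton:N:regular}--\ref{sec:newton:lusin}, Corollary~\ref{cor:newton:final}); the Carleson-duality lemmas \ref{lem:N>C}, \ref{lem:N<C}, \ref{lem:N<C:vertical} and the tent-space duality \eqref{eqn:CoiMS85} convert those interior estimates into the eight bounds of Theorem~\ref{thm:potentials}; and the Fatou-type theorems of Sections \ref{sec:fatou:neumann} and~\ref{sec:fatou:dirichlet} are indispensable even to make sense of $\Tr_{m-1}\Pi^{L^*}$, $\Tr_m\Pi^{L^*}$ and $\M_{\mat A^*}^-\Pi^{L^*}$. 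None of this is supplied, or replaced by anything concrete, in your plan.

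The second gap is the comparison $\widetilde N_*(\nabla^{m-j}u)\lesssim \mathcal{M}\bigl(\Trace\nabla^{m-j}u\bigr)+\mathcal{A}_2^*(t\nabla^m\partial_t u)$ for general null solutions, on which your nontangential estimates rest. The justification you give (integrate $\partial_t\nabla^{m-j}u$ in $t$ with square-function control) fails: Cauchy--Schwarz against the area functional produces the divergent factor $\bigl(\int_{t_0}^\infty dt/t\bigr)^{1/2}$, and without De Giorgi--Nash-type estimates no bound of this shape is available for general higher-order systems. The estimate of roughly this form that the paper does prove, namely $\widetilde N_*(\nabla^m\Pi^{L^*}\arr\Psi)(z)\leq C\mathcal{M}((\mathcal{A}_2^*\arr\Psi)^\theta)(z)^{1/\theta}+C\mathcal{M}(\Tr_m\Pi^{L^*}\arr\Psi)(z)$ inside Lemma~\ref{lem:newton:N:regular}, concerns the Newton potential with compactly supported input: it relies on Lemma~\ref{lem:iterate}, on the far part of the input making $\Pi^{L^*}\arr\Psi$ an $L^*$-null solution near the Whitney ball, and the area functional on the right is applied to the \emph{input} $\arr\Psi$, not to $t\nabla^m\partial_t u$. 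For the layer potentials themselves the nontangential bounds \cref{eqn:S:N:intro,eqn:D:N:intro,eqn:S:N:rough:intro,eqn:D:N:rough:intro} are instead obtained by dualizing $\widetilde N_*$ against Carleson functionals (Lemmas \ref{lem:N<C} and~\ref{lem:N<C:vertical}) together with the Fatou theorems. Both halves of your outline would therefore need to be replaced by the Newton-potential duality scheme, or an equally substantial substitute, before this becomes a proof.
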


The use of the numbers $p_{j,L}^+$ allows us to efficiently summarize several known special cases from the case $2m=2$. 

In particular, 
if $\mat A$ is constant then $p_{0,L}^+=p_{1,L}^+=\infty$. If $\dmn=2$ and $\mat A$ is $t$-independent, then we still have that $p_{0,L}^+=p_{1,L}^+=\infty$; see \cite[Th\'eor\`eme II.2]{AusT95} in the case $2m=2$ and \cite[Proposition~3.3]{Bar19p} (reproduced in the bound~\eqref{eqn:Meyers:bound} above) in the general case. Thus, in either of these two special cases, Theorem~\ref{thm:potentials} and the bounds \cref{eqn:D:N:rough:+,eqn:S:N:rough:+,eqn:D:lusin:rough:+,eqn:S:lusin:rough:+,eqn:D:N:+,eqn:S:N:+,eqn:D:lusin:+,eqn:S:lusin:+} imply that all eight bounds~\cref{eqn:D:N:rough:intro,eqn:S:N:rough:intro,eqn:D:lusin:rough:intro,eqn:S:lusin:rough:intro,eqn:D:N:intro,eqn:S:N:intro,eqn:D:lusin:intro,eqn:S:lusin:intro} (or \cref{eqn:D:N:rough:+,eqn:S:N:rough:+,eqn:D:lusin:rough:+,eqn:S:lusin:rough:+,eqn:D:N:+,eqn:S:N:+,eqn:D:lusin:+,eqn:S:lusin:+}) are valid for all $p$ with $1< p<\infty$. If $2m=2$, and if $\mat A$ is constant or $\dmn=2$, then all eight bounds are known (see \cite[Theorem~12.7]{AusS16}) for $1<p<\infty$. 

Furthermore, if the well known De Giorgi-Nash-Moser regularity conditions are valid, 
(which is true if $\mat A$ is real and $2m=2$, and which by \cite[Appendix~B]{AlfAAHK11} is true for complex $t$-independent coefficients in dimension $\dmn=3$), then $p_{1,L}^+=\infty$, and so the bounds \cref{eqn:S:lusin:intro,eqn:D:lusin:intro} are valid for $1<p<\infty$, the bounds \cref{eqn:S:N:intro,eqn:D:N:intro} are valid for $1<p<2+\varepsilon$, and the bounds \cref{eqn:S:lusin:rough:intro,eqn:D:lusin:rough:intro,eqn:S:N:rough:intro,eqn:D:N:rough:intro} are valid for $2-\varepsilon<p<\infty$. The $2+\varepsilon<p<\infty$ case of the bound~\eqref{eqn:D:lusin:intro} was established in \cite{Bar19p}; the remaining bounds on layer potentials were established earlier for second order $t$-independent operators satisfying the De Giorgi-Nash-Moser conditions in \cite{AusM14,HofKMP15A,HofKMP15B,HofMitMor15,HofMayMou15}.

Finally, in the general case (with $\dmn\geq 4$), \cite[Proposition~3.6]{Bar19p} (reproduced in the bound~\eqref{eqn:Meyers:bound} above) implies that the bounds \cref{eqn:S:lusin:intro,eqn:D:lusin:intro} are valid for $\frac{2n}{n+2}-\varepsilon<p<\frac{2n}{n-2}+\varepsilon$, the bounds \cref{eqn:S:N:intro,eqn:D:N:intro} are valid for $\frac{2n}{n+2}-\varepsilon<p<2+\varepsilon$, and the bounds \cref{eqn:S:lusin:rough:intro,eqn:D:lusin:rough:intro,eqn:S:N:rough:intro,eqn:D:N:rough:intro} are valid for $2-\varepsilon<p<\frac{2n}{n-2}+\varepsilon$. Again, the $2+\varepsilon<p<\frac{2n}{n-2}+\varepsilon$ cases of the bounds \cref{eqn:S:lusin:intro,eqn:D:lusin:intro} are due to \cite{Bar19p}; the remaining bounds on layer potentials for general second order operators with $t$-independent coefficients are due to~\cite[Theorem~12.7]{AusS16}.

\subsection{Outline}

The outline of this paper is as follows. In Section~\ref{sec:dfn} we will define our terminology. In Section~\ref{sec:preliminaries} we will state some known results of the theory that we will use several times throughout the paper, and (in Section~\ref{sec:tentspace}) will establish a number of results concerning tent space operators, that is, the operators $\widetilde N_+$, $\mathcal{A}_2^+$, $\widetilde N_*$, $\mathcal{A}_2^*$ given by formulas \eqref{dfn:NTM:modified:+}, \eqref{dfn:lusin:+}, \eqref{dfn:NTM:modified:*}, and~\eqref{dfn:lusin:*}, as well as the related Carleson operator $\widetilde{\mathfrak{C}}_1^\pm$, $\widetilde{\mathfrak{C}}_1^*$ given by formulas~\cref{eqn:Carleson:norm,eqn:Carleson:*}.

We will prove Theorem~\ref{thm:potentials} in Section~\ref{sec:p<2}. We will prove it by duality with the Newton potential, and so in Section~\ref{sec:newton} we will study the Newton potential. Specifically, we will establish duality formulas relating the Newton potential to the double and single layer potentials, then bound the Newton potential using the known bounds \cref{eqn:D:N:rough:+,eqn:S:N:rough:+,eqn:D:lusin:rough:+,eqn:S:lusin:rough:+,eqn:D:N:+,eqn:S:N:+,eqn:D:lusin:+,eqn:S:lusin:+} on the double and single layer potential, a decomposition argument in the spirit of \cite[Lemma~4.1]{HofMayMou15}, and the good-$\lambda$ results of \cite{Bar19p} modeled on those of \cite{She06A}.

In Section~\ref{sec:neumann} we will conclude the paper by proving Theorem~\ref{thm:neumann:selfadjoint} using the method of layer potentials. A crucial ingredient in the proof of uniqueness of solutions is the Green's formula; this formula is the subject of Section~\ref{sec:green}. 

\subsection*{Acknowledgements}

The author would like to thank Steve Hofmann and Svitlana Mayboroda for many useful conversations on topics related to this paper. The author would also like to thank
the Mathematical Sciences Research Institute for hosting a Program on Harmonic Analysis,
the Instituto de Ciencias Matem\'aticas for hosting a Research Term on ``Real Harmonic Analysis and Its Applications to Partial Differential Equations and Geometric Measure Theory'',
and
the IAS/Park City Mathematics Institute for hosting a Summer Session with a research topic of Harmonic Analysis,
at which many of the results and techniques of this paper were discussed.

\section{Definitions}\label{sec:dfn}

In this section, we will provide precise definitions of the notation and concepts used throughout this paper.

We will always work with operators~$L$ of order~$2m$ in the divergence form~\eqref{eqn:divergence} (interpreted in the weak sense of formula~\eqref{eqn:weak} below) acting on functions defined in open sets in~$\R^\dmn$, $\dmn\geq 2$.

As usual, we let $B(X,r)$ denote the ball in $\R^\dmn$ of radius $r$ and center $X$. We let $\R^\dmn_+$ and $\R^\dmn_-$ denote the upper and lower half spaces $\R^n\times (0,\infty)$ and $\R^n\times(-\infty,0)$; we will identify $\R^n$ with $\partial\R^\dmn_\pm$.
If $Q$ is a cube, we will let $\ell(Q)$ be its side length, and we let 
$cQ$ be the concentric cube of side length $c\ell(Q)$. If $E$ is a set of finite measure, we let
\begin{equation*}\fint_E f(x)\,dx=\frac{1}{\abs{E}}\int_E f(x)\,dx.\end{equation*}

If $E$ is a measurable set in Euclidean space and $H$ is a globally defined function, we will let $\1_E H=\chi_E H$, where $\chi_E$ is the characteristic function of~$E$. If $H$ is defined in all of~$E$, but is not globally defined, we will let $\1_E H$ be the extension of $H$ by zero, that is,
\begin{equation*}\1_E H(X)=\begin{cases} H(X), & X\in E, \\ 0, & \text{otherwise}.\end{cases}\end{equation*}
We will use $\1_\pm$ as a shorthand for $\1_{\R^\dmn_\pm}$.

\subsection{Multiindices and arrays of functions}\label{sec:array}

We will routinely work with multiindices in~$(\N_0)^\dmn$. (We will occasionally work with multiindices in $(\N_0)^\dmnMinusOne$.) Here $\N_0$ denotes the nonnegative integers. If $\zeta=(\zeta_1,\zeta_2,\dots,\zeta_\dmn)$ is a multiindex, then we define $\abs{\zeta}$ and $\partial^\zeta$ in the usual ways, as $\abs{\zeta}=\zeta_1+\zeta_2+\dots+\zeta_\dmn$ and $\partial^\zeta=\partial_{x_1}^{\zeta_1}\partial_{x_2}^{\zeta_2} \cdots\partial_{x_\dmn}^{\zeta_\dmn}$.

Recall that a vector $\vec H$ is a list of numbers (or functions) indexed by integers $j$ with $1\leq j\leq N$ for some $N\geq 1$. We similarly let an array $\arr H$ be a list of numbers or functions indexed by multiindices~$\zeta$ with $\abs\zeta=k$ for some $k\geq 1$. 
In particular, if $\varphi$ is a function with weak derivatives of order up to~$k$, then we view $\nabla^k\varphi$ as such an array.

The inner product of two such arrays of functions $\arr F$ and $\arr G$ defined in a measurable set $\Omega$ in Euclidean space is given by
\begin{equation*}\bigl\langle \arr F,\arr G\bigr\rangle_\Omega =
\sum_{\abs{\zeta}=k}
\int_{\Omega} \overline{F_{\zeta}(X)}\, G_{\zeta}(X)\,dX.\end{equation*}

\subsection{Function spaces and Dirichlet boundary values}

Let $\Omega$ be a measurable set in Euclidean space. We let $C^\infty_0(\Omega)$ be the space of all smooth functions supported in a compact subset of~$\Omega$. We let $L^p(\Omega)$ denote the usual Lebesgue space with respect to Lebesgue measure with norm given by
\begin{equation*}\doublebar{f}_{L^p(\Omega)}=\biggl(\int_\Omega \abs{f(x)}^p\,dx\biggr)^{1/p}.\end{equation*}

If $\Omega$ is a connected open set, then we let the homogeneous Sobolev space $\dot W^{k,p}(\Omega)$ be the space of equivalence classes of functions $u$ that are locally integrable in~$\Omega$ and have weak derivatives in $\Omega$ of order up to~$k$ in the distributional sense, and whose $k$th gradient $\nabla^k u$ lies in $L^p(\Omega)$. Two functions are equivalent if their difference is a polynomial of order at most~$k-1$.
We impose the norm
\begin{equation*}\doublebar{u}_{\dot W^{k,p}(\Omega)}=\doublebar{\nabla^k u}_{L^p(\Omega)}.\end{equation*}
Then $u$ is equal to a polynomial of order at most $k-1$ (and thus equivalent to zero) if and only if its $\dot W^{k,p}(\Omega)$-norm is zero.

If $1<p<\infty$, then $\dot W^{-1,p'}(\R^n)$ denotes the dual space to $\dot W^{1,p}(\R^n)$, where $1/p+1/p'=1$; this is a space of distributions on~$\R^n$.

The use of a dot to denote homogeneous Sobolev spaces (as opposed to the inhomogeneous spaces $W^{k,p}(\Omega)$ with $\|u\|_{W^{k,p}(\Omega)}^p=\sum_{j=0}^k\doublebar{\nabla^j u}_{L^p(\Omega)}^p$) is by now standard. The use of a dot to denote arrays of functions, as in Section~\ref{sec:array}, is also standard (see, for example, \cite{Agm57,
CohG83,CohG85,%
PipV95A,
She06B,%
MitM13A,MitM13B}). 
We apologize for any confusion arising from this overloading of notation, but the convention of these fields seems to require it.

We say that $u\in L^p_{loc}(\Omega)$ or $u\in\dot W^{k,p}_{loc}(\Omega)$ if $u\in L^p(U)$ or $u\in\dot W^{k,p}(U)$ for any bounded open set $U$ with $\overline U\subset\Omega$.

We will need a number of more specialized norms on functions.
In the introduction, we defined the nontangential maximal function $\widetilde N_+$, $\widetilde N_*$  and the Lusin area integral $\mathcal{A}_2^+$, $\mathcal{A}_2^*$. See formulas~\eqref{dfn:NTM:modified:+}, \eqref{dfn:NTM:modified:*} and~\eqref{dfn:lusin:+}, \eqref{dfn:lusin:*}. We will also need the corresponding operators in the lower half space; thus, we define
\begin{align}
\label{dfn:NTM:modified:pm}
\widetilde N_\pm H(x) &= \sup
\biggl\{\biggl(\fint_{B((y,\pm s),{s}/2)} \abs{H(z,t)}^2\,dz\,dt\biggr)^{1/2}:
s>0,\>
\abs{x-y}< s
\biggr\}
,
\\
\label{dfn:lusin:pm}
\mathcal{A}_2^\pm H(x) &= \biggl(\int_0^\infty \int_{\abs{x-y}<{t}} \abs{H(y,\pm t)}^2 \frac{dy\,dt}{{t}^\dmn}\biggr)^{1/2}
\end{align}
for all $x\in\R^n$. 

We will need one other tent space operator. 
Following \cite{CoiMS85,HofMayMou15}, the averaged Carleson operator is given by
\begin{equation}\label{eqn:Carleson:norm}
\widetilde{\mathfrak{C}}_1^\pm H(x)= \sup_{Q\owns x} \frac{1}{\abs{Q}}\int_Q \int_0^{\ell(Q)} \biggl(\fint_{B((y,\pm s),s/2)} \abs{ H(z,t)}^2\,dz\,dt\biggr)^{1/2}\,\frac{ds\,dy}{s}
\end{equation}
where the supremum is taken over cubes $Q$ in $\R^n$ containing~$x$. 
We will let the two-sided averaged Carleson operator be given by
\begin{equation}\label{eqn:Carleson:*}
\widetilde{\mathfrak{C}}_1^* H(x)= \max(\widetilde{\mathfrak{C}}_1^+ H(x), \widetilde{\mathfrak{C}}_1^- H(x)).
\end{equation}

We adopt the convention that if a $t$ appears inside the argument of one of the above operators, then it denotes the $\pdmn$th coordinate function.

Following \cite{BarHM19B}, we define the boundary values $\Trace^\pm u$ of a function $u$ defined in $\R^\dmn_\pm$ by
\begin{equation}
\label{eqn:trace}
\Trace^\pm  u
= f \quad\text{if}\quad
\lim_{t\to 0^\pm} \doublebar{u(\,\cdot\,,t)-f}_{L^1(K)}=0
\end{equation}
for all compact sets $K\subset\R^n$. We define
\begin{equation}
\label{eqn:Dirichlet}
\Tr_j^\pm u= \Trace^\pm \nabla^j u.\end{equation}
We remark that if $\nabla u$ is locally integrable up to the boundary, then $\Trace^\pm u$ exists, and furthermore $\Trace^\pm u$ coincides with the traditional trace in the sense of Sobolev spaces. Furthermore, if $\nabla u$ is locally integrable in a neighborhood of the boundary, then $\Trace^+u=\Trace^-u$; in this case we will refer to the boundary values (from either side) as $\Trace u$.

We are interested in functions with boundary data in Lebesgue or Sobolev spaces. However, observe that if $j\geq 1$, then the components of $\Tr_j^\pm u$ are derivatives of a common function and so must satisfy certain compatibility conditions. We thus define the following Whitney-Lebesgue, Whitney-Sobolev and Whitney-Besov spaces of arrays that satisfy these conditions.

\begin{defn} \label{dfn:Whitney}
Let
\begin{equation*}\mathfrak{D}=\{\Tr_{m-1}\varphi:\varphi\text{ smooth and compactly supported in $\R^\dmn$}\}.\end{equation*}

If $1\leq p<\infty$, then we let $\dot W\!A^{0,p}_{m-1}(\R^n)$ be the closure of the set $\mathfrak{D}$ in $L^p(\R^n)$.
We let $\dot W\!A^{1,p}_{m-1}(\R^n)$ be the closure of $\mathfrak{D}$ in $\dot W^{1,p}(\R^n)$. 
Finally, we let 
$\dot W\!A^{1/2,2}_{m-1}(\R^n)$ be the closure of $\mathfrak{D}$ in the Besov space $\dot B^{1/2,2}_{2}(\R^n)$; the norm in this space may be written as
\begin{equation}
\label{eqn:B:norm}
\doublebar{ f}_{\dot B^{1/2,2}_{2}(\R^n)} = \biggl(\int_{\R^n}\abs{\widehat {f}(\xi)}^2\abs{\xi}\,d\xi\biggr)^{1/2}\end{equation}
where $\widehat f$ denotes the Fourier transform of~$f$ in~$\R^n$.
\end{defn}

\begin{rmk}\label{rmk:W2:trace}
It is widely known that $\arr f\in \dot W\!A^{1/2,2}_{m-1}(\R^n)$ if and only if $\arr f=\Tr_{m-1}^+ F$ for some $F$ with $\nabla^m F\in L^2(\R^\dmn_+)$. This was essentially proven in \cite{Liz60,Jaw77}; see  \cite[Lemma~2.6]{BarHM19A} for further discussion.
\end{rmk}

\begin{rmk} There is an extensive theory of Besov spaces (see, for example, \cite{Tri83}). We will make use only of the Besov space $\dot B^{1/2,2}_2(\R^n)$ given by formula~\eqref{eqn:B:norm} and the space $\dot B^{-1/2,2}_{2}(\R^n)$. This space has norm 
\begin{equation}
\label{eqn:B:norm:-}
\doublebar{g}_{\dot B^{-1/2,2}_{2}(\R^n)} = \biggl(\int_{\R^n}\abs{\widehat {g}(\xi)}^2\frac{1}{\abs{\xi}}\,d\xi\biggr)^{1/2}.
\end{equation}
The two important properties of this space we will use are, first, that ${\dot B^{-1/2,2}_{2}(\R^n)}$ is the dual space to ${\dot B^{1/2,2}_{2}(\R^n)}$, and, second, that $f\in {\dot B^{1/2,2}_{2}(\R^n)}$ if and only if the gradient $\nabla f$ exists in the distributional sense and satisfies $\doublebar{\nabla f}_{\dot B^{-1/2,2}_{2}(\R^n)}\approx \doublebar{f}_{\dot B^{1/2,2}_{2}(\R^n)}$.
\end{rmk}

\subsection{Elliptic differential operators and Neumann boundary values}

Let $\mat A = \begin{pmatrix} A_{\alpha\beta} \end{pmatrix}$ be a matrix of measurable coefficients defined on $\R^\dmn$, indexed by multtiindices $\alpha$, $\beta$ with $\abs{\alpha}=\abs{\beta}=m$. If $\arr F$ is an array indexed by multiindices of length~$m$, then $\mat A\arr F$ is the array given by
\begin{equation*}(\mat A\arr F)_{\alpha} =
\sum_{\abs{\beta}=m}
A_{\alpha\beta} F_{\beta}.\end{equation*}

We let $L$ be the $2m$th-order divergence form operator associated with~$\mat A$. The weak formulation of such an operator is given by
\begin{equation}
\label{eqn:weak}
Lu=0 \text{ in }\Omega \text{ in the weak sense if } \langle \nabla^m \varphi, \mat A\nabla^m u\rangle_\Omega=0 \text{ for all } \varphi\in C^\infty_0(\Omega).
\end{equation}
Throughout we require our coefficients to be pointwise bounded and to satisfy the
G\r{a}rding inequality~\eqref{eqn:elliptic}, which by density we may restate as
\begin{align*}
\re {\bigl\langle\nabla^m \varphi,\mat A\nabla^m \varphi\bigr\rangle_{\R^\dmn}}
&\geq
\lambda\doublebar{\nabla^m\varphi}_{L^2(\R^\dmn)}^2
\quad\text{for all $\varphi\in\dot W^{m,2}(\R^\dmn)$}
\end{align*}
for some $\lambda>0$. 
The stronger G\r{a}rding inequality~\eqref{eqn:elliptic:slices} will play a minimal role in this paper; it is needed only because the proof of the primary results of \cite{BarHM18} required this stronger inequality, the paper \cite{Bar19p} used the results of \cite{BarHM18}, and our proof of Theorem~\ref{thm:neumann:selfadjoint} uses the results of \cite{Bar19p}.

We let $L^*$ be the elliptic operator associated with the adjoint matrix $\mat A^*$, where $(A^*)_{\alpha\beta}=\overline{A_{\beta\alpha}}$.

Recall from the introduction that the Neumann boundary values of a solution $w$ to $Lw=0$ in $\R^\dmn_+$ that satisfies estimates as in the problem~\eqref{eqn:neumann:regular:2} or~\eqref{eqn:neumann:regular:p:selfadjoint} are given by formula~\eqref{eqn:neumann:intro}.

We will also be concerned with solutions $u$ or $v$ to $Lu=0$ that satisfy $u\in\dot W^{m,2}(\R^\dmn_+)$ or $\mathcal{A}_2^+(t\nabla^m u)\in L^{p'}(\R^n)$ for $p'$ with $1<p'<\infty$.

If $u\in\dot W^{m,2}(\R^\dmn_+)$ then we may still use formula~\eqref{eqn:neumann:intro} to define $\M_{\mat A}^+ u$. Furthermore, by density, if $u\in \dot W^{m,2}(\R^\dmn_+)$ and $\M_{\mat A}^+ u$ is given by formula~\eqref{eqn:neumann:intro}, then
\begin{equation}
\label{eqn:neumann:W2}
\langle \M_{\mat A}^+ u,\Tr_{m-1}^+\varphi\rangle_{\R^n}
=
\langle \mat A\nabla^m u,\nabla^m\varphi\rangle_{\R^\dmn_+}
\quad\text{for all $\varphi\in \dot W^{m,2}(\R^\dmn_+)$.}
\end{equation}
Thus, if $u\in \dot W^{m,2}(\R^\dmn_+)$, then 
$\M_{\mat A}^+u$ is a bounded linear operator on $\dot W\!A^{1/2,2}_{m-1}(\R^n)$.

If $v$ satisfies $\mathcal{A}_2^+(t\nabla^m u)\in L^{p'}(\R^n)$, then $\nabla^m v$ may not be locally integrable up to the boundary and thus the integral on the right hand side of formula~\eqref{eqn:neumann:intro} may not converge. Thus, the definition of $\M_{\mat A}^+ v$ in this case is more delicate. We refer the reader to \cite[Section~2.3.2]{BarHM19B} for the precise formulation of the Neumann boundary values $\M_{\mat A}^+ v$ of a solution $v$ to $Lv=0$ with $\mathcal{A}_2^+(t\nabla^m v)\in L^{p'}(\R^n)$.

The numbers $C$ and $\varepsilon$ denote constants whose value may change from line to line, but which are always positive and depend only on the dimension~$\dmn$, the order $2m$ of any relevant operators, the bound $\doublebar{\mat A}_{L^\infty(\R^n)}$ on the coefficients, and the number $\lambda$ in the bound~\eqref{eqn:elliptic}. 
We say that $A\approx B$ if there are some positive constants $\varepsilon$ and~$C$ depending only on the above quantities such that $\varepsilon B\leq A\leq CB$.

The numbers $p_{j,L}^+$ are always as in the bound~\eqref{eqn:Meyers}.
The notation $C(j,L,p)$ denotes a constant that depends only on the standard parameters $n$, $m$, $\lambda$, $\doublebar{\mat A}_{L^\infty(\R^n)}$, the number~$p$, and the constant $c(j,L,p,2)$ in the bound~\eqref{eqn:Meyers}. 
(If $p$ is small enough, then $c(j,L,p,2)$ may be taken as depending only on~$p$ and the standard parameters, and so in this case we may simply write $C_p$ rather than $C(j,L,p)$. See Remark~\ref{rmk:Meyers}.)

\subsection{Potential operators}
\label{sec:dfn:potential}

In this section we will define the double and single layer potentials of Theorem~\ref{thm:potentials}. 

We will also define the Newton potential. We will use the Newton potential to define the double layer potential. Furthermore, we will prove Theorem~\ref{thm:potentials} by establishing various bounds on the Newton potential and using duality to pass to estimates on the double and single layer potentials.

For any $\arr H\in L^2(\R^\dmn)$, by the Lax-Milgram lemma there is a unique function $\Pi^L\arr H$ in $\dot W^{m,2}(\R^\dmn)$ that satisfies
\begin{equation}\label{eqn:newton}
\langle \nabla^m\varphi, \mat A\nabla^m \Pi^L\arr H\rangle_{\R^\dmn}=\langle \nabla^m\varphi, \arr H\rangle_{\R^\dmn}
\quad\text{for all $\varphi\in \dot W^{m,2}(\R^\dmn)$.}
\end{equation}
We will use the operator $\Pi^L$ operator frequently, and thus will refer it as the Newton potential. This represents a break from tradition, as the traditional Newton potential $\mathcal{N}^L$ is usually taken to satisfy $\langle \nabla^m\varphi,\mat A\nabla^m\mathcal{N}^L H\rangle_{\R^\dmn}=\langle\varphi,H\rangle_{\R^\dmn}$.

We record here that, by \cite[Lemma~43]{Bar16}, there is some $\varepsilon>0$ such that if $2-\varepsilon<r<2+\varepsilon$, then
\begin{equation}\label{eqn:newton:bound:r}
\doublebar{\nabla^m\Pi^{L}\arr H}_{L^r(\R^\dmn)} \leq C_r \doublebar{\arr H}_{L^r(\R^\dmn)}
\end{equation}
for all $\arr H\in L^r(\R^\dmn)\cap L^2(\R^\dmn)$.

We will be interested in the gradient $\nabla^{m-1}\Pi^{L}\arr H$ of order~$m-1$. However, $\Pi^{L}\arr H$ as defined by formula~\eqref{eqn:newton} is an element of $\dot W^{m,2}(\R^\dmn)$, and as such, it is the gradient $\nabla^{m}\Pi^{L}\arr H$ of order~$m$ that is well defined; $\nabla^{m-1}\Pi^{L}\arr H$ is defined only up to adding constants.

We may fix an additive normalization as follows. 
If $\dmn\geq 3$, then by the Gagliardo-Nirenberg-Sobolev inequality,  (see, for example, \cite[Section~5.6]{Eva10}) there is a unique additive normalization of $\nabla^{m-1}\Pi^{L}\arr H$ such that
\begin{equation}\label{eqn:newton:GNS}
\doublebar{\nabla^{m-1}\Pi^{L}\arr H}_{L^q(\R^\dmn)}
\leq C \doublebar{\nabla^{m}\Pi^{L}\arr H}_{L^2(\R^\dmn)}\end{equation}
where $\pdmn/q=\pdmn/2-1$ (and in particular where $q<\infty$).

If $\dmn=2$, let $r<2$ be as in the bound~\eqref{eqn:newton:bound:r}. If $\arr H \in L^2(\R^\dmn)$ is compactly supported, or more generally if $\arr H\in L^2(\R^\dmn)\cap L^r(\R^\dmn)$, then again by the Gagliardo-Nirenberg-Sobolev inequality, there is a unique additive normalization of $\nabla^{m-1}\Pi^{L}\arr H$ such that
\begin{equation}\label{eqn:newton:GNS:2}
\doublebar{\nabla^{m-1}\Pi^{L}\arr H}_{L^q(\R^2)}
\leq C_r \doublebar{\nabla^{m}\Pi^{L}\arr H}_{L^r(\R^2)}
\end{equation}
where $2/q=2/r-1$ (and so again $q<\infty$).

We will use this additive normalization throughout.

We now define the double layer potential.
Suppose that $\arr f\in \dot W\!A^{1/2,2}_{m-1}(\R^n)$. As mentioned in Remark~\ref{rmk:W2:trace}, $\arr f=\Tr_{m-1}^- F$ for some $F\in \dot W^{m,2}(\R^\dmn_+)$.
We define
\begin{equation}
\label{dfn:D:newton:-}
\D^{\mat A}\arr f =
-\Pi^L(\1_-\mat A\nabla^m F) + \1_- F
.\end{equation}
This operator is well-defined, that is, does not depend on the choice of~$F$. See \cite[Section~2.4]{BarHM17} or \cite[Section~4]{Bar17}. 
Using the bounds~\cref{eqn:D:N:+,eqn:D:N:rough:+} we may extend $\D^{\mat A}$ by density to an operator on all of $\dot W\!A^{k,p}_{m-1}(\R^n)$, for $k\in \{0,1\}$ and for an appropriate range of~$p$.

We now define the single layer potential.
Let $\arr g$ be a bounded linear operator on $\dot W\!A^{1/2,2}_{m-1}(\R^n)$. Then by Remark~\ref{rmk:W2:trace}, $F\to \langle \Tr_{m-1} F,\arr g\rangle_{\R^n}$ is a bounded linear operator on $\dot W^{m,2}(\R^\dmn)$. By the Lax-Milgram lemma, there is a unique function $\s^L\arr g\in \dot W^{m,2}(\R^\dmn)$ that satisfies
\begin{align}
\label{dfn:S}
\langle \nabla^m\varphi, \mat A\nabla^m\s^L\arr g\rangle_{\R^\dmn}&=\langle \Tr_{m-1}\varphi,\arr g\rangle_{\R^n}
&&\text{for all }\varphi\in \dot W^{m,2}(\R^\dmn)
.\end{align}
See \cite{Bar17}. We remark that formula~\eqref{dfn:S} is also meaningful and $\s^L\arr g$ is defined for $\arr g\in \dot B^{-1/2,2}_2(\R^n)$.
This definition coincides with the definition of $\s^L\arr g$ involving the Newton potential given in \cite{BarHM17,BarHM19A}. Using the bound~\eqref{eqn:S:N:+} we may extend $\s^L$ by density to an operator on all of $L^p(\R^n)$ for all $2-\varepsilon<p<p_{0,L}^+$.

\begin{rmk}
If $L$ is an operator of the form~\eqref{eqn:weak}, then $L$ may generally be associated to many choices of coefficients $\mat A$; for example, if $A_{\alpha\beta}=\widetilde A_{\alpha\beta}+M_{\alpha\beta}$, where $M$ is constant and $M_{\alpha\beta}=-M_{\beta\alpha}$, then the operators associated to $\mat A$ and $\mat{\widetilde A}$ are equal. The single layer potential $\s^L$ depends only on the operator~$L$, while the double layer potential $\D^{\mat A}$ depends on the particular choice of coefficients~$\mat A$.
\end{rmk}

In \cite{BarHM18p} the operator $\s^L_\nabla$ was defined in terms of integrals involving the fundamental solution. In the present paper, we will simply define $\s^L_\nabla$ as the operator satisfying \cite[formulas~(4.5--4.6)]{BarHM18p}. 
These formulas are as follows. If $\zeta$ is a multiindex, then
$\arr e_{\zeta}$ is the unit array associated to the multiindex $\zeta$; that is, 
\begin{equation}
\label{eqn:e}
(\arr e_\zeta)_\zeta=1,\quad (\arr e_\zeta)_\theta=0
\text{ whenever } \abs\theta=\abs\zeta\text{ and }\theta\neq \zeta.\end{equation}
Let $h \in \dot B^{1/2,2}_2(\R^n)\cap \dot B^{-1/2,2}_2(\R^n)$.
Suppose that $\alpha$ and $\gamma$ are multiindices with $\abs\alpha=m$ and $\abs\gamma=m-1$; in particular, we require that all entries of $\gamma$ be nonnegative.
Then
\begin{equation}\label{eqn:S:S:horizontal}
\nabla^{m}\s^L_\nabla 
(h\arr e_\alpha)(x,t) = -\nabla^{m}\s^L((\partial_{x_j}  h)\arr e_\gamma)(x,t)
\quad
\text{if } 1\leq j\leq n \text{ and } \alpha=\gamma+\vec e_j\end{equation}
and
\begin{equation}\label{eqn:S:S:vertical}
\nabla^{m-1}\s^L_\nabla 
(h\arr e_\alpha)(x,t) = -\nabla^{m-1}\partial_t\s^L (h\arr e_\gamma)(x,t)
\text{ if } \alpha=\gamma+\vec e_\dmn.
\end{equation}
We define $\s^L_\nabla\arr h$ for general $\arr h$ by linearity. 
As shown in \cite[Lemma~4.4]{BarHM18p}, $\s^L_\nabla$ is well defined in the sense that if $1\leq \alpha_\dmn \leq m-1$, then we may use either formula~\eqref{eqn:S:S:horizontal} or~\eqref{eqn:S:S:vertical} to define $\nabla^m\s^L_\nabla(h\arr e_\alpha)$, and furthermore that if $\alpha_\ell\geq 1$ and $\alpha_k\geq 1$ then the value of the right hand side of formula~\eqref{eqn:S:S:horizontal} is the same whether we choose $j=k$ or $j=\ell$. 

Furthermore, by \cite[Lemma~4.8]{BarHM18p}, if $\arr h\in L^2(\R^n)\subset \dot B^{1/2,2}_2(\R^n)\cap \dot B^{-1/2,2}_2(\R^n)$ then there is a (necessarily unique) additive normalization of $\nabla^{m-1}\s^L\arr h$ that satisfies $\lim_{t\to\pm\infty}\doublebar{\nabla^{m-1}\s^L_\nabla\arr h(\,\cdot\,,t)}_{L^2(\R^n)}=0$.
Using the bound~\eqref{eqn:S:N:rough:+}, we may extend $\s^L_\nabla$ by density to an operator on all of $L^p(\R^n)$ for $2-\varepsilon<p<p_{1,L}^+$.

\section{Preliminaries}
\label{sec:preliminaries}

In this section we will discuss a few known results and establish some general results that will be of use throughout the paper.

Specifically, in Section~\ref{sec:lower} we will discuss the change of variables $(x,t)\to (x,-t)$, and how it allows us to easily generalize from the upper half space to the lower half space. In Section~\ref{sec:regular} we will list some known results from the theory of solutions to elliptic equations $Lu=0$. Finally, in Section~\ref{sec:tentspace}, we will establish some general results involving tent spaces, that is, spaces of functions $H$ for which the tent space norms $\widetilde N_+H$, $\mathcal{A}_2^+H$ or $\widetilde{\mathfrak{C}}_1^+H$ lie in~$L^p(\R^n)$.

\subsection{The lower half space}
\label{sec:lower}

It is often notationally convenient to establish bounds only in the upper half space and to use change of variables arguments to generalize to the lower half space.

The change of variables $(x,t)\to (x,-t)$, for $x\in\R^n$ and $t\in\R$, interchanges the upper and lower half spaces. In \cite[Section~3.3]{BarHM18p}, it was shown that if $Lu=0$ in $\Omega$, then $L^-u^-=0$ in $\Omega^-$, where $u^-(x,t)=u(x,-t)$, $\Omega^-=\{(x,t):(x,-t)\in\Omega\}$, and $L^-$ is the operator of the form~\eqref{eqn:weak} associated to the coefficients $\mat A^-$ given by $A^-_{\alpha\beta}=(-1)^{\alpha_\dmn+\beta_\dmn} A_{\alpha\beta}$. Notice that if $\mat A$ is bounded, $t$-independent and satisfies the condition~\eqref{eqn:elliptic} (or \eqref{eqn:elliptic:slices}), then $\mat A^-$ satisfies the same conditions with $\doublebar{\mat A}_{L^\infty(\R^n)}=\doublebar{\mat A^-}_{L^\infty(\R^n)}$ and with the same value of~$\lambda$. 

We observe that by the same change of variables argument, if $j$ is an integer with $0\leq j\leq m$, and if $p_{j,L}^+$ and $c(j,L,p,q)$ are as in the bound~\eqref{eqn:Meyers}, then \begin{equation}p_{j,L}^+=p_{j,L^-}^+\quad\text{and}\quad c(j,L,p,q)=c(j,L^-,p,q)\text{ for all }0<q<p<p_{j,L}^+.\end{equation}

Furthermore, by \cite[Section~3.3]{BarHM18p},
\begin{align*}
\D^{\mat A}\arr f(x,-t)&=-\D^{\mat A^-}\arr f^-(x,t)
,&
\s^L\arr g(x,-t)&=\s^{L^-}\arr g^-(x,t)
,\\
\Pi^{L}\arr H(x,-t)&=\Pi^{L^-}\arr H^-(x,t)
,&
\s^L_\nabla\arr h(x,-t)&=\s^{L^-}_\nabla\arr h{}^-(x,t)
\end{align*}
where 
\begin{align*}
f_\gamma^-(x)&=(-1)^{\gamma_\dmn} f_\gamma(x),& g_\gamma^-(x)&=(-1)^{\gamma_\dmn} g_\gamma(x), \\ H_\alpha^-(x,t)&=(-1)^{\alpha_\dmn} H_\alpha(x,-t),& h_\beta^-(x)&=(-1)^{\beta_\dmn}h_\beta(x).
\end{align*}
It is straightforward to calculate that if $\arr f=\Tr_{m-1}^+\varphi$ in the sense of formula~\eqref{eqn:Dirichlet}, then $\arr f^-=\Tr_{m-1}^-\varphi^-$. 
Thus, $\arr f^-$ is in the distinguished subspace $\mathfrak{D}$ of Definition~\ref{dfn:Whitney} if and only if $\arr f$ is, and so the mapping $\arr f\to\arr f^-$ is an automorphism of $\dot W\!A^{s,p}_{m-1}(\R^n)$ for all the spaces $\dot W\!A^{s,p}_{m-1}(\R^n)$ defined by Definition~\ref{dfn:Whitney}.

We observe further that if $\M_{\mat A}^+w\owns \arr g$, then by the definition~\eqref{eqn:neumann:intro} of Neumann boundary values, if $\varphi\in C^\infty_0(\R^\dmn)$ then
\begin{align*}
\langle \Tr_{m-1}\varphi,\arr g^-\rangle_{\R^n}
&
=\langle \Tr_{m-1}(\varphi^-),\arr g\rangle_{\R^n} 
=\langle \nabla^m(\varphi^-),\mat A\nabla^m w\rangle_{\R^\dmn_+}
\\&=\langle \nabla^m \varphi,\mat A^-\nabla^m w^- \rangle_{\R^\dmn_-}
\end{align*}
and so
\begin{equation}\label{eqn:neumann:lower}
\text{if }\M_{\mat A}^+w\owns \arr g \text{ then }\M_{\mat A^-}^- w^-\owns \arr g^-.\end{equation}
An examination of the definition of Neumann boundary values in \cite[Section~2.3.2]{BarHM19B} reveals that formula~\eqref{eqn:neumann:lower} is valid if that definition of Neumann boundary values is used instead.

Thus, we may easily pass from bounds in the upper half space to bounds in the lower half space.

\subsection{Solutions to elliptic equations}
\label{sec:regular}

It is well known that solutions to the elliptic equation $Lu=0$ display many useful properties. In this section we will state two regularity results that will be used throughout the paper.

We begin with the higher order analogue of the Caccioppoli inequality. This lemma was proven in full generality in \cite{Bar16} and some important preliminary versions were established in \cite{Cam80,AusQ00}.
\begin{lem}[The Caccioppoli inequality]\label{lem:Caccioppoli}
Let $L$ be an operator of the form~\eqref{eqn:weak} of order~$2m$ associated to bounded coefficients~$\mat A$ that satisfy the ellipticity condition~\eqref{eqn:elliptic}.

Let $ u\in \dot W^{m,2}(B(X,2r))$ with $L u=0$ in $B(X,2r)$.
Then we have the bound
\begin{equation*}
\fint_{B(X,r)} \abs{\nabla^j  u(x,s)}^2\,dx\,ds
\leq \frac{C}{r^2}\fint_{B(X,2r)} \abs{\nabla^{j-1}  u(x,s)}^2\,dx\,ds
\end{equation*}
for any $j$ with $1\leq j\leq m$.
\end{lem}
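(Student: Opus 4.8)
The plan is to prove the estimate by downward induction on $j$, from $j=m$ to $j=1$. The case $j=m$ is the classical top-order Caccioppoli inequality and is the only step in which the equation $Lu=0$ enters essentially; each case $j<m$ is deduced from the case $j+1$ via interior interpolation inequalities and the standard iteration lemma. Throughout it is equivalent, and more convenient, to work with the un-normalized form $\int_{B(X,r)}\abs{\nabla^j u}^2\leq\frac{C}{r^2}\int_{B(X,2r)}\abs{\nabla^{j-1}u}^2$, the passage to the stated averaged form being a matter of the elementary identity $\abs{B(X,r)}\approx\abs{B(X,2r)}\approx r^\dmn$. Since $\nabla^{j-1}u$ is defined only modulo a polynomial of degree at most $m-j$, I shall establish the bound for the representative minimizing $\int_{B(X,2r)}\abs{\nabla^{j-1}u}^2$; it then holds for every representative, that infimum being the least possible value of the right-hand side.

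\emph{Base case $j=m$.} This is the standard top-order Caccioppoli estimate. Fix $\eta\in C^\infty_0(B(X,2r))$ with $\eta=1$ on $B(X,r)$ and $\abs{\nabla^k\eta}\leq Cr^{-k}$, and test~\eqref{eqn:weak} (extended by density to compactly supported $\dot W^{m,2}$ functions) against $\varphi=\eta^{2m}u$. Expanding $\nabla^m(\eta^{2m}u)$ by the Leibniz rule, writing $\eta^{2m}=\eta^m\cdot\eta^m$, and applying the G\r{a}rding inequality~\eqref{eqn:elliptic} to $\psi=\eta^m u$ (extended by zero to $\dot W^{m,2}(\R^\dmn)$), while using $0\leq\eta\leq1$ to discard superfluous powers of $\eta$ and Young's inequality to absorb the lower-order cross terms, one obtains the raw Caccioppoli bound $\int_{B(X,r)}\abs{\nabla^m u}^2\leq C\sum_{k=1}^m r^{-2k}\int_{B(X,2r)}\abs{\nabla^{m-k}u}^2$. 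Then subtract from $u$ the polynomial $P$ of degree at most $m-1$ making $\fint_{B(X,2r)}\nabla^l(u-P)$ vanish for $0\leq l\leq m-1$ (solvable by triangular elimination; this does not alter $\nabla^m u$ and realizes the minimizing representative of $\nabla^{m-1}u$), and invoke iterated Poincar\'e inequalities on $B(X,2r)$ to get $\int_{B(X,2r)}\abs{\nabla^{m-k}u}^2\leq Cr^{2(k-1)}\int_{B(X,2r)}\abs{\nabla^{m-1}u}^2$ for $1\leq k\leq m$; substituting collapses the raw bound to $\frac{C}{r^2}\int_{B(X,2r)}\abs{\nabla^{m-1}u}^2$.

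\emph{Inductive step.} Suppose the lemma holds with $j$ replaced by $j+1$, for every ball and every solution; a covering argument upgrades this to $\int_{B(X,\rho)}\abs{\nabla^{j+1}u}^2\leq\frac{C}{(\rho'-\rho)^2}\int_{B(X,\rho')}\abs{\nabla^j u}^2$ for all $r\leq\rho<\rho'\leq2r$. On the other hand, the scale-invariant interpolation inequality on balls — valid for every $\dot W^{m,2}$ function, with the parameter $\varepsilon>0$ free — gives $\int_{B(X,\rho)}\abs{\nabla^j u}^2\leq\varepsilon\,(\rho''-\rho)^2\int_{B(X,\rho'')}\abs{\nabla^{j+1}u}^2+\frac{C(\varepsilon)}{(\rho''-\rho)^2}\int_{B(X,\rho'')}\abs{\nabla^{j-1}u}^2$ for $\rho<\rho''$, where in the last term I have used that the constant array $0$ is an admissible competitor in the relevant infimum, so that again no choice of representative is needed. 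Taking $\rho''=(\rho+\rho')/2$, combining the two displays, and choosing $\varepsilon$ small give $\int_{B(X,\rho)}\abs{\nabla^j u}^2\leq\tfrac12\int_{B(X,\rho')}\abs{\nabla^j u}^2+\frac{C}{(\rho'-\rho)^2}\int_{B(X,2r)}\abs{\nabla^{j-1}u}^2$ for all $r\leq\rho<\rho'\leq2r$; since $\rho\mapsto\int_{B(X,\rho)}\abs{\nabla^j u}^2$ is bounded on $[r,2r]$ (because $u\in\dot W^{m,2}(B(X,2r))$ and $j\leq m$), the standard iteration lemma yields $\int_{B(X,r)}\abs{\nabla^j u}^2\leq\frac{C}{r^2}\int_{B(X,2r)}\abs{\nabla^{j-1}u}^2$, which closes the induction.

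The main obstacle is the base case: the equation only controls the top-order gradient $\nabla^m u$, so the G\r{a}rding inequality must be run on the cut-off function $\eta^m u$ rather than on $u$ itself, and then every lower-order cross term must be absorbed while carefully tracking powers of $r$. It is worth emphasizing that the cases $j<m$ genuinely require the equation — $\nabla^j u$ cannot be controlled by $\nabla^{j-1}u$ for an arbitrary $\dot W^{m,2}$ function — so the inductive scheme, which feeds the lower-order estimates off the top-order one, is essential rather than cosmetic; after that, the only remaining delicate point is the bookkeeping of the additive normalization of $\nabla^{j-1}u$.
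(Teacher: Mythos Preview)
The paper does not prove this lemma; it is quoted as a known result with a reference to \cite{Bar16} (and preliminary versions in \cite{Cam80,AusQ00}). Your argument is correct and is the standard one found in those references: the top-order case $j=m$ via testing against a high power of a cutoff and invoking the G\r{a}rding inequality~\eqref{eqn:elliptic}, followed by downward induction using the scale-invariant interpolation inequality and the usual iteration lemma to absorb the leading term.
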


If $\mat A$ is $t$-independent then solutions to $Lu=0$ have additional regularity. In particular, the following lemma was proven in the case $m = 1$ in \cite[Proposition 2.1]{AlfAAHK11} and generalized to the case $m \geq 2$ in \cite[Lemma 3.20]{BarHM19A}.
\begin{lem}\label{lem:slices}
Let $L$ be an operator of the form~\eqref{eqn:weak} of order~$2m$ associated to bounded $t$-independent coefficients~$\mat A$ that satisfy the ellipticity condition~\eqref{eqn:elliptic}.

Let $Q\subset\R^n$ be a cube of side length~$\ell(Q)$ and let $I\subset\R$ be an interval with $\abs{I}=\ell(Q)$.
If $u\in \dot W^{m,2}_{loc}(2Q\times 2I)$ and $Lu=0$ in $2Q\times 2I$, then
\begin{equation*}\int_Q \abs{\nabla^{m-j} \partial_t^k u(x,t)}^p\,dx \leq \frac{C(j,L,p)}{\ell(Q)}
\int_{2Q}\int_{2I} \abs{\nabla^{m-j} \partial_s^k u(x,s)}^p\,ds\,dx
\end{equation*}
for any $t\in I$, any integer $j$ with $0\leq j\leq m$, any $p$ with $0< p < p_{j,L}^+$, and any integer $k\geq 0$. 
\end{lem}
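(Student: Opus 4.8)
\emph{Proof strategy.} The plan is to reduce to the case $k=0$ using that $\mat A$ is $t$-independent, then to treat the model case $p=2$ by a ``fundamental theorem of calculus in $t$'' applied to the slice integral together with the Caccioppoli inequality, and finally to bootstrap to the full range $0<p<p_{j,L}^+$ using the reverse-H\"older estimate~\eqref{eqn:Meyers}. For the reduction in $k$: since $\mat A$ does not depend on $t$, the difference quotient $D_h w(x,t)=\frac1h\bigl(w(x,t+h)-w(x,t)\bigr)$ satisfies $L(D_h w)=0$ wherever $Lw=0$ and the formula makes sense. From $u\in\dot W^{m,2}_{loc}(2Q\times 2I)$ we get $\nabla^{m-1}u\in W^{1,2}_{loc}$, so $D_h\nabla^{m-1}u\to\partial_t\nabla^{m-1}u$ in $L^2_{loc}$; applying Lemma~\ref{lem:Caccioppoli} to the solution $D_h u$ then bounds $\nabla^m D_h u$ in $L^2$ on compact subsets uniformly in $h$, and passing to a weak limit identifies $\partial_t u\in\dot W^{m,2}_{loc}(2Q\times 2I)$ with $L(\partial_t u)=0$. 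Iterating, $\partial_t^k u\in\dot W^{m,2}_{loc}(2Q\times 2I)$ is a solution, so it suffices to prove the lemma for $k=0$ and apply it to $v:=\partial_t^k u$.

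For the case $p=2$: let $v$ solve $Lv=0$ on $2Q\times 2I$, fix cubes $Q\subset Q'$ and intervals $I\subset I'$ with $\abs{I'}=\ell(Q')$ and $2Q'\times 2I'\subset 2Q\times 2I$, and set $\phi(\tau)=\int_{Q'}\abs{\nabla^{m-j}v(x,\tau)}^2\,dx$. Since $\partial_\tau\nabla^{m-j}v\in L^2(Q'\times I')$ — for $j\ge1$ because $\nabla^{m-j+1}v\in L^2_{loc}$, and for $j=0$ because $\partial_t v$ is a solution in $\dot W^{m,2}_{loc}$ by the previous step — the function $\phi$ is absolutely continuous on $I'$, and $\phi(t)\le\fint_{I'}\phi+\int_{I'}\abs{\phi'}$ for every $t\in I'$, with $\abs{\phi'(\tau)}\le 2\doublebar{\nabla^{m-j}v(\cdot,\tau)}_{L^2(Q')}\doublebar{\nabla^{m-j}\partial_\tau v(\cdot,\tau)}_{L^2(Q')}$. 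The first term is $\approx\ell(Q)^{-1}\doublebar{\nabla^{m-j}v}_{L^2(Q'\times I')}^2$; for the second, by Cauchy--Schwarz it is at most $\doublebar{\nabla^{m-j}v}_{L^2(Q'\times I')}\doublebar{\nabla^{m-j}\partial_t v}_{L^2(Q'\times I')}$, and $\nabla^{m-j}\partial_t v$ is controlled by Lemma~\ref{lem:Caccioppoli}: for $j\ge1$ it is a component of $\nabla^{m-j+1}v$ of order $\le m$, while for $j=0$ we apply the Caccioppoli inequality to the solution $\partial_t v$ at order $m$, in both cases yielding $\doublebar{\nabla^{m-j}\partial_t v}_{L^2(Q'\times I')}\le C\ell(Q)^{-1}\doublebar{\nabla^{m-j}v}_{L^2(2Q\times 2I)}$. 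Hence $\sup_{t\in I'}\phi(t)\le C\ell(Q)^{-1}\doublebar{\nabla^{m-j}v}_{L^2(2Q\times 2I)}^2$, and a standard covering argument (reducing first to configurations small enough that the enlargements above stay inside $2Q\times 2I$) gives the $p=2$ case.

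For general $p$: if $0<p<2$, then on each slice H\"older's inequality gives $\doublebar{\nabla^{m-j}v(\cdot,t)}_{L^p(Q)}\le\abs{Q}^{1/p-1/2}\doublebar{\nabla^{m-j}v(\cdot,t)}_{L^2(Q)}$, we insert the $p=2$ bound, and then \eqref{eqn:Meyers} (with exponents $2$ and $p$, valid since $2<p_{j,L}^+$) converts $\doublebar{\nabla^{m-j}v}_{L^2}$ over $2Q\times 2I$ back into an $L^p$ norm over a slightly larger domain; tracking the scaling-dictated powers of $\ell(Q)$ gives the claim. If $2<p<p_{j,L}^+$, H\"older on the slice runs the wrong way, so instead we repeat the argument at exponent $p$: with $g(\tau)=\int_{Q'}\abs{\nabla^{m-j}v(x,\tau)}^p\,dx$, which is absolutely continuous because $\nabla^{m-j}v$ and $\nabla^{m-j}\partial_t v$ lie in $L^p_{loc}$ (as $p<p_{j,L}^+$ and both $v$ and $\partial_t v$ are solutions), one obtains after H\"older in $x$ and in $\tau$ that $\sup_{t\in I'}g(t)\le C\ell(Q)^{-1}\doublebar{\nabla^{m-j}v}_{L^p(Q'\times I')}^p+C\doublebar{\nabla^{m-j}v}_{L^p(Q'\times I')}^{p-1}\doublebar{\nabla^{m-j}\partial_t v}_{L^p(Q'\times I')}$. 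The new ingredient is the auxiliary bound $\doublebar{\nabla^{m-j}\partial_t v}_{L^p(Q'\times I')}\le C\ell(Q)^{-1}\doublebar{\nabla^{m-j}v}_{L^p(2Q\times 2I)}$, which I would prove by covering by balls of radius $\approx\ell(Q)$ and on each combining \eqref{eqn:Meyers} for the solution $\partial_t v$ (exponents $p$ and $2$, passing from $L^p$ to $L^2$ at order $m-j$), the $L^2$ Caccioppoli inequality as above, and the elementary inclusion $L^2\subset L^p$ on a fixed ball — the powers of $\ell(Q)$ again cancel. Feeding this in, both terms are $\le C\ell(Q)^{-1}\doublebar{\nabla^{m-j}v}_{L^p(2Q\times 2I)}^p$; a covering argument finishes the case $p\ge2$, and applying everything to $v=\partial_t^k u$ completes the proof.

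The step I expect to be the main obstacle is precisely this last auxiliary bound on $\nabla^{m-j}\partial_t v$ with the sharp range $0<p<p_{j,L}^+$. The naive route — an $L^p$ Caccioppoli inequality for $v$ passing from order $m-j$ to order $m-j+1$ — is only available for $p<p_{j-1,L}^+$, which can be strictly smaller (e.g.\ $p_{0,L}^+$ may be only $2+\varepsilon$ while $p_{1,L}^+$ is much larger). The resolution is to use the $t$-independence of $\mat A$, so that $\partial_t v$ is itself a solution, and apply~\eqref{eqn:Meyers} to the order-$(m-j)$ gradient of \emph{that} solution, which ties the exponent restriction to $p_{j,L}^+$ rather than $p_{j-1,L}^+$. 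The same $t$-independence is what makes the $j=0$ case work at all, since $\nabla^{m+1}v$ cannot be bounded directly; the remaining work — enlarging cubes and intervals and bounded-overlap covers — is routine.
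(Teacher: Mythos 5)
Your proof is correct and follows essentially the same route as the paper's source for this lemma: the paper does not prove Lemma~\ref{lem:slices} itself but cites \cite[Proposition~2.1]{AlfAAHK11} and \cite[Lemma~3.20]{BarHM19A}, and the standard argument behind those results is exactly yours — $t$-independence makes $\partial_t^k u$ a solution, the fundamental theorem of calculus in $t$ together with the Caccioppoli inequality gives the $p=2$ slice bound, and the reverse H\"older estimate~\eqref{eqn:Meyers}, applied both to $u$ and to the solution $\partial_t u$ so that the exponent restriction is tied to $p_{j,L}^+$ rather than $p_{j-1,L}^+$, extends it to all $0<p<p_{j,L}^+$. One cosmetic slip: in the $p>2$ step the inclusion you need on a fixed ball is $L^p\subset L^2$ (H\"older), not ``$L^2\subset L^p$''; this is in fact the direction you use, since it is what makes your powers of $\ell(Q)$ cancel.
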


\subsection{Tent spaces}
\label{sec:tentspace}

Recall that Theorem~\ref{thm:potentials} concerns nontangential maximal and area integral norms of layer potentials. Thus, in order to prove Theorem~\ref{thm:potentials}, we will need a number of results concerning the area integral, the nontangential maximal operator, and the Carleson operator of formula~\eqref{eqn:Carleson:norm}.

We begin with the following lemma concerning the Lebesgue norm and the area integral.

\begin{lem}\label{lem:lebesgue:lusin} 
Let $\sigma>0$, $\kappa\in\R$, and $0<\theta\leq r\leq 2$. Let $\arr F\in L^2_{loc}(\R^\dmn_+)$ be such that $\mathcal{A}_2^+(t^\kappa\arr F)\in L^\theta(\R^n)$.

If $\theta\pdmn < r(n+\theta\kappa)$, then
\begin{equation*}\doublebar{\arr F}_{L^r(\R^n\times(\sigma,\infty))}
\leq \frac{C_{n,\theta,\kappa,r}}{\sigma^{\kappa+n/\theta-1/r-n/r}} \doublebar{\mathcal{A}_2^+(t^\kappa\arr F)}_{L^\theta(\R^n)}.\end{equation*}
If $\theta\pdmn > r(n+\theta\kappa)$, then
\begin{equation*}\doublebar{\arr F}_{L^r(\R^n\times(0,\sigma))}
\leq \frac{C_{n,\theta,\kappa,r}}{\sigma^{\kappa+n/\theta-1/r-n/r}} \doublebar{\mathcal{A}_2^+(t^\kappa\arr F)}_{L^\theta(\R^n)}.\end{equation*}
\end{lem}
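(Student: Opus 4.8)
The plan is to reduce the estimate to a known embedding between the Lusin area integral and Lebesgue norms on slices via a change of variables and a slicing (Fubini-type) argument. First I would dispose of the scaling. Set $\arr G(x,t) = t^\kappa \arr F(x,t)$, so that $\mathcal A_2^+ \arr G \in L^\theta(\R^n)$ by hypothesis. Since both sides of each claimed inequality are homogeneous under the parabolic-type dilation $(x,t) \mapsto (\rho x, \rho t)$ — one checks that $\mathcal A_2^+(t^\kappa \arr F)$ and $\doublebar{\arr F}_{L^r(\R^n\times(\sigma,\infty))}$ scale with explicit powers of $\rho$ that reconcile to give exactly the exponent $\kappa + n/\theta - 1/r - n/r$ on $\sigma$ — it suffices to prove the bound for $\sigma = 1$ (for the first inequality) and for $\sigma = 1$ after rescaling (for the second). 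So I would first verify the scaling bookkeeping: under $x\mapsto\rho x$, $t\mapsto\rho t$, the $L^r$ norm of $\arr F$ over $\R^n\times(\sigma,\infty)$ picks up $\rho^{\pdmn/r}$, while $\mathcal A_2^+(t^\kappa\arr F)$ in $L^\theta(\R^n)$ picks up $\rho^{\kappa + n/\theta}$; matching these against the target power of $\sigma$ pins everything down and confirms the claimed exponent, so WLOG $\sigma=1$.

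Next, for the case $\theta\pdmn < r(n+\theta\kappa)$, I would control $\doublebar{\arr F}_{L^r(\R^n\times(1,\infty))}$ slice by slice. The key local fact is that for a fixed height $t$, the average of $|\arr F|^2$ over a Whitney ball $B((y,t), t/2)$ is pointwise dominated (for $x$ within distance $t$ of $y$) by $(\mathcal A_2^+(t^\kappa\arr F)(x))^2$ divided by an appropriate power of $t$ — this is just because the integral defining $\mathcal A_2^+$ over the cone includes a sub-cone of comparable Whitney boxes at heights near $t$, and on that region $t^\kappa$ is comparable to a constant. So roughly $\fint_{B((y,t),t/2)} |\arr F|^2 \lesssim t^{-\pdmn - 2\kappa}\,\fint_{|x-y|<t} (\mathcal A_2^+(t^\kappa\arr F)(x))^2\,dx$. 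Then I would estimate $\int_{\R^n} |\arr F(y,t)|^r\,dy$ by first replacing $|\arr F(y,t)|$ by an $L^2$-average over a Whitney ball (which is harmless up to constants for reasonable $\arr F$, or more precisely one works with the averaged quantity throughout), apply Hölder in $y$ to pass from the $L^2$-average to $L^r$, invoke the above pointwise bound, and integrate in $t$ over $(1,\infty)$. The condition $\theta\pdmn < r(n+\theta\kappa)$ is precisely what makes the resulting power of $t$ integrable at $+\infty$; collecting the constants gives the bound with $\sigma = 1$, and rescaling restores general $\sigma$. The second case, $\theta\pdmn > r(n+\theta\kappa)$, is entirely parallel but the power of $t$ is now integrable at $0$ rather than at $\infty$, so one integrates over $(0,\sigma)$ instead.

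The main obstacle I anticipate is the interchange between the raw pointwise norm $|\arr F(y,t)|$ appearing in the $L^r$ norm on the left and the $L^2$-averaged quantity that the area integral naturally controls. Since $\arr F$ is only assumed in $L^2_{loc}(\R^\dmn_+)$, $|\arr F(y,t)|$ need not be pointwise meaningful slice-wise, and $\mathcal A_2^+$ only sees averages. The clean way around this is to prove the inequality first with the left-hand side replaced by the $L^r$ norm of the averaged function $(y,t)\mapsto (\fint_{B((y,t),t/2)}|\arr F|^2)^{1/2}$, and then note that this averaged function dominates (a mollified version of) $\arr F$; alternatively, if the intended application only needs the averaged statement, one carries the averages throughout. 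I would also need to handle the Hölder step carefully when $r < 2$: passing from an $L^2$-average over a ball of radius $\sim t$ to the $L^r$ integral costs a factor $t^{n(1/r - 1/2)}$ (positive power since $r\le 2$), which must be folded into the $t$-integrability count — this is already accounted for in the stated exponents, but it is the place where an arithmetic slip is most likely, so I would double-check that $\theta \le r$ and $\theta \le 2$ are used exactly where needed.
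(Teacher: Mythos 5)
Your overall architecture (Hölder from $L^2$ down to $L^r$ using $r\le 2$, the use of $\theta\le r$, the exponent count that makes the $t$-integral converge at $\infty$ or at $0$, and the scaling bookkeeping, which I checked and is correct) matches the paper's proof in spirit. But there is a genuine gap at the central step, namely how the area integral is made to control the \emph{raw} $L^r$ norm of $\arr F$. You route this through Whitney-ball $L^2$-averages of $\arr F$: you propose to "replace $\abs{\arr F(y,t)}$ by an $L^2$-average over a Whitney ball," concede this is the main obstacle, and then resolve it by saying the averaged function "dominates (a mollified version of) $\arr F$." That does not prove the stated lemma: the Whitney average does not dominate $\abs{\arr F(y,t)}$ pointwise for an $L^2_{loc}$ function, and an estimate for a mollification of $\arr F$ at the fixed scale $t/2$ does not pass to $\arr F$ itself (there is no $\epsilon\to 0$ limit available here). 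Your fallback "carry the averages throughout" proves a different statement. Relatedly, the slice-by-slice framing is itself problematic: $\mathcal{A}_2^+$ is an integral over the cone, so it cannot see a single time slice $\{t=t_0\}$ (a null set in the cone); any correct argument must integrate $\abs{\arr F}^2$ over regions of height comparable to $t$ before invoking the area integral.

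The fix is to drop the averaging detour entirely and exploit that $\mathcal{A}_2^+(t^\kappa\arr F)(y)^2$ is an honest integral of $\abs{t^\kappa\arr F}^2$ over the cone $\{\abs{x-y}<t\}$. Decompose $\R^n\times(2^j\sqrt n,2^{j+1}\sqrt n)$ into slabs $Q\times(\sqrt n\,\ell(Q),2\sqrt n\,\ell(Q))$ over a grid of cubes $Q$ of side $2^j$; for $x\in Q$, $y\in Q$, $t>\sqrt n\,\ell(Q)$ one has $\abs{x-y}<\diam Q<t$, so the whole slab lies inside the cone over every $y\in Q$, and after Hölder ($r\le2$) the $L^2$ integral of $\arr F$ over the slab is bounded directly by $\ell(Q)^{n+1-r\kappa}\bigl(\fint_Q\mathcal{A}_2^+(t^\kappa\arr F)^\theta\bigr)^{r/\theta}$ with no pointwise comparison needed; then $\theta\le r$ lets you sum over $Q$, and summing the geometric series in $j$ over $2^j\gtrsim\sigma$ (resp.\ $2^j\lesssim\sigma$) uses exactly your sign condition on $n+1-r\kappa-nr/\theta$. (Alternatively, your averaging route can be salvaged by a covering/Fubini argument showing $\doublebar{\arr F}_{L^r(\R^n\times(c\sigma,\infty))}\le C\doublebar{(y,s)\mapsto(\fint_{B((y,s),s/2)}\abs{\arr F}^2)^{1/2}}_{L^r(\R^n\times(\sigma,\infty))}$ via Whitney boxes contained in the balls $B((y,s),s/2)$, but that argument must actually be supplied; as written the proposal does not close this step.)
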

\begin{proof}
Our argument is largely taken from \cite[Remark~5.3]{BarHM19B}, where the case $r=2$, $\kappa=1$ was considered. Let $j$ be an integer. Then
\begin{align*}\int_{\R^n}\int_{2^j\sqrt{n}}^{2^{j+1}\sqrt{n}}\abs{\arr F(x,t)}^r \,dt\,dx
&=\sum_{Q\in\mathcal{G}_j}
\int_Q \int_{\sqrt{n}\ell(Q)}^{2\sqrt{n}\ell(Q)}\abs{\arr F(x,t)}^r\,dt\,dx
\end{align*}
where $\mathcal{G}_j$ is a grid of pairwise-disjoint open cubes in $\R^n$ of side length $2^j$ whose union is almost all of $\R^n$. 
If $r\leq 2$, then by H\"older's inequality, 
\begin{equation*}\int_Q \int_{\ell(Q)\sqrt{n}}^{2\ell(Q)\sqrt{n}}\abs{\arr F(x,t)}^r\,dt\,dx
\leq 
\bigl(\abs{Q}\ell(Q)\sqrt{n}\bigr)^{1-r/2}
\biggl(\int_Q \int_{\ell(Q)\sqrt{n}}^{2\ell(Q)\sqrt{n}}\abs{\arr F(x,t)}^2\,dt\,dx\biggr)^{r/2}
.\end{equation*}
For every $x$, $y\in Q$ and every $t>\ell(Q)\sqrt{n}$ we have that
\begin{equation*}\abs{x-y}<\diam Q = \ell(Q)\sqrt{n}<t,\end{equation*}
and so for any $y\in Q$ we have that
\begin{equation*}
\int_Q \int_{\ell(Q)\sqrt{n}}^{2\ell(Q)\sqrt{n}}\abs{\arr F(x,t)}^2\,dt\,dx
\leq 
C_{n,\kappa}\int_0^\infty \int_{\abs{x-y}<t} \abs{\arr F(x,t)}^2\frac{\ell(Q)^{\dmn-2\kappa}}{t^{\dmn-2\kappa}}\,dx\,dt
.\end{equation*}
Thus,
\begin{align*}\int_{\R^n}\int_{2^j\sqrt{n}}^{2^{j+1}\sqrt{n}} \abs{\arr F(x,t)}^r \,dt\,dx
&\leq C_{n,\kappa}\sum_{Q\in\mathcal{G}_j}
\ell(Q)^{n+1-r\kappa}
\biggl(\fint_Q\mathcal{A}_2^+(t^\kappa\arr F)(y)^\theta\,dy\biggr)^{r/\theta}
.\end{align*}
If $\theta\leq r$ then
\begin{equation*}\sum_{Q\in \mathcal{G}_j}
\biggl(\int_Q\mathcal{A}_2^+(t\arr F)(y)^\theta\,dy\biggr)^{r/\theta}
\leq
\biggl(\int_{\R^n} \mathcal{A}_2^+(t\arr F)(y)^\theta \,dy \biggr)^{r/\theta}
\end{equation*}
and so
\begin{align*}
\int_{\R^n}\int_{2^j\sqrt{n}}^{2^{j+1}\sqrt{n}} \abs{\arr F(x,t)}^r \,dt\,dx
&\leq
C_{n,\kappa} 2^{j(n+1-r\kappa-nr/\theta)}
\biggl(\int_{\R^n}\mathcal{A}_2^+(t^\kappa\arr F)(y)^\theta\,dy\biggr)^{r/\theta}
.\end{align*}
By summing over $j$ with $2^{j+1}\sqrt{n}>\sigma$ or with $2^j\sqrt{n}<\sigma$, we may complete the proof.
\end{proof}

We now establish the following localization lemma involving the Carleson operator~\eqref{eqn:Carleson:norm}.
\begin{lem}\label{lem:C:local} 
Let $1<r\leq\infty$, let $Q\subset\R^n$ be a cube, and let $\arr H\in L^2_{loc}(\R^\dmn_+)$ be such that ${\widetilde{\mathfrak{C}}_1^+(t\arr H)}\in{L^{r}(16Q)}$. Then
\begin{equation}
\label{eqn:C:local}\doublebar{\widetilde{\mathfrak{C}}_1^+(\1_{10Q\times (0,\ell(Q))}t\arr H)}_{L^{r}(\R^n)}
\leq C_{n,r} \doublebar{\widetilde{\mathfrak{C}}_1^+(t\arr H)}_{L^{r}(16Q)}
.\end{equation}
In particular, if $\arr H\in L^2(\R^\dmn_+)$ is supported in a compact subset of $\R^\dmn_+$, then $\widetilde{\mathfrak{C}}_1^+(t\arr H)\in {L^{r}(\R^n)}$ for all $1<r\leq\infty$.
\end{lem}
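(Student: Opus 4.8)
\emph{Plan of proof.} Set $\arr G=\1_{10Q\times(0,\ell(Q))}\,t\arr H$, so that $\abs{\arr G}\le\abs{t\arr H}$ pointwise in $\R^\dmn_+$. Since the inner quantity $\bigl(\fint_{B((y,s),s/2)}\abs{\,\cdot\,}^2\bigr)^{1/2}$ in the definition~\eqref{eqn:Carleson:norm} of $\widetilde{\mathfrak{C}}_1^+$ is nondecreasing in $\abs{\,\cdot\,}$, this yields at once the pointwise bound $\widetilde{\mathfrak{C}}_1^+\arr G(x)\le\widetilde{\mathfrak{C}}_1^+(t\arr H)(x)$ for every $x\in\R^n$; in particular $\int_{16Q}(\widetilde{\mathfrak{C}}_1^+\arr G)^r\le\doublebar{\widetilde{\mathfrak{C}}_1^+(t\arr H)}_{L^r(16Q)}^r$, and by the hypothesis every average that appears below is finite for a.e.\ point of $16Q$. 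The remaining task is to control $\widetilde{\mathfrak{C}}_1^+\arr G$ on $\R^n\setminus 16Q$, where the point is that it decays.

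The next step is to locate the support of the inner average of $\arr G$. If $(z,t)\in B((y,s),s/2)\subset\R^\dmn$ then $\abs{z-y}<s/2$ and $s/2<t<3s/2$; for this ball to meet $\supp\arr G\subset 10Q\times(0,\ell(Q))$ one therefore needs $s<2\ell(Q)$ and $\dist(y,10Q)<s/2<\ell(Q)$, which forces $y\in 12Q$. Hence $\bigl(\fint_{B((y,s),s/2)}\abs{\arr G}^2\bigr)^{1/2}=0$ unless $y\in 12Q$ and $0<s<2\ell(Q)$, so in the supremum~\eqref{eqn:Carleson:norm} defining $\widetilde{\mathfrak{C}}_1^+\arr G(x)$ only cubes $Q'\owns x$ meeting $12Q$ contribute a nonzero average; and if $x\notin 16Q$, any such $Q'$ satisfies $\ell(Q')\ge c_n\dist(x,Q)$ for a dimensional constant $c_n>0$.

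For such a cube $Q'$, bounding $\abs{\arr G}$ by $\abs{t\arr H}$ and discarding the part of the domain outside $12Q\times(0,2\ell(Q))$,
\begin{equation*}
\frac1{\abs{Q'}}\int_{Q'}\int_0^{\ell(Q')}\Bigl(\fint_{B((y,s),s/2)}\abs{\arr G}^2\Bigr)^{1/2}\frac{ds\,dy}{s}
\le\frac{\abs{12Q}}{\abs{Q'}}\cdot\frac1{\abs{12Q}}\int_{12Q}\int_0^{2\ell(Q)}\Bigl(\fint_{B((y,s),s/2)}\abs{t\arr H}^2\Bigr)^{1/2}\frac{ds\,dy}{s}.
\end{equation*}
Since $2\ell(Q)\le\ell(12Q)$, using $12Q$ itself as the cube in~\eqref{eqn:Carleson:norm} shows that the last term's second factor is at most $\widetilde{\mathfrak{C}}_1^+(t\arr H)(y')$ for every $y'\in 12Q$, hence at most $\fint_{12Q}\widetilde{\mathfrak{C}}_1^+(t\arr H)\le C_n\fint_{16Q}\widetilde{\mathfrak{C}}_1^+(t\arr H)$; combined with $\abs{12Q}/\abs{Q'}\le C_n\ell(Q)^n\dist(x,Q)^{-n}$ and the supremum over $Q'$, this gives $\widetilde{\mathfrak{C}}_1^+\arr G(x)\le C_n\,\ell(Q)^n\dist(x,Q)^{-n}\fint_{16Q}\widetilde{\mathfrak{C}}_1^+(t\arr H)$ for $x\notin 16Q$. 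To finish, raise this to the power $r$ and integrate over $\R^n\setminus 16Q$ (or take the supremum, when $r=\infty$): using $\dist(x,Q)\ge c\ell(Q)$ there, a dyadic-annulus decomposition gives $\int_{\R^n\setminus 16Q}\dist(x,Q)^{-nr}\,dx\le C_{n,r}\ell(Q)^{n-nr}$, and with Jensen's inequality $\bigl(\fint_{16Q}\widetilde{\mathfrak{C}}_1^+(t\arr H)\bigr)^r\le\abs{16Q}^{-1}\doublebar{\widetilde{\mathfrak{C}}_1^+(t\arr H)}_{L^r(16Q)}^r$ one obtains $\int_{\R^n\setminus 16Q}(\widetilde{\mathfrak{C}}_1^+\arr G)^r\le C_{n,r}\doublebar{\widetilde{\mathfrak{C}}_1^+(t\arr H)}_{L^r(16Q)}^r$; adding the contribution of $16Q$ proves~\eqref{eqn:C:local}. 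I expect the only delicate point to be arranging this decay estimate so that the exponent of $\dist(x,Q)$ is exactly $n$ (with constants depending only on $n$), since that is precisely the threshold at which the geometric sum $\sum_{k\ge0}2^{kn(1-r)}$ in the annular bound converges if and only if $r>1$, which is where the hypothesis $r>1$ is used.

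For the final assertion, given $\arr H\in L^2(\R^\dmn_+)$ with $\supp\arr H$ a compact subset of $\R^\dmn_+$, I would choose a cube $Q$ with $10Q\times(0,\ell(Q))\supset\supp\arr H$, so that $t\arr H=\1_{10Q\times(0,\ell(Q))}t\arr H$ and~\eqref{eqn:C:local} applies once $\widetilde{\mathfrak{C}}_1^+(t\arr H)\in L^r(16Q)$ is known. But because $\arr H\in L^2(\R^\dmn_+)$ and $\supp\arr H$ is compactly contained in $\R^\dmn_+$, the inner average $\bigl(\fint_{B((y,s),s/2)}\abs{t\arr H}^2\bigr)^{1/2}$ is bounded and supported in a bounded subset of $\{(y,s):s\ge c\}$ for some $c>0$; hence $\widetilde{\mathfrak{C}}_1^+(t\arr H)$ is bounded, and so lies in $L^r(16Q)$ since $16Q$ has finite measure.
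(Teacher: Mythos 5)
Your argument is correct and follows essentially the same route as the paper's proof: the pointwise domination inside $16Q$, the observation that the inner average of $\1_{10Q\times(0,\ell(Q))}t\arr H$ vanishes unless $y\in 12Q$ and $s<2\ell(Q)$, the resulting decay bound of order $\dist(x,Q)^{-n}$ times an average of $\widetilde{\mathfrak{C}}_1^+(t\arr H)$ over $12Q$ for $x\notin 16Q$, and the same treatment of the compactly supported case via boundedness of $\widetilde{\mathfrak{C}}_1^+(t\arr H)$. The only cosmetic difference is that you use the $L^1$ average plus Jensen and write out the dyadic-annulus integration (which the paper leaves as a ``straightforward computation''), whereas the paper bounds directly by the $L^r$ average over $12Q$.
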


\begin{proof} We begin with the bound~\eqref{eqn:C:local}.
If $x\in 16Q$, then ${\widetilde{\mathfrak{C}}_1^+(\1_{10Q\times(0,\ell(Q))}t\arr H)(x)}
\leq {\widetilde{\mathfrak{C}}_1^+(t\arr H)(x)}$. Thus, we need only consider $x\notin 16Q$.

Let $\arr\Phi(x,t)=t\arr H(x,t)$.
By formula~\eqref{eqn:Carleson:norm},
\begin{multline*}
\widetilde{\mathfrak{C}}_1^+ (\1_{10Q\times(0,\ell(Q))}t\arr H)(x)
\\=\sup_{R\owns x} \frac{1}{\abs{R}}\int_R \int_0^{\ell(R)} \biggl(\fint_{B((y,s),s/2)} \1_{10Q\times(0,\ell(Q))}\abs{\arr \Phi}^2\biggr)^{1/2}\,\frac{ds\,dy}{s}
\end{multline*}
where the supremum is over cubes $R\subset\R^n$ with $x\in R$. 
Observe that if 
\begin{equation*}B((y,s),s/2)\cap (10Q\times(0,\ell(Q)))\neq \emptyset,\end{equation*}
then $s < 2\ell(Q)$ and 
$\dist(y,10Q)<s/2<\ell(Q)$, so $y\in 12Q$.
Thus, 
\begin{multline*}
\int_{\R^n} \int_0^\infty \biggl(\fint_{B((y,s),s/2)} \1_{10Q\times(0,\ell(Q))}\abs{\arr\Phi}^2\biggr)^{1/2}\,\frac{ds\,dy}{s}
\\\leq
\int_{12Q} \int_0^{2\ell(Q)} \biggl(\fint_{B((y,s),s/2)} \abs{\arr \Phi}^2\biggr)^{1/2}\,\frac{ds\,dy}{s}
\leq 
\abs{12Q}
\widetilde{\mathfrak{C}}_1^+(t\arr H)(z)\end{multline*}
for all $z\in 12Q$.
Furthermore, if 
\begin{equation*}\int_R \int_0^{\ell(R)} \biggl(\fint_{B((y,s),s/2)} \1_{10Q\times(0,\ell(Q))}\abs{\arr\Phi}^2\biggr)^{1/2}\,\frac{ds\,dy}{s}\neq 0\end{equation*}
then $R\cap 12Q\neq\emptyset$. But observe that if $x\notin 16Q$ and $R\owns x$ with $R\cap 12Q\neq\emptyset$, then $\sqrt{n}\ell(R)=\diam(R)>\dist(x,12Q)$. Thus, if $x\notin16Q$ then
\begin{equation*}
\widetilde{\mathfrak{C}}_1^+ (\1_{10Q\times(0,\ell(Q))}t\arr H)(x)
\leq
\frac{\abs{12Q}}{n^{n/2}\dist(x,12Q)^n}
\biggl(\fint_{12Q} \widetilde{\mathfrak{C}}_1^+(t\arr H)(z)^{r}\,dz\biggr)^{1/r}
.\end{equation*}
A straightforward computation yields the bound~\eqref{eqn:C:local} for all $r>1$.

We now turn to the case of compactly supported~$\arr H$. If $\arr H$ is supported in a compact subset of $\R^\dmn_+$, then there is some $\varepsilon>0$ and some $N<\infty$ such that $\arr H(x,t)=0$ whenever $t<\varepsilon$ or $t>N$. We compute that
\begin{equation*}\biggl(\fint_{B((y,s),s/2)} \abs{t\arr H(z,t)}^2\,dz\,dt\biggr)^{1/2} \leq C_n s^{1/2-n/2}\doublebar{\arr H}_{L^2(\R^\dmn_+)}\end{equation*}
and is zero if $s<2\varepsilon/3$ or $s>2N$. Thus, by formula~\eqref{eqn:Carleson:norm},
\begin{align*}
\widetilde{\mathfrak{C}}_1^+ (t\arr H)(x)
&=\sup_{R\owns x} \fint_R \int_0^{\ell(R)} \biggl(\fint_{B((y,s),s/2)} \abs{\arr H(x,t)}^2\,t^2\,dx\,dt\biggr)^{1/2}\,\frac{ds\,dy}{s}
\\&\leq \int_{2\varepsilon/3}^{2N} C_n s^{-1/2-n/2}\doublebar{\arr H}_{L^2(\R^\dmn_+)}\,ds
.\end{align*}
The right hand side is finite and independent of~$x$. Thus, if $\arr H$ is supported in a compact subset of $\R^\dmn_+$, then $\widetilde{\mathfrak{C}}_1^+ (t\arr H)$ is bounded and so the right hand side of formula~\eqref{eqn:C:local} is finite for any fixed cube~$Q$. But if $\arr H$ is compactly supported, then it is supported in $10Q\times(0,\ell(Q))$ for some cube~$Q$; thus, $\1_{10Q\times(0,\ell(Q))}\arr H =\arr H$, and so by the bound~\eqref{eqn:C:local} we have that $\widetilde{\mathfrak{C}}_1^+ (t\arr H)\in L^{r}(\R^n)$, as desired.
\end{proof}

We now come to a method for bounding nontangential maximal functions by duality. This is the reason that the Carleson operator $\widetilde{\mathfrak{C}}_1^+$ is of interest in the present paper. It is well known (see \cite[Theorem~5.1]{AlvM87}) that if $1<p<\infty$ and $1/p+1/p'=1$, then the dual to the space of nontangentially bounded functions
\begin{equation*}\{U:N_+U\in L^p(\R^n)\},\quad \text{where } N_+U(x)=\sup\{\abs{U(y,t)}:\abs{x-y}<t\},\end{equation*}
is the space of Borel measures
\begin{equation*}\{\mu:
\mathfrak{C}_1^+(t\abs{\mu})\in L^{p'}(\R^n)\},
\quad\text{where}\quad
\mathfrak{C}_1^+(\mu)(x)=\sup_{Q\owns x} \frac{1}{\abs{Q}}\int_Q\int_0^{\ell(Q)}\frac{1}{t} d\abs{\mu}(y,t)
.\end{equation*}
We claim that a similar result is true for the spaces defined by the averaged norms $\widetilde N_+$ and $\widetilde{\mathfrak{C}}_1^+$ given by formulas~\eqref{dfn:NTM:modified:pm} and~\eqref{eqn:Carleson:norm}. More precisely, we will use the following two lemmas.

\begin{lem}\label{lem:N>C}
Let $p$ satisfy $1<p<\infty$ and let $p'$ satisfy $1/p+1/p'=1$.
Suppose that $\arr u$ and $\arr H$ are such that ${\widetilde N_+\arr u}\in{L^p(\R^n)}$ and ${\widetilde{\mathfrak{C}}_1^+(t\arr H)}\in{L^{p'}(\R^n)}$. Then
\begin{equation*}\abs{\langle \arr u,\arr H\rangle_{\R^\dmn_+}}
\leq C\doublebar{\widetilde N_+\arr u}_{L^p(\R^n)} \doublebar{\widetilde{\mathfrak{C}}_1^+(t\arr H)}_{L^{p'}(\R^n)}.\end{equation*}
\end{lem}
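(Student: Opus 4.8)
## Proof proposal for Lemma \ref{lem:N>C}

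The plan is to reduce the claimed duality estimate to the classical pairing between nontangentially bounded functions and Carleson measures, by passing from the averaged operators $\widetilde N_+$ and $\widetilde{\mathfrak C}_1^+$ to their un-averaged counterparts $N_+$ and $\mathfrak C_1^+$. First I would replace $\arr u$ and $\arr H$ by suitable averaged versions. Concretely, set $U(y,s) = \bigl(\fint_{B((y,s),s/2)} \abs{\arr u(z,t)}^2\,dz\,dt\bigr)^{1/2}$ and similarly define a Whitney-averaged version of $\arr H$; then by Cauchy--Schwarz on each Whitney ball,
\begin{equation*}
\abs{\langle \arr u,\arr H\rangle_{\R^\dmn_+}}
\leq C\int_{\R^\dmn_+} \biggl(\fint_{B((y,s),s/2)}\abs{\arr u}^2\biggr)^{1/2}
\biggl(\fint_{B((y,s),s/2)}\abs{\arr H}^2\biggr)^{1/2}\,\frac{dy\,ds}{s},
\end{equation*}
where the factor $s$ in the denominator is compensated by the measure of the Whitney ball; this is the standard device (a Fubini argument, covering $\R^\dmn_+$ by Whitney balls with bounded overlap) for writing an $L^2$ pairing in terms of Whitney averages. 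The point of this step is that $N_+$ applied to the first averaged factor is pointwise comparable to $\widetilde N_+\arr u$, and $\mathfrak C_1^+$ applied to $s$ times the second averaged factor is exactly $\widetilde{\mathfrak C}_1^+(t\arr H)$ by the definition \eqref{eqn:Carleson:norm}.

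Next I would apply the classical duality statement quoted in the text just before the lemma (from \cite[Theorem~5.1]{AlvM87}): with $V(y,s)$ the Whitney-average of $\abs{\arr H}$ and $d\mu = V(y,s)\,dy\,ds$ the associated measure on $\R^\dmn_+$, the integral above is bounded by
\begin{equation*}
C\doublebar{N_+U}_{L^p(\R^n)}\,\doublebar{\mathfrak C_1^+(s\,d\mu)}_{L^{p'}(\R^n)}.
\end{equation*}
Then I would identify the two factors: $N_+U(x)\leq C\,\widetilde N_+\arr u(x)$ follows because any Whitney ball contributing to $N_+U(x)$ sits inside a cone over $x$ of the right aperture, after adjusting apertures by the usual argument that changing the aperture of a nontangential maximal function changes it only by a constant in $L^p$; and $\mathfrak C_1^+(s\,d\mu)(x) = \sup_{Q\owns x}\frac{1}{\abs Q}\int_Q\int_0^{\ell(Q)} V(y,s)\,\frac{ds\,dy}{s}$ is literally $\widetilde{\mathfrak C}_1^+(t\arr H)(x)$ by unwinding \eqref{eqn:Carleson:norm}. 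Combining these gives the asserted bound.

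The main obstacle I anticipate is the bookkeeping in the first step: making precise the passage from the raw $L^2(\R^\dmn_+)$ pairing to an integral of products of Whitney averages, with the correct power of $s$ and with the averaging balls lined up so that the output is genuinely $N_+$ of something and $\mathfrak C_1^+$ of something. One has to choose a fixed grid (or continuum) of Whitney balls covering $\R^\dmn_+$ with controlled overlap, apply Cauchy--Schwarz on each, and check that enlarging the averaging ball slightly (from radius $s/2$ to, say, $s$) only costs a constant --- this is where the aperture-change lemma for $\widetilde N_+$ and the analogous monotonicity for $\widetilde{\mathfrak C}_1^+$ enter. A secondary technical point is justifying the absolute convergence of $\langle\arr u,\arr H\rangle_{\R^\dmn_+}$ in the first place, which follows a posteriori from the very bound being proved once one knows the right-hand side is finite, but should be addressed (e.g.\ by first proving the estimate for $\arr H$ supported away from $\{t=0\}$ and truncated, then removing the truncation). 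Everything else is a direct citation of \cite{AlvM87} plus routine comparison of maximal operators.
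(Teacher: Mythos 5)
Your proposal is correct and follows essentially the same route as the paper: Cauchy--Schwarz over Whitney balls to replace $\arr u$ and $\arr H$ by their averages $U$ and $H$, for which $N_+U=\widetilde N_+\arr u$ and $\mathfrak{C}_1^+(tH)=\widetilde{\mathfrak{C}}_1^+(t\arr H)$, followed by the classical unaveraged duality (the paper cites \cite[formula~(2.6)]{CoiMS85}), with absolute convergence obtained by first integrating over compact subsets of $\R^\dmn_+$. The only difference is in the bookkeeping you flagged: instead of a Whitney covering with bounded overlap and aperture adjustments, the paper uses the exact change-of-variables identity $\int_{\R^\dmn_+}F=\int_{\R^\dmn_+}\fint_{B((y,s),s/2)}F(x,t)\,\frac{t}{s}\,dx\,dt\,dy\,ds$, so the averaged operators match the definitions \eqref{dfn:NTM:modified:+} and \eqref{eqn:Carleson:norm} on the nose and no aperture-change lemma is needed.
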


\begin{lem}\label{lem:N<C}
Suppose that $\arr u\in L^2_{loc}(\R^\dmn_+)$. Let $1<p<\infty$ and let $1/p+1/p'=1$. 
Then
\begin{equation*}
\doublebar{\widetilde N_+\arr u}_{L^p(\R^n)}
\leq C_p\sup_{\arr H\in L^2_c(\R^\dmn_+)\setminus\{\arr 0\}} \frac{\abs{\langle \arr u,\arr H\rangle_{\R^\dmn_+}}} {\doublebar{\widetilde{\mathfrak{C}}_1^+(t\arr H)}_{L^{p'}(\R^n)}}
\end{equation*}
provided the right hand side is finite.
Here $L^2_c(\R^\dmn_+)=\{\arr H\in L^2(\R^\dmn_+):\supp \arr H\subset K$ for some compact set $K\subset\R^\dmn_+\}$.
\end{lem}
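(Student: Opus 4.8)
The plan is to reduce to the classical (unaveraged) nontangential/Carleson duality of \cite[Theorem~5.1]{AlvM87} cited above and then to \emph{realize} the extremal Carleson measure that it produces as an element of $L^2_c(\R^\dmn_+)$, exploiting the averages built into $\widetilde N_+$ and $\widetilde{\mathfrak{C}}_1^+$ so that the two sides line up. First I would set $V(y,s)=\bigl(\fint_{B((y,s),s/2)}\abs{\arr u}^2\bigr)^{1/2}$; since $\arr u\in L^2_{loc}(\R^\dmn_+)$ this is a continuous function on $\R^\dmn_+$, and the cone $\abs{x-y}<s$ in~\eqref{dfn:NTM:modified:pm} gives the pointwise bound $\widetilde N_+\arr u\le N_+V$ for a standard nontangential maximal operator $N_+$. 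A preliminary truncation $\arr u\mapsto\1_{K_j}\arr u$ along compact $K_j\uparrow\R^\dmn_+$ — which only decreases the right-hand side, since $\langle\1_{K_j}\arr u,\arr H\rangle=\langle\arr u,\1_{K_j}\arr H\rangle$ and $\widetilde{\mathfrak{C}}_1^+(t\1_{K_j}\arr H)\le\widetilde{\mathfrak{C}}_1^+(t\arr H)$ — together with monotone convergence lets me assume $\widetilde N_+\arr u\in L^p(\R^n)$.

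\textbf{Duality and construction of $\arr H$.} In that case \cite[Theorem~5.1]{AlvM87} furnishes a nonnegative Borel measure $\mu$ on $\R^\dmn_+$ whose Carleson functional $\mathfrak{C}_1^+(t\mu)$ has $L^{p'}(\R^n)$-norm at most~$1$ and with $\int_{\R^\dmn_+}V\,d\mu\ge c\,\doublebar{N_+V}_{L^p(\R^n)}$; restricting $\mu$ to a large compact subset of $\{(y,s):V(y,s)>\delta\}$ for small $\delta>0$, I may take $\mu$ finite and compactly supported in $\R^\dmn_+$, losing only a constant. Writing $B_{y,s}=B((y,s),s/2)$ and letting $\arr g_{y,s}$ be the array with $(\arr g_{y,s})_\zeta=u_\zeta\1_{B_{y,s}}/V(y,s)$, so that $\fint_{B_{y,s}}\abs{\arr g_{y,s}}^2=1$ and $\fint_{B_{y,s}}\overline{\arr u}\cdot\arr g_{y,s}=V(y,s)$, I set
\begin{equation*}
\arr H(z,t)=\int_{\R^\dmn_+}\frac{\arr g_{y,s}(z,t)}{\abs{B_{y,s}}}\,d\mu(y,s)=\arr u(z,t)\,w(z,t),
\qquad
w(z,t)=\int_{\{(y,s)\,:\,(z,t)\in B_{y,s}\}}\frac{d\mu(y,s)}{V(y,s)\,\abs{B_{y,s}}}.
\end{equation*}
Fubini's theorem, using that $\mu$ is finite, compactly supported in $\R^\dmn_+$, and carried by $\{V>\delta\}$, shows $\arr H\in L^2_c(\R^\dmn_+)$ and $\langle\arr u,\arr H\rangle_{\R^\dmn_+}=\int_{\R^\dmn_+}\bigl(\fint_{B_{y,s}}\overline{\arr u}\cdot\arr g_{y,s}\bigr)\,d\mu=\int_{\R^\dmn_+}V\,d\mu\ge c\,\doublebar{N_+V}_{L^p}\ge c\,\doublebar{\widetilde N_+\arr u}_{L^p}$.

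\textbf{The Carleson estimate (the main obstacle).} What remains — and the step I expect to be hardest — is the bound $\doublebar{\widetilde{\mathfrak{C}}_1^+(t\arr H)}_{L^{p'}(\R^n)}\le C\doublebar{\mathfrak{C}_1^+(t\mu)}_{L^{p'}(\R^n)}$; granting it, the previous paragraph together with the definition of the supremum on the right of the lemma gives $\doublebar{\widetilde N_+\arr u}_{L^p}\lesssim\langle\arr u,\arr H\rangle\le C_p\,\doublebar{\widetilde{\mathfrak{C}}_1^+(t\arr H)}_{L^{p'}}\cdot\sup(\cdots)\lesssim\sup(\cdots)$, finishing the proof. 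Noting that $\widetilde{\mathfrak{C}}_1^+(t\arr H)=\mathfrak{C}_1^+(t\rho)$ for $d\rho(y,s)=\bigl(\fint_{B((y,s),s/2)}t^2\abs{\arr H}^2\bigr)^{1/2}\frac{ds\,dy}{s}$, one must bound $\rho$ of a Carleson box $Q\times(0,\ell(Q))$ by $\mu$ of a comparable box. The danger is the factor $1/V(y,s)^2$ buried in $w^2$, which is potentially enormous where $\arr u$ is small; the point is that it cancels exactly. Applying the Cauchy--Schwarz inequality to $w(z,t)^2$ against the measure $\abs{B_{y,s}}^{-1}\,d\mu$, using that $(z,t)\in B_{y,s}$ forces $s\approx t$, and inserting the identity $\int_{B_{y,s}}\abs{\arr u}^2=\abs{B_{y,s}}\,V(y,s)^2$, one obtains after a Fubini that $\bigl(\fint_{B((y_0,s_0),s_0/2)}t^2\abs{\arr H}^2\bigr)^{1/2}\lesssim s_0^{-n}\,\mu(\widehat B_{y_0,s_0})$, where $\widehat B_{y_0,s_0}$ is a box of horizontal width and vertical extent $\approx s_0$ sitting over $y_0$ at height $\approx s_0$. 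A second Fubini, in which the constraint $(y,s)\in\widehat B_{y_0,s_0}$ pins $s_0\approx s$ and $\abs{y_0-y}\lesssim s$ — so the $s_0$-integral converges rather than producing a logarithmic divergence as $s_0\to0$ — yields $\rho(Q\times(0,\ell(Q)))\lesssim\mu\bigl(CQ\times(0,C\ell(Q))\bigr)\lesssim\abs{Q}\,\mathfrak{C}_1^+(t\mu)(x)$ for $x\in Q$, hence $\mathfrak{C}_1^+(t\rho)\lesssim\mathfrak{C}_1^+(t\mu)$ pointwise and the claimed $L^{p'}$ bound.

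\textbf{Remark on difficulty.} Once the cancellation above is seen, the remaining work is geometric bookkeeping: keeping track of which balls $B_{y,s}$ and boxes $\widehat B_{y_0,s_0}$ meet which others, i.e.\ an implicit (harmless) change of aperture, and verifying the various Fubini interchanges against the finiteness/support properties of $\mu$ and the local square-integrability of $\arr u$. The genuinely delicate point is arranging the Cauchy--Schwarz so that the $1/V$ amplification is neutralized by the $V^2$ coming from $\int_{B_{y,s}}\abs{\arr u}^2$, while simultaneously not throwing away the scale-localization $s\approx t$ that keeps the $\frac{ds}{s}$-integral finite.
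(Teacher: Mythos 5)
Your proposal is correct and follows essentially the same route as the paper: truncation, the Alvarez--Milman duality producing an extremal measure $\mu$ with $\mathfrak{C}_1^+(t\mu)\in L^{p'}$, realization of the pairing by superposing normalized Whitney-ball pieces $\arr u\1_{B_{y,s}}/V(y,s)$ weighted by $\mu$, and the pointwise bound $\widetilde{\mathfrak{C}}_1^+(t\arr H)\lesssim \mathfrak{C}_1^+(t\mu)$ via Cauchy--Schwarz with the $1/V$ factor cancelled by $\int_{B_{y,s}}\abs{\arr u}^2=\abs{B_{y,s}}V^2$. This last step is exactly the paper's Lemma~\ref{lem:C:L2}, and your sketch of it (including the $s\approx t$ localization) matches the paper's argument.
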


\begin{proof}[Proof of Lemma~\ref{lem:N>C}]
Let $F$ be an integrable function. 
Then
\begin{align*}
\int_{\R^\dmn_+} F(x,t)\,dx\,dt
&= 
\int_{\R^\dmn_+} F(y+sz,s+sr)\,(1+r)\,dy\,ds 
\end{align*}
for any $z\in\R^n$ and any $r>-1$. Averaging over $(z,r)\in B(0,1/2)$, we have that
\begin{align*}
\int_{\R^\dmn_+} F(x,t)\,dx\,dt
&= 
\int_{\R^\dmn_+} \fint_{B(0,1/2)}F(y+sz,s+sr)\frac{s+sr}{s}\,dz\,dr\,dy\,ds 
\end{align*}
and a change of variables yields that
\begin{align}
\int_{\R^\dmn_+} F(x,t)\,dx\,dt
&= 
\int_{\R^\dmn_+} \fint_{B((y,s),s/2)}F(x,t)\frac{t}{s}\,dx\,dt\,dy\,ds 
.\end{align}

Let $K$ be a compact set in $\R^\dmn_+$. Observe that $\arr u$ and $\arr H$ are both in $L^2_{loc}(\R^\dmn_+)$; thus $F=\1_K\abs{ \arr u}\abs{\arr H}$ is integrable. Therefore,
\begin{equation*}
\int_K \abs{ \arr u}\abs{\arr H}
= \int_{\R^\dmn_+} \fint_{B((y,s),s/2)}
\abs{\1_K\arr u(x,t)}\abs{t\arr H(x,t)}\,dx\,dt\,\frac{dy\,ds}{s}.\end{equation*}
We define 
\begin{equation*}H(y,s)=\frac{1}{s}\biggl(\fint_{B((y,s),s/2)}\abs{t\arr H(x,t)}^2\,dx\,dt\biggr)^{1/2},
\quad U(y,s)=\biggl(\fint_{B((y,s),s/2)}\abs{\arr u}^2\biggr)^{1/2}\end{equation*} 
so that by the definitions~\eqref{dfn:NTM:modified:pm} and~\eqref{eqn:Carleson:norm} of $\widetilde N_+$ and~${\mathfrak{C}}_1^+$, 
\begin{equation*}N_+U=\widetilde N_+\arr u,\qquad \mathfrak{C}_1^+ (tH)=\widetilde{\mathfrak{C}}_1^+(t\arr H).\end{equation*}
By H\"older's inequality,
\begin{equation*}
\int_K \abs{ \arr u}\abs{\arr H}
\leq
\int_{\R^\dmn_+} UH.\end{equation*}
By the duality results discussed above (see \cite[formula~(2.6)]{CoiMS85}
), we have that
\begin{equation*}
\int_{\R^\dmn_+} U H
\leq
C \doublebar{ N_+ U}_{L^p(\R^n)}
\doublebar{{\mathfrak{C}}_1^+(t H)}_{L^{p'}(\R^n)}.
\end{equation*}
Thus,
\begin{equation*}
\int_K \abs{ \arr u}\abs{\arr H}
\leq
C \doublebar{\widetilde N_+ \arr u}_{L^p(\R^n)}
\doublebar{\widetilde{\mathfrak{C}}_1^+(t \arr H)}_{L^{p'}(\R^n)}
\end{equation*}
provided the right hand side is finite. Because $K$ was arbitrary, this inequality is still true if we integrate over $\R^\dmn_+$ instead of~$K$. Thus, $\langle \arr u,\arr H\rangle_{\R^\dmn_+}$ represents an absolutely convergent integral that satisfies
\begin{equation*}
\abs{\langle \arr u,\arr H\rangle_{\R^\dmn_+}}
\leq
C \doublebar{\widetilde N_+ \arr u}_{L^p(\R^n)}
\doublebar{\widetilde{\mathfrak{C}}_1^+(t \arr H)}_{L^{p'}(\R^n)}
\end{equation*}
as desired.
\end{proof}

The following lemma will be used in the proof of Lemma~\ref{lem:N<C}.

\begin{lem}\label{lem:C:L2} Let $\mu$ be a nonnegative measure on~$\R^n$. For each $(x,r)\in \R^\dmn_+$, let $\arr H_{(x,r)}$ be defined in $\R^\dmn_+$, supported in $B((x,r),r/2)$, and satisfy
\begin{equation*}\left(\fint_{B((x,r),r/2)} \abs{\arr H_{(x,r)}}^2\right)^{1/2}\leq 1.\end{equation*}
Define
\begin{equation*}\arr H(z,t)=\int_{\R^\dmn_+} \frac{1}{r^\dmn} \arr H_{(x,r)}(z,t)\,d\mu(x,r).\end{equation*}
Then
\begin{align*}
\widetilde{\mathfrak{C}}_1^+ (t\arr H)(\tilde x) 
&\leq
C\mathfrak{C}_1^+(t\mu)(\tilde x)
\end{align*}
for all $\tilde x\in\R^n$ such that the right hand side is finite.
\end{lem}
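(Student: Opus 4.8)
The plan is to unwind the definition of $\widetilde{\mathfrak{C}}_1^+$ applied to the superposed function $\arr H$, interchange the order of integration (Fubini), and reduce the whole estimate to the corresponding statement for the classical Carleson functional $\mathfrak{C}_1^+$ applied to the measure $\mu$, for which the averaging is already built in. First I would fix $\tilde x\in\R^n$ and a cube $Q\owns \tilde x$, and write
\[
\frac{1}{\abs Q}\int_Q\int_0^{\ell(Q)}\biggl(\fint_{B((y,s),s/2)}\abs{t\arr H(z,t)}^2\,dz\,dt\biggr)^{1/2}\,\frac{ds\,dy}{s}.
\]
Inside the average over $B((y,s),s/2)$ I would use the triangle inequality in $L^2\bigl(B((y,s),s/2)\bigr)$ together with the superposition formula for $\arr H$, so that this $L^2$-average is bounded by
\[
\int_{\R^\dmn_+}\frac{1}{r^\dmn}\biggl(\fint_{B((y,s),s/2)}\abs{t\,\arr H_{(x,r)}(z,t)}^2\,dz\,dt\biggr)^{1/2}\,d\mu(x,r).
\]
Now the key geometric observation is that $\arr H_{(x,r)}$ is supported in $B((x,r),r/2)$, so the inner average is nonzero only when $B((y,s),s/2)$ meets $B((x,r),r/2)$; a routine comparison of these two balls shows that on the overlap $s\approx r$ and $t\approx r$, and by the normalization $\bigl(\fint_{B((x,r),r/2)}\abs{\arr H_{(x,r)}}^2\bigr)^{1/2}\le 1$ combined with the volume ratio $\abs{B((x,r),r/2)}/\abs{B((y,s),s/2)}$ we obtain a bound of the form
\[
\biggl(\fint_{B((y,s),s/2)}\abs{t\,\arr H_{(x,r)}(z,t)}^2\,dz\,dt\biggr)^{1/2}
\le C\,\frac{r^{\dmn+1}}{s^\dmn}\,\1_{\{B((y,s),s/2)\cap B((x,r),r/2)\neq\emptyset\}}.
\]
The factor $r^{\dmn+1}/s^\dmn$ kills the $1/r^\dmn$ from the superposition and leaves an $r/s$ (and the membership condition forces $s\lesssim r\lesssim s$), which after dividing by the extra $s$ in $\frac{ds\,dy}{s}$ produces exactly the kernel $1/t$ (equivalently $1/r$) appearing in the definition of $\mathfrak{C}_1^+(t\mu)$.

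The second step is to carry out the Fubini interchange: the outer integral over $(y,s)\in Q\times(0,\ell(Q))$ and the integral over $(x,r)$ against $d\mu$ are swapped, and for each fixed $(x,r)$ the set of $(y,s)$ with $s<\ell(Q)$, $y\in Q$ and $B((y,s),s/2)\cap B((x,r),r/2)\neq\emptyset$ has $\int\int \1\,\frac{ds\,dy}{s}\lesssim 1$ (a bounded-overlap / fixed-scale computation, since $s\approx r$ pins the $s$-integral to a dyadic band and the $y$-integral is over a set of measure $\lesssim r^n$ while $\frac1{\abs Q}\cdot r^n\le 1$ when $r\lesssim\ell(Q)$). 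One then recognizes the resulting expression as $\frac{1}{\abs Q}\int_{Q^*\times(0,C\ell(Q))}\frac{1}{r}\,d\mu(x,r)$ for a slightly enlarged cube $Q^*\supset Q$; enlarging the supremizing cube only enlarges $\mathfrak{C}_1^+(t\mu)(\tilde x)$ by a dimensional constant, so taking the supremum over $Q\owns\tilde x$ yields
\[
\widetilde{\mathfrak{C}}_1^+(t\arr H)(\tilde x)\le C\,\mathfrak{C}_1^+(t\mu)(\tilde x),
\]
valid whenever the right-hand side is finite (finiteness of $\mathfrak{C}_1^+(t\mu)(\tilde x)$ justifies all the interchanges via Tonelli, since every integrand is nonnegative).

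The main obstacle I anticipate is the bookkeeping of scales in the geometric comparison: one must check carefully that the support condition $B((y,s),s/2)\cap B((x,r),r/2)\neq\emptyset$ really does force $s$ and $r$ to be comparable \emph{and} both comparable to the relevant heights, so that the powers of $r$ and $s$ balance to give precisely the $1/t$ kernel of $\mathfrak{C}_1^+$ with no residual scale factor — and, relatedly, that after the Fubini swap the $(y,s)$-integral against $\frac{ds\,dy}{s}$ is genuinely $O(1)$ uniformly in $(x,r)$ (rather than, say, logarithmically divergent), which is where the averaging in the definition of $\widetilde{\mathfrak{C}}_1^+$ is doing real work. Once those scale comparisons are pinned down, the remainder is a routine change of variables plus the elementary observation that dilating a cube costs only a dimensional constant in $\mathfrak{C}_1^+$.
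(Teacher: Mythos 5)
Your overall strategy is sound and is in substance the same as the paper's: bound the Whitney average of $t\arr H$ by superposition (the paper uses Cauchy--Schwarz against $d\mu$ restricted to $V(y,s)=\{(x,r):B((x,r),r/2)\cap B((y,s),s/2)\neq\emptyset\}$, you use Minkowski's integral inequality, and both routes give $\bigl(\fint_{B((y,s),s/2)}\abs{t\arr H}^2\bigr)^{1/2}\leq C s^{-n}\mu(V(y,s))$ up to constants), then use the geometric fact that intersection of the two Whitney balls forces $r/3<s<3r$, swap the order of integration, and observe that for each fixed $(x,r)$ the $(y,s)$-integral contributes a dimensional constant while the support condition confines $(x,r)$ to an enlarged Carleson box. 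Minkowski versus Cauchy--Schwarz is a cosmetic difference; the skeleton is identical to the paper's proof.

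However, the scale bookkeeping in your write-up is not internally consistent, and taken literally it proves the wrong estimate. Three concrete points. First, $\frac{1}{r^{\dmn}}\cdot\frac{r^{\dmn+1}}{s^{\dmn}}=\frac{r}{s^{\dmn}}$, not $\frac{r}{s}$. Second, the claim that $\int\!\!\int\1_{V(x,r)}\frac{ds\,dy}{s}\lesssim 1$ is false: the $y$-slice of $V(x,r)$ has measure $\approx r^{n}$ and the $s$-band has length $\approx r$ with $s\approx r$, so that integral is $\approx r^{n}$; what is genuinely $O(1)$, uniformly in $(x,r)$, is $\int_{V(x,r)}\frac{r}{s^{\dmn}}\,\frac{ds\,dy}{s}$, i.e., the indicator integrated against the full leftover kernel. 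Third, and most importantly, the correct outcome of the Fubini swap is $\frac{C}{\abs{Q}}\int_{Q^*\times(0,C\ell(Q))}d\mu(x,r)$ with kernel $1$, not $\frac1r\,d\mu(x,r)$: since $\frac1t\,d\abs{t\mu}=d\mu$, it is the unweighted box mass of $\mu$ that gives $\mathfrak{C}_1^+(t\mu)$, whereas $\frac{1}{\abs{Q}}\int\frac1r\,d\mu$ is (up to cube enlargement) $\mathfrak{C}_1^+(\mu)$, which is not controlled by $\mathfrak{C}_1^+(t\mu)$ (consider a unit point mass at height $r_0\to0^+$ above a fixed point). So the final displayed expression in your argument, as written, would not yield the lemma. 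The repair is only a recount of powers: each fixed $(x,r)$ contributes at most a dimensional constant times $d\mu(x,r)$, supported in $4Q\times(0,4\ell(Q))$, after which the proof closes exactly as in the paper.
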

\begin{proof}
Let $W(y,s)=B((y,s),s/2)$, and let $V(y,s)=\{(x,r):W(y,s)\cap W(x,r)\neq\emptyset\}$.
Then
\begin{align*}\fint_{W(y,s)} \abs{\arr H(z,t)}^2\,dz\,dt
&=
\fint_{W(y,s)} \abs[bigg]{\int_{V(y,s)} \frac{1}{r^\dmn}\arr H_{(x,r)}(z,t)\,d\mu(x,r)}^2\,dz\,dt
.\end{align*}
By H\"older's inequality,
\begin{align*}\fint_{W(y,s)} \abs{\arr H(z,t)}^2\,dz\,dt
&\leq
\fint_{W(y,s)} \int_{V(y,s)} \frac{\mu(V(y,s))}{r^{2n+2}} \abs{\arr H_{(x,r)}(z,t)}^2\,d\mu(x,r)\,dz\,dt
.\end{align*}
Changing the order of integration, we see that
\begin{align*}\fint_{W(y,s)} \abs{\arr H(z,t)}^2\,dz\,dt
&\leq
\int_{V(y,s)} \frac{\mu(V(y,s))}{r^{2n+2}} \fint_{W(y,s)} \abs{\arr H_{(x,r)}(z,t)}^2\,dz\,dt\,d\mu(x,r)
.\end{align*}
A straightforward computation yields that $V(y,s)$ is the ellipsoid
\begin{align}
\label{eqn:V}
V(y,s)
&=
	\left\{(x,r):\frac43\abs{x-y}^2+\left(r-\frac{5}{3}s\right)^2 < \left(\frac{4}{3}s\right)^2\right\}
. \end{align} 
In particular, if $(x,r)\in V(y,s)$ then $\frac{1}{3}r< s< 3r$. Thus, $\abs{W(x,r)}\approx \abs{W(y,s)}$ and so
\begin{align*}\fint_{W(y,s)} \abs{\arr H(z,t)}^2\,dz\,dt
&\leq
C\frac{\mu(V(y,s))}{s^{2n+2}} \int_{V(y,s)} \fint_{W(x,r)} \abs{\arr H_{(x,r)}(z,t)}^2\,dz\,dt\,d\mu(x,r)
.\end{align*}
Recalling the $L^2$ norm of $\arr H_{(x,r)}$, we see that
\begin{align*}
\biggl(\fint_{W(y,s)} \abs{\arr H(z,t)}^2\,dz\,dt\biggr)^{1/2}
&\leq 
\frac{C}{s^\dmn}\mu(V(y,s))
=
\frac{C}{s^\dmn}\int_{V(y,s)} d\mu(x,r)
.\end{align*}
Then
\begin{align*}
\widetilde{\mathfrak{C}}_1^+ (t\arr H)(\tilde x) 
&=
\sup_{Q\owns \tilde x} \frac{1}{\abs{Q}}\int_Q \int_0^{\ell(Q)} \biggl(\fint_{W(y,s)} \abs{t \arr H(z,t)}^2\,dz\,dt\biggr)^{1/2}\,\frac{ds\,dy}{s}
\\&\leq
\sup_{Q\owns \tilde x} \frac{1}{\abs{Q}}\int_Q \int_0^{\ell(Q)} \frac{C}{s^\dmnMinusOne}\int_{V(y,s)} d\mu(x,r)\,\frac{ds\,dy}{s}
.\end{align*}
By formula~\eqref{eqn:V}, if $(y,s)\in Q\times(0,\ell(Q))$ then $V(y,s)\subset 4Q\times(0,3\ell(Q))$. Recall that $V(y,s)=\{(x,r):W(x,r)\cap W(y,s)\neq \emptyset\}$ and so $(x,r)\in V(y,s)$ if and only if $(y,s)\in V(x,r)$. Thus,
\begin{align*}
\widetilde{\mathfrak{C}}_1^+ (t\arr H)(\tilde x) 
&\leq
C\sup_{Q\owns \tilde x} \frac{1}{\abs{Q}}\int_{4Q} \int_0^{4\ell(Q)} \int_{V(x,r)}\,\frac{ds\,dy}{s^\dmn} d\mu(x,r)
.\end{align*}
But $\int_{V(x,r)}\,\frac{ds\,dy}{s^\dmn}$ is a constant. Renaming the variables $(x,r)$ to $(x,t)$, we see that
\begin{align*}
\widetilde{\mathfrak{C}}_1^+ (t\arr H)(\tilde x) 
&\leq
C\sup_{Q\owns \tilde x} \frac{1}{\abs{Q}}\int_{4Q} \int_0^{4\ell(Q)} \frac{t}{t}d\mu(x,t)
=4^nC\mathfrak{C}_1^+(t\mu)(\tilde x)
\end{align*}
as desired.
\end{proof}

\begin{proof}[Proof of Lemma~\ref{lem:N<C}]
Let $\arr u\in L^2_{loc}(\R^\dmn_+)$ be such that 
\begin{equation*}\sup_{\arr H\in L^2_c(\R^\dmn_+)\setminus\{0\}} \frac{\abs{\langle \arr u,\arr H\rangle_{\R^\dmn_+}}} {\doublebar{\widetilde{\mathfrak{C}}_1^+(t\arr H)}_{L^{p'}(\R^n)}}
<\infty.
\end{equation*}
Let $K_\varepsilon={\{(x,t):\varepsilon\leq t\leq 1/\varepsilon,\> \abs{x}\leq 1/\varepsilon\}}$. Define $\arr u_\varepsilon=\1_{K_\varepsilon}\arr u$. By the monotone convergence theorem,
\begin{equation*}\doublebar{\widetilde N_+\arr u}_{L^p(\R^n)} = \lim_{\varepsilon\to 0^+}\doublebar{\widetilde N_+\arr u_\varepsilon}_{L^p(\R^n)}= \sup_{\varepsilon>0}\doublebar{\widetilde N_+\arr u_\varepsilon}_{L^p(\R^n)}.\end{equation*}
Thus we need only bound $\doublebar{\widetilde N_+\arr u_\varepsilon}_{L^p(\R^n)}$, uniformly in~$\varepsilon>0$.
We observe that if $\varepsilon>0$, then $\arr u_\varepsilon\in L^2(\R^\dmn_+)$, and $\widetilde N_+\arr u_\varepsilon$ is bounded and compactly supported.

We now construct an $\arr H_\varepsilon$ that will allow us to bound $\widetilde N_+ \arr u_\varepsilon$. 
Let $W(x,r)=B((x,r),r/2)$, and let $U_\varepsilon(x,r)^2=\fint_{W(x,r)} \abs{\arr u_\varepsilon}^2$, so that $\widetilde N_+\arr u_\varepsilon=N_+U_\varepsilon$.
By \cite[Theorem~5.1 and formula~(2.12)]{AlvM87},
there is a (nonnegative) measure $\mu$ with $\doublebar{{\mathfrak{C}}_1^+ (t\mu)}_{L^{p'}(\R^n)}\leq C_p$ and with \begin{equation*}\doublebar{N_+U_\varepsilon}_{L^p(\R^n)} = \int_{\R^\dmn_+} U_\varepsilon(y,s)\,d\mu(y,s).\end{equation*}
Let 
\begin{equation*}\arr H{}_{(x,r)}^\varepsilon = \begin{cases}\frac{1}{U_\varepsilon(x,r)}\1_{W(x,r)} \arr u_\varepsilon,& U_\varepsilon(x,r)>0,\\ \arr 0, &U_\varepsilon(x,r)=0,\end{cases}\end{equation*}
so that
\begin{equation*}\biggl(\fint_{W(x,r)} \abs{\arr H{}_{(x,r)}^\varepsilon}^2\biggr)^{1/2}\leq 1
,\quad
U_\varepsilon(x,r)
= \frac{1}{\abs{W(x,r)}} \langle \arr u,\arr H{}_{(x,r)}^\varepsilon\rangle_{\R^\dmn_+}
.\end{equation*}
Observe that there is a constant $c_n$ such that $\abs{W(x,r)}=r^\dmn/c_n$ for all $x\in\R^n$ and all $r>0$. Then
\begin{align*}\doublebar{\widetilde N_+\arr u_\varepsilon}_{L^p(\R^n)}
&=
\int_{\R^\dmn_+} U_\varepsilon(x,r)\,d\mu(x,r)
\\&=
\int_{\R^\dmn_+} \frac{c_n}{r^\dmn} \langle \arr u,\arr H{}_{(x,r)}^\varepsilon\rangle_{\R^\dmn_+}\,d\mu(x,r)
.\end{align*}
Changing the order of integration, we see that
\begin{equation*}
\doublebar{\widetilde N_+\arr u_\varepsilon}_{L^p(\R^n)}
=\langle \arr u,\arr H_\varepsilon\rangle_{\R^\dmn_+}
\quad\text{where}\quad
\arr H_\varepsilon(z,t)=\int_{\R^\dmn_+} \frac{c_\dmnMinusOne}{r^\dmn}\arr H{}_{(x,r)}^\varepsilon(z,t)\,d\mu(x,r).\end{equation*}
We observe that $\arr H_\varepsilon$ is compactly supported.
By Lemma~\ref{lem:C:L2} and assumption on~$\mu$,
\begin{align*}
\doublebar{\widetilde{\mathfrak{C}}_1^+ (t\arr H)}_{L^{p'}(\R^n)}
&\leq
C\doublebar{\mathfrak{C}_1^+(t\mu)}_{L^{p'}(\R^n)}
\leq CC_p
\end{align*}
and so
\begin{align*}\doublebar{\widetilde N_+\arr u_\varepsilon}_{L^p(\R^n)}
&=\langle \arr u,\arr H_\varepsilon\rangle_{\R^\dmn_+}
\leq \frac{C_p}{\doublebar{\widetilde{\mathfrak{C}}_1^+(t\arr H_\varepsilon)}_{L^{p'}(\R^n)}} \langle \arr u,\arr H_\varepsilon\rangle_{\R^\dmn_+}
\\&\leq
C_p\sup_{\arr H\in L^2_c(\R^\dmn_+)\setminus\{0\}} 
\frac{\abs{\langle \arr u,\arr H\rangle_{\R^\dmn_+}}} {\doublebar{\widetilde{\mathfrak{C}}_1^+(t\arr H)}_{L^{p'}(\R^n)}} \end{align*}
as desired.
\end{proof}

We will use Lemma~\ref{lem:N<C} to prove the nontangential bounds~\cref{eqn:S:N:intro,eqn:D:N:intro}. In proving the bounds~\cref{eqn:S:N:rough:intro,eqn:D:N:rough:intro}, it will be convenient to be able to introduce an additional derivative in the inner product on the right hand side. Thus, we now prove the following lemma.

\begin{lem}\label{lem:N<C:vertical} Let $u\in \dot W^{m,2}_{loc}(\R^\dmn_+)$ satisfy $\widetilde N_+(\nabla^{m-1} u)\in L^2(\R^n)$. Let $p$ satisfy $1<p<2$ and let $1/p+1/p'=1$.
Then
\begin{equation*}
\doublebar{\widetilde N_+(\nabla^{m-1}u)}_{L^p(\R^n)}
\leq C_p\sup_{\arr \Psi}
\frac{\abs{\langle \arr \Psi,\nabla^{m}u\rangle_{\R^\dmn_+}}} {\doublebar{\widetilde{\mathfrak{C}}_1^+(t\,\partial_t\arr\Psi)} _ {L^{p'}(\R^n)}}
\end{equation*}
where the supremum is over all compactly supported $\arr\Psi\in L^2(\R^\dmn_+)$ that are not identically zero and have a weak vertical derivative also in $L^2(\R^\dmn_+)$.

\end{lem}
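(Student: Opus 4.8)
The plan is to combine Lemma~\ref{lem:N<C}, applied to the array $\nabla^{m-1}u$, with an integration by parts in $t$ that trades a pairing of $\nabla^{m-1}u$ against a dual array for a pairing of $\nabla^m u$ against the vertical antiderivative of that array. First I would reduce to an a~priori finite situation exactly as in the proof of Lemma~\ref{lem:N<C}: working with the arrays $\arr H_\varepsilon\in L^2_c(\R^\dmn_+)$ produced there from $\arr u_\varepsilon=\1_{K_\varepsilon}\nabla^{m-1}u$, one has $\doublebar{\widetilde N_+\arr u_\varepsilon}_{L^p(\R^n)}=\langle\nabla^{m-1}u,\arr H_\varepsilon\rangle_{\R^\dmn_+}$ with $\doublebar{\widetilde{\mathfrak{C}}_1^+(t\arr H_\varepsilon)}_{L^{p'}(\R^n)}\le CC_p$, and $\doublebar{\widetilde N_+(\nabla^{m-1}u)}_{L^p(\R^n)}=\sup_\varepsilon\doublebar{\widetilde N_+\arr u_\varepsilon}_{L^p(\R^n)}$. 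So it is enough to bound $\langle\nabla^{m-1}u,\arr H_\varepsilon\rangle$ in terms of the supremum $S$ on the right-hand side of the lemma, uniformly in $\varepsilon$.

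Fix such an $\arr H_\varepsilon$, supported in a compact set $Q_0\times(a,b)\subset\R^\dmn_+$. Let $\rho_N\ge0$ be supported in $[N,2N]$ with $\int_0^\infty\rho_N=1$ and $\doublebar{\rho_N}_{L^\infty}\le C/N$, put $\overline{(H_\varepsilon)}_\gamma(x)=\int_0^\infty(H_\varepsilon)_\gamma(x,t)\,dt$, and split $\arr H_\varepsilon=\arr H_\varepsilon^1+\arr H_\varepsilon^0$ with $(H_\varepsilon^0)_\gamma(x,t)=\overline{(H_\varepsilon)}_\gamma(x)\,\rho_N(t)$, so that each component of $\arr H_\varepsilon^1$ has vanishing vertical mean. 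Define the length-$m$ array $\arr\Psi_N$ by $(\Psi_N)_{\gamma+\vec e_\dmn}(x,t)=\int_0^t(H_\varepsilon^1)_\gamma(x,s)\,ds$ for $|\gamma|=m-1$ and $(\Psi_N)_\alpha=0$ otherwise; the vanishing-mean property makes $\arr\Psi_N$ compactly supported in $\R^\dmn_+$, and since $\partial_t\arr\Psi_N\in L^2(\R^\dmn_+)$ it is an admissible competitor for $S$. Integrating by parts in $t$ (no boundary terms, as $u\in\dot W^{m,2}_{loc}(\R^\dmn_+)$ and $\arr\Psi_N$ is compactly supported in $\R^\dmn_+$) gives $\langle\arr\Psi_N,\nabla^m u\rangle_{\R^\dmn_+}=-\langle\arr H_\varepsilon^1,\nabla^{m-1}u\rangle_{\R^\dmn_+}$, hence
\begin{equation*}
\abs{\langle\nabla^{m-1}u,\arr H_\varepsilon\rangle_{\R^\dmn_+}}\le\abs{\langle\arr\Psi_N,\nabla^m u\rangle_{\R^\dmn_+}}+\abs{\langle\nabla^{m-1}u,\arr H_\varepsilon^0\rangle_{\R^\dmn_+}}\le S\doublebar{\widetilde{\mathfrak{C}}_1^+(t\,\partial_t\arr\Psi_N)}_{L^{p'}(\R^n)}+\abs{\langle\nabla^{m-1}u,\arr H_\varepsilon^0\rangle_{\R^\dmn_+}}.
\end{equation*}
By sublinearity of the Carleson operator, $\widetilde{\mathfrak{C}}_1^+(t\,\partial_t\arr\Psi_N)\le\widetilde{\mathfrak{C}}_1^+(t\arr H_\varepsilon)+\widetilde{\mathfrak{C}}_1^+(t\arr H_\varepsilon^0)$ (re-indexing length-$(m-1)$ arrays as length-$m$ arrays does not change the Carleson operator), and a direct tent-space estimate using that $t\arr H_\varepsilon^0$ is bounded by $\overline{(\arr H_\varepsilon)}$ and concentrated at height $\sim N$ yields $\doublebar{\widetilde{\mathfrak{C}}_1^+(t\arr H_\varepsilon^0)}_{L^{p'}(\R^n)}^{p'}\le C\doublebar{\overline{(\arr H_\varepsilon)}}_{L^2(\R^n)}^{p'}N^{n(1-p'/2)}$, which tends to $0$ as $N\to\infty$ precisely because $p<2$, i.e. $p'>2$. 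Thus the $\arr\Psi_N$-term contributes $\le CC_pS$ in the limit.

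The remaining term $\langle\nabla^{m-1}u,\arr H_\varepsilon^0\rangle_{\R^\dmn_+}$ is the main obstacle: it lives at height $\sim N$, where $\nabla^{m-1}u$ is not controlled by anything that has already been made small, and a crude Cauchy--Schwarz bound produces a constant independent of $N$. I would handle it by integrating by parts in $t$ a second time, writing $\rho_N=\partial_t P_N$ with $P_N$ a smoothed step: this expresses $\langle\nabla^{m-1}u,\arr H_\varepsilon^0\rangle$ as a pairing of the fixed compactly supported array $\overline{(\arr H_\varepsilon)}$ against the slice $\nabla^{m-1}u(\,\cdot\,,N)$, plus a term $\langle\arr T_N,\nabla^m u\rangle$ with $\arr T_N$ again admissible and $t\,\partial_t\arr T_N$ of the same size as $t\arr H_\varepsilon^0$, so that $\abs{\langle\arr T_N,\nabla^m u\rangle}\le S\doublebar{\widetilde{\mathfrak{C}}_1^+(t\arr H_\varepsilon^0)}_{L^{p'}}\to0$. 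For the slice pairing I would use the hypothesis $\widetilde N_+(\nabla^{m-1}u)\in L^2(\R^n)$, which gives $\int_{\R^n\times(N,2N)}\abs{\nabla^{m-1}u}^2\le CN\doublebar{\widetilde N_+(\nabla^{m-1}u)}_{L^2(\R^n)}^2$ and hence a sequence $N_k\to\infty$ along which the slices $\nabla^{m-1}u(\,\cdot\,,N_k)$ are bounded in $L^2$ on the (fixed) support of $\overline{(\arr H_\varepsilon)}$; extracting a weak-$L^2$ limit of the vertical averages $\fint_{N_k}^{2N_k}\nabla^{m-1}u(\,\cdot\,,t)\,dt$ and using the equality-of-mixed-partials relations satisfied by $\nabla^{m-1}u$ forces that limit to pair to zero against $\overline{(\arr H_\varepsilon)}$, so the slice contribution vanishes in the limit. (Alternatively, once the a~priori finiteness of $\doublebar{\widetilde N_+(\nabla^{m-1}u)}_{L^p(\R^n)}$ is in hand one may bound this term directly by $C\doublebar{\widetilde N_+(\nabla^{m-1}u)}_{L^p}\doublebar{\widetilde{\mathfrak{C}}_1^+(t\arr H_\varepsilon^0)}_{L^{p'}}$ via Lemma~\ref{lem:N>C} and absorb.) Combining the three estimates and letting first $N\to\infty$ and then $\sup_\varepsilon$ yields $\doublebar{\widetilde N_+(\nabla^{m-1}u)}_{L^p(\R^n)}\le C_pS$.
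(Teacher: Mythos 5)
Your main construction is the same as the paper's proof of Lemma~\ref{lem:N<C:vertical}: reduce via Lemma~\ref{lem:N<C} to pairings $\langle \arr H,\nabla^{m-1}u\rangle_{\R^\dmn_+}$ with $\arr H\in L^2_c(\R^\dmn_+)$, subtract from $\arr H$ its vertical average redistributed at height $\sim N$ (your $\arr H_\varepsilon^0$, the paper's $\arr G_T$), take the vertical antiderivative of the mean-zero part to get a compactly supported competitor $\arr\Psi_N$, integrate by parts, and use that $\doublebar{\widetilde{\mathfrak{C}}_1^+(t\arr H_\varepsilon^0)}_{L^{p'}(\R^n)}\lesssim N^{n/p'-n/2}\doublebar{\overline{(\arr H_\varepsilon)}}_{L^2(\R^n)}\to0$ because $p'>2$ (the paper gets this from Lemma~\ref{lem:C:local}; your direct computation gives the same exponent). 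Up to that point your argument is correct and matches the paper.

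The gap is in the remaining term $\langle\nabla^{m-1}u,\arr H_\varepsilon^0\rangle_{\R^\dmn_+}$, where you depart from the paper. First, your second integration by parts uses $P_N$ with $P_N(t)=1$ for $t>2N$, so the array $\arr T_N$ is not compactly supported; any cutoff reintroduces a boundary slice at another height, so the maneuver only reproduces the same kind of term and is circular. Second, the vanishing of the slice pairing is not justified: from the hypothesis you extract only $\int_{\R^n\times(N,2N)}\abs{\nabla^{m-1}u}^2\le CN\doublebar{\widetilde N_+(\nabla^{m-1}u)}_{L^2(\R^n)}^2$, which gives boundedness (not smallness) of slices in $L^2$ on $\supp\overline{(\arr H_\varepsilon)}$ along a subsequence; a weak limit of such slices is just some array satisfying mixed-partials compatibility, and compatibility carries no orthogonality against an arbitrary compactly supported $L^2$ array $\overline{(\arr H_\varepsilon)}$, so there is no reason the limiting pairing is zero. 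Third, the parenthetical absorption via Lemma~\ref{lem:N>C} is circular: it requires $\doublebar{\widetilde N_+(\nabla^{m-1}u)}_{L^p(\R^n)}<\infty$, which is precisely what is being proved and is not available a priori (the truncation $\arr u_\varepsilon$ does not help, since the pairing involves the full $\nabla^{m-1}u$; and running it at exponent $2$ fails because $\doublebar{\widetilde{\mathfrak{C}}_1^+(t\arr H_\varepsilon^0)}_{L^2(\R^n)}$ is $O(1)$, not $o(1)$). The paper instead kills this term quantitatively from the $L^2$ hypothesis: by Cauchy--Schwarz, $\abs{\langle\arr G_T,\nabla^{m-1}u\rangle_{\R^\dmn_+}}\le \doublebar{\arr\theta}_{L^2(\R^n)}\bigl(\fint_{t\sim T}\int_{\R^n}\abs{\nabla^{m-1}u}^2\bigr)^{1/2}$, and the slab average is bounded by $C\doublebar{F_T}_{L^2(\R^n)}^2$ with $F_T(y)=\fint_{\abs{y-z}<T}\widetilde N_+(\nabla^{m-1}u)(z)\,dz$; since $F_T\to0$ pointwise and $F_T\le\mathcal{M}(\widetilde N_+(\nabla^{m-1}u))\in L^2(\R^n)$, dominated convergence gives $F_T\to0$ in $L^2(\R^n)$, so the term vanishes as $T\to\infty$. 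Replacing your slice/weak-limit step by this estimate repairs the proof.
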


\begin{proof}
By Lemma~\ref{lem:N<C},
\begin{equation*}
\doublebar{\widetilde N_+(\nabla^{m-1}u)}_{L^p(\R^n)}
\leq C_p\sup_{\arr H\in L^2_c(\R^\dmn_+)\setminus\{\arr 0\}}
\frac{\abs{\langle \arr H,\nabla^{m-1}u\rangle_{\R^\dmn_+}}} {\doublebar{\widetilde{\mathfrak{C}}_1^+(t\arr H)}_{L^{p'}(\R^n)}}
.\end{equation*}
Choose some such $\arr H$.
Let $\arr \theta(x)=\int_0^\infty \arr H(x,t)\,dt$; because $\arr H$ is compactly supported we have that $\arr \theta(x)\in L^2(\R^n)$. Let $\arr G_T(x,t)=\arr \theta(x)\frac{2}{T}\chi_{(3T/4,5T/4)}(t)$, where $T>0$ is a real number and where $\chi_{(3T/4,5T/4)}$ denotes the characteristic function of the interval $(3T/4,5T/4)$.
Let $\arr H_T$ be such that $\arr H=\arr G_T+\arr H_T$.

Then $\int_0^\infty \arr H_T(x,t)\,dt=0$ for almost every $x\in\R^n$. 
Let 
\begin{equation*}(\Psi_T)_\alpha(x,t)=\int_0^t (H_T)_\gamma(x,s)\,ds\quad\text{where} \quad\alpha=\gamma+\vec e_\dmn,\end{equation*}
where $\vec e_\dmn$ is the unit vector in the $\dmn$th direction,
and let $(\Psi_T)_\alpha=0$ if $\alpha_\dmn=0$. Then $\arr\Psi_T\in L^2(\R^\dmn_+)$ and is compactly supported. Furthermore, $\partial_t (\arr\Psi_T)_\alpha(x,t)=(H_T)_\gamma(x,t)$, and so
$\langle \arr H_T,\nabla^{m-1}u\rangle_{\R^\dmn_+}=-\langle \arr\Psi_T,\nabla^m u\rangle_{\R^\dmn_+}$.

Thus,
\begin{equation*}\frac{\abs{\langle \arr H,\nabla^{m-1}u\rangle_{\R^\dmn_+}}} {\doublebar{\widetilde{\mathfrak{C}}_1^+(t\arr H)}_{L^{p'}(\R^n)}}
=
\frac{\abs{\langle \arr \Psi_T,\nabla^{m}u\rangle_{\R^\dmn_+}-\langle \arr G_T,\nabla^{m-1}u\rangle_{\R^\dmn_+}}} {\doublebar{\widetilde{\mathfrak{C}}_1^+ (t\,\partial_t\arr\Psi_T+t\arr G_T) }_{L^{p'}(\R^n)}}
\end{equation*}
for any $T>0$.

By the definition~\eqref{eqn:Carleson:norm} of $\widetilde{\mathfrak{C}}_1^+$, if $x\in\R^n$ then
\begin{equation*}\widetilde{\mathfrak{C}}_1^+(t\arr G_T)(x)
\leq CT^{-n/2}\doublebar{\arr\theta}_{L^2(\R^n)}.\end{equation*}
Suppose that $T$ is large enough that there is a cube $Q$ with $\ell(Q)=5T/4$ and with $\supp \arr\theta\subset 10Q$. By Lemma~\ref{lem:C:local} with $r=p'$, 
\begin{equation*}\doublebar{\widetilde{\mathfrak{C}}_1^+(t\arr G_T)}_{L^{p'}(\R^n)}
\leq C_{p'}\doublebar{\widetilde{\mathfrak{C}}_1^+(t\arr G_T)}_{L^{p'}(16Q)}
\leq C_{p'} T^{n/p'-n/2}\doublebar{\arr\theta}_{L^2(\R^n)}.\end{equation*}
If $p<2$ and $1/p+1/p'=1$, then $p'>2$ and so $\doublebar{\widetilde{\mathfrak{C}}_1^+(t\arr G_T)}_{L^{p'}(\R^n)}\to 0$ as $T\to \infty$. Because $(G_T)_\gamma+\partial_\dmn(\Psi_T)_\alpha=H_\gamma$, this implies that $\doublebar{\widetilde{\mathfrak{C}}_1^+(t\,\partial_t\arr\Psi_T)}_ {L^{p'}(\R^n)}\to \doublebar{\widetilde{\mathfrak{C}}_1^+(t\arr H)}_{L^{p'}(\R^n)}$ as $T\to \infty$; by assumption $\doublebar{\widetilde{\mathfrak{C}}_1^+(t\arr H)}_{L^{p'}(\R^n)}>0$, and so 
\begin{equation*}\frac{\abs{\langle \arr H,\nabla^{m-1}u\rangle_{\R^\dmn_+}}} {\doublebar{\widetilde{\mathfrak{C}}_1^+(t\arr H)}_{L^{p'}(\R^n)}}
=
\lim_{T\to\infty}
\frac{\abs{\langle \arr \Psi_T,\nabla^{m-1}u\rangle_{\R^\dmn_+}-\langle \arr G_T,\nabla^{m-1}u\rangle_{\R^\dmn_+}}} {\doublebar{\widetilde{\mathfrak{C}}_1^+ (t\,\partial_t\arr\Psi_T) }_{L^{p'}(\R^n)}}
.\end{equation*}

We claim that $\langle \arr G_T,\nabla^{m-1}u\rangle_{\R^\dmn_+}\to 0$ as $T\to\infty$ as well. We compute that
\begin{align*}
{\abs{\langle \arr G_T,\nabla^{m-1} u\rangle_{\R^\dmn_+}}}
&\leq
\fint_{3T/4}^{5T} \int_{\R^n} \abs{\arr\theta(x)} \abs{\nabla^{m-1}u(x,t)}\,dx\,dt
\\&\leq \doublebar{\arr\theta}_{L^2(\R^n)}\fint_{3T/4}^{5T}  \doublebar{\nabla^{m-1}u(\,\cdot\,,t)}_{L^2(\R^n)}\,dt
.\end{align*}
By H\"older's inequality,
\begin{equation*}\fint_{3T/4}^{5T}  \doublebar{\nabla^{m-1}u(\,\cdot\,,t)}_{L^2(\R^n)}\,dt
\leq
\biggl(\fint_{3T/4}^{5T}  \int_{\R^n} \abs{\nabla^{m-1}u(x,t)}^2\,dx\,dt
\biggr)^{1/2}
.\end{equation*}
Introducing a term $\fint_{\abs{x-y}<T/4}\,dy$ and changing the order of integration, we see that
\begin{align*}
\fint_{3T/4}^{5T}  \int_{\R^n} \abs{\nabla^{m-1}u(x,t)}^2\,dx\,dt
&\leq
\int_{\R^n} \fint_{3T/4}^{5T}  
\fint_{\abs{x-y}<T/4}\abs{\nabla^{m-1}u(x,t)}^2\,dx\,dt\,dy.\end{align*}
Observe that 
$\{(x,t):\abs{x-y}<T/4,\abs{t-T}<T/4\}\subset B((z,T),T/2)$, and that the two regions have comparable volume. Recalling the definition~\eqref{dfn:NTM:modified:+} of $\widetilde N_+$, we see that 
\begin{align*}
\fint_{3T/4}^{5T}  \int_{\R^n} \abs{\nabla^{m-1}u(x,t)}^2\,dx\,dt
&\leq
C_n\int_{\R^n} 
\biggl(\fint_{\abs{y-z}<T} \widetilde N_+(\nabla^{m-1} u)(z)\,dz\biggr)^2
\,dy.\end{align*}
Let $F_T(y)=\fint_{\abs{y-z}<T} \widetilde N_+(\nabla^{m-1} u)(z)\,dz$, so that
\begin{equation*}\abs{\langle \arr G_T,\nabla^{m-1} u\rangle_{\R^\dmn_+}}\leq C_n \doublebar{\arr\theta}_{L^2(\R^n)} \doublebar{F_T}_{L^2(\R^n)}.\end{equation*}
By H\"older's inequality, 
\begin{equation*}F_T(y)\leq C_n T^{-n/2} \doublebar{\widetilde N_+(\nabla^{m-1} u)}_{L^2(\R^n)},\end{equation*} and so $F_T(y)\to 0$ as $T\to \infty$ pointwise for each $y\in \R^n$. We also have that $F_T(y)\leq \mathcal{M}(\widetilde N_+(\nabla^{m-1} u))(y)$, where $\mathcal{M}$ is the Hardy-Littlewood maximal function. By boundedness of $\mathcal{M}$ on~$L^2(\R^n)$, $\mathcal{M}(\widetilde N_+(\nabla^{m-1} u))\in L^2(\R^n)$, and so by the dominated convergence theorem, $F_T\to 0$ in $L^2(\R^n)$ as $T\to\infty$.
Thus,
\begin{equation*}\lim_{T\to\infty} {\abs{\langle \arr G_T,\nabla^{m-1} u\rangle_{\R^\dmn_+}}}=0.\end{equation*}

Therefore,
\begin{align*}\frac{\abs{\langle \arr H,\nabla^{m-1}u\rangle_{\R^\dmn_+}}} {\doublebar{\widetilde{\mathfrak{C}}_1^+(t\arr H)}_{L^{p'}(\R^n)}}
&=
\lim_{T\to\infty}
\frac{\abs{\langle \arr \Psi_T,\nabla^{m-1}u\rangle_{\R^\dmn_+}-\langle \arr G_T,\nabla^{m-1}u\rangle_{\R^\dmn_+}}} {\doublebar{\widetilde{\mathfrak{C}}_1^+ (t\,\partial_t\arr\Psi_T) }_{L^{p'}(\R^n)}}
\\&=
\lim_{T\to\infty}
\frac{\abs{\langle \arr \Psi_T,\nabla^{m-1}u\rangle_{\R^\dmn_+}}} {\doublebar{\widetilde{\mathfrak{C}}_1^+ (t\,\partial_t\arr\Psi_T) }_{L^{p'}(\R^n)}}
\leq\sup_{\arr\Psi} \frac{\abs{\langle \arr \Psi,\nabla^{m-1}u\rangle_{\R^\dmn_+}}} {\doublebar{\widetilde{\mathfrak{C}}_1^+ (t\,\partial_t\arr\Psi) }_{L^{p'}(\R^n)}}
.\end{align*}
Recalling that Lemma~\ref{lem:N<C} implies that
\begin{equation*}
\doublebar{\widetilde N_+\arr u}_{L^p(\R^n)}
\leq C_p\sup_{\arr H}
\frac{\abs{\langle \arr H,\nabla^{m-1}u\rangle_{\R^\dmn_+}}} {\doublebar{\widetilde{\mathfrak{C}}_1^+(t\arr H)}_{L^{p'}(\R^n)}}
\end{equation*}
completes the proof.
\end{proof}

\section{The Newton potential}
\label{sec:newton}

We will establish the bounds on layer potentials of Theorem~\ref{thm:potentials} by duality with the Newton potential, as in \cite{HofMayMou15} and \cite[Section~9]{BarHM19A}. Thus, the present section is devoted to duality results for, and bounds on, the Newton potential.

Specifically, we will establish duality between the Newton potential and the double and single layer potentials in Section~\ref{sec:duality}. We will bound the Newton potential in Sections~\cref{sec:newton:boundary,sec:newton:N:regular,sec:newton:N:rough,sec:newton:lusin}. For ease of reference, the main bounds on the Newton potential established in the present paper are all listed in Corollary~\ref{cor:newton:final}. In Section~\ref{sec:p<2}, we will apply the duality results of Section~\ref{sec:duality} and the bounds of Sections \cref{sec:newton:N:regular,sec:newton:N:rough,sec:newton:lusin} to establish bounds on the double and single layer potentials; the bounds of Section~\ref{sec:newton:boundary} will be used in Sections~\cref{sec:newton:N:regular,sec:newton:N:rough,sec:newton:lusin}.

\subsection{Duality}
\label{sec:duality}

In this section we will prove the following lemma, that is, will establish appropriate duality relations between the Newton potential and the double and single layer potentials. In Section~\ref{sec:newton:boundary}, 
we will use these relations to establish bounds on the Newton potential. 
In Section~\ref{sec:p<2} we will reverse the argument and use these duality relations to establish bounds on the double and single layer potentials.

\begin{lem}\label{lem:newton:dual}
Let $L$ be an operator of the form~\eqref{eqn:weak} of order~$2m$ associated to bounded coefficients~$\mat A$ that satisfy the ellipticity condition~\eqref{eqn:elliptic}.

If $\arr\Psi\in L^2(\R^\dmn)$, and if $\arr g\in (\dot W\!A^{1/2,2}_{m-1}(\R^n))^*$, then we have the duality relation
\begin{align}
\label{eqn:S:Newton:dual}
\langle \Tr_{m-1} \Pi^{L^*}\arr \Psi,\arr g\rangle_{\R^n}
&= \langle \arr \Psi, \nabla^m\s^L\arr g\rangle_{\R^\dmn}
.\end{align}
If $\arr\Psi\in L^2(\R^\dmn_+)$, and if $\arr f\in \dot W\!A^{1/2,2}_{m-1}(\R^n)$, then we have the duality relation
\begin{align}
\label{eqn:D:Newton:dual}
\langle \M_{\mat A^*}^- \Pi^{L^*}(\1_+\arr\Psi),\arr f\rangle_{\R^n}
&=
	-\langle \arr \Psi, \nabla^m\D^{\mat A}\arr f\rangle_{\R^\dmn_+}
.\end{align}
If $\mat A$ is $t$-independent in the sense of formula~\eqref{eqn:t-independent}, if $\arr\Psi\in L^2(\R^\dmn)$ is zero in $\R^n\times(-\varepsilon,\varepsilon)$ for some $\varepsilon>0$, and if $\arr f\in \dot W\!A^{1/2,2}_{m-1}(\R^n)$, $\arr g\in (\dot W\!A^{1/2,2}_{m-1}(\R^n))^*$, and $\arr h\in L^2(\R^n)$, then
\begin{align}
\label{eqn:S:Newton:rough:dual}
\langle \Tr_m \Pi^{L^*}\arr\Psi,\arr h\rangle_{\R^n}
&= \langle \arr \Psi, \nabla^m\s^L_\nabla\arr h\rangle_{\R^\dmn}
,\\
\label{eqn:S:Newton:dual:vertical}
\langle \Tr_{m-1} \partial_\dmn\Pi^{L^*}\arr\Psi,\arr g\rangle_{\R^n}
&= -\langle \arr \Psi, \nabla^m\partial_\dmn\s^L\arr g\rangle_{\R^\dmn}
,\\
\label{eqn:D:Newton:dual:vertical}
\langle \M_{\mat A^*}^- \partial_\dmn\Pi^{L^*}(\1_+\arr\Psi),\arr f\rangle_{\R^n}
&=
	\langle \arr \Psi, \nabla^m\partial_\dmn\D^{\mat A}\arr f\rangle_{\R^\dmn_+}
.\end{align}
\end{lem}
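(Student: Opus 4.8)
The plan is to establish the five duality identities in the order \eqref{eqn:S:Newton:dual}, \eqref{eqn:D:Newton:dual}, \eqref{eqn:S:Newton:dual:vertical}, \eqref{eqn:D:Newton:dual:vertical}, \eqref{eqn:S:Newton:rough:dual}, deriving each from the ones before it. Every step is bookkeeping built on the defining variational identities \eqref{eqn:newton} for $\Pi^L$ and $\Pi^{L^*}$, \eqref{dfn:S} for $\s^L$, the formula \eqref{dfn:D:newton:-} for $\D^{\mat A}$, and the slice identity \eqref{eqn:neumann:W2} (and its obvious analogue in $\R^\dmn_-$) for $\M_{\mat A}^\pm$, together with the algebraic facts $\langle \mat A^*\arr F,\arr G\rangle=\langle \arr F,\mat A\arr G\rangle$ and $\langle\arr F,\arr G\rangle=\overline{\langle\arr G,\arr F\rangle}$. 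For \eqref{eqn:S:Newton:dual} I would note that both $\Pi^{L^*}\arr\Psi$ and $\s^L\arr g$ lie in $\dot W^{m,2}(\R^\dmn)$ and hence are admissible test functions; inserting $\varphi=\Pi^{L^*}\arr\Psi$ into \eqref{dfn:S} gives $\langle\nabla^m\Pi^{L^*}\arr\Psi,\mat A\nabla^m\s^L\arr g\rangle_{\R^\dmn}=\langle\Tr_{m-1}\Pi^{L^*}\arr\Psi,\arr g\rangle_{\R^n}$, and transferring $\mat A$ to the other factor, conjugating, and then using \eqref{eqn:newton} with test function $\s^L\arr g$ turns the left side into $\langle\arr\Psi,\nabla^m\s^L\arr g\rangle_{\R^\dmn}$. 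The same computation is valid for any $\arr g\in\dot B^{-1/2,2}_2(\R^n)$ — which I will need below — since $\Tr_{m-1}\Pi^{L^*}\arr\Psi\in\dot W\!A^{1/2,2}_{m-1}(\R^n)\subset\dot B^{1/2,2}_2(\R^n)$ by Remark~\ref{rmk:W2:trace}.

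For \eqref{eqn:D:Newton:dual} I would write $\arr f=\Tr_{m-1}^-F$ with $\nabla^m F\in L^2(\R^\dmn_-)$ (Remark~\ref{rmk:W2:trace}), so that by \eqref{dfn:D:newton:-} we have $\nabla^m\D^{\mat A}\arr f=-\nabla^m\Pi^L(\1_-\mat A\nabla^m F)$ in $\R^\dmn_+$, hence $-\langle\arr\Psi,\nabla^m\D^{\mat A}\arr f\rangle_{\R^\dmn_+}=\langle\1_+\arr\Psi,\nabla^m\Pi^L(\1_-\mat A\nabla^m F)\rangle_{\R^\dmn}$. Feeding $\varphi=\Pi^L(\1_-\mat A\nabla^m F)$ into \eqref{eqn:newton} for $\Pi^{L^*}$, conjugating, moving $\mat A^*$ across, and then feeding $\varphi=\Pi^{L^*}(\1_+\arr\Psi)$ into \eqref{eqn:newton} for $\Pi^L$ makes the two Newton potentials telescope, leaving $\langle\mat A^*\nabla^m\Pi^{L^*}(\1_+\arr\Psi),\nabla^m F\rangle_{\R^\dmn_-}$. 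Since $\1_+\arr\Psi$ vanishes in $\R^\dmn_-$, the function $\Pi^{L^*}(\1_+\arr\Psi)$ solves $L^*=0$ there, and the $\R^\dmn_-$ analogue of \eqref{eqn:neumann:W2} identifies this with $\langle\M_{\mat A^*}^-\Pi^{L^*}(\1_+\arr\Psi),\Tr_{m-1}^-F\rangle_{\R^n}=\langle\M_{\mat A^*}^-\Pi^{L^*}(\1_+\arr\Psi),\arr f\rangle_{\R^n}$, which is \eqref{eqn:D:Newton:dual}.

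The identities \eqref{eqn:S:Newton:dual:vertical} and \eqref{eqn:D:Newton:dual:vertical} I would deduce from the previous two by exploiting the $t$-independence of $\mat A$. Fix $\varepsilon$ with $\arr\Psi=0$ on $\R^n\times(-\varepsilon,\varepsilon)$ and, for $|s|<\varepsilon/2$, set $\arr\Psi^s(x,t)=\arr\Psi(x,t-s)\in L^2(\R^\dmn)$, noting $\arr\Psi^s$ vanishes on $\R^n\times(-\varepsilon/2,\varepsilon/2)$ and $\1_+\arr\Psi^s=(\1_+\arr\Psi)^s$. A change of variables in \eqref{eqn:newton} — legitimate precisely because $\mat A^*(x)$ does not see the shift $t\mapsto t+s$ — together with uniqueness of the Lax--Milgram solution gives $\nabla^m\Pi^{L^*}(\arr\Psi^s)(x,t)=\nabla^m\Pi^{L^*}\arr\Psi(x,t-s)$, so that $\Tr_{m-1}\Pi^{L^*}(\arr\Psi^s)$ and $\Tr_m\Pi^{L^*}(\arr\Psi^s)$ are $\nabla^{m-1}$ and $\nabla^m$ of $\Pi^{L^*}\arr\Psi$ restricted to the slice $\{t=-s\}$, and these vary smoothly in $s\in(-\varepsilon,\varepsilon)$ by interior regularity of solutions in the slab $\R^n\times(-\varepsilon,\varepsilon)$ (Lemma~\ref{lem:slices}). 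Then I would apply the already-proven \eqref{eqn:S:Newton:dual} to $\arr\Psi^s$ and differentiate at $s=0$: the left side $\langle\Tr_{m-1}\Pi^{L^*}(\arr\Psi^s),\arr g\rangle_{\R^n}$ has $s$-derivative $-\langle\Tr_{m-1}\partial_\dmn\Pi^{L^*}\arr\Psi,\arr g\rangle_{\R^n}$ (this quantity, involving $\nabla^{m-1}\partial_\dmn$, being unaffected by the additive normalization of $\Pi^{L^*}$), while on the right, after a change of variables, $\langle\arr\Psi^s,\nabla^m\s^L\arr g\rangle_{\R^\dmn}$ has $s$-derivative $\langle\arr\Psi,\nabla^m\partial_\dmn\s^L\arr g\rangle_{\R^\dmn}$ — differentiating under the integral being justified because $\arr\Psi$ is supported in $\{|t|\geq\varepsilon\}$, where $\nabla^m\s^L\arr g$ is a smooth solution and $\partial_\dmn\nabla^m\s^L\arr g$ is, by Caccioppoli, dominated by an $L^2$ function. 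This proves \eqref{eqn:S:Newton:dual:vertical}, and \eqref{eqn:D:Newton:dual:vertical} follows the same way from \eqref{eqn:D:Newton:dual} applied to $(\1_+\arr\Psi)^s$, the one extra ingredient being that $\M_{\mat A^*}^-$ applied to a vertical translate of a solution near the boundary returns the conormal datum on the translated slice (immediate from restricting \eqref{eqn:neumann:W2} to the slab), so that $s$-differentiation of the left side produces $-\langle\M_{\mat A^*}^-\partial_\dmn\Pi^{L^*}(\1_+\arr\Psi),\arr f\rangle_{\R^n}$.

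For the last identity \eqref{eqn:S:Newton:rough:dual} I would combine \eqref{eqn:S:Newton:dual} and \eqref{eqn:S:Newton:dual:vertical} with the definition \eqref{eqn:S:S:horizontal}--\eqref{eqn:S:S:vertical} of $\s^L_\nabla$. By linearity and density it suffices to take $\arr h=h\arr e_\alpha$, $|\alpha|=m$, with $h$ in a dense subclass on which \eqref{eqn:S:S:horizontal}--\eqref{eqn:S:S:vertical} apply (both sides are continuous in $\arr h\in L^2(\R^n)$, since $\Tr_m\Pi^{L^*}\arr\Psi\in L^2(\R^n)$ by Lemma~\ref{lem:slices} and $\doublebar{\nabla^m\s^L_\nabla\arr h}_{L^2(\{|t|\geq\varepsilon\})}\leq C_\varepsilon\doublebar{\arr h}_{L^2(\R^n)}$ by the known bounds on $\s^L_\nabla$ and Caccioppoli). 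Writing $\alpha=\gamma+\vec e_j$: if $1\leq j\leq n$ then $(\Tr_m\Pi^{L^*}\arr\Psi)_\alpha=\partial_{x_j}(\Tr_{m-1}\Pi^{L^*}\arr\Psi)_\gamma$, so integrating by parts on $\R^n$, applying \eqref{eqn:S:Newton:dual} with $\arr g=(\partial_{x_j}h)\arr e_\gamma\in\dot B^{-1/2,2}_2(\R^n)$, and then \eqref{eqn:S:S:horizontal} gives $\langle\Tr_m\Pi^{L^*}\arr\Psi,h\arr e_\alpha\rangle_{\R^n}=\langle\arr\Psi,\nabla^m\s^L_\nabla(h\arr e_\alpha)\rangle_{\R^\dmn}$; if $j=\dmn$ then $(\Tr_m\Pi^{L^*}\arr\Psi)_\alpha=(\Tr_{m-1}\partial_\dmn\Pi^{L^*}\arr\Psi)_\gamma$, so \eqref{eqn:S:Newton:dual:vertical} with $\arr g=h\arr e_\gamma$ and the $\nabla^m$-form of \eqref{eqn:S:S:vertical} give the same conclusion; summing over $\alpha$ and invoking the well-definedness of $\s^L_\nabla$ (\cite[Lemma~4.4]{BarHM18p}) finishes it. I expect the main obstacle to be the $t$-translation/differentiation step for \eqref{eqn:S:Newton:dual:vertical} and especially \eqref{eqn:D:Newton:dual:vertical}: one must check carefully that vertical translation commutes, up to the appropriate corrections, with $\Pi^{L^*}$, $\s^L$, $\D^{\mat A}$ and $\M_{\mat A^*}^-$ (the only place $t$-independence is genuinely used), and justify differentiating the boundary pairings via interior regularity in the slab where $\arr\Psi$ vanishes; the commutation of $\M_{\mat A^*}^-$ with vertical translation is the most delicate point.
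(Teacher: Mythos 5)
Your derivations of \eqref{eqn:S:Newton:dual}, \eqref{eqn:D:Newton:dual}, and \eqref{eqn:S:Newton:rough:dual} are essentially the paper's. For \eqref{eqn:D:Newton:dual} the paper simply cites the adjointness relation $\langle\nabla^m\Pi^{L^*}\arr G,\arr H\rangle_{\R^\dmn}=\langle\arr G,\nabla^m\Pi^L\arr H\rangle_{\R^\dmn}$ from \cite[Lemma~42]{Bar16}, where you re-derive it by telescoping the two defining identities \eqref{eqn:newton}; for \eqref{eqn:S:Newton:rough:dual} the paper likewise reduces, by an $L^2$ bound on both sides and density, to $\arr h$ for which \eqref{eqn:S:S:horizontal}, \eqref{eqn:S:S:vertical}, \eqref{eqn:S:Newton:dual} and \eqref{eqn:S:Newton:dual:vertical} apply, and your sign bookkeeping in that reduction is correct.

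The divergence, and the gap, is in the vertical identities. The paper first proves \eqref{eqn:S:Newton:dual:vertical}--\eqref{eqn:D:Newton:dual:vertical} for $\arr\Psi\in L^2(\R^\dmn)\cap\dot W^{1,2}(\R^\dmn)$ vanishing near the boundary, using $L^2$ difference quotients to get $\nabla^m\Pi^L(\partial_\dmn\arr\Psi)=\partial_\dmn\nabla^m\Pi^L\arr\Psi$ and then integrating by parts in \eqref{eqn:S:Newton:dual}--\eqref{eqn:D:Newton:dual}; it then extends to all $\arr\Psi\in L^2(\R^n\times(\varepsilon,\infty))$ (and, separately, to the lower region) by density, the key point being that by a covering/Caccioppoli argument $\partial_\dmn\Pi^{L^*}$ maps $L^2(\R^n\times(\varepsilon,\infty))$ boundedly into $\dot W^{m,2}(\R^\dmn_-)$, which both makes the left-hand pairings meaningful and shows both sides are bounded in $\arr\Psi$. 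You instead translate vertically and differentiate the already-proved identities at $s=0$. The right-hand sides are fine (Caccioppoli on $\{\abs{t}>\varepsilon/2\}$, as you say), and the left side of \eqref{eqn:D:Newton:dual:vertical} is also fine because \eqref{eqn:neumann:W2} realizes that Neumann pairing as a bulk integral over $\R^\dmn_-$ where $\nabla^m\partial_t\Pi^{L^*}(\1_+\arr\Psi)\in L^2$. The unproved step is the left side of \eqref{eqn:S:Newton:dual:vertical}: for a general $\arr g\in(\dot W\!A^{1/2,2}_{m-1}(\R^n))^*$, differentiating $s\mapsto\langle\Tr_{m-1}\Pi^{L^*}(\arr\Psi^s),\arr g\rangle_{\R^n}$ requires the difference quotients of the trace to converge in the $\dot W\!A^{1/2,2}_{m-1}$ (Whitney--Besov) topology, and the interior regularity you invoke (Lemma~\ref{lem:slices}) only gives slice-wise $L^2$ control, which does not suffice to pair against an abstract functional. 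Moreover, since $\arr\Psi$ may be supported on both sides of $\R^n$, $\Pi^{L^*}\arr\Psi$ is a solution only in the slab $\{\abs{t}<\varepsilon\}$, so $\nabla^m\partial_t\Pi^{L^*}\arr\Psi\notin L^2(\R^\dmn_\pm)$ in general, and one must first split $\arr\Psi=\1_+\arr\Psi+\1_-\arr\Psi$ (as the paper implicitly does) before $\Tr_{m-1}\partial_\dmn\Pi^{L^*}$ even lands in the trace space of a half-space. Both defects are repairable --- either follow the paper's density route, or note that for $\arr\Psi$ supported in $\{t>\varepsilon\}$ the collar Caccioppoli bound gives $\nabla^m\partial_t\Pi^{L^*}\arr\Psi\in L^2(\{t<\varepsilon/2\})$, hence the vertical difference quotients of $\nabla^m\Pi^{L^*}\arr\Psi$ converge in $L^2(\R^\dmn_-)$ and Remark~\ref{rmk:W2:trace} upgrades this to convergence of the traces --- but as written the differentiation of the trace pairing is asserted rather than proved.
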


\begin{proof}
By the definition~\eqref{eqn:newton} of the Newton potential, if $\arr\Psi\in L^2(\R^\dmn)$ then $\Pi^{L^*}\arr\Psi\in \dot W^{m,2}(\R^\dmn)$.
By the definition~\eqref{dfn:S} of the single layer potential,
\begin{equation*}\langle \Tr_{m-1}\Pi^{L^*}\arr\Psi,\arr g\rangle_{\partial\R^\dmn_+}
=
\langle \nabla^m\Pi^{L^*}\arr\Psi,\mat A\nabla^m\s^L\arr g\rangle_{\R^\dmn}
\end{equation*}
and by the definition~\eqref{eqn:newton} of the Newton potential, we have that the relation \eqref{eqn:S:Newton:dual} is valid.

If $\arr\Psi\in L^2(\R^\dmn_+)$ and $\arr f=\Tr_{m-1}^- F$ for some $F\in \dot W^{m,2}(\R^\dmn_-)$, then by definition of Neumann boundary values
\begin{align*}
\langle \M_{\mat A^*}^- \Pi^{L^*}(\1_+\arr\Psi),\arr f\rangle_{\R^n}
&=
	\langle \mat A^*\nabla^m  \Pi^{L^*}(\1_+\arr\Psi), \nabla^m F\rangle_{\R^\dmn_-}
\\&=
	\langle \nabla^m  \Pi^{L^*}(\1_+\arr\Psi), \1_-\mat A\nabla^m F\rangle_{\R^\dmn}
.\end{align*}
By \cite[Lemma~42]{Bar16}, we have that
\begin{equation}
\label{eqn:newton:adjoint}
\langle \nabla^m  \Pi^{L^*}\arr G, \arr H\rangle_{\R^\dmn}
=\langle \arr G, \nabla^m  \Pi^{L}\arr H\rangle_{\R^\dmn}
\end{equation}
for all $\arr G$, $\arr H\in L^2(\R^\dmn)$. Thus,
\begin{align*}
\langle \M_{\mat A^*}^- \Pi^{L^*}(\1_+\arr\Psi),\arr f\rangle_{\R^n}
&=
	\langle \arr\Psi, \nabla^m  \Pi^{L}(\1_-\mat A\nabla^m F)\rangle_{\R^\dmn_+}
.\end{align*}
By formula~\eqref{dfn:D:newton:-}, the relation \eqref{eqn:D:Newton:dual} is valid.

To prove the relations \cref{eqn:S:Newton:dual:vertical,eqn:D:Newton:dual:vertical}, we review some Sobolev space theory.
If $F\in L^2(\R^\dmn)$ and $h\neq 0$, let $F_h(x,t)=\frac{1}{h}(F(x,t+h)-F(x,t))$. Suppose that $\lim_{h\to 0}F_h$ exists in the the sense of $L^2$ functions, that is, that 
\begin{equation*}\lim_{h\to 0} \doublebar{F_h-G}_{L^2(\R^\dmn)}=0\end{equation*}
for some function $G\in L^2(\R^\dmn)$. Then by the weak definition of derivative, $\partial_\dmn F$ exists and equals~$G$. Conversely, if $F\in L^2(\R^\dmn)\cap \dot W^{1,2}(\R^\dmn)$, then an argument similar to the proof of the Lebesgue differentiation theorem shows that  $\lim_{h\to 0}\doublebar{F_h-\partial_\dmn F}_{L^2(\R^\dmn)}=0$.

By linearity and $t$-independence of~$\mat A$, $\Pi^{L^*}(\arr\Psi_h)=(\Pi^{L^*}\arr\Psi)_h$. If $\arr\Psi\in L^2(\R^\dmn)\cap\dot W^{1,2}(\R^\dmn)$, then taking limits as $h\to 0$ in $L^2(\R^\dmn)$ shows that
\begin{equation}\label{eqn:Newton:vertical}
\nabla^m\Pi^L(\partial_\dmn\arr \Psi)
=\partial_\dmn(\nabla^m\Pi^L\arr \Psi)
.\end{equation}
If in addition $\arr\Psi$ is zero in $\R^n\times(-\varepsilon,\varepsilon)$ for some $\varepsilon>0$, then formulas~\eqref{eqn:S:Newton:dual:vertical} and~\eqref{eqn:D:Newton:dual:vertical} then follow from formulas~\eqref{eqn:S:Newton:dual} and~\eqref{eqn:D:Newton:dual} by integrating by parts. 

To establish formulas~\eqref{eqn:S:Newton:dual:vertical} and~\eqref{eqn:D:Newton:dual:vertical} for arbitrary $\arr\Psi\in L^2(\R^n\times(\varepsilon,\infty))$, fix $\varepsilon>0$, $\arr f\in \dot W\!A^{1/2,2}_{m-1}(\R^n)$ and $\arr g\in (\dot W\!A^{1/2,2}_{m-1}(\R^n))^*$. By formulas~\eqref{dfn:D:newton:-} and~\eqref{dfn:S}, we have that $\D^{\mat A}\arr f\in \dot W^{m,2}(\R^\dmn_+)$ and $\s^L\arr g\in\dot W^{m,2}(\R^\dmn)$, and so by the Caccioppoli inequality $\nabla^m\partial_\dmn\D^{\mat A}\arr f\in L^2(\R^n\times(\varepsilon,\infty))$ and $\nabla^m\partial_\dmn\s^L\arr g\in L^2(\R^n\times(\varepsilon,\infty))$. Thus, the right hand sides of formulas~\eqref{eqn:S:Newton:dual:vertical} and~\eqref{eqn:D:Newton:dual:vertical} (regarded as functions of~$\arr\Psi$) represent bounded linear operators on $L^2(\R^n\times(\varepsilon,\infty))$. Similarly, by the Caccioppoli inequality $\partial_\dmn\Pi^{L^*}$ is a bounded linear operator from $L^2(\R^n\times(\varepsilon,\infty))$ to $\dot W^{m,2}(\R^\dmn_-)$, and so if $\arr g\in (\dot W\!A^{1/2,2}_{m-1}(\R^n))^*$ and $\arr f\in \dot W\!A^{1/2,2}_{m-1}(\R^n)$, then the left hand sides of formulas~\eqref{eqn:S:Newton:dual:vertical} and~\eqref{eqn:D:Newton:dual:vertical} represent bounded linear operators on $L^2(\R^n\times(\varepsilon,\infty))$. Thus, by density, formulas~\eqref{eqn:S:Newton:dual:vertical} and~\eqref{eqn:D:Newton:dual:vertical} are valid for all $\arr\Psi\in L^2(\R^n\times(\varepsilon,\infty))$. A similar argument (or the relations of Section~\ref{sec:lower}) establishes formula~\eqref{eqn:S:Newton:dual:vertical} for $\arr\Psi\in L^2(\R^n\times(-\infty,\varepsilon))$.

Formula~\eqref{eqn:S:Newton:rough:dual} was established in \cite[Section~9]{BarHM19A} under the additional assumption that $\arr\Psi$ is supported in $\R^\dmn_+$.
In the general case, by assumption on~$\supp\arr\Psi$, Lemma~\ref{lem:slices},  and the bound~\eqref{eqn:S:lusin:rough:+} (with $p=2$), we have that both sides of formula~\eqref{eqn:S:Newton:rough:dual} have norm at most 
\begin{equation*}\frac{C}{\sqrt\varepsilon}\doublebar{\arr\Psi}_{L^2(\R^\dmn)}\doublebar{\arr h}_{L^2(\R^n)}\end{equation*}
and in particular are meaningful if this quantity is finite. Thus, we need only establish formula~\eqref{eqn:S:Newton:rough:dual} for $\arr h$  in a dense subset of $L^2(\R^n)$. 
In particular, we need only consider $\arr h$ such that 
formulas~\eqref{eqn:S:S:vertical}, \eqref{eqn:S:S:horizontal}, \eqref{eqn:S:Newton:dual}, and~\eqref{eqn:S:Newton:dual:vertical} (with appropriate~$\arr g$) are valid, and formula~\eqref{eqn:S:Newton:rough:dual} is a straightforward consequence of the given formulas.
\end{proof}

\subsection{The boundary values of the Newton potential}
\label{sec:newton:boundary}

In this section, we will begin to establish bounds on the Newton potential by using Lemma~\ref{lem:newton:dual} and the known bounds \cref{eqn:D:N:rough:+,eqn:S:N:rough:+,eqn:D:lusin:rough:+,eqn:S:lusin:rough:+,eqn:D:N:+,eqn:S:N:+,eqn:D:lusin:+,eqn:S:lusin:+}. The argument is precisely dual to that of Section~\ref{sec:p<2}. Observe that it is the boundary values $\Tr_{m-1}\Pi^{L^*}\arr\Psi$, $\Tr_{m}^-\Pi^{L^*}\arr\Psi$ and $\M_{\mat A}^- \Pi^{L^*}(\1_+\arr\Psi)$ that appear in the bounds~\cref{eqn:S:Newton:dual,eqn:D:Newton:dual,eqn:S:Newton:rough:dual,eqn:S:Newton:dual:vertical}; thus, it is the boundary values of the Newton potential that will be bounded in the present section. We remark that we will not establish all of the bounds on the Newton potential that follow from formulas~\cref{eqn:S:Newton:dual,eqn:D:Newton:dual,eqn:S:Newton:rough:dual,eqn:S:Newton:dual:vertical} and the bounds \cref{eqn:D:N:rough:+,eqn:S:N:rough:+,eqn:D:lusin:rough:+,eqn:S:lusin:rough:+,eqn:D:N:+,eqn:S:N:+,eqn:D:lusin:+,eqn:S:lusin:+}, but only those that we will need in Sections~\cref{sec:newton:N:regular,sec:newton:N:rough,sec:newton:lusin}.

\begin{lem}\label{lem:Newton:boundary}
Let $L$ be an operator of the form~\eqref{eqn:weak} of order~$2m$ associated to bounded $t$-independent coefficients~$\mat A$ that satisfy the ellipticity condition~\eqref{eqn:elliptic}.

Then there is some $\varepsilon$ with the following significance. 
Suppose that $\arr\Psi\in L^2(\R^\dmn)$ is supported in a compact subset of $\R^\dmn_+\cup\R^\dmn_-$.
If $1/p+1/p'=1$ and $p$ lies in the indicated ranges, then 
\begin{align}
\label{eqn:newton:Trace} 
\doublebar{\Tr_{m} \Pi^{L^*}\arr\Psi}_{L^{p'}}
&\leq C(1,L,p)\doublebar{\mathcal{A}_2^*\arr \Psi}_{L^{p'}}
,&2-\varepsilon&<p<p_{1,L}^+
,\\
\label{eqn:newton:neumann}
\doublebar{\M_{\mat A^*}^- \Pi^{L^*}(\1_+\arr \Psi)}_{(\dot W\!A^{0,p}_{m-1})^*}
&\leq C(1,L,p)\doublebar{\mathcal{A}_2^+\arr \Psi}_{L^{p'}}
,&2&\leq p<p_{1,L}^+
.\end{align}
Suppose $\arr H\in L^2(\R^\dmn)$ is supported in a compact subset of $\R^\dmn_+\cup\R^\dmn_-$.
If we normalize $\nabla^{m-1}\Pi^{L^*}\arr H$ as in formulas \cref{eqn:newton:GNS,eqn:newton:GNS:2}, then
\begin{align}
\label{eqn:newton:trace:rough}
\doublebar{\Tr_{m-1}\Pi^{L^*}\arr H}_{L^{p'}} 
&\leq C(0,L,p)\doublebar{\widetilde{\mathfrak{C}}_1^*(t\arr H)}_{L^{p'}}
,&2-\varepsilon<p<p_{0,L}^+.\end{align}
\end{lem}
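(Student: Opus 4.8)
The plan is to obtain all three estimates by dualizing the identities of Lemma~\ref{lem:newton:dual} against the layer-potential bounds already known for $p\geq 2$ (or $p>2-\varepsilon$) --- namely \eqref{eqn:S:N:+}, \eqref{eqn:D:lusin:rough:+}, and \eqref{eqn:S:lusin:rough:+} --- together with tent-space duality (as in \cite[formula~(2.6)]{CoiMS85}) and the Carleson duality of Lemma~\ref{lem:N>C}. In each case I would write the norm of the boundary object as a supremum of pairings against a dense subclass of the relevant pre-dual, use the matching formula from Lemma~\ref{lem:newton:dual} to rewrite that pairing as $\langle\arr\Psi,\cdot\rangle$ (or $\langle\arr H,\cdot\rangle$) against a gradient of a layer potential on $\R^\dmn_+\cup\R^\dmn_-$, and estimate. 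Because $\arr\Psi$ and $\arr H$ are compactly supported away from $\{t=0\}$, every pairing splits into its $\R^\dmn_+$ and $\R^\dmn_-$ contributions; the latter is handled by the reflection $t\mapsto -t$ of Section~\ref{sec:lower}, which supplies the lower half-space analogues of the pairing inequalities, while the layer-potential norms are already controlled by the two-sided operators. This is also where the two-sided operators $\mathcal{A}_2^*$ and $\widetilde{\mathfrak{C}}_1^*$ on the right-hand sides enter, via $\mathcal{A}_2^\pm\leq\mathcal{A}_2^*$ and $\widetilde{\mathfrak{C}}_1^\pm\leq\widetilde{\mathfrak{C}}_1^*$. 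Here $\varepsilon$ may be taken to be the smaller of the exponents in \eqref{eqn:S:N:+} and \eqref{eqn:S:lusin:rough:+}.

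For \eqref{eqn:newton:Trace}: the array $\Tr_m\Pi^{L^*}\arr\Psi$ needs no additive normalization, so its $L^{p'}$ norm is the supremum of $\abs{\langle\Tr_m\Pi^{L^*}\arr\Psi,\arr h\rangle_{\R^n}}$ over $\arr h\in L^2(\R^n)\cap L^p(\R^n)$ with $\doublebar{\arr h}_{L^p(\R^n)}\leq 1$. For such $\arr h$, formula~\eqref{eqn:S:Newton:rough:dual} (applicable since $\arr\Psi$ vanishes near $\{t=0\}$) turns the pairing into $\langle\arr\Psi,\nabla^m\s^L_\nabla\arr h\rangle_{\R^\dmn}$. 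Rewriting its $\R^\dmn_+$ part as $\int_{\R^\dmn_+}\arr\Psi\cdot\overline{t\,\nabla^m\s^L_\nabla\arr h}\,\frac{dx\,dt}{t}$ and applying tent-space duality, it is at most $C\doublebar{\mathcal{A}_2^+\arr\Psi}_{L^{p'}(\R^n)}\doublebar{\mathcal{A}_2^+(t\nabla^m\s^L_\nabla\arr h)}_{L^p(\R^n)}$, and \eqref{eqn:S:lusin:rough:+} bounds this by $C(1,L,p)\doublebar{\mathcal{A}_2^*\arr\Psi}_{L^{p'}(\R^n)}\doublebar{\arr h}_{L^p(\R^n)}$; the $\R^\dmn_-$ part is identical. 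Bound~\eqref{eqn:newton:neumann} follows by the same scheme: test $\M_{\mat A^*}^-\Pi^{L^*}(\1_+\arr\Psi)$ against $\arr f$ in the dense subspace $\mathfrak{D}$ of $\dot W\!A^{0,p}_{m-1}(\R^n)$, which also lies in $\dot W\!A^{1/2,2}_{m-1}(\R^n)$, so \eqref{eqn:D:Newton:dual} converts the pairing into $-\langle\arr\Psi,\nabla^m\D^{\mat A}\arr f\rangle_{\R^\dmn_+}$, and \eqref{eqn:D:lusin:rough:+} replaces \eqref{eqn:S:lusin:rough:+}. The constraint $p\geq 2$ in \eqref{eqn:newton:neumann} is exactly the one in \eqref{eqn:D:lusin:rough:+}.

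For \eqref{eqn:newton:trace:rough}, whose right-hand side is a Carleson norm, I would pair against the nontangential bound instead. Here $\Tr_{m-1}\Pi^{L^*}\arr H$ carries the normalization of \eqref{eqn:newton:GNS}--\eqref{eqn:newton:GNS:2}, so $\nabla^{m-1}\Pi^{L^*}\arr H$ is a genuine $L^q$ function and its trace a genuine array; to use \eqref{eqn:S:Newton:dual} unambiguously I would test against $\arr g$ in a dense subclass of $L^p(\R^n)$ on which that relation holds and which annihilates constant arrays --- mean-zero arrays in $C^\infty_0(\R^n)$ work, as these lie in $(\dot W\!A^{1/2,2}_{m-1}(\R^n))^*$. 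For such $\arr g$, \eqref{eqn:S:Newton:dual} gives $\langle\Tr_{m-1}\Pi^{L^*}\arr H,\arr g\rangle_{\R^n}=\langle\arr H,\nabla^m\s^L\arr g\rangle_{\R^\dmn}$; splitting into half spaces and applying Lemma~\ref{lem:N>C} in $\R^\dmn_+$, the $\R^\dmn_+$ part is at most $C\doublebar{\widetilde N_+(\nabla^m\s^L\arr g)}_{L^p(\R^n)}\doublebar{\widetilde{\mathfrak{C}}_1^+(t\arr H)}_{L^{p'}(\R^n)}$, where the Carleson norm is finite by the compact-support statement of Lemma~\ref{lem:C:local} and is dominated by $\doublebar{\widetilde{\mathfrak{C}}_1^*(t\arr H)}_{L^{p'}(\R^n)}$, while \eqref{eqn:S:N:+} bounds the first factor by $C(0,L,p)\doublebar{\arr g}_{L^p(\R^n)}$. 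The $\R^\dmn_-$ part is the mirror image, and taking the supremum over $\arr g$ gives \eqref{eqn:newton:trace:rough}.

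I do not expect any genuinely hard step; the whole argument is the one of Section~\ref{sec:p<2} run in reverse. The care needed is bookkeeping: keeping the $t^{\pm1}$ weights consistent in the tent-space and Carleson pairings; checking that the test classes ($\arr h\in L^2\cap L^p$, $\arr f\in\mathfrak{D}$, mean-zero $\arr g\in C^\infty_0$) are dense in the relevant pre-duals and are precisely the classes on which the identities of Lemma~\ref{lem:newton:dual} were proved, so that the additive normalization of $\nabla^{m-1}\Pi^{L^*}\arr H$ plays no role; and noting that, since $\arr\Psi$ and $\arr H$ are supported away from $\{t=0\}$, the three boundary objects are a priori meaningful --- as locally integrable functions, respectively as a bounded functional on $\dot W\!A^{1/2,2}_{m-1}(\R^n)$ via \eqref{eqn:neumann:W2} --- so that the uniform estimates obtained on the dense subclasses do upgrade to the claimed global bounds on $L^{p'}(\R^n)$, respectively on $(\dot W\!A^{0,p}_{m-1}(\R^n))^*$.
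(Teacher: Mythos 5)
Your treatment of \eqref{eqn:newton:Trace} and \eqref{eqn:newton:neumann} is correct and is essentially the paper's own argument: dualize via \eqref{eqn:S:Newton:rough:dual} and \eqref{eqn:D:Newton:dual} against $\arr h\in L^2(\R^n)\cap L^p(\R^n)$, respectively against $\arr f$ in a dense subclass of $\dot W\!A^{0,p}_{m-1}(\R^n)$ lying in $\dot W\!A^{1/2,2}_{m-1}(\R^n)$, then apply the tent-space duality of \cite{CoiMS85} and the known bounds \eqref{eqn:S:lusin:rough:+}, \eqref{eqn:D:lusin:rough:+}; splitting the pairing into its $\R^\dmn_\pm$ parts instead of invoking the two-sided duality directly is an immaterial difference.

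For \eqref{eqn:newton:trace:rough}, however, there is a genuine gap at the final step. Because your test class (mean-zero arrays in $C^\infty_0(\R^n)$) annihilates constants, ``taking the supremum over $\arr g$'' does not control $\doublebar{\Tr_{m-1}\Pi^{L^*}\arr H}_{L^{p'}(\R^n)}$; it only controls $\inf_{\arr c}\doublebar{\Tr_{m-1}\Pi^{L^*}\arr H-\arr c}_{L^{p'}(\R^n)}$. Equivalently, the duality argument produces some $\arr f\in L^{p'}(\R^n)$ with the right norm bound that agrees with the trace when paired against mean-zero test functions, so that $\Tr_{m-1}\Pi^{L^*}\arr H-\arr f$ is a constant array, and one must still show this constant vanishes. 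Your remark that the additive normalization of $\nabla^{m-1}\Pi^{L^*}\arr H$ ``plays no role'' is exactly backwards: the normalization \eqref{eqn:newton:GNS}--\eqref{eqn:newton:GNS:2} is what pins the constant down. The paper closes this step by noting that with that normalization $\nabla^{m-1}\Pi^{L^*}\arr H\in L^q(\R^\dmn)$ for some finite $q$, hence by Lemma~\ref{lem:slices} (using that $\arr H$ is supported away from $\partial\R^\dmn_\pm$) $\Tr_{m-1}\Pi^{L^*}\arr H\in L^q(\R^n)$; a constant array expressible as the difference of an $L^q(\R^n)$ and an $L^{p'}(\R^n)$ function with $q,p'<\infty$ must be zero, so $\arr f$ is the normalized trace and the stated bound follows. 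Without this (or an equivalent identification of the constant) your argument yields only an estimate modulo constants; the rest of your outline for \eqref{eqn:newton:trace:rough} --- the choice of test class inside $\dot B^{-1/2,2}_2(\R^n)\cap L^p(\R^n)$, formula \eqref{eqn:S:Newton:dual}, Lemma~\ref{lem:N>C}, Lemma~\ref{lem:C:local}, and the bound \eqref{eqn:S:N:+} --- matches the paper.
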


\begin{proof}
We will use Lemmas~\ref{lem:N>C} and~\ref{lem:N<C} to establish the bound~\eqref{eqn:newton:trace:rough}.
We will need a similar formula involving the area integral to establish the bounds \cref{eqn:newton:Trace,eqn:newton:neumann}. 
Let $T_2^p=\{\psi:\mathcal{A}_2^+\psi\in L^p(\R^n)\}$
with the natural norm. By \cite[p.~316]{CoiMS85}, if $1<p<\infty$ then  under the inner product
\begin{equation*}\langle f,g\rangle  = \int_{\R^\dmn_+} f(x,t)\,g(x,t)\,\frac{dx\,dt}{t}\end{equation*}
the dual space to $T_2^p$ is $T_2^{p'}$.
Thus,
\begin{equation}
\label{eqn:CoiMS85}
\frac{1}{C_p}
\doublebar{\mathcal{A}_2^+(t\arr u)}_{L^{p}(\R^n)}
\leq
\sup_{\arr \Psi}
\frac{\abs{\langle \arr \Psi, \arr u\rangle_{\R^\dmn_+}}
}{\doublebar{\mathcal{A}_2^+\arr \Psi}_{L^{p'}(\R^n)}}
\leq
C\doublebar{\mathcal{A}_2^+(t\arr u)}_{L^{p}(\R^n)}
\end{equation}
where the supremum is over all $\arr\Psi\in L^2_{loc}(\R^\dmn_+)$ such that ${\mathcal{A}_2^+\arr \Psi}\in{L^{p'}(\R^n)}$ and such that the denominator is positive. A similar formula is valid for $\mathcal{A}_2^-$ and~$\mathcal{A}_2^*$.
\begin{rmk}\label{rmk:CoiMS85:compact}
We may take the supremum only over all $\arr\Psi\in L^2_c(\R^\dmn_+)\setminus\{\arr 0\}$, where $L^2_c$ is as in Lemma~\ref{lem:N<C}. 
\end{rmk}

If $1<p<\infty$, then by formula~\eqref{eqn:S:Newton:rough:dual}, and by density of $L^p\cap L^2$ in $L^p$,
\begin{equation*}\doublebar{\Tr_{m} \Pi^{L^*}\arr\Psi}_{L^{p'}(\R^n)}
=\sup_{\arr h\in L^p\cap L^2\setminus\{\arr 0\}} \frac{\abs{\langle\arr\Psi,\nabla^m\s^L_\nabla\arr h\rangle_{\R^\dmn}}}{\doublebar{\arr h}_{L^p(\R^n)}}
.\end{equation*}
By the bound~\eqref{eqn:CoiMS85}, if $1<p<\infty$ then
\begin{equation*}\doublebar{\Tr_{m} \Pi^{L^*}\arr\Psi}_{L^{p'}(\R^n)}
\leq C\sup_{\arr h\in L^p\cap L^2\setminus\{0\}} \frac{\doublebar{\mathcal{A}_2^*\arr\Psi}_{L^{p'}(\R^n)}  \doublebar{\mathcal{A}_2^*(t\nabla^m\s^L_\nabla\arr h)}_{L^p(\R^n)} }{\doublebar{\arr h}_{L^p(\R^n)}}
\end{equation*}
and by the bound~\eqref{eqn:S:lusin:rough:+}, if $2-\varepsilon< p<p_{1,L}^+$ then the bound~\eqref{eqn:newton:Trace} is valid.

Similarly, by formula~\eqref{eqn:D:Newton:dual} and the bounds~\eqref{eqn:CoiMS85} and~\eqref{eqn:D:lusin:rough:+}, if $2\leq p<p_{1,L}^+$ and 
$\arr f\in \dot W\!A^{0,p}_{m-1}(\R^n)\cap \dot W\!A^{1/2,2}_{m-1}(\R^n)$, then
\begin{equation*}\abs{\langle\M_{\mat A^*}^- \Pi^{L^*}(\1_+\arr\Psi),\arr f\rangle_{\R^n}}
\leq C(1,L,p)\doublebar{\mathcal{A}_2^+\arr \Psi}_{L^{p'}(\R^n)}
\doublebar{\arr f}_{\dot W\!A^{0,p}_{m-1}(\R^n)}
.\end{equation*}
By density of $\dot W\!A^{0,p}_{m-1}(\R^n)\cap \dot W\!A^{1/2,2}_{m-1}(\R^n)$ in $\dot W\!A^{1/2,2}_{m-1}(\R^n)$, the bound~\eqref{eqn:newton:neumann} is valid.

We now turn to the bound~\eqref{eqn:newton:trace:rough}. Let $\arr \gamma\in \dot B^{-1/2,2}_2(\R^n)\cap L^p(\R^n)$ for some $p$ with $2-\varepsilon<p<p_{0,L}^+$. Then by formula~\eqref{eqn:S:Newton:dual}, Lemma~\ref{lem:N>C}, and the bound~\eqref{eqn:S:N:+}, we have that
\begin{equation*}\abs{\langle \Tr_{m-1} \Pi^{L^*}\arr H,\arr \gamma\rangle_{\R^n}
}
\leq
C(0,L,p)\doublebar{\arr\gamma}_{L^p(\R^n)}
\doublebar{\widetilde{\mathfrak{C}}_1^*(t\arr H)}_{L^{p'}(\R^n)}
.\end{equation*}
By density of $\dot B^{-1/2,2}_2(\R^n)\cap L^p(\R^n)$ in $L^p(\R^n)$, there is an $\arr f\in L^{p'}(\R^n)$ with $\doublebar{\arr f}_{L^{p'}(\R^n)}\leq C(0,L,p) \doublebar{\widetilde{\mathfrak{C}}_1^*(t\arr H)}_{L^{p'}(\R^n)}$ such that $\langle \arr f,\arr\gamma\rangle_{\R^n}={\langle \Tr_{m-1} \Pi^{L^*}\arr H,\arr \gamma\rangle_{\R^n}
}$ for all $\arr \gamma\in \dot B^{-1/2,2}_2(\R^n)\cap L^p(\R^n)$.
We need only establish that $\arr f=\Tr_{m-1}\Pi^{L^*}\arr H$.

We normalize $\Pi^{L^*}\arr H$ as in formulas~\cref{eqn:newton:GNS,eqn:newton:GNS:2}.
Then there is some $q<\infty$ such that $\nabla^{m-1}\Pi^{L^*}\arr H\in L^q(\R^\dmn)$. 
By Lemma~\ref{lem:slices} and because $\arr H$ is supported away from~$\partial\R^\dmn_\pm$, we have that $\Tr_{m-1}\Pi^{L^*}\arr H\in L^q(\R^n)$ (and in particular is locally integrable). 

If $\arr\varphi\in C^\infty_0(\R^n)$ and $\int_{\R^n}\arr\varphi=0$, then $\arr\varphi\in \dot B^{-1/2,2}_2(\R^n)\cap L^p(\R^n)$, and so
\begin{equation*}\langle \arr\varphi, \Tr_{m-1}\Pi^{L^*}\arr H-\arr f\rangle_{\R^n}=0.\end{equation*}
Thus, $\Tr_{m-1}\Pi^{L^*}\arr H-\arr f$ is a constant. 

We have seen that $\arr f\in L^{p'}(\R^n)$, $\Tr_{m-1}\Pi^{L^*}\arr H\in L^q(\R^n)$, for $p'$, $q<\infty$, and $\arr f-\Tr_{m-1}\Pi^{L^*}\arr H$ is constant; therefore, $\arr f=\Tr_{m-1}\Pi^{L^*}\arr H$, as desired.
\end{proof}

\subsection{Inputs satisfying area integral estimates}
\label{sec:newton:N:regular}

In this section we will continue to establish bounds on the Newton potential. 
The two main results of this section are Lemma~\ref{lem:newton:N:regular}, in which we establish the $L^2$ bound \begin{equation*}\doublebar{\widetilde N_*(\nabla^m \Pi^{L^*} \arr\Psi)}_{L^2(\R^n)}
\leq C \doublebar{\mathcal{A}_2^*\arr\Psi}_{L^2(\R^n)},\end{equation*} and Lemma~\ref{lem:newton:N:-}, in which we establish the $L^{p'}$ bound \begin{equation*}\doublebar{\widetilde N_*(\nabla^{m-j}\partial_t^j \Pi^{L^*} \arr\Psi)}_{L^{p'}(\R^n)}
\leq C(j,L^*,p') \doublebar{\mathcal{A}_2^*\arr\Psi}_{L^{p'}(\R^n)}, \quad p_{j,L^*}^-<p\leq 2.\end{equation*} 
The proof of Lemma~\ref{lem:newton:N:regular} will involve the bound~\eqref{eqn:newton:Trace} and some techniques from the proof of \cite[Lemma~4.1]{HofMayMou15}, while the proof of Lemma~\ref{lem:newton:N:-} will involve Lemma~\ref{lem:newton:N:regular} and some techniques from \cite{Bar19p}.

\begin{lem}
\label{lem:newton:N:regular}
Let $L$ be an operator of the form~\eqref{eqn:weak} of order~$2m$ associated to bounded $t$-independent coefficients~$\mat A$ that satisfy the ellipticity condition~\eqref{eqn:elliptic}.

Let $\arr\Psi\in L^2(\R^\dmn)$ be supported in a compact subset of $\R^\dmn_+\cup\R^\dmn_-$. Then
\begin{equation*}
\doublebar{\widetilde N_*(\nabla^m \Pi^{L^*} \arr\Psi)}_{L^2(\R^n)}
\leq C \doublebar{\mathcal{A}_2^*\arr\Psi}_{L^2(\R^n)}. 
\end{equation*}
\end{lem}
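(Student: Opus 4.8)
The plan is to reduce to the one-sided operator $\widetilde N_+$, decompose $\arr\Psi$ into pieces localized in both the vertical and horizontal variables, bound the Newton potential of each piece using the trace estimate~\eqref{eqn:newton:Trace} and interior estimates, and reassemble by an almost-orthogonality argument in the spirit of \cite[Lemma~4.1]{HofMayMou15}. Since $\widetilde N_* H=\max(\widetilde N_+ H,\widetilde N_- H)$ and $\|\max(F,G)\|_{L^2}^2\le\|F\|_{L^2}^2+\|G\|_{L^2}^2$, and since by Section~\ref{sec:lower} the reflection $(x,t)\mapsto(x,-t)$ carries $\widetilde N_-(\nabla^m\Pi^{L^*}\arr\Psi)$ to $\widetilde N_+(\nabla^m\Pi^{M}\arr\Psi^-)$ for a related operator $M$ of the same type and a reflected input with $\|\mathcal{A}_2^*\arr\Psi^-\|_{L^2}=\|\mathcal{A}_2^*\arr\Psi\|_{L^2}$, it suffices to bound $\|\widetilde N_+(\nabla^m\Pi^{L^*}\arr\Psi)\|_{L^2(\R^n)}$. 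Throughout write $u=\Pi^{L^*}\arr\Psi$; by~\eqref{eqn:newton} we have $\|\nabla^m u\|_{L^2(\R^\dmn)}\le C\|\arr\Psi\|_{L^2(\R^\dmn)}$, and $L^*u=0$ on $\R^\dmn\setminus\supp\arr\Psi$, in particular in a neighborhood of $\{t=0\}$.

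For $j\in\Z$ and $k$ ranging over a grid, let $Q_{j,k}\subset\R^n$ be the dyadic cubes of side $2^j$, put $\arr\Psi_{j,k}=\arr\Psi\,\1_{Q_{j,k}\times(\{2^j\le t<2^{j+1}\}\cup\{-2^{j+1}<t\le-2^j\})}$ and $u_{j,k}=\Pi^{L^*}\arr\Psi_{j,k}$, so that $\arr\Psi=\sum_{j,k}\arr\Psi_{j,k}$ (a finite sum, as $\arr\Psi$ is compactly supported away from $\{t=0\}$) and $u=\sum_{j,k}u_{j,k}$. A direct Fubini computation gives $\|\mathcal{A}_2^*\arr\Psi_{j,k}\|_{L^2(\R^n)}^2\approx 2^{-j}\|\arr\Psi_{j,k}\|_{L^2(\R^\dmn)}^2$ and $\sum_{j,k}\|\mathcal{A}_2^*\arr\Psi_{j,k}\|_{L^2}^2=\|\mathcal{A}_2^*\arr\Psi\|_{L^2}^2$. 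The key per-piece estimate I would establish is
\[\widetilde N_+(\nabla^m u_{j,k})(x)\le C\,2^{-jn/2}\Bigl(1+2^{-j}\dist(x,Q_{j,k})\Bigr)^{-\dmn/2}\|\mathcal{A}_2^*\arr\Psi_{j,k}\|_{L^2(\R^n)},\]
whose right-hand side has $L^2(\R^n)$ norm $\approx\|\mathcal{A}_2^*\arr\Psi_{j,k}\|_{L^2}$. This is obtained by splitting the supremum defining $\widetilde N_+$ into balls $B((y,s),s/2)$ of three types: (i) those with $s\gtrsim 2^j$ or meeting the box $Q_{j,k}\times[2^j,2^{j+1}]$, bounded by interior $L^2$ averages and the energy bound; (ii) those with $(y,s)$ far from the box or $s\gg 2^j$, where $u_{j,k}$ solves $L^*u_{j,k}=0$ in a large ball and the Caccioppoli inequality (Lemma~\ref{lem:Caccioppoli}) together with reverse Hölder/slice estimates (Lemma~\ref{lem:slices} and Remark~\ref{rmk:Meyers}) yield decay $\lesssim\dist^{-\dmn/2}\|\nabla^m u_{j,k}\|_{L^2(\R^\dmn)}$; and (iii) those with $s\lesssim 2^j$ lying below the box, where $u_{j,k}$ is a solution in a slab, so a standard oscillation estimate gives $\widetilde N_+(\nabla^m u_{j,k})\lesssim\mathcal{M}(\Tr_m u_{j,k})+[\text{square function truncated below the box}]$, the first term handled by~\eqref{eqn:newton:Trace} at $p=p'=2$ and boundedness of the Hardy--Littlewood maximal operator $\mathcal{M}$, and the second by Caccioppoli in terms of $\|\nabla^m u_{j,k}\|_{L^2}$ near the top of the slab.

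It then remains to reassemble: $\widetilde N_+(\nabla^m u)\le\sum_{j,k}\widetilde N_+(\nabla^m u_{j,k})$, and I would bound the $L^2$ norm of the right-hand side by a Schur-type estimate, using that the profiles above have fast horizontal decay at each scale and are quasi-orthogonal across scales because a solution of $L^*u_{j,k}=0$ is small far from $\supp\arr\Psi_{j,k}$; this gives the decay in $|j-j'|$ and in the separation of $Q_{j,k}$ and $Q_{j',k'}$ needed to control the cross terms, yielding $\|\widetilde N_+(\nabla^m u)\|_{L^2}^2\lesssim\sum_{j,k}\|\mathcal{A}_2^*\arr\Psi_{j,k}\|_{L^2}^2=\|\mathcal{A}_2^*\arr\Psi\|_{L^2}^2$. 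The main obstacle is precisely this reassembly: since the $\widetilde N_+(\nabla^m u_{j,k})$ are nonnegative, the triangle inequality alone loses too much, and one must extract genuine quasi-orthogonality over both the horizontal grid and the dyadic scales — this is where the techniques of \cite[Lemma~4.1]{HofMayMou15} enter. A secondary technical point is the slab oscillation estimate of regime (iii), whose purpose is to route the argument through the trace bound~\eqref{eqn:newton:Trace}.
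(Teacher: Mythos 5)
Your plan breaks down at the step you yourself flag as the main obstacle: the reassembly. The per-piece information you propose to extract is a nonnegative pointwise profile $\phi_{j,k}(x)=2^{-jn/2}\bigl(1+2^{-j}\dist(x,Q_{j,k})\bigr)^{-\dmn/2}\doublebar{\mathcal{A}_2^*\arr\Psi_{j,k}}_{L^2}$, and no Schur-type or ``quasi-orthogonality'' argument based only on such size estimates can give the Bessel inequality $\doublebar{\sum_{j,k}\phi_{j,k}}_{L^2}^2\lesssim\sum_{j,k}\doublebar{\mathcal{A}_2^*\arr\Psi_{j,k}}_{L^2}^2$. Indeed, take unit coefficients on all boxes $Q_{j,k}\subset[0,1]^n$ with $-N\leq j\leq 0$: the finest scale alone contributes $\approx 2^{N(n-1/2)}$ pointwise on a fixed fraction of $[0,1]^n$, so the left side is $\gtrsim 2^{N(2n-1)}$ while the right side is $\approx 2^{Nn}$, and these differ by the unbounded factor $2^{N(n-1)}$ whenever $n\geq 2$. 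So the decay you cite (``a solution is small far from $\supp\arr\Psi_{j,k}$'') is exactly the information that is insufficient; genuine cancellation would be needed, and since $\widetilde N_+$ is a positive sublinear operator you cannot run Cotlar--Stein on it directly, nor do the pieces $\nabla^m\Pi^{L^*}\arr\Psi_{j,k}$ carry any vanishing-moment structure that would supply it. (A secondary issue: even the claimed per-piece pointwise profile is not justified by the tools you cite in regime (iii), since the trace bound~\eqref{eqn:newton:Trace} and the maximal operator give only an $L^2$ bound on that contribution, with no spatial decay; only the weaker per-piece $L^2$ estimate follows.)

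The paper avoids this entirely by never decomposing over a fixed global grid. Instead, for each point $z$ and each Whitney ball $B((x_0,t_0),\abs{t_0}/2)$ it splits $\arr\Psi$ into a near piece and dyadic annular far pieces centered at that ball, and proves the pointwise bound $\widetilde N_*(\nabla^m\Pi^{L^*}\arr\Psi)(z)\leq C_\theta\,\mathcal{M}\bigl((\mathcal{A}_2^*\arr\Psi)^\theta\bigr)(z)^{1/\theta}+C\,\mathcal{M}\bigl(\Tr_m\Pi^{L^*}\arr\Psi\bigr)(z)$ for some $\theta<2$: the near piece is handled by $L^2$ boundedness of $\nabla^m\Pi^{L^*}$ and Lemma~\ref{lem:lebesgue:lusin}; the far pieces are handled through Lemma~\ref{lem:iterate}, which trades the local average of $\nabla^m$ for a boundary trace term plus a term in $\partial_t^{m+1}$ that decays geometrically in the annular index (via pointwise interior bounds when $2m>\dmn$, with the reduction $\widetilde L=\Delta^ML\Delta^M$ otherwise --- a case your sketch does not address). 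The global information enters only once, through the already-proved trace estimate~\eqref{eqn:newton:Trace} applied to the full potential, and the $L^2$ conclusion then follows from boundedness of the Hardy--Littlewood maximal operator. If you want to salvage your approach, you should replace the global dyadic decomposition by this per-point near/far splitting, which is what the reference to \cite[Lemma~4.1]{HofMayMou15} actually amounts to here.
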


\begin{proof}
Let $z\in \R^n$ and let $(x_0,t_0)$ satisfy $\abs{z-x_0}<\abs{t_0}$. Let $B=B((x_0,t_0),\abs{t_0}/2)$. We wish to bound
\begin{equation*}\fint_{B} \abs{\nabla^m \Pi^{L^*}\arr\Psi}^2\end{equation*}
by a quantity depending only on $z$ and $\arr\Psi$, and not on $x_0$ or~$t_0$.

Let $\Delta(x,r)=\{y\in\R^n:\abs{x-y}<r\}$ denote a disk in $\R^n$ (not $\R^\dmn$). Let $E_k=\Delta(x_0,2^{k+2}\abs{t_0})\times(-2^{k+2}\abs{t_0},2^{k+2}\abs{t_0})$ be a cylinder in $\R^\dmn$.
We define
\begin{align*}
\arr\Psi_0&=\1_{E_0} \arr\Psi,
&
\arr\Psi_k&=\1_{E_k\setminus E_{k-1}}\1_+ \arr\Psi,
\quad k\geq 1,\end{align*}
and let
\begin{equation*}
w_k=\Pi^{L^*}\arr\Psi_k,\quad k\geq 0
.\end{equation*}
Then $\Pi^{L^*}\arr\Psi = w_0+\sum_{k=1}^\infty w_k$.

We begin by bounding $\nabla^m w_0$. By the $L^2$ boundedness of $\nabla^m\Pi^{L^*}$,
\begin{equation*}\fint_{B} \abs{\nabla^m w_0}^2
\leq \frac{C}{\abs{t_0}^\dmn}\doublebar{\arr\Psi_0}_{L^2(\R^\dmn)}^2
.\end{equation*}
By Lemma~\ref{lem:lebesgue:lusin} with $r=2$ and $\kappa=0$, if $\frac{2n}{n+1}<\theta\leq 2$, then
\begin{equation*}\doublebar{\arr \Psi_0}_{L^2(\R^\dmn)}
=\doublebar{\arr \Psi_0}_{L^2(\R^n\times(-4\abs{t_0},4\abs{t_0}))}
\leq \frac{C_{\theta}}{\abs{t_0}^{n/\theta-1/2-n/2}} \doublebar{\mathcal{A}_2^*\arr \Psi_0}_{L^\theta(\R^n)}.\end{equation*}
But $\mathcal{A}_2^*\arr\Psi_0=0$ outside of $\Delta(x_0,8\abs{t_0})\subset\Delta(z,9\abs{t_0})$, and $\mathcal{A}_2^*\arr\Psi_0(x)\leq \mathcal{A}_2^*\arr\Psi(x)$ for all $x\in \Delta(z,9\abs{t_0})$. Thus,
\begin{align*}
\fint_{B} \abs{\nabla^m w_0}^2
&\leq 
\frac{C_{\theta}}{\abs{t_0}^{2n/\theta}} 
\biggl(\int_{\Delta(z,9\abs{t_0})}
(\mathcal{A}_2^*\arr \Psi)^\theta\biggr)^{2/\theta}\!
.\end{align*}
Let $\mathcal{M}$ denotes the Hardy-Littlewood maximal operator (in $\R^n$) given by
\begin{equation*}\mathcal{M} f(z)=\sup_{r>0} \fint_{\abs{y-z}<r} \abs{f(y)}\,dy.\end{equation*}
We then have that
\begin{align}
\label{eqn:newton:N:regular:proof:1}
\fint_{B} \abs{\nabla^m w_0}^2
&\leq 
C_{\theta} \mathcal{M}((\mathcal{A}_2^*\arr \Psi)^\theta)(z)^{2/\theta}
\end{align}
whenever $\frac{2n}{n+1}<\theta\leq 2$.

We now turn to $\arr\Psi_k$, $k\geq 1$. Let $\widetilde w=\sum_{k=1}^\infty w_k$. Observe that $L^*\widetilde w=0$ in $E_0$. 

The following lemma may be proven using the same argument as {\cite[Lem\-ma~3.19]{BarHM18p}}, in which the case of cubes (rather than cylinders) was considered.
\begin{lem}
\label{lem:iterate} 
Let $L$ be an operator of the form~\eqref{eqn:weak} of order~$2m$ associated to bounded $t$-independent coefficients~$\mat A$ that satisfy the ellipticity condition~\eqref{eqn:elliptic}.

Let $x_0\in\R^n$ and let $r>0$, $c>0$, and $\sigma>1$. Let $E=\Delta(x_0,r)\times(-cr,cr)$ and let $\widetilde E=\Delta(x_0,\sigma r)\times(-c\sigma r,c\sigma r)$.
Suppose that $u\in \dot W^{m,2}(\widetilde E)$ and that $Lu=0$ in $\widetilde E$. Let $0\leq j\leq m$. Then there is a constant $C_{c,\sigma}$ depending only on $c$, $\sigma$ and the standard parameters (and in particular independent of $x_0$ and~$r$) such that
\begin{align*}\fint_E \abs{\nabla^j u(x,t)}^2\,dt\,dx
&\leq
C_{c,\sigma}\biggl(r\fint_{\widetilde E}\abs{\partial_t^{j+1} u(x,t)}\,dt\,dx\biggr)^2
\\&\qquad+C_{c,\sigma} \biggl(\fint_{\Delta(x_0,\sigma r)} \abs{\Tr_j u(x)}\,dx\biggr)^2
.\end{align*}
\end{lem}

Observe that $B=B((x_0,t_0),\abs{t_0}/2)\subset \Delta(x_0,\abs{t_0}/2)\times(-3\abs{t_0}/2,3\abs{t_0}/2)$.
Thus by Lemma~\ref{lem:iterate},
\begin{equation}
\label{eqn:newton:N:regular:proof:2}
\biggl(\fint_{B} \abs{\nabla^m \widetilde w}^2\biggr)^{1/2}
\leq
C\fint_{\Delta(x_0,\abs{t_0})}\int_{-2\abs{t_0}}^{2\abs{t_0}} \abs{\partial_\dmn^{m+1}\widetilde w} + C \fint_{\Delta(x_0,\abs{t_0})} \abs{\Tr_m \widetilde w}
.\end{equation}
Recall that $\theta$ is a number with $\frac{2n}{n+1}<\theta\leq 2$.
If $n\geq 1$ then $\theta>1$. Thus by H\"older's inequality,
\begin{align*}\fint_{\Delta(x_0,\abs{t_0})} \abs{\Tr_m \widetilde w}
&\leq
	C\mathcal{M}(\Tr_m w)(z)
	+
	\biggl(\fint_{\Delta(x_0,\abs{t_0})} \abs{\Tr_m w_0}^\theta\biggr)^{1/\theta}
.\end{align*}
By the bound~\eqref{eqn:Meyers:bound}, $p_{1,L}^-\leq \max(1,\frac{2n}{n+2})\leq \frac{2n}{n+1}$, and so 
if $\frac{2n}{n+1}<\theta\leq 2$ then the bound~\eqref{eqn:newton:Trace} is valid for $p'=\theta$. Furthermore, the constant $c(1,L,\theta',2)$ of the bound~\eqref{eqn:Meyers} may be bounded by a constant depending only on $\theta$ and the standard parameters. Thus,
\begin{equation*}\fint_{\Delta(x_0,\abs{t_0})} \abs{\Tr_m  w_0}^\theta
\leq
\frac{1}{\abs{\Delta(x_0,\abs{t_0})}} \int_{\R^n} \abs{\Tr_m  w_0}^\theta
\leq
C_\theta\frac{1}{\abs{\Delta(x_0,\abs{t_0})}} \int_{\R^n} (\mathcal{A}_2^*\arr\Psi_0)^\theta
.\end{equation*}
As before, $\mathcal{A}_2^*\arr\Psi_0\leq \mathcal{A}_2^*\arr\Psi$ and $\mathcal{A}_2^*\arr\Psi_0=0$ outside of $\Delta(z,9\abs{t_0})$, and so
\begin{equation}
\label{eqn:newton:N:regular:proof:3}
\fint_{\Delta(x_0,\abs{t_0})} \abs{\Tr_m \widetilde w}
\leq
	C\mathcal{M}(\Tr_m w)(z)
	+
	C_\theta\mathcal{M}((\mathcal{A}_2^*\arr\Psi)^\theta)(z)^{1/\theta}.
\end{equation}

We are left with the term involving $\partial_\dmn^{m+1}\widetilde w$. 

Choose some $k\geq 1$.
Let $(x,t)\in \Delta(x_0,2\abs{t_0})\times(-2\abs{t_0},2\abs{t_0})=E_{-1}\subseteq E_{k-2}$.
Observe that because $\mat A$ (and thus $\mat A^*$) is $t$-independent, we have that $L^*(\partial_\dmn^{m+1} w_k)=0$ in $E_{k-1}$ for each $k\geq 1$.
By \cite[formula~(29)]{Bar16}, 
if $2m>\dmn$, then 
\begin{equation*}\abs{\partial_t^{m+1}w_k(x,t)} \leq C \biggl(\fint_{E_{k-3/2}} \abs{\partial_s^{m+1}w_k(y,s)}^2\,dy\,ds\biggr)^{1/2}.\end{equation*}
Recall that $w_k=\Pi^{L^*}\arr\Psi_k$.
By the Caccioppoli inequality and boundedness of the Newton potential $L^2(\R^\dmn)\to \dot W^{m,2}(\R^\dmn)$,
\begin{equation*}\abs{\partial_t^{m+1}w_k(x,t)} \leq \frac{C}{(2^k\abs{t_0})^{1+\pdmn/2}} \doublebar{\arr\Psi_k}_{L^2(\R^\dmn)}.\end{equation*}
Observe that $\mathcal{A}_2^*\arr\Psi_k\leq \mathcal{A}_2^*\arr\Psi$ and that $\mathcal{A}_2^*\arr\Psi_k=0$ outside of $\Delta(x_0,2^{k+3}\abs{t_0})\subset \Delta(z,2^{k+4}\abs{t_0})$.
As before, by Lemma~\ref{lem:lebesgue:lusin} with $r=2$ and $\kappa=0$,
\begin{align*}\doublebar{\arr\Psi_k}_{L^2(\R^\dmn)}
&\leq \frac{C_\theta}{(2^k\abs{t_0})^{n/\theta-1/2-n/2}} \doublebar{\mathcal{A}_2^*\arr\Psi_k}_{L^\theta(\R^n)}
\\&\leq
C_\theta(2^k\abs{t_0})^{\pdmn/2} \mathcal{M}((\mathcal{A}_2^*\arr\Psi)^\theta)(z)^{1/\theta}.
\end{align*}
Thus,
\begin{align*}\fint_{\Delta(x_0,\abs{t_0})}\int_{-2\abs{t_0}}^{2\abs{t_0}} \abs{\partial_\dmn^{m+1}\widetilde w}
&\leq 
\sum_{k=1}^\infty
\frac{C_\theta}{2^k} \mathcal{M}((\mathcal{A}_2^*\arr\Psi)^\theta)(z)^{1/\theta}
.\end{align*}
Summing, and applying the bounds~\cref{eqn:newton:N:regular:proof:1,eqn:newton:N:regular:proof:2,eqn:newton:N:regular:proof:3}, we see that if $2m>\dmn$ and $\frac{2n}{n+1}<\theta\leq 2$, then
\begin{align*}
\biggl(\fint_{B} \abs{\nabla^m\Pi^{L^*}\arr\Psi}^2\biggr)^{1/2}
&\leq 
\biggl(\fint_{B} \abs{\nabla^mw_0}^2\biggr)^{1/2}
+\biggl(\fint_{B} \abs{\nabla^m\widetilde w}^2\biggr)^{1/2}
\\&\leq
C_\theta\mathcal{M}((\mathcal{A}_2^*\arr \Psi)^\theta)(z)^{1/\theta}
+ C \mathcal{M}(\Tr_{m} \Pi^{L^*}\arr\Psi)(z).\end{align*}
The right hand side depends only on~$z$, not on $x_0$ or $t_0$, and so
\begin{equation*}\widetilde N_*(\nabla^m \Pi^{L^*} \arr\Psi)(z)
\leq
C_\theta\mathcal{M}((\mathcal{A}_2^*\arr \Psi)^\theta)(z)^{1/\theta}
+ C \mathcal{M}(\Tr_{m} \Pi^{L^*}\arr\Psi)(z).\end{equation*}

By the bound~\eqref{eqn:newton:Trace}, we have that $\doublebar{\Tr_{m} \Pi^{L^*}\arr\Psi}_{L^2(\R^n)}\leq C\doublebar{\mathcal{A}_2^*\arr\Psi}_{L^2(\R^n)}$. Choose $\theta=(2n+1)/(n+1)<2$.
By boundedness of $\mathcal{M}$ on $L^2(\R^n)$ and on $L^{2/\theta}(\R^n)$, we have that
\begin{equation*}
\doublebar{\widetilde N_*(\nabla^m \Pi^{L^*} \Psi)}_{L^2(\R^n)}
\leq C \doublebar{\mathcal{A}_2^*\arr\Psi}_{L^2(\R^n)}. 
\end{equation*}
This completes the proof in the case $2m>\dmn$. 

Suppose now that $2m\leq \dmn$.
Let $\widetilde L = \Delta^M L \Delta^M$ for some large integer~$M$. As shown in the proof of \cite[Theorem~62]{Bar16}, there are constants $a_\xi$ such that
\begin{equation*}\Pi^L\arr \Psi = \Delta^M \Pi^{\widetilde L} \arr{\widetilde \Psi}
\quad\text{where}\quad
\arr {\widetilde\Psi} = \sum_{\abs\xi=2M}\sum_{\abs\beta=m} a_\xi \,\Psi_\beta \,\arr e_{\beta+\xi}
\end{equation*}
where $\arr e_{\beta+\xi}$ is given by formula~\eqref{eqn:e}.
Thus,
\begin{align*}\widetilde N_*(\nabla^m \Pi^{L^*} \arr\Psi)(z)
&=
	\widetilde N_+(\nabla^{m}\Delta^M \Pi^{\widetilde L^*} \arr{\widetilde \Psi})(z)
\end{align*}
and if we choose $M$ so that $2m+4M>\dmn$, then
\begin{equation*}\doublebar{\widetilde N_*(\nabla^{m+2M} \Pi^{\widetilde L^*} \arr{\widetilde \Psi})}_{L^2(\R^n)}
\leq C \doublebar{\mathcal{A}_2^* \arr{\widetilde\Psi}}_{L^2(\R^n)}
\leq C^2 \doublebar{\mathcal{A}_2^* \arr{\Psi}}_{L^2(\R^n)}\end{equation*}
as desired.
\end{proof}

We now extend to bounds for $\mathcal{A}_2^*\arr\Psi\in L^{p'}(\R^n)$, $p<2$.

\begin{lem}\label{lem:newton:N:-}
Let $L$ be an operator of the form~\eqref{eqn:weak} of order~$2m$ associated to bounded $t$-independent coefficients~$\mat A$ that satisfy the ellipticity condition~\eqref{eqn:elliptic}.

Let $j$ be an integer with $0\leq j\leq m$. Let $p$ satisfy $p_{j,L^*}^-<p<2$ and let $1/p+1/p'=1$.
Let $\arr\Psi\in L^2(\R^\dmn)$ be supported in a compact subset of $\R^\dmn_+\cup\R^\dmn_-$. Then
\begin{equation*}
\doublebar{\widetilde N_*(\nabla^{m-j} \partial_t^j \Pi^{L^*} \arr\Psi)}_{L^{p'}(\R^n)}
\leq C(j,L^*,p') \doublebar{\mathcal{A}_2^*\arr\Psi}_{L^{p'}(\R^n)}, \quad p_{j,L^*}^-<p\leq2.
\end{equation*}
\end{lem}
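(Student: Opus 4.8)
The plan is to deduce the $L^{p'}$ estimates for $p_{j,L^*}^-<p<2$ --- equivalently $2<p'<p_{j,L^*}^+$ --- from the endpoint $L^2$ bound of Lemma~\ref{lem:newton:N:regular} by the Shen-type good-$\lambda$ (self-improvement) argument developed in \cite{Bar19p} and modeled on \cite{She06A}. Fix $j$ with $0\le j\le m$, write $u=\Pi^{L^*}\arr\Psi$, and set $F=\widetilde N_*(\nabla^{m-j}\partial_t^j u)$ and $f=\mathcal A_2^*\arr\Psi$; the claim is $\doublebar{F}_{L^{p'}(\R^n)}\le C(j,L^*,p')\doublebar{f}_{L^{p'}(\R^n)}$. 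The endpoint $p=2$ is immediate: the components of $\nabla^{m-j}\partial_t^j u$ are among those of $\nabla^m u$, so $F\le\widetilde N_*(\nabla^m u)$ pointwise and Lemma~\ref{lem:newton:N:regular} applies. Thus the real content is the upward improvement $L^2\to L^{p'}$, which I obtain by feeding into the good-$\lambda$ framework of \cite{Bar19p} two ingredients: the global $L^2$ bound just recalled, and, for each cube $Q\subset\R^n$, a decomposition $F\le F_Q+R_Q$ in which $F_Q$ is controlled in $L^2(Q)$ by an $L^2$ average of $f$ over a dilate of $Q$, while $R_Q$ satisfies a reverse Hölder inequality over $Q$ with exponents up to $p_{j,L^*}^+$, with right-hand side an $L^2$ average of $F$ and $f$ over a dilate of $Q$. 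The two-sided operators $\widetilde N_*$, $\mathcal A_2^*$ require no special treatment here, since Lemma~\ref{lem:newton:N:regular}, Lemma~\ref{lem:slices} (applied on cylinders straddling $\{t=0\}$), and the decomposition below are all set up symmetrically; alternatively one splits into $\widetilde N_\pm$ and invokes the change of variables of Section~\ref{sec:lower}.

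To produce the decomposition, fix $Q$, let $E=CQ\times(-C\ell(Q),C\ell(Q))$ for a large fixed $C$, and split $\arr\Psi=\arr\Psi_Q^{\mathrm{loc}}+\arr\Psi_Q^{\mathrm{glob}}$ with $\arr\Psi_Q^{\mathrm{loc}}=\1_E\arr\Psi$; set $u^\bullet=\Pi^{L^*}\arr\Psi_Q^\bullet$, so $u=u^{\mathrm{loc}}+u^{\mathrm{glob}}$. Take $F_Q=\widetilde N_*(\nabla^{m-j}\partial_t^j u^{\mathrm{loc}})$: since $\mathcal A_2^*\arr\Psi_Q^{\mathrm{loc}}\le f$ and is supported in a fixed dilate $C'Q$, Lemma~\ref{lem:newton:N:regular} together with the pointwise domination gives $\doublebar{F_Q}_{L^2(\R^n)}\le C\doublebar{f}_{L^2(C'Q)}$, i.e.\ $\bigl(\fint_Q F_Q^2\bigr)^{1/2}\le C\bigl(\fint_{C'Q}f^2\bigr)^{1/2}$. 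Take $R_Q=\widetilde N_*(\nabla^{m-j}\partial_t^j u^{\mathrm{glob}})$: because $\arr\Psi_Q^{\mathrm{glob}}$ vanishes on $E$, formula~\eqref{eqn:newton} shows $L^*u^{\mathrm{glob}}=0$ in the interior of $E$, and by the $t$-independence of $\mat A^*$ (cf.\ formula~\eqref{eqn:Newton:vertical} and Lemma~\ref{lem:slices}) so does $v:=\partial_t^j u^{\mathrm{glob}}$ on a slightly shrunk copy. Splitting the supremum defining $\widetilde N_*$ at a scale $\sim\ell(Q)$ chosen small enough (relative to $C$) that the corresponding averaging balls lie in $E$: that small-scale portion of $R_Q$ is a truncated nontangential maximal function of $\nabla^{m-j}v$, the gradient of order $m-j$ of a solution, and therefore satisfies a reverse Hölder inequality with any exponent below $p_{j,L^*}^+$ by Lemma~\ref{lem:slices} and the Meyers bound~\eqref{eqn:Meyers} (the constants $c(j,L^*,p',2)$ being admissible since $p'<p_{j,L^*}^+$; see Remark~\ref{rmk:Meyers}); replacing $u^{\mathrm{glob}}$ by $u$ over the enlarged cube costs only the $L^2$ estimate for $F_Q$, yielding $\bigl(\fint_Q R_Q^{p'}\bigr)^{1/p'}\le C\bigl(\fint_{\sigma Q}F^2\bigr)^{1/2}+C\bigl(\fint_{C'Q}f^2\bigr)^{1/2}$ for this part. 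On the remaining (larger) scales the value of $\widetilde N_*$ over $Q$ is, after the harmless enlargement of aperture forced by moving the base point within $Q$, essentially constant on $Q$; that constant is bounded using, first, the decay of $\nabla^{m-j}\partial_t^j\Pi^{L^*}\arr\Psi_Q^{\mathrm{loc}}$ far from $E$ (Caccioppoli, the Gagliardo--Nirenberg--Sobolev normalization of Section~\ref{sec:dfn:potential}, and Lemma~\ref{lem:lebesgue:lusin} to estimate $\doublebar{\arr\Psi_Q^{\mathrm{loc}}}_{L^2}$ by a dilated average of $f$), and second, the pointwise domination of the wide-aperture maximal function of $\nabla^m u$ by $\mathcal M(F^{q_0})^{1/q_0}$ for some $q_0<2$; since a constant has all its averages equal, this piece is acceptable on either side of the dichotomy.

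With this decomposition in hand, the good-$\lambda$ lemma of \cite{Bar19p} --- a Calder\'on--Zygmund stopping-time argument at each threshold $\lambda$, using the $L^2$ bound to control the stopping set and the reverse Hölder estimate for $R_Q$ to gain integrability --- yields $\doublebar{F}_{L^{p'}}\le C(j,L^*,p')\doublebar{f}_{L^{p'}}$ for $2<p'<p_{j,L^*}^+$, as desired. The main obstacle is precisely the scale bookkeeping for $\widetilde N_*$: this maximal function integrates over all vertical scales, whereas the two estimates above are intrinsically local (reverse Hölder for $u^{\mathrm{glob}}$ holds only at scales up to $\sim\ell(Q)$, and the data-controlled bound for $F_Q$ is most naturally an $L^2$ estimate), so one must carefully isolate the small-scale part (where Lemma~\ref{lem:slices} supplies the $j$-dependent exponent $p_{j,L^*}^+$), the localized-data far-field part (controlled by decay of the Newton potential and Lemma~\ref{lem:lebesgue:lusin}), and the large-scale near-constant part, and verify each lands on the correct side of the good-$\lambda$ dichotomy. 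The other delicate point is arranging the derivative shift $\nabla^{m-j}\partial_t^j$ to be compatible with the $t$-independence of the Newton potential, so that $\partial_t^j u^{\mathrm{glob}}$ is again an $L^*$-solution and Lemma~\ref{lem:slices} applies to its gradient of order $m-j$; this is exactly where the improved exponent $p_{j,L^*}^+$ (rather than $p_{0,L^*}^+$) enters and produces the full stated range of~$p$.
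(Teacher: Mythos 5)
Your proposal is correct and follows essentially the same route as the paper: split the data over each cube into a local piece (controlled in $L^2$ by Lemma~\ref{lem:newton:N:regular} applied to the truncated data) and a global piece whose $j$th vertical derivative is an $L^*$-solution near the cylinder by $t$-independence, apply the reverse H\"older estimate for truncated nontangential maximal functions of $\nabla^{m-j}$ of solutions (the content of \cite[Lemma~4.11]{Bar19p}, with exponent below $p_{j,L^*}^+$), and conclude with the good-$\lambda$ lemma \cite[Lemma~4.3]{Bar19p}. The extra scale bookkeeping you carry out by hand (large-scale near-constancy, aperture change) is exactly what those cited lemmas already package, so your argument matches the paper's proof.
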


\begin{proof}
The $p=2$ case is Lemma~\ref{lem:newton:N:regular}.
Let
\begin{align*}
u&=\partial_t^j\Pi^{L^*} \arr\Psi, & u_Q &= \partial_t^j\Pi^{L^*}(\1_{10Q\times(-\ell(Q),\ell(Q))} \arr \Psi), &  \Phi_1&={\mathcal{A}_2^*\arr\Psi}
\end{align*}
where $Q$ is any cube in~$\R^n$.
Hereafter the proof closely parallels that of \cite[Theorem~4.12]{Bar19p}, and in fact will use many results of \cite{Bar19p}.
Choose some $p$ with $p_{j,L^*}^-<p<2$. By standard self-improvement properties of reverse H\"older estimates (see, for example, \cite[Chapter~V, Theorem~1.2]{Gia83}), there is a $p_2>p'$ such that the bound~\eqref{eqn:Meyers} is valid for solutions $u$ to $L^*u=0$ and for $p=p_2$. That is, there is a $p_2>p'$ such that $p_2<p_{j,L^*}^+$, with $p_2$ and $c(j,L^*,p_2,2)$ depending only on $p$ and $c(j,L^*,p',2)$.

We have that $u-u_Q\in\dot W^{m,2}_{loc}(10Q\times(-\ell(Q),\ell(Q)))$ and $L^*(u-u_Q)=0$ in $10Q\times (-\ell(Q),\ell(Q))$.
By \cite[Lemma~4.11]{Bar19p} with $v=u-u_Q$, we have that
\begin{align*}
\biggl(\fint_{8Q} \widetilde N^\ell_n(\nabla^{m-j}(u-u_Q))^{p_2}\biggr)^{1/{p_2}}
&\leq
	C(j,L^*,p_2)\biggl(\fint_{10Q}\widetilde N_n^{3\ell} (\nabla^{m-j}(u-u_Q))^2\biggr)^{1/2}
\end{align*}
where $\ell=\ell(Q)/4$ and where $\widetilde N_n^\ell$ is as given in \cite[Section~4]{Bar19p}. In particular, $\widetilde N_n^{3\ell} (\nabla^{m-j} u)\leq \widetilde N_*(\nabla^{m-j} u)$.
By Lemma~\ref{lem:newton:N:regular}, we have that
\begin{equation*}\doublebar{\widetilde N_*(\nabla^{m-j} u)}_{L^2(\R^n)}\leq C\doublebar{\Phi_1}_{L^2(\R^n)}<\infty.
\end{equation*}
Observe that $\mathcal{A}_2^*(\1_{10Q\times(-\ell(Q),\ell(Q))} \arr \Psi)(x)\leq \mathcal{A}_2^*\arr \Psi(x)=\Phi_1(x)$ and is zero if $x\notin 12Q$; thus, again by Lemma~\ref{lem:newton:N:regular}, we have that
\begin{equation*}\doublebar{\widetilde N_*(\nabla^{m-j} u_Q)}_{L^2(\R^n)}\leq C\doublebar{\Phi_1}_{L^2(16Q)}.\end{equation*}
These bounds imply that
\begin{align*}
\doublebar{\widetilde N^\ell_n(\nabla^{m-j} (u-u_Q))}_{L^{p_2}(8Q)}
&\leq
	\frac{C(j,L^*,p_2)}{\abs{Q}^{1/2-1/p_2}}
	\bigl(\doublebar{\Phi_1}_{L^2(16Q)}
	+
	\doublebar{\widetilde N_* (\nabla^{m-j} u)}_{L^2(10Q)} 
	\bigr).\end{align*}
The conditions of \cite[Lemma~4.3]{Bar19p} with $\arr u=\nabla^{m-j} u$ and $\arr u_Q=\nabla^{m-j} u_Q$  are thus satisfied, and so
\begin{equation*}\doublebar{\widetilde N_+(\nabla^{m-j} u)}_{L^{p'}(\R^n)}
\leq C(j,L^*,p') \doublebar{\Phi_1}_{L^{p'}(\R^n)},\end{equation*}
as desired.
\end{proof}

\subsection{Inputs satisfying Carleson estimates}
\label{sec:newton:N:rough}

In this section we will continue to establish bounds on the Newton potential. 
In Lemma~\ref{lem:newton:lusin:rough} we will establish the area integral bound
\begin{equation*}\doublebar{\mathcal{A}_2^*(t\nabla^m \Pi^{L^*}\arr H)}_{L^{p'}(\R^n)}
\leq C(0,L,p) \doublebar{\widetilde{\mathfrak{C}}_1^*(t\arr H)}_{L^{p'}(\R^n)},\quad 2\leq p <p_{0,L}^+,\end{equation*} 
and in Lemmas~\ref{lem:newton:N:rough:-} and~\ref{lem:newton:N:rough} we will establish the nontangential bound
\begin{align*}
\doublebar{\widetilde N_*(\nabla^{m-1}\Pi^{L^*}\arr H)}_{L^{p'}(\R^n)}
&\leq \widetilde C_p \doublebar{\widetilde{\mathfrak{C}}_1^*(t\arr H)}_{L^{p'}(\R^n)}
,\quad p_{1,L^*}^-<p<p_{0,L}^+
\end{align*}
for an appropriate constant~$\widetilde C_p$.

Lemma~\ref{lem:newton:lusin:rough} will be proven by a simple duality argument. The proof of Lem\-ma~\ref{lem:newton:N:rough:-} uses similar techniques to that of Lem\-ma~\ref{lem:newton:N:regular}. Most of the proof of Lemma~\ref{lem:newton:N:rough} will be omitted, as once some notation has been established it may be proven in the same fashion as \cite[Theorem~4.12]{Bar19p} or Lemma~\ref{lem:newton:N:-}.

\begin{lem}\label{lem:newton:lusin:rough}
Let $L$ be an operator of the form~\eqref{eqn:weak} of order~$2m$ associated to bounded $t$-independent coefficients~$\mat A$ that satisfy the ellipticity condition~\eqref{eqn:elliptic}.

Let $\arr H\in L^2(\R^\dmn)$ be supported in a compact subset of $\R^\dmn_+\cup\R^\dmn_-$.
Let $p_{0,L}^+$ be as in the bound~\eqref{eqn:Meyers}. If $2\leq p <p_{0,L}^+$, and if $1/p+1/p'=1$,
then
\begin{equation*}
\doublebar{\mathcal{A}_2^*(t\nabla^m \Pi^{L^*}\arr H)}_{L^{p'}(\R^n)}
\leq C(0,L,p) \doublebar{\widetilde{\mathfrak{C}}_1^*(t\arr H)}_{L^{p'}(\R^n)}
,\quad 2\leq p <p_{0,L}^+.\end{equation*}
\end{lem}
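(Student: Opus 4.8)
The plan is to prove this by the duality method indicated in the section introduction. The only ingredients needed are the tent--space duality \eqref{eqn:CoiMS85} (in its $\mathcal{A}_2^*$ form), the nontangential/Carleson duality of Lemma~\ref{lem:N>C}, the self--adjointness relation \eqref{eqn:newton:adjoint} for the Newton potential, and the nontangential estimates on $\Pi^{L}$ already established in Lemmas~\ref{lem:newton:N:regular} and~\ref{lem:newton:N:-}. In particular, no new hard analytic estimate is required; the work is organizing the exponents correctly.

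First I would fix $p$ with $2\le p<p_{0,L}^+$ and set $1/p+1/p'=1$, so $1<p'\le 2$. By the $\mathcal{A}_2^*$--analog of \eqref{eqn:CoiMS85} with the roles of the two exponents interchanged (the left--hand exponent being $p'$, hence the testing exponent being $p$), together with the analog of Remark~\ref{rmk:CoiMS85:compact}, it suffices to prove
\begin{equation*}
\abs{\langle\arr\Psi,\nabla^m\Pi^{L^*}\arr H\rangle_{\R^\dmn}}
\le C(0,L,p)\,\doublebar{\mathcal{A}_2^*\arr\Psi}_{L^p(\R^n)}\,\doublebar{\widetilde{\mathfrak{C}}_1^*(t\arr H)}_{L^{p'}(\R^n)}
\end{equation*}
for every $\arr\Psi\in L^2(\R^\dmn)$ supported in a compact subset of $\R^\dmn_+\cup\R^\dmn_-$. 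Using the conjugate--symmetry of the inner product and the adjoint relation \eqref{eqn:newton:adjoint}, the inner product equals $\langle\nabla^m\Pi^{L}\arr\Psi,\arr H\rangle_{\R^\dmn}$, so the question becomes an estimate on $\nabla^m\Pi^{L}\arr\Psi$ tested against $\arr H$.

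Next I would split $\arr H=\1_+\arr H+\1_-\arr H$. Since $\1_+\arr H$ is supported in $\R^\dmn_+$ and $\widetilde{\mathfrak{C}}_1^+(t\,\cdot\,)$ depends only on the restriction to $\R^\dmn_+$, Lemma~\ref{lem:N>C} (with testing exponent $p$ and Carleson exponent $p'$) gives
\begin{equation*}
\abs{\langle\nabla^m\Pi^{L}\arr\Psi,\1_+\arr H\rangle_{\R^\dmn}}
\le C\,\doublebar{\widetilde N_+(\nabla^m\Pi^{L}\arr\Psi)}_{L^p(\R^n)}\,\doublebar{\widetilde{\mathfrak{C}}_1^+(t\arr H)}_{L^{p'}(\R^n)},
\end{equation*}
and the lower--half--space analog of Lemma~\ref{lem:N>C}, obtained from the change of variables of Section~\ref{sec:lower}, handles $\1_-\arr H$ in the same way with $\widetilde N_-$ and $\widetilde{\mathfrak{C}}_1^-$ in place of $\widetilde N_+$ and $\widetilde{\mathfrak{C}}_1^+$. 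Using $\widetilde N_\pm\le\widetilde N_*$ and $\widetilde{\mathfrak{C}}_1^\pm(t\arr H)\le\widetilde{\mathfrak{C}}_1^*(t\arr H)$, it remains to show $\doublebar{\widetilde N_*(\nabla^m\Pi^{L}\arr\Psi)}_{L^p(\R^n)}\le C(0,L,p)\doublebar{\mathcal{A}_2^*\arr\Psi}_{L^p(\R^n)}$. This is exactly Lemma~\ref{lem:newton:N:-} with $j=0$ read off for the operator $L^*$ in place of $L$: that lemma's admissible range $p_{0,(L^*)^*}^-<q\le 2$ becomes $p_{0,L}^-<q\le 2$, i.e.\ $2\le q'<p_{0,L}^+$, which contains $q'=p$ by hypothesis, and its constant becomes $C(0,L,p)$; moreover $\arr\Psi$ here is compactly supported away from the boundary, so $\arr\Psi$ is admissible for that lemma and all the quantities in play ($\mathcal{A}_2^*\arr\Psi$, the right side via Lemma~\ref{lem:C:local}, and $\widetilde N_*(\nabla^m\Pi^{L}\arr\Psi)$) are finite. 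Combining the two halves and dividing by $\doublebar{\mathcal{A}_2^*\arr\Psi}_{L^p(\R^n)}$ before taking the supremum completes the proof; the same estimate for $\arr H$ supported in $\R^\dmn_-$ follows identically (or from Section~\ref{sec:lower}).

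There is no real obstacle here. The two points that need a little care are bookkeeping ones: getting the exponents right in the two duality statements — the target exponent $p'\le 2$ ends up as the exponent on $\arr H$, while its conjugate $p\ge 2$ is carried by $\arr\Psi$ and by $\nabla^m\Pi^{L}\arr\Psi$ — and noticing that Lemma~\ref{lem:newton:N:-}, although phrased for exponents at most $2$, yields the needed bound on $\widetilde N_*(\nabla^m\Pi^{L}\arr\Psi)$ on the whole range $2\le p<p_{0,L}^+$ once it is applied to the adjoint operator.
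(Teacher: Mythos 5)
Your proposal is correct and follows essentially the same route as the paper's proof: tent-space duality \eqref{eqn:CoiMS85} with the exponents exchanged, the adjoint relation \eqref{eqn:newton:adjoint} to pass from $\Pi^{L^*}$ acting on $\arr H$ to $\Pi^{L}$ acting on $\arr\Psi$, the nontangential/Carleson duality of Lemma~\ref{lem:N>C}, and finally Lemma~\ref{lem:newton:N:-} with $j=0$ and the roles of $p$, $p'$ and $L$, $L^*$ interchanged. Your explicit splitting of $\arr H$ into its upper and lower half-space pieces merely spells out what the paper leaves implicit in applying Lemma~\ref{lem:N>C} with the starred operators.
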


\begin{proof}
By the bound~\eqref{eqn:CoiMS85},
\begin{equation*}\doublebar{\mathcal{A}_2^*(t\nabla^m \Pi^{L^*}\arr H)}_{L^{p'}(\R^n)}
\approx\sup_{\arr \Psi} \frac{\abs{\langle \arr \Psi, \nabla^m \Pi^{L^*}\arr H\rangle_{\R^\dmn}}} {\doublebar{\mathcal{A}_2^*\arr\Psi}_{L^{p}(\R^n)}}. \end{equation*}
We may take the supremum over $\arr\Psi$ supported in a compact subset of $\R^\dmn_+\cup\R^\dmn_-$ such that the denominator is positive and finite. Thus, we may assume $\arr\Psi\in L^2(\R^\dmn)$. By \cite[Lemma~42]{Bar16} (reproduced as formula~\eqref{eqn:newton:adjoint} above),
\begin{equation*}\doublebar{\mathcal{A}_2^*(t\nabla^m \Pi^{L^*}\arr H)}_{L^{p'}(\R^n)}
\approx\sup_{\arr \Psi} \frac{\abs{\langle \nabla^m \Pi^{L}\arr \Psi, \arr H\rangle_{\R^\dmn}}} {\doublebar{\mathcal{A}_2^*\arr\Psi}_{L^{p}(\R^n)}} \end{equation*}
and by Lemma~\ref{lem:N>C},
\begin{equation*}\doublebar{\mathcal{A}_2^*(t\nabla^m \Pi^{L^*}\arr H)}_{L^{p'}(\R^n)}
\leq C_p
\sup_{\arr \Psi} \frac{\doublebar{\widetilde N_*(\nabla^m \Pi^{L}\arr \Psi)}_{L^{p}(\R^n)} \doublebar{\widetilde{\mathfrak{C}}_1^*(t\arr H)}_{L^{p'}(\R^n)}} {\doublebar{\mathcal{A}_2^*\arr\Psi}_{L^{p}(\R^n)}} .\end{equation*}
Using Lemma~\ref{lem:newton:N:-} with $j=0$ and with $p$, $p'$ and $L$, $L^*$ exchanged completes the proof.
\end{proof}

We now establish nontangential estimates.

\begin{lem}\label{lem:newton:N:rough:-}
Let $L$ be an operator of the form~\eqref{eqn:weak} of order~$2m$ associated to bounded $t$-independent coefficients~$\mat A$ that satisfy the ellipticity condition~\eqref{eqn:elliptic}.

Let $\arr H\in L^2(\R^\dmn)$ be supported in a compact subset of $\R^\dmn_+\cup\R^\dmn_-$.
Let $p_{0,L}^+$ be as in the bound~\eqref{eqn:Meyers}.
If $2\leq p <p_{0,L}^+$, and if $1/p+1/p'=1$,
then
\begin{equation*}
\doublebar{\widetilde N_*(\nabla^{m-1} \Pi^{L^*}\arr H)}_{L^{p'}(\R^n)}\leq C(0,L,p) \doublebar{\widetilde{\mathfrak{C}}_1^*(t\arr H)}_{L^{p'}(\R^n)},
\quad 2\leq p<p_{0,L}^+
.\end{equation*}
\end{lem}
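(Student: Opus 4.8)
The plan is to mimic the proof of Lemma~\ref{lem:newton:N:regular}, replacing the Lebesgue-norm estimate (Lemma~\ref{lem:lebesgue:lusin}) with the Carleson-based bound~\eqref{eqn:newton:trace:rough}, and replacing the area-integral quantity $\mathcal{A}_2^*\arr\Psi$ by $\widetilde{\mathfrak{C}}_1^*(t\arr H)$ throughout. More precisely, fix $z\in\R^n$ and $(x_0,t_0)$ with $\abs{z-x_0}<\abs{t_0}$, set $B=B((x_0,t_0),\abs{t_0}/2)$, and bound $\bigl(\fint_B\abs{\nabla^{m-1}\Pi^{L^*}\arr H}^2\bigr)^{1/2}$ by a quantity depending only on $z$. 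As in Lemma~\ref{lem:newton:N:regular}, decompose $\arr H=\arr H_0+\sum_{k\geq1}\arr H_k$ using the cylinders $E_k=\Delta(x_0,2^{k+2}\abs{t_0})\times(-2^{k+2}\abs{t_0},2^{k+2}\abs{t_0})$, with $\arr H_0=\1_{E_0}\arr H$ and $\arr H_k=\1_{E_k\setminus E_{k-1}}\1_+\arr H$ for $k\geq1$, and let $w_k=\Pi^{L^*}\arr H_k$.

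For the local term $w_0$: by the $L^2$ boundedness of $\nabla^m\Pi^{L^*}$ and the Caccioppoli inequality (Lemma~\ref{lem:Caccioppoli}) one controls $\fint_B\abs{\nabla^{m-1}w_0}^2$ by $\abs{t_0}^{2}\abs{t_0}^{-\dmn}\doublebar{\arr H_0}_{L^2(\R^\dmn)}^2$ plus a term involving $\Tr_{m-1}w_0$ — actually the cleanest route is to invoke Lemma~\ref{lem:iterate} to reduce $\fint_B\abs{\nabla^{m-1}\widetilde w}^2$ (with $\widetilde w=\sum_{k\geq1}w_k$, solving $L^*\widetilde w=0$ in $E_0$) to $\fint\abs{\partial_t^m\widetilde w}$ and $\fint_{\Delta}\abs{\Tr_{m-1}\widetilde w}$. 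The trace term is handled via the Hardy--Littlewood maximal function of $\Tr_{m-1}\Pi^{L^*}\arr H$ together with a local piece $\bigl(\fint\abs{\Tr_{m-1}w_0}^\theta\bigr)^{1/\theta}$, which by the bound~\eqref{eqn:newton:trace:rough} (valid for $p'=\theta$ once $\theta>p_{0,L^*}^-{}'$, i.e. $\theta$ close to but below $2$; here $\frac{2n}{n+1}<\theta\leq2$ suffices since $p_{0,L^*}^-\leq2-\widetilde\varepsilon$) is at most $C_\theta\abs{\Delta(x_0,\abs{t_0})}^{-1/\theta}\bigl(\int_{\R^n}\widetilde{\mathfrak{C}}_1^*(t\arr H_0)^\theta\bigr)^{1/\theta}$, and one uses $\widetilde{\mathfrak{C}}_1^*(t\arr H_0)\leq\widetilde{\mathfrak{C}}_1^*(t\arr H)$ with support in $\Delta(z,C\abs{t_0})$ to dominate by $C_\theta\mathcal{M}(\widetilde{\mathfrak{C}}_1^*(t\arr H)^\theta)(z)^{1/\theta}$. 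For the far terms $w_k$, $k\geq1$: on the small cylinder $E_{-1}$ one has $L^*(\partial_t^m w_k)=0$ in $E_{k-1}$, so by interior estimates (\cite[formula~(29)]{Bar16}, first when $2m>\dmn$) plus Caccioppoli and boundedness of $\Pi^{L^*}$, $\abs{\partial_t^m w_k(x,t)}\leq C(2^k\abs{t_0})^{-1-\pdmn/2}\doublebar{\arr H_k}_{L^2(\R^\dmn)}$, and $\doublebar{\arr H_k}_{L^2(\R^\dmn)}$ is bounded using Lemma~\ref{lem:lebesgue:lusin}? — no, here I need the Carleson analogue: I will control $\doublebar{\1_{E_k\setminus E_{k-1}}\1_+\arr H}_{L^2(\R^\dmn)}$ by a power of $2^k\abs{t_0}$ times $\mathcal{M}(\widetilde{\mathfrak{C}}_1^*(t\arr H)^\theta)(z)^{1/\theta}$, using that the averaged Carleson norm over a cube of side $\approx2^k\abs{t_0}$ dominates $(2^k\abs{t_0})^{-\dmn/2-1}$ times the $L^2$ mass of $t\arr H$ on the corresponding Whitney region (this is essentially the definition~\eqref{eqn:Carleson:norm} read backwards, combined with Hölder in the cube variable). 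Summing the geometric series in $k$ and applying boundedness of $\mathcal{M}$ on $L^2$ and $L^{2/\theta}$ gives the $p=2$ case, and the case $2m\leq\dmn$ follows from the $\Delta^M L\Delta^M$ trick exactly as in Lemma~\ref{lem:newton:N:regular}.

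Having established $\doublebar{\widetilde N_*(\nabla^{m-1}\Pi^{L^*}\arr H)}_{L^2(\R^n)}\leq C\doublebar{\widetilde{\mathfrak{C}}_1^*(t\arr H)}_{L^2(\R^n)}$ (the $p=2$ case, which matches the statement since $p_{0,L}^-\leq2$), the extrapolation to $2<p<p_{0,L}^+$ (i.e. $p_{0,L}^-{}'>\ldots$, wait — the statement is $2\leq p<p_{0,L}^+$ with the norm on the left in $L^{p'}$, so $1<p'\leq2$) proceeds by the good-$\lambda$ machinery of \cite{Bar19p}, exactly as in the proof of Lemma~\ref{lem:newton:N:-}: set $u=\Pi^{L^*}\arr H$, $u_Q=\Pi^{L^*}(\1_{10Q\times(-\ell(Q),\ell(Q))}\arr H)$, $\Phi_1=\widetilde{\mathfrak{C}}_1^*(t\arr H)$, note $L^*(u-u_Q)=0$ in $10Q\times(-\ell(Q),\ell(Q))$, apply the reverse-Hölder self-improvement (\cite[Chapter~V, Theorem~1.2]{Gia83}) to pass from $p'$ to some $p_2>p'$ below $p_{0,L^*}^+$, invoke \cite[Lemma~4.11]{Bar19p} on $v=u-u_Q$ and \cite[Lemma~4.3]{Bar19p} on $\arr u=\nabla^{m-1}u$, $\arr u_Q=\nabla^{m-1}u_Q$, using the $p=2$ bound just proven (applied both to $\arr H$ and to the localized datum $\1_{10Q\times\cdots}\arr H$, whose Carleson norm is $\leq\Phi_1$ and supported near $Q$ by Lemma~\ref{lem:C:local}) to verify the hypotheses. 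I expect the main obstacle to be the far-term estimate: unlike the area-integral setting where Lemma~\ref{lem:lebesgue:lusin} directly converts $L^2$ mass on an annular cylinder into an $L^\theta$ norm of $\mathcal{A}_2^*$, here I must argue by hand that the averaged Carleson functional of $t\arr H$, evaluated on cubes of dyadically growing size, controls the $L^2$ mass of $t\arr H$ on the matching cylinders with the correct $2^{-k}$ decay after summation — this requires care in choosing the cube in the supremum defining $\widetilde{\mathfrak{C}}_1^+$ and in tracking the powers of $2^k\abs{t_0}$, and is the step most likely to need a dedicated sublemma or a pointwise bound of the form $\bigl(\fint_{B((y,s),s/2)}\abs{t\arr H}^2\bigr)^{1/2}\lesssim s\,\widetilde{\mathfrak{C}}_1^*(t\arr H)(y)$ valid for a.e. $y$ — but in fact the robust substitute is to run the whole far-term sum through the bound~\eqref{eqn:newton:trace:rough} applied to each $\arr H_k$ rather than estimating $\doublebar{\arr H_k}_{L^2}$ directly, which sidesteps the issue entirely at the cost of a slightly longer bookkeeping.
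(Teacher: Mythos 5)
There is a genuine gap, and it sits exactly where you suspected: the far-term estimate in your Lemma~\ref{lem:newton:N:regular}-style annular decomposition. The claim that the averaged Carleson functional of $t\arr H$, read ``backwards'' over a cube of side $\approx 2^k\abs{t_0}$, dominates $(2^k\abs{t_0})^{-\dmn/2-1}$ times the solid $L^2$ mass of $t\arr H$ on the matching cylinder is false: $\widetilde{\mathfrak{C}}_1^*(t\arr H)$ is an $L^1$-type average (in the Whitney parameters $(y,s)$) of local $L^2$ averages, and H\"older runs the wrong way to upgrade it to the global $L^2$ norm. Concretely, for $\arr H=\varepsilon^{-1/2}\1_{\{0<t<\varepsilon\}}\1_{\Delta\times\R}$ one has $\doublebar{\arr H}_{L^2}\approx\abs{\Delta}^{1/2}$ uniformly in $\varepsilon$, while $\widetilde{\mathfrak{C}}_1^*(t\arr H)\lesssim\varepsilon^{1/2}\to0$; so no bound of the form $\doublebar{\arr H_k}_{L^2}\lesssim(2^k\abs{t_0})^{\text{power}}\,\mathcal{M}((\widetilde{\mathfrak{C}}_1^*(t\arr H))^\theta)(z)^{1/\theta}$ can hold. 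Your fallback --- running the far sum through the bound~\eqref{eqn:newton:trace:rough} applied to each $\arr H_k$ --- does not repair this, because \eqref{eqn:newton:trace:rough} only controls the boundary trace $\Tr_{m-1}\Pi^{L^*}\arr H_k$, whereas the annular argument needs a pointwise interior bound on $\partial_t^m w_k$ in $E_{-1}$ with geometric decay in $k$; that interior bound was obtained in Lemma~\ref{lem:newton:N:regular} precisely from $\doublebar{\arr\Psi_k}_{L^2}$ via Lemma~\ref{lem:lebesgue:lusin}, and there is no Carleson analogue of Lemma~\ref{lem:lebesgue:lusin} for the data. A secondary mismatch: your extrapolation step points the good-$\lambda$ machinery of \cite{Bar19p} at the range $1<p'\leq2$, but that machinery (as in Lemma~\ref{lem:newton:N:-}) starts from an $L^2$ bound plus a local reverse H\"older self-improvement and produces exponents \emph{above} $2$; it cannot deliver the sub-$2$ exponents required here (in the paper it is used for that purpose only later, in Lemma~\ref{lem:newton:N:rough}, where $p'>2$).

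The paper's proof avoids estimating solid norms of the datum altogether. The key is to first prove Lemma~\ref{lem:newton:lusin:rough}, i.e.\ $\doublebar{\mathcal{A}_2^*(t\nabla^m\Pi^{L^*}\arr H)}_{L^{p'}}\leq C\doublebar{\widetilde{\mathfrak{C}}_1^*(t\arr H)}_{L^{p'}}$ for $2\leq p<p_{0,L}^+$, by duality with Lemma~\ref{lem:newton:N:-}; this is the device that converts Carleson control of the data into interior (area-integral) control of the solution. Then one uses a two-piece near/far splitting $\arr H=\arr H_n+\arr H_f$ with $\arr H_n=\1_{\Delta(x_0,4\abs{t_0})\times(-4\abs{t_0},4\abs{t_0})}\arr H$: the near piece is handled by H\"older, the Gagliardo--Nirenberg--Sobolev inequality together with \eqref{eqn:newton:bound:r}, and Lemma~\ref{lem:lebesgue:lusin} with $\kappa=1$ applied to $\mathcal{A}_2^*(t\nabla^m\Pi^{L^*}\arr H_n)$ (not to $\arr H_n$ itself), followed by Lemma~\ref{lem:newton:lusin:rough} and the localization Lemma~\ref{lem:C:local}; the far piece is handled by Lemma~\ref{lem:iterate}, the trace bound~\eqref{eqn:newton:trace:rough}, the Meyers estimate~\eqref{eqn:Meyers} at a small exponent, and again Lemma~\ref{lem:lebesgue:lusin} plus Lemma~\ref{lem:newton:lusin:rough} applied to the solution. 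This produces a pointwise bound of $\widetilde N_*(\nabla^{m-1}\Pi^{L^*}\arr H)(z)$ by maximal functions of $\Tr_{m-1}\Pi^{L^*}\arr H$, of $\mathcal{A}_2^*(t\nabla^m\Pi^{L^*}\arr H)^{\theta'}$, and of $(\widetilde{\mathfrak{C}}_1^*(t\arr H))^{\theta'}$ (with $p<\theta<p_{0,L}^+$ obtained by self-improvement), which yields the full range $1<p'\leq2$ directly from $L^{p'}$- and $L^{p'/\theta'}$-boundedness of $\mathcal{M}$, with no good-$\lambda$ step. If you want to salvage your outline, the essential change is to route every far-field and solid-integral estimate through the area functional of the solution (available by Lemma~\ref{lem:newton:lusin:rough}) rather than through $L^2$ norms of the truncated data.
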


\begin{proof} As in the proof of Lemma~\ref{lem:newton:N:regular}, let $\Delta(x,r)=\{y\in\R^n:\abs{x-y}<r\}$, let $z\in\R^n$, and let $B=B((x_0,t_0),\abs{t_0}/2)$ be a Whitney ball with $\abs{x_0-z}<\abs{t_0}$. Let
\begin{equation*}\arr H= \arr H_n+\arr H_f \quad\text{where}\quad \arr H_n = \1_{\Delta(x_0,4\abs{t_0})\times(-4\abs{t_0},4\abs{t_0})}\arr H.
\end{equation*}
Our goal is thus to show that
\begin{equation*}\biggl(\fint_B \abs{\nabla^{m-1}\Pi^{L^*}\arr H_n}^2\biggr)^{1/2}
+\biggl(\fint_B \abs{\nabla^{m-1}\Pi^{L^*}\arr H_f}^2\biggr)^{1/2}\end{equation*}
may be bounded by a quantity depending only on $z$ and $\arr H$, not $x_0$ and~$t_0$.

Fix some $p$ with $2\leq p<p_{0,L}^+$.
As in the proof of Lemma~\ref{lem:newton:N:-}, by standard self-improvement properties of reverse H\"older estimates (see, for example, \cite[Chapter~V, Theorem~1.2]{Gia83}), there is a $\theta>p$ such that the bound~\eqref{eqn:Meyers} is valid for solutions $u$ to $Lu=0$ and for $p=\theta$. That is, there is a $\theta$ such that $p<\theta<p_{0,L}^+$, with $\theta$ and $c(0,L,\theta,2)$ depending only on $p$ and $c(0,L,p,2)$.

If $\dmn\geq 3$, let $r=2$; if $\dmn=2$, let $r$ satisfy $\theta'<r<2$ and be close enough to $2$ that the bound~\eqref{eqn:newton:bound:r} is valid. Let $q$ be as in the bound~\eqref{eqn:newton:GNS} or~\eqref{eqn:newton:GNS:2}. Observe that $r>1$ and so $q>2$.

We begin with $\Pi^{L^*}\arr H_n$. 
By H\"older's inequality, 
\begin{equation*}\biggl(\fint_B \abs{\nabla^{m-1}\Pi^{L^*}\arr H_n}^2\biggr)^{1/2}
\leq
C\abs{t_0}^{-\pdmn/q}
\doublebar{\nabla^{m-1}\Pi^{L^*}\arr H_n}_{L^q(\R^n\times I(t_0))}\end{equation*}
where $I(t_0)=({t_0}/2,\infty)$ if $t_0>0$, and where $I(t_0)=(-\infty,t_0/2)=(-\infty,-\abs{t_0}/2)$ if $t_0<0$.

Recall that $\arr H\in L^2(\R^\dmn)$ and $r\leq 2$, and so $\arr H_n\in L^r(\R^\dmn)$. By the bound~\eqref{eqn:newton:bound:r}, we have that $\nabla^m\Pi^{L^*}\arr H_n\in L^r(\R^n\times I(t_0))$.
By the Gagliardo-Nirenberg-Sobolev inequality and standard extension theorems of Sobolev spaces on a half-space, we have that there is a constant $\arr c$ such that
\begin{equation*}
\doublebar{\nabla^{m-1}\Pi^{L^*}\arr H_n-\arr c}_{L^q(\R^n\times I(t_0))} \leq C_r \doublebar{\nabla^m\Pi^{L^*}\arr H_n}_{L^r(\R^n\times I(t_0))}.
\end{equation*}
By the bound~\eqref{eqn:newton:GNS} or~\eqref{eqn:newton:GNS:2}, $\doublebar{\nabla^{m-1}\Pi^{L^*}\arr H_n}_{L^q(\R^\dmn)}$ is finite, and so $\arr c=0$.

Recall that $1<\theta'<r\leq 2$ and so $\theta'(n+1)/(n+\theta')<r$. By Lemma~\ref{lem:lebesgue:lusin} with $\kappa=1$,
\begin{equation*}\doublebar{\nabla^m\Pi^{L^*}\arr H_n}_{L^r(\R^n\times I(t_0))}
\leq \frac{C_{\theta,r}}{\abs{t_0}^{1+n/\theta'-1/r-n/r}} \doublebar{\mathcal{A}_2^*(t\nabla^m\Pi^{L^*}\arr H_n)}_{L^{\theta'}(\R^n)}\end{equation*}
By Lemma~\ref{lem:newton:lusin:rough}, if $2<\theta<p_{0,L}^+$, then 
\begin{equation*}
\doublebar{\mathcal{A}_2^*(t\nabla^m\Pi^{L^*}\arr H_n)}_{L^{\theta'}(\R^n)}
\leq C(0,L,\theta) \doublebar{\widetilde{\mathfrak C}_1^*(t\arr H_n)}_{L^{\theta'}(\R^n)}.\end{equation*}
Thus,
\begin{equation*}\biggl(\fint_B \abs{\nabla^{m-1}\Pi^{L^*}(\arr H_n)}^2\biggr)^{1/2}
\leq
\frac{C(0,L,\theta)}{\abs{t_0}^{n/\theta'}}\doublebar{\widetilde{\mathfrak C}_1^*(t\arr H_n)}_{L^{\theta'}(\R^n)}
.\end{equation*}
By Lemma~\ref{lem:C:local} with $r=\theta'$, we have that 
\begin{equation*}\doublebar{\widetilde{\mathfrak C}_1^*(t\arr H_n)}_{L^{\theta'}(\R^n)}\leq 
C_\theta \abs{t_0}^{n/\theta'}\mathcal{M}((\widetilde{\mathfrak C}_1^*(t\arr H))^{\theta'})(z)^{1/{\theta'}}\end{equation*}
where $\mathcal{M}$ is the Hardy-Littlewood maximal function.
Thus, 
\begin{equation}
\label{eqn:Hn}\biggl(\fint_B \abs{\nabla^{m-1}\Pi^{L^*}(\arr H_n)}^2\biggr)^{1/2}
\leq C(0,L,\theta) \mathcal{M}((\widetilde{\mathfrak C}_1^*(t\arr H))^{\theta'})(z)^{1/\theta'}.\end{equation}

We now turn to $\Pi^{L^*}\arr H_f$. Recall that $\arr H_f=0$ in $\Delta(x_0,4\abs{t_0})\times(-4\abs{t_0},4\abs{t_0})$. By Lemma~\ref{lem:iterate},
\begin{align*}
\biggl(\fint_B \abs{\nabla^{m-1}\Pi^{L^*}\arr H_f}^2\biggr)^{1/2}
&\leq
C \fint_{\Delta(x_0,\abs{t_0})} \abs{\Tr_{m-1} \Pi^{L^*} \arr H_f}
\\&\qquad
+
C\int_{-2\abs{t_0}}^{2\abs{t_0}} \fint_{\Delta(x_0,\abs{t_0})} \abs{\partial_\dmn^m \Pi^{L^*} \arr H_f}
.\end{align*}
We begin by bounding the trace. 
We have that
\begin{align*}\fint_{\Delta(x_0,\abs{t_0})} \abs{\Tr_{m-1}\Pi^{L^*}\arr H_f}
&\leq
C\mathcal{M}(\Tr_{m-1}\Pi^{L^*}\arr H)(z)
+\fint_{\Delta(x_0,\abs{t_0})} \abs{\Tr_{m-1}\Pi^{L^*}\arr H_n}
.\end{align*}
By the bound~\eqref{eqn:newton:trace:rough}, we have that $\Tr_{m-1}\Pi^{L^*}\arr H\in L^{\theta'}(\R^n)$,
and that
\begin{equation*}
\doublebar{\Tr_{m-1}\Pi^{L^*}\arr H_n}_{L^{\theta'}(\R^n)}\leq C(0,L,\theta) \doublebar{\widetilde{\mathfrak{C}}_1^*(t\arr H_n)}_{L^{\theta'}(\R^n)}
.\end{equation*}
Thus, by H\"older's inequality and Lemma~\ref{lem:C:local},
\begin{align*}
\fint_{\Delta(x_0,\abs{t_0})} \abs{\Tr_{m-1}\Pi^{L^*}\arr H_n}
&\leq
C(0,L,\theta)t_0^{-n/\theta'}\doublebar{\widetilde{\mathfrak{C}}_1^* (t\arr H_n)}_{L^{\theta'}(\R^n)}
\\&\leq
C(0,L,\theta)\mathcal{M}((\widetilde{\mathfrak{C}}_1^*(t\arr H))^{\theta'})(z)^{1/\theta'}
.\end{align*}
Therefore,
\begin{align*}
\biggl(\fint_B \abs{\nabla^{m-1}\Pi^{L^*}\arr H_f}^2\biggr)^{1/2}
&\leq
C\mathcal{M}(\Tr_{m-1}\Pi^{L^*}\arr H)(z)
\\&\qquad
+C(0,L,\theta)\mathcal{M}((\widetilde{\mathfrak{C}}_1^*(t\arr H))^{\theta'})(z)^{1/\theta'}
\\&\qquad+
C\int_{-2\abs{t_0}}^{2\abs{t_0}} \fint_{\Delta(x_0,\abs{t_0})} \abs{\partial_\dmn^m \Pi^{L^*} \arr H_f}
.\end{align*}

We are left with the term involving $\partial_\dmn^m \Pi^{L^*}\arr H_f$. 
Let $w=\partial_\dmn^m\Pi^{L^*}\arr H_f$. By the bound~\eqref{eqn:Meyers}, if $0<\mu<\infty$ then
\begin{equation*}\fint_{-2\abs{t_0}}^{2\abs{t_0}} \fint_{\Delta(x_0,\abs{t_0})}\abs{w}
\leq
C_\mu
\biggl(\fint_{-3\abs{t_0}}^{3\abs{t_0}} \fint_{\Delta(x_0,2\abs{t_0})}\abs{w}^\mu\biggr)^{1/\mu}
.\end{equation*}
Choose $\mu=1/2$. By Lemma~\ref{lem:lebesgue:lusin} with $\theta=r=1/2$ and $\kappa=1$,
\begin{equation*}\biggl(\fint_{-3\abs{t_0}}^{3\abs{t_0}} \fint_{\Delta(x_0,2\abs{t_0})}\abs{w}^{1/2}\biggr)^2
\leq
Ct_0^{-2n-1} \doublebar{\mathcal{A}_2^*( t\1_E w)}_ {L^{1/2}(\R^n)}
\end{equation*}
where $E$ is the region of integration on the left hand side.
Observe that $\mathcal{A}_2^*(t\1_E w)= 0$ outside of $\Delta(x_0,5\abs{t_0})\subset\Delta(z,6\abs{t_0})$.
By H\"older's inequality, if $\theta'>1/2$ then
\begin{equation*}\int_{-2\abs{t_0}}^{2\abs{t_0}} \fint_{\Delta(x_0,\abs{t_0})}\abs{w}
\leq
C\biggl(\fint_{\Delta(z,6\abs{t_0})} \mathcal{A}_2^*(tw)(y)^{\theta'}\,dy\biggr)^{1/\theta'}
.\end{equation*}
Recalling the definitions of $w$ and $\arr H_f$, we see that if $\theta'\geq 1$, then
\begin{align*}
\biggl(\fint_{\Delta(z,6\abs{t_0})} \mathcal{A}_2^*(tw)^{\theta'}\biggr)^{1/\theta'}
&\leq
\biggl(\fint_{\Delta(z,6\abs{t_0})}  \mathcal{A}_2^*(t\nabla^m\Pi^{L^*} \arr H_n)^{\theta'}\biggr)^{1/\theta'}
\\&\quad+
\biggl(\fint_{\Delta(z,6\abs{t_0})} \mathcal{A}_2^*(t\nabla^m\Pi^{L^*}\arr H)^{\theta'} \biggr)^{1/\theta'}
.\end{align*}
By Lemma~\ref{lem:newton:lusin:rough}, if $2\leq \theta<p_{0,L}^+$, then 
\begin{align*}
\biggl(\fint_{\Delta(z,6\abs{t_0})} \mathcal{A}_2^*(tw)^{\theta'}\biggr)^{1/\theta'}
&\leq
\frac{C(0,L,\theta)}{t_0^{n/\theta'}}
\doublebar{\widetilde{\mathfrak{C}}_1^*(t\arr H_n)}_{L^{\theta'}(\R^n)}
\\&\quad
+
\mathcal{M}(\mathcal{A}_2^*(t\nabla^m\Pi^{L^*}\arr H)^{\theta'})(z)^{1/\theta'}
\end{align*}
and by Lemma~\ref{lem:C:local} with $r=\theta'$,
\begin{align*}
\frac{1}{t_0^{n/\theta'}}
\doublebar{\widetilde{\mathfrak{C}}_1^*(t\arr H_n)}_{L^{\theta'}(\R^n)}
&\leq
C\mathcal{M}((\widetilde{\mathfrak{C}}_1^*(t\arr H))^{\theta'})(z)^{1/\theta'}
.\end{align*}
Thus,
\begin{align*}
\biggl(\fint_B \abs{\nabla^{m-1}\Pi^{L^*}\arr H_f}^2\biggr)^{1/2}
&\leq
C\mathcal{M}(\Tr_{m-1}\Pi^{L^*}\arr H)(z)
\\&\qquad
+
C\mathcal{M}(\mathcal{A}_2^*(t\nabla^m\Pi^{L^*}\arr H)^{\theta'})(z)^{1/\theta'}
\\&\qquad
+C(0,L,\theta)\mathcal{M}((\widetilde{\mathfrak{C}}_1^*(t\arr H))^{\theta'})(z)^{1/\theta'}
.\end{align*}
Combining this estimate with the bound~\eqref{eqn:Hn} yields that
\begin{align*}
\widetilde N_*(\nabla^{m-1} \Pi^{L^*}(\1_+\arr H))(z)
&\leq
C\mathcal{M}(\Tr_{m-1}\Pi^{L^*}\arr H)(z)
\\&\qquad
+
C\mathcal{M}(\mathcal{A}_2^*(t\nabla^m\Pi^{L^*}\arr H)^{\theta'})(z)^{1/\theta'}
\\&\qquad
+C(0,L,\theta)\mathcal{M}((\widetilde{\mathfrak{C}}_1^*(t\arr H))^{\theta'})(z)^{1/\theta'}
.\end{align*}
Recall that $p<\theta<p_{0,L}^+$, so that $p'/\theta'>1$, and that $c(0,L,\theta,2)$ depends only on $p$ and $c(0,L,p,2)$.
By the bound~\eqref{eqn:newton:trace:rough}, Lemma~\ref{lem:newton:lusin:rough}, and the $L^{p'}$ and $L^{p'/\theta'}$-boundedness of~$\mathcal{M}$, we have that the lemma follows from the above bound.
\end{proof}

The techniques of \cite{Bar19p} let us extend the range of $p$ in our nontangential bound.

\begin{lem}\label{lem:newton:N:rough}
Let $L$ be an operator of the form~\eqref{eqn:weak} of order~$2m$ associated to bounded $t$-independent coefficients~$\mat A$ that satisfy the ellipticity condition~\eqref{eqn:elliptic}.

Let $\arr H\in L^2(\R^\dmn)$ be supported in a compact subset of $\R^\dmn_+\cup\R^\dmn_-$.
Let $p_{j,L}^+$ be as in the bound~\eqref{eqn:Meyers}.
If $p_{1,L^*}^-<p<2$, and if $1/p+1/p'=1$, then
\begin{align*}
\doublebar{\widetilde N_*(\nabla^{m-1}\Pi^{L^*}\arr H)}_{L^{p'}(\R^n)}
&\leq 
C(1,L^*,p')\doublebar{\widetilde{\mathfrak{C}}_1^*(t\arr H)}_{L^{p'}(\R^n)}
,\quad p_{1,L^*}^-<p<2
.\end{align*}
\end{lem}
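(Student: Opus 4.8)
The plan is to mimic the proof of Lemma~\ref{lem:newton:N:-} almost line by line, replacing $\nabla^{m-j}$ by $\nabla^{m-1}$, the input $\arr\Psi$ by $\arr H$, and the control function $\mathcal{A}_2^*\arr\Psi$ by $\Phi_1:=\widetilde{\mathfrak{C}}_1^*(t\arr H)$. Fix $p$ with $p_{1,L^*}^-<p<2$, so that $2<p'<p_{1,L^*}^+$ (the range is nonempty by the bound~\myeqref{eqn:Meyers:bound}). Set $u=\Pi^{L^*}\arr H$, normalized as in \myeqref{eqn:newton:GNS}--\myeqref{eqn:newton:GNS:2}, and for a cube $Q\subset\R^n$ set $u_Q=\Pi^{L^*}(\1_{10Q\times(-\ell(Q),\ell(Q))}\arr H)$, normalized the same way; note $\1_{10Q\times(-\ell(Q),\ell(Q))}\arr H$ is still supported in a compact subset of $\R^\dmn_+\cup\R^\dmn_-$. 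Then $u-u_Q\in\dot W^{m,2}_{loc}(10Q\times(-\ell(Q),\ell(Q)))$ with $L^*(u-u_Q)=0$ there, so the real-variable machinery of \cite{Bar19p} is applicable.

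First I would record the two $L^2$ inputs. Since $p_{0,L}^+>2$, the case $p=2$ of Lemma~\ref{lem:newton:N:rough:-} gives $\doublebar{\widetilde N_*(\nabla^{m-1}u)}_{L^2(\R^n)}\leq C\doublebar{\Phi_1}_{L^2(\R^n)}<\infty$; applying the same case to $u_Q$ and then Lemma~\ref{lem:C:local} (to the $\widetilde{\mathfrak{C}}_1^+$ and $\widetilde{\mathfrak{C}}_1^-$ pieces of $\widetilde{\mathfrak{C}}_1^*$ separately, using $\R^n\times(0,\ell(Q))$ for the upper piece and its mirror image for the lower one, cf.\ Section~\ref{sec:lower}) gives $\doublebar{\widetilde N_*(\nabla^{m-1}u_Q)}_{L^2(\R^n)}\leq C\doublebar{\Phi_1}_{L^2(16Q)}$. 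Next, by standard self-improvement of reverse H\"older estimates (e.g.\ \cite[Chapter~V, Theorem~1.2]{Gia83}), since $p'<p_{1,L^*}^+$ there is a $p_2$ with $p'<p_2<p_{1,L^*}^+$, with $p_2$ and $c(1,L^*,p_2,2)$ depending only on $p$ and $c(1,L^*,p',2)$. Then \cite[Lemma~4.11]{Bar19p} applied with $v=u-u_Q$ yields, with $\ell=\ell(Q)/4$ and $\widetilde N_n^\ell$ as in \cite[Section~4]{Bar19p},
\begin{equation*}
\Bigl(\fint_{8Q}\widetilde N_n^\ell(\nabla^{m-1}(u-u_Q))^{p_2}\Bigr)^{1/p_2}
\leq C(1,L^*,p_2)\Bigl(\fint_{10Q}\widetilde N_n^{3\ell}(\nabla^{m-1}(u-u_Q))^2\Bigr)^{1/2},
\end{equation*}
and since $\widetilde N_n^{3\ell}(\nabla^{m-1}w)\leq\widetilde N_*(\nabla^{m-1}w)$, the two $L^2$ bounds above show that the hypotheses of \cite[Lemma~4.3]{Bar19p} are met for the pair $\arr u=\nabla^{m-1}u$, $\arr u_Q=\nabla^{m-1}u_Q$ with control function $\Phi_1$. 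That lemma then gives $\doublebar{\widetilde N_+(\nabla^{m-1}u)}_{L^{p'}(\R^n)}\leq C(1,L^*,p')\doublebar{\Phi_1}_{L^{p'}(\R^n)}$; the corresponding bound for the lower half space, obtained by the change of variables $(x,t)\to(x,-t)$ of Section~\ref{sec:lower} applied to $L^-$ (using that $\Pi^{L^-}$ and $\widetilde{\mathfrak{C}}_1^\mp$ transform appropriately), combines with it to give the stated $\widetilde N_*$ bound.

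The genuinely new analytic content is already contained in Lemmas~\ref{lem:newton:N:regular} and~\ref{lem:newton:N:rough:-} and in the real-variable lemmas of \cite{Bar19p}; what remains here is essentially bookkeeping, which is why the bulk of the proof can be and will be omitted (exactly as the paper indicates for \cite[Theorem~4.12]{Bar19p}). The one point deserving attention is the compatibility of the localization operator $\1_{10Q\times(-\ell(Q),\ell(Q))}$ with the Carleson functional, i.e.\ that Lemma~\ref{lem:C:local} really produces $\doublebar{\widetilde{\mathfrak{C}}_1^*(t\1_{10Q\times(-\ell(Q),\ell(Q))}\arr H)}_{L^2(\R^n)}\leq C\doublebar{\widetilde{\mathfrak{C}}_1^*(t\arr H)}_{L^2(16Q)}$, together with checking that the constants coming out of the reverse-H\"older self-improvement and of \cite[Lemmas~4.3 and~4.11]{Bar19p} depend only on $p'$ and $c(1,L^*,p',2)$, hence qualify as $C(1,L^*,p')$. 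This is the same verification as in the proof of Lemma~\ref{lem:newton:N:-} and presents no essential difficulty.
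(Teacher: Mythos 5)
Your proposal is correct and is essentially the argument the paper intends: it makes the same choices $u=\Pi^{L^*}\arr H$, $u_Q=\Pi^{L^*}(\1_{10Q\times(-\ell(Q),\ell(Q))}\arr H)$, $\Phi_1=\widetilde{\mathfrak{C}}_1^*(t\arr H)$, $j=1$, and runs the same good-$\lambda$/reverse-H\"older machinery of \cite[Lemmas~4.3 and~4.11]{Bar19p} with the $L^2$ inputs supplied by Lemma~\ref{lem:newton:N:rough:-} and the localization Lemma~\ref{lem:C:local}, exactly as in the proof of Lemma~\ref{lem:newton:N:-}, which is the comparison the paper's (omitted) proof invokes.
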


\begin{proof}
Let
\begin{align*}
u&=\Pi^{L^*}(\1_+\arr H), &
u_Q &= \Pi^{L^*}(\1_{10Q\times(-\ell(Q),\ell(Q))}\arr H), &
\Phi_1&=\widetilde{\mathfrak{C}}_1^*(t\arr H)
, & j&=1
.\end{align*}
The proof is similar to the proof of \cite[Theorem~4.12]{Bar19p} or Lemma~\ref{lem:newton:N:-} and will be omitted.
\end{proof}

\subsection{Area integral estimates on the Newton potential}
\label{sec:newton:lusin}

In this section we will establish area integral estimates on the Neumann potential beyond Lemma~\ref{lem:newton:lusin:rough}. We recall that the Fatou-type estimates on Neumann boundary values established in \cite{BarHM19B} involve area integral estimates and not nontangential estimates; thus, in light of formulas~\eqref{eqn:D:Newton:dual} and~\eqref{eqn:D:Newton:dual:vertical}, area integral estimates are necessary to bound the double layer potential. We will also expand the range of $p$ in the nontangential bound of Lemma~\ref{lem:newton:N:-}. For ease of reference, all of our nontangential and area integral bounds on the Newton potential are listed in Corollary~\ref{cor:newton:final}.

\begin{lem} \label{lem:lusin:regular:2}
Let $L$ be an operator of the form~\eqref{eqn:weak} of order~$2m$ associated to bounded $t$-independent coefficients~$\mat A$ that satisfy the ellipticity condition~\eqref{eqn:elliptic}.

Let $\arr\Psi\in L^2(\R^\dmn_+)$ be compactly supported. Then 
\begin{align*}
\doublebar{\mathcal{A}_2^-(t\nabla^m\partial_t\Pi^{L^*}(\1_+\arr\Psi))}_{L^2(\R^n)}
&\leq
C\doublebar{\mathcal{A}_2^+\arr \Psi}_{L^2(\R^n)}
.\end{align*}

\end{lem}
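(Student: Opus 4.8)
The plan is to prove this by a duality argument, pairing against test functions $\arr\Phi \in L^2(\R^\dmn_-)$ and using the adjoint relation for the Newton potential together with the already-established bound of Lemma~\ref{lem:newton:N:regular}. First I would use the $T_2^2$ self-duality of the area integral (the bound~\eqref{eqn:CoiMS85}, in its lower-half-space form) to write
\begin{equation*}
\doublebar{\mathcal{A}_2^-(t\nabla^m\partial_t\Pi^{L^*}(\1_+\arr\Psi))}_{L^2(\R^n)}
\approx \sup_{\arr\Phi} \frac{\abs{\langle \arr\Phi,\nabla^m\partial_t\Pi^{L^*}(\1_+\arr\Psi)\rangle_{\R^\dmn_-}}}{\doublebar{\mathcal{A}_2^-\arr\Phi}_{L^2(\R^n)}},
\end{equation*}
where the supremum may be taken over $\arr\Phi$ compactly supported in $\R^\dmn_-$, in particular $\arr\Phi\in L^2(\R^\dmn)$ with $\supp\arr\Phi$ away from $\partial\R^\dmn_\pm$.

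Next I would move the vertical derivative and integrate by parts: since $\arr\Phi$ is supported away from the boundary and $\Pi^{L^*}(\1_+\arr\Psi)\in\dot W^{m,2}(\R^\dmn)$, the Caccioppoli inequality ensures $\nabla^m\partial_t\Pi^{L^*}(\1_+\arr\Psi)\in L^2$ near $\supp\arr\Phi$, and $\langle\arr\Phi,\nabla^m\partial_t\Pi^{L^*}(\1_+\arr\Psi)\rangle_{\R^\dmn_-} = -\langle\partial_t\arr\Phi,\nabla^m\Pi^{L^*}(\1_+\arr\Psi)\rangle_{\R^\dmn_-}$. Then the adjoint identity~\eqref{eqn:newton:adjoint} (i.e.\ \cite[Lemma~42]{Bar16}), valid for $L^2$ inputs, gives
\begin{equation*}
-\langle\partial_t\arr\Phi,\nabla^m\Pi^{L^*}(\1_+\arr\Psi)\rangle_{\R^\dmn} = -\langle \nabla^m\Pi^L(\1_-\partial_t\arr\Phi),\arr\Psi\rangle_{\R^\dmn_+},
\end{equation*}
after extending $\partial_t\arr\Phi$ by zero outside $\R^\dmn_-$ (it has compact support in the open lower half space, so this extension lies in $L^2(\R^\dmn)$). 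Now Lemma~\ref{lem:N>C} (or rather the straightforward $L^2$ Cauchy–Schwarz together with the dual characterization) bounds this pairing by $\doublebar{\widetilde N_*(\nabla^m\Pi^L(\1_-\partial_t\arr\Phi))}_{L^2(\R^n)}\,\doublebar{\widetilde{\mathfrak C}_1^*(t\arr\Psi)}_{L^2(\R^n)}$; but here it is cleaner to simply pair $\arr\Psi$ (supported in $\R^\dmn_+$ with $\mathcal{A}_2^+\arr\Psi\in L^2$) against $\nabla^m\Pi^L(\1_-\partial_t\arr\Phi)$ using~\eqref{eqn:CoiMS85} again in the upper half space, getting $\doublebar{\mathcal{A}_2^+\arr\Psi}_{L^2(\R^n)}\,\doublebar{\widetilde N_+(\nabla^m\Pi^L(\1_-\partial_t\arr\Phi))}_{L^2(\R^n)}$ — wait, the area/nontangential pairing needs Lemma~\ref{lem:N>C}, so I would pair via $\langle\arr\Psi,\cdot\rangle$ and dominate by $\doublebar{\widetilde N_+\arr\Psi}$ times a Carleson norm, or, most directly, apply Lemma~\ref{lem:newton:N:regular} to the operator $\Pi^L(\1_-\partial_t\arr\Phi)$. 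Concretely, by Lemma~\ref{lem:newton:N:regular} applied to $L$ (with roles of $L,L^*$ swapped) and the $t$-independence change-of-variables relations of Section~\ref{sec:lower},
\begin{equation*}
\doublebar{\widetilde N_*(\nabla^m\Pi^L(\1_-\partial_t\arr\Phi))}_{L^2(\R^n)} \leq C\doublebar{\mathcal{A}_2^*(\1_-\partial_t\arr\Phi)}_{L^2(\R^n)} = C\doublebar{\mathcal{A}_2^-(\partial_t\arr\Phi)}_{L^2(\R^n)}.
\end{equation*}
The remaining point is to trade $\mathcal{A}_2^-(\partial_t\arr\Phi)$ for $\mathcal{A}_2^-(t^{-1}\cdot t\,\partial_t\arr\Phi)$ — i.e.\ to absorb the vertical derivative back. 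Since we introduced the factor $t$ on $\arr\Psi$, the natural move is to keep the derivative on $\arr\Phi$ from the start: redo the pairing as $\langle t\,\partial_t\arr\Phi, t^{-1}\nabla^m\Pi^{L^*}(\1_+\arr\Psi)\rangle$ is not what we want. Instead, the correct bookkeeping is: the left side is $\doublebar{\mathcal{A}_2^-(t\nabla^m\partial_t w)}$ with $w=\Pi^{L^*}(\1_+\arr\Psi)$; $T_2^2$ duality against $\arr\Phi$ with the measure $dx\,dt/\abs t$ gives $\sup_{\arr\Phi}\abs{\langle \arr\Phi, t\nabla^m\partial_t w\rangle_{dx\,dt/\abs t}}/\doublebar{\mathcal{A}_2^-\arr\Phi}_{L^2}$, i.e.\ $\abs{\langle\arr\Phi,\nabla^m\partial_t w\rangle_{\R^\dmn_-}}$, and I proceed as above with $\arr\Phi$ in place of $t\,\partial_t\arr\Phi$. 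Then I need $\doublebar{\widetilde N_*(\nabla^m\Pi^L(\1_-\partial_t\arr\Phi))}_{L^2}\leq C\doublebar{\mathcal{A}_2^-\arr\Phi}_{L^2}$, which is \emph{not} directly Lemma~\ref{lem:newton:N:regular} because of the stray $\partial_t$. To handle this I would use~\eqref{eqn:Newton:vertical} to write $\nabla^m\Pi^L(\1_-\partial_t\arr\Phi)$ in terms of $\partial_t\nabla^m\Pi^L(\1_-\arr\Phi)$ up to a boundary commutator coming from the truncation $\1_-$ (since $\partial_t$ and $\1_-$ do not commute), and then invoke Lemma~\ref{lem:newton:N:regular} together with interior Caccioppoli to control $\partial_t$ of $\nabla^m\Pi^L(\1_-\arr\Phi)$ by $\nabla^m\Pi^L(\1_-\arr\Phi)$ on Whitney balls; the commutator term is supported near $\{t=0\}$, where $\arr\Phi$ vanishes, so it is harmless.

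The main obstacle I anticipate is precisely this interplay between the vertical derivative $\partial_t$ and the cutoff $\1_-$: one cannot naively pull $\partial_t$ across $\1_+$ or $\1_-$ under the Newton potential, and the clean identity~\eqref{eqn:Newton:vertical} holds only for $\arr\Psi\in L^2\cap\dot W^{1,2}$ with no cutoff. The resolution is to exploit that both $\arr\Psi$ (supported in $\R^\dmn_+$, away from $t=0$) and the test function $\arr\Phi$ (supported in $\R^\dmn_-$, away from $t=0$) vanish in a neighborhood of the boundary, so all integrations by parts in $t$ produce no boundary terms, and the commutator $[\partial_t,\1_-]$ contributes only a distribution supported on $\{t=0\}$ against which everything in sight vanishes. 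Once that is dispatched, the estimate chains together: $T_2^2$ duality, integration by parts in $t$, the Newton-potential adjoint identity~\eqref{eqn:newton:adjoint}, the change-of-variables relations of Section~\ref{sec:lower} to pass $L^*\leftrightarrow L$ and $\R^\dmn_-\leftrightarrow\R^\dmn_+$, and finally Lemma~\ref{lem:newton:N:regular}. A subtlety worth a line is that Lemma~\ref{lem:newton:N:regular} requires its input $\arr\Psi$ to be compactly supported in $\R^\dmn_+\cup\R^\dmn_-$; here the relevant input is $\1_-\arr\Phi$ (or $\1_-\partial_t\arr\Phi$), which is indeed compactly supported in $\R^\dmn_-$ since $\arr\Phi$ was chosen so, so the hypothesis is met and the argument closes.
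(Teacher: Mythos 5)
Your proposal has a genuine gap, and it is located exactly at the point you flag with ``wait'': the duality argument does not close. After the $T^2_2$ duality, the integration by parts in $t$, and the adjoint identity~\eqref{eqn:newton:adjoint}, the pairing you must bound is $\langle \nabla^m\Pi^L(\partial_t\arr\Phi),\arr\Psi\rangle_{\R^\dmn_+}$ with $\arr\Phi$ compactly supported in $\R^\dmn_-$. If you close it against $\doublebar{\mathcal{A}_2^+\arr\Psi}_{L^2(\R^n)}$ by tent-space duality, then (using~\eqref{eqn:Newton:vertical}, which is legitimate here) you need $\doublebar{\mathcal{A}_2^+(t\nabla^m\partial_t\Pi^{L}(\1_-\arr\Phi))}_{L^2(\R^n)}\leq C\doublebar{\mathcal{A}_2^-\arr\Phi}_{L^2(\R^n)}$ --- but by the reflection of Section~\ref{sec:lower} this is precisely the statement of the lemma itself (for the reflected/adjoint operator), not one of the previously established bounds, so the argument is circular. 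If instead you close via Lemma~\ref{lem:N>C}, you need $\doublebar{\widetilde{\mathfrak{C}}_1^+(t\arr\Psi)}_{L^2(\R^n)}\leq C\doublebar{\mathcal{A}_2^+\arr\Psi}_{L^2(\R^n)}$, which is false in general: for $\arr\Psi=\1_{B((0,T),T/2)}$ the Carleson functional exceeds the area functional by a factor comparable to $T$, so no constant uniform in $\arr\Psi$ exists.

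Your proposed repair --- Lemma~\ref{lem:newton:N:regular} applied to $\Pi^L(\1_-\partial_t\arr\Phi)$, plus ``interior Caccioppoli'' and a harmless commutator --- does not rescue this. The obstruction is not the commutator $[\partial_t,\1_-]$ (indeed $\1_-\partial_t\arr\Phi=\partial_t(\1_-\arr\Phi)$ exactly, since $\arr\Phi$ vanishes near $\{t=0\}$); it is the stray derivative itself. The test function $\arr\Phi$ is an arbitrary element of the tent space, not a solution, so $\doublebar{\mathcal{A}_2^-(\partial_t\arr\Phi)}_{L^2(\R^n)}$ is not controlled by $\doublebar{\mathcal{A}_2^-\arr\Phi}_{L^2(\R^n)}$; and $\Pi^L(\1_-\arr\Phi)$ fails to satisfy $Lu=0$ exactly on $\supp\arr\Phi\subset\R^\dmn_-$, so the Caccioppoli inequality is unavailable on the Whitney balls that matter, while on Whitney balls where it is a solution Caccioppoli converts $\widetilde N_*(\nabla^m\partial_t\,\cdot)$ into a quantity carrying an uncompensated factor $1/t$. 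The paper avoids all of this by a different mechanism: since $\Pi^{L^*}(\1_+\arr\Psi)\in\dot W^{m,2}(\R^\dmn)$ and is a solution in $\R^\dmn_-$, one may invoke the Green's formula in the lower half space to write $\1_-\nabla^m\Pi^{L^*}(\1_+\arr\Psi)$ as $\nabla^m\D^{\mat A^*}(\Tr_{m-1}^-\Pi^{L^*}(\1_+\arr\Psi))+\nabla^m\s^{L^*}(\M_{\mat A^*}^-\Pi^{L^*}(\1_+\arr\Psi))$, and then apply the known $p=2$ square-function bounds~\eqref{eqn:D:lusin:+} and~\eqref{eqn:S:lusin:+} together with the boundary estimates~\eqref{eqn:newton:Trace} and~\eqref{eqn:newton:neumann} (the latter requiring a short Hahn--Banach/representation step to realize the Neumann data as an $L^2$ array). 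If you want to keep a duality flavor, that representation-formula route is the one that actually uses only previously established estimates.
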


\begin{proof}
By boundedness of the Newton potential (see Section~\ref{sec:dfn:potential}), $\Pi^{L^*}(\1_+\arr\Psi)\in \dot W^{m,2}(\R^\dmn)$. By the definition~\eqref{eqn:newton} of $\Pi^{L^*}$, ${L^*}(\Pi^{L^*}(\1_+\arr\Psi))=0$ in $\R^\dmn_-$. By \cite[Lemma~5.2]{Bar17} or \cite[formula~(2.26)]{BarHM17}, we have the Green's formula
\begin{equation*}
\1_-\nabla^m\Pi^{L^*}(\1_+\arr\Psi)
=\nabla^m\D^{\mat A^*} (\Tr_{m-1}^- \Pi^{L^*}(\1_+\arr\Psi))
+\nabla^m\s^{L^*}(\M_{\mat A^*}^- \Pi^{L^*}(\1_+\arr\Psi))
\end{equation*}
away from $\partial\R^\dmn_\pm$.
This formula may also be derived from formula \eqref{dfn:D:newton:-} for the double layer potential (with $F=\Pi^{L^*}(\1_+\arr\Psi)$), and from the definitions~\eqref{eqn:neumann:W2}, \eqref{eqn:newton} and \eqref{dfn:S} of $\M_{\mat A^*}^-$, $\Pi^{L^*}$, and~$\s^{L^*}$.

Thus
\begin{align*}
\doublebar{\mathcal{A}_2^-(t\nabla^m\partial_t\Pi^{L^*}(\1_+\arr\Psi))}_{L^2(\R^n)}
&\leq
\doublebar{\mathcal{A}_2^-(t\nabla^m\partial_t\D^{\mat A^*} (\Tr_{m-1}^- \Pi^{L^*}(\1_+\arr\Psi)))}_{L^2(\R^n)}
\\&\qquad+
\doublebar{\mathcal{A}_2^-(t\nabla^m\partial_t\s^{L^*}(\M_{\mat A^*}^- \Pi^{L^*}(\1_+\arr\Psi)))}_{L^2(\R^n)}
.\end{align*}
By the bound~\eqref{eqn:D:lusin:+} with $p=2$,
\begin{align*}\doublebar{\mathcal{A}_2^-(t\nabla^m\partial_t\D^{\mat A^*} (\Tr_{m-1}^- \Pi^{L^*}(\1_+\arr\Psi)))}_{L^2(\R^n)}
&\leq
C\doublebar{\Tr_{m-1}^- \Pi^{L^*}(\1_+\arr\Psi)}_{\dot W\!A^{1,2}_{m-1}(\R^n)}
,\end{align*}
and by definition of ${\dot W\!A^{1,2}_{m-1}(\R^n)}$ and the bound~\eqref{eqn:newton:Trace},
\begin{equation*}
\doublebar{\Tr_{m-1}^- \Pi^{L^*}(\1_+\arr\Psi)}_{\dot W\!A^{1,2}_{m-1}(\R^n)}
\leq \doublebar{\Tr_{m}^- \Pi^{L^*}(\1_+\arr\Psi)}_{L^2(\R^n)}
\leq C\doublebar{\mathcal{A}_2^+\arr\Psi}_{L^2(\R^n)},
\end{equation*}
as desired.

We will apply a similar argument to the second term. By the bound~\eqref{eqn:newton:neumann} with $p=2$,
\begin{equation*}
\abs{\langle\M_{\mat A^*}^- \Pi^{L^*}(\1_+\arr\Psi),\arr f\rangle_{\R^n}}
\leq C\doublebar{\mathcal{A}_2^+\arr \Psi}_{L^2(\R^n)}
\doublebar{\arr f}_{\dot W\!A^{0,2}_{m-1}(\R^n)}
.\end{equation*}
By boundedness of the Newton potential $L^2(\R^\dmn)\mapsto \dot W^{m,2}(\R^\dmn)$, and by the definition~\eqref{eqn:neumann:W2} of Neumann boundary data, we also have that
\begin{equation*}
\abs{\langle\M_{\mat A^*}^- \Pi^{L^*}(\1_+\arr\Psi),\arr f\rangle_{\R^n}}
\leq C\doublebar{\arr \Psi}_{L^2(\R^\dmn_+)}
\doublebar{\arr f}_{\dot W\!A^{1/2,2}_{m-1}(\R^n)}
.\end{equation*}
Thus, $\M_{\mat A^*}^- \Pi^{L^*}(\1_+\arr\Psi)$ extends to a bounded linear operator on $\dot W\!A^{0,2}_{m-1}(\R^n)+\dot W\!A^{1/2,2}_{m-1}(\R^n)$, and by the Hahn-Banach theorem extends to a bounded linear operator on $L^2(\R^n)+\dot B^{1/2,2}_2(\R^n)$. By standard duality arguments, there is a $\arr g\in L^2(\R^n)\cap \dot B^{-1/2,2}_2(\R^n)$ such that $\langle \arr g,\arr\varphi\rangle_{\R^n}=\langle \M_{\mat A^*}^- \Pi^{L^*}(\1_+\arr\Psi),\arr\varphi\rangle_{\R^n}$ for all $\arr\varphi\in \dot W\!A^{1/2,2}_{m-1}(\R^n)$. 
We may ensure that $\doublebar{\arr g}_{L^2(\R^n)} \leq C\doublebar{\mathcal{A}_2^+\arr \Psi}_{L^2(\R^n)}$ by carefully choosing the norm in $L^2(\R^n)+\dot B^{1/2,2}_2(\R^n)$.

By the definition~\eqref{dfn:S} of the single layer potential, we have that $\s^{L^*}\arr g=\s^{L^*}(\M_{\mat A^*}^- \Pi^L(\1_+\arr\Psi))$. Thus,
\begin{align*}
\doublebar{\mathcal{A}_2^-(t\nabla^m\partial_t\Pi^{L^*}(\1_+\arr\Psi))}_{L^2(\R^n)}
&\leq
C\doublebar{\mathcal{A}_2^+\arr\Psi}_{L^2(\R^n)}
+
\doublebar{\mathcal{A}_2^-(t\nabla^m\partial_t\s^{L^*}\arr g)}_{L^2(\R^n)}
\end{align*}
and the given bound on~$\doublebar{\arr g}_{L^2(\R^n)}$ and the bound~\eqref{eqn:S:lusin:+} completes the proof.
\end{proof}

We now establish area integral estimates for a wider range of~$p$.

\begin{lem}\label{lem:lusin:+} 
Let $L$ be an operator of the form~\eqref{eqn:weak} of order~$2m$ associated to bounded $t$-independent coefficients~$\mat A$ that satisfy the ellipticity condition~\eqref{eqn:elliptic}.

Let $\arr H$ and $\arr \Psi$ be elements of $L^2(\R^\dmn)$ that are supported in compact subsets of $\R^\dmn_+\cup\R^\dmn_-$ and $\R^\dmn_+$, respectively. Let $p_{j,L}^-$ be as in formula~\ref{eqn:Meyers:p-}.
If $1/p+1/p'=1$, and if $p_{1,L^*}^-< p< 2$, then we have the bounds
\begin{align*}
\doublebar{\mathcal{A}_2^*(t\nabla^{m}\Pi^{L^*}\arr H)}_{L^{p'}(\R^n)}
&\leq C(1,L^*,p') \doublebar{\widetilde{\mathfrak{C}}_1^*(t\arr H)}_{L^{p'}(\R^n)}
,& p_{1,L^*}^-&< p< 2
,\\
\doublebar{\mathcal{A}_2^-(t\nabla^{m}\partial_t\Pi^{L^*}(\1_+\arr \Psi))}_{L^{p'}(\R^n)}
&\leq C(1,L^*,p') \doublebar{\mathcal{A}_2^+\arr\Psi}_{L^{p'}(\R^n)}
,& p_{1,L^*}^-&< p<2
.\end{align*}
\end{lem}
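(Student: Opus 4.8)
The plan is to bound $\doublebar{\mathcal{A}_2^*(t\nabla^m\Pi^{L^*}\arr H)}_{L^{p'}}$ by the same extrapolation-type scheme already used in Lemmas~\ref{lem:newton:N:-} and~\ref{lem:newton:N:rough}, now applied to the area-integral functional rather than the nontangential functional. Fix $p$ with $p_{1,L^*}^-<p<2$ and set $1/p+1/p'=1$, so $p'>2$. For a cube $Q\subset\R^n$ let $\arr\Psi_Q=\nabla^m\Pi^{L^*}(\1_{10Q\times(-\ell(Q),\ell(Q))}\arr H)$ and $\arr u=\nabla^m\Pi^{L^*}\arr H$, and let $\Phi_1=\widetilde{\mathfrak{C}}_1^*(t\arr H)$. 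On $10Q\times(-\ell(Q),\ell(Q))$ the difference $\Pi^{L^*}\arr H-\Pi^{L^*}(\1_{10Q\times(-\ell(Q),\ell(Q))}\arr H)$ solves $L^*(\,\cdot\,)=0$, so on the interior one can invoke the reverse-H\"older / self-improvement machinery of \cite[Chapter~V, Theorem~1.2]{Gia83} to pass from the $L^2$-based area estimate to an $L^{p_2}$-based one for some $p_2>p'$ with $p_2<p_{1,L^*}^+$, exactly as in the cited proofs. The $L^2$ input is provided by Lemma~\ref{lem:lusin:regular:2} together with Lemma~\ref{lem:newton:lusin:rough} (for the $2\le p<p_{0,L}^+$ regime, which in particular supplies the $p=2$ case of the $\arr H$-bound), using that $\mathcal{A}_2^*(\1_{10Q\times(-\ell(Q),\ell(Q))}\arr H)\le\mathcal{A}_2^*\arr H$ and is supported in $12Q$ and that $\widetilde{\mathfrak{C}}_1^*$ localizes by Lemma~\ref{lem:C:local}. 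With these ingredients the hypotheses of the good-$\lambda$ / real-variable extrapolation lemma \cite[Lemma~4.3]{Bar19p} are met with $\arr u$, $\arr u_Q$ and the control function $\Phi_1$, and that lemma yields the first inequality.

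For the second inequality, the approach is essentially the same but with the output functional $\mathcal{A}_2^-(t\nabla^m\partial_t(\,\cdot\,))$ and input $\arr\Psi$ supported in $\R^\dmn_+$, with control function $\Phi_1=\mathcal{A}_2^+\arr\Psi$. One sets $\arr u=\nabla^m\partial_t\Pi^{L^*}(\1_+\arr\Psi)$ restricted to $\R^\dmn_-$ and $\arr u_Q=\nabla^m\partial_t\Pi^{L^*}(\1_{10Q\times(0,\ell(Q))}\arr\Psi)$; note that $L^*\Pi^{L^*}(\1_+\arr\Psi)=0$ in all of $\R^\dmn_-$, so the difference $\arr u-\arr u_Q$ is $\nabla^m\partial_t$ of a function that is a null solution on a large region meeting the relevant Whitney balls, which is what the self-improvement step needs. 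The $L^2$ base case is precisely Lemma~\ref{lem:lusin:regular:2}, and the localization $\mathcal{A}_2^+(\1_{10Q\times(0,\ell(Q))}\arr\Psi)\le\mathcal{A}_2^+\arr\Psi$ supported in $12Q$ is immediate from the definition~\eqref{dfn:lusin:+}. Again the real-variable lemma \cite[Lemma~4.3]{Bar19p} with $p_2>p'>2$ and the $L^{p_2}$-improved local estimate on $\arr u-\arr u_Q$ delivers the bound; since the structure is identical to Lemmas~\ref{lem:newton:N:-} and~\ref{lem:newton:N:rough}, most of the argument can be abbreviated by reference to those proofs.

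The main obstacle will be ensuring that the self-improvement step genuinely applies to the area-integral functional rather than to a pointwise nontangential quantity: $\mathcal{A}_2^*$ and $\mathcal{A}_2^-$ are global (integrate over all of $\R^n$), so one must localize them to the dilates of $Q$ before the reverse-H\"older argument, and verify that the tails decay appropriately. This is handled by the usual device of writing $\mathcal{A}_2^-(t\nabla^m\partial_t(u-u_Q))$ over the tent above $8Q$ and controlling the contribution of $u_Q$ by the $L^2$ estimates above $16Q$ (via Lemma~\ref{lem:lusin:regular:2} / Lemma~\ref{lem:newton:lusin:rough}) and the contribution of $u$ by $\widetilde N_n^{3\ell}$-type truncated maximal functions as in \cite[Section~4]{Bar19p}; the compatibility of the truncated area functional with \cite[Lemmas~4.3 and~4.11]{Bar19p} is the one point requiring care, but it is exactly the point already dealt with in \cite[Theorem~4.12]{Bar19p}, from which the present lemma differs only in the choice of input and output functionals.
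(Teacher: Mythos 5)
Your overall scheme (localize via $u$ versus $u_Q$, take the $L^2$ base case from Lemmas~\ref{lem:newton:lusin:rough} and~\ref{lem:lusin:regular:2}, control by $\Phi_1=\widetilde{\mathfrak{C}}_1^*(t\arr H)$ or $\mathcal{A}_2^+\arr\Psi$, and close with a good-$\lambda$/extrapolation lemma from \cite{Bar19p}) is the right shape, but the central mechanism you invoke is the wrong one, and this is where the argument breaks. The lemmas \cite[Lemmas~4.3 and~4.11]{Bar19p} and the Gehring-type self-improvement \cite[Chapter~V, Theorem~1.2]{Gia83} are tailored to the \emph{nontangential} functional: Lemma~4.11 is a local reverse H\"older inequality for $\widetilde N^\ell_n$ of a solution, which rests on the fact that $\widetilde N$ is built from $L^2$ averages over Whitney balls, to which interior higher integrability of $\nabla^{m-j}v$ applies directly. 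The truncated area integral $\int_0^\ell\int_{|x-y|<t}|t\nabla^m v|^2\,dy\,dt/t^{\dmn}$ is not such an average, so Gehring self-improvement does not upgrade it from an $L^2$ to an $L^{p_2}$ local bound by itself; the standard local control for a solution is a truncated square-function-by-nontangential estimate of one order lower. That is precisely why the paper's route runs through \cite[Lemma~6.2]{Bar19p} (the area-integral good-$\lambda$ lemma), whose hypotheses are: $\mathcal{A}_2^-(t\nabla^m u)\in L^2$, $\doublebar{\mathcal{A}_2^-(t\nabla^m u_Q)}_{L^2}\lesssim\doublebar{\Phi_1}_{L^2(16Q)}$, and — crucially — the global bound $\doublebar{\widetilde N_*(\nabla^{m-1}u)}_{L^{p'}}\leq C(1,L^*,p')\doublebar{\Phi_1}_{L^{p'}}$ together with $\doublebar{\widetilde N_*(\nabla^{m-1}u_Q)}_{L^2(10Q)}\lesssim\doublebar{\Phi_1}_{L^2(16Q)}$, supplied by Lemmas~\ref{lem:newton:N:-} and~\ref{lem:newton:N:rough}. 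Your proposal never brings in these order-$(m-1)$ nontangential estimates as inputs, and your appeal to \cite[Theorem~4.12]{Bar19p} to justify compatibility with the area functional is misplaced: that theorem (like Lemma~4.3) concerns nontangential bounds, not square functions.

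Concretely, the step that fails is the claimed passage from the $L^2$ area estimate for $u-u_Q$ on $10Q\times(-\ell(Q),\ell(Q))$ to an $L^{p_2}$ area estimate on $8Q$ ``exactly as in the cited proofs'': there is no reverse H\"older inequality for the truncated area functional of a local solution, and your own last paragraph implicitly concedes this by falling back on $\widetilde N_n^{3\ell}$-type quantities — at which point you need the $L^{p'}$ bound for $\widetilde N_*(\nabla^{m-1}\Pi^{L^*}\arr H)$ (respectively $\widetilde N_*(\nabla^{m-1}\partial_t\Pi^{L^*}(\1_+\arr\Psi))$, i.e.\ the $j=1$ case of Lemma~\ref{lem:newton:N:-}) to sum over cubes, which is exactly the missing hypothesis. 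Once you replace \cite[Lemma~4.3]{Bar19p} by \cite[Lemma~6.2]{Bar19p} and add Lemmas~\ref{lem:newton:N:-} and~\ref{lem:newton:N:rough} (at order $m-1$) to your list of verified hypotheses, the proof closes and is then essentially the paper's argument; minor discrepancies such as your choice $10Q\times(0,\ell(Q))$ versus the paper's $11Q\times(0,2\ell(Q))$ for $\arr\Psi_Q$, and working with $\mathcal{A}_2^*$ directly rather than reducing to $\mathcal{A}_2^-$ by the reflection of Section~\ref{sec:lower}, are harmless.
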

\begin{proof}
We will use \cite[Lemma~6.2]{Bar19p}.

For ease of notation we will consider only $\mathcal{A}_2^-$ in both cases; a similar argument or Section~\ref{sec:lower} establishes the bound on $\mathcal{A}_2^+(t\nabla^{m}\Pi^{L^*}\arr H)$.
We make one of the following two choices of notation:
\begin{align*}
u&=\Pi^{L^*}\arr H, &
u_Q&=\Pi^{L^*}\arr H_Q,  &
\Phi_1&=\widetilde{\mathfrak{C}}_1^*(t\arr H), &
\text{or}
\\ 
u&=\partial_t\Pi^{L^*}(\1_+\arr\Psi),&
u_Q&=\partial_t\Pi^{L^*}\arr\Psi_Q, & 
\Phi_1&=\mathcal{A}_2^+\arr\Psi
\end{align*}
where
\begin{align*}
\arr H_Q&=\1_{10Q\times(-\ell(Q),\ell(Q))}\arr H,  
& 
\arr\Psi_Q&=(\1_{11Q\times(0,2\ell(Q))}\arr\Psi).
\end{align*}
Observe that $\mathcal{A}_2^+\arr\Psi_Q(x)\leq \mathcal{A}_2^+\arr\Psi(x)$ and $\mathcal{A}_2^+\arr\Psi_Q(x)=0$ whenever $x\notin 15Q$, while by Lemma~\ref{lem:C:local} we have that $\doublebar{\widetilde{\mathfrak{C}}_1^*(t\arr H_Q)}_{L^r(\R^n)}\leq C\doublebar{\widetilde{\mathfrak{C}}_1^*(t\arr H)}_{L^r(16Q)}$ for any $1<r<\infty$.

By definition of $\Pi^{L^*}$ and the Caccioppoli inequality, we have that
\begin{gather*}
u-u_Q\in \dot W^{m,2}(10Q\times(-\ell(Q),\ell(Q))),
\\
L^*(u-u_Q)=0\text{ in }10Q\times(-\ell(Q),\ell(Q))
.\end{gather*}
By Lemmas~\ref{lem:newton:lusin:rough} and~\ref{lem:lusin:regular:2}, we have that
\begin{align*}
\mathcal{A}_2^-(t\nabla^m u)&\in L^2(\R^n)
,\\
\doublebar{\mathcal{A}_2^-(t\nabla^m u_Q)}_{L^2(\R^n)}
&\leq C\doublebar{\Phi_1}_{L^2(16Q)}
.\end{align*}
By Lemmas~\ref{lem:newton:N:-} and~\ref{lem:newton:N:rough}, if $p_{1,L^*}^-<p\leq 2$ then
\begin{align*}
\doublebar{\widetilde N_*(\nabla^{m-1}u)}_{L^{p'}(\R^n)}
&\leq C(1,L^*,p') \doublebar{\Phi_1}_{L^{p'}(\R^n)}
,\\
\doublebar{\widetilde N_*(\nabla^{m-1} u_Q)}_{L^2(10Q)}
&\leq C\doublebar{\Phi_1}_{L^2(16Q)}
.\end{align*}
By \cite[Lemma~6.2]{Bar19p}, the conclusion is valid.
\end{proof}

For the sake of completeness, we establish a few final bounds on the Newton potential; for ease of reference we list all of our nontangential and area integral bounds on the Newton potential in the following corollary.

\begin{cor}
\label{cor:newton:final}
Let $L$ be an operator of the form~\eqref{eqn:weak} of order~$2m$ associated to bounded $t$-independent coefficients~$\mat A$ that satisfy the ellipticity condition~\eqref{eqn:elliptic}.

Let $\arr\Psi\in L^2(\R^\dmn)$ and $\arr H\in L^2(\R^\dmn)$ be supported in compact subsets of $\R^\dmn_+\cup\R^\dmn_-$. 
Let $j$ be an integer with $0\leq j\leq m$. Let $1/p+1/p'=1$.
If $p$ lies in the given ranges, then
\begin{align}
\label{eqn:newton:N:rough}
\doublebar{\widetilde N_*(\nabla^{m-1}\Pi^{L^*}\arr H)}_{L^{p'}(\R^n)}
&\leq \widetilde C_p \doublebar{\widetilde{\mathfrak{C}}_1^*(t\arr H)}_{L^{p'}(\R^n)}
,& p_{1,L^*}^-&<p<p_{0,L}^+
,\\
\label{eqn:newton:lusin:rough}
\doublebar{\mathcal{A}_2^*(t\nabla^{m}\Pi^{L^*}\arr H)}_{L^{p'}(\R^n)}
&\leq \widetilde C_p \doublebar{\widetilde{\mathfrak{C}}_1^*(t\arr H)}_{L^{p'}(\R^n)}
,& p_{1,L^*}^-&< p<p_{0,L}^+
,\\
\label{eqn:newton:N:regular}
\doublebar{\widetilde N_*(\nabla^{m-j} \partial_t^j \Pi^{L^*} \arr\Psi)}_{L^{p'}(\R^n)}
&\leq \widetilde C_p \doublebar{\mathcal{A}_2^*\arr\Psi}_{L^{p'}(\R^n)}, \quad &p_{j,L^*}^-&<p<p_{1,L}^+
,\\
\label{eqn:newton:lusin:regular}
\doublebar{\mathcal{A}_2^-(t\nabla^{m}\partial_t\Pi^{L^*}(\1_+\arr \Psi))}_{L^{p'}(\R^n)}
&\leq \widetilde C_p \doublebar{\mathcal{A}_2^+\arr\Psi}_{L^{p'}(\R^n)}
,& p_{1,L^*}^-&< p<p_{1,L}
\end{align}
where $\widetilde C_p$ depends only on the standard parameters, $p$, and the constants $c(k,L,p,2)$ (if $p>2$) or $c(k,L^*,p',2)$ (if $p<2$) in the bound~\eqref{eqn:Meyers}, for appropriate values of~$k$.
\end{cor}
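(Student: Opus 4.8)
The plan is to assemble Corollary~\ref{cor:newton:final} from the lemmas already proven in this section, observing that the four displayed bounds overlap in their hypotheses so that only a few new cases need to be treated. First I would dispose of the ranges $p<2$: the bound~\eqref{eqn:newton:N:rough} for $p_{1,L^*}^-<p<2$ is exactly Lemma~\ref{lem:newton:N:rough}; for $2\le p<p_{0,L}^+$ it is Lemma~\ref{lem:newton:N:rough:-}; since $p_{1,L^*}^-<2$ (by~\eqref{eqn:Meyers:bound}) and $p_{0,L}^+>2$, these two ranges combine to give the full interval $p_{1,L^*}^-<p<p_{0,L}^+$ with $\widetilde C_p$ controlled by the appropriate Meyers constants as recorded in Remark~\ref{rmk:Meyers}. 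Likewise~\eqref{eqn:newton:lusin:rough} follows for $2\le p<p_{0,L}^+$ from Lemma~\ref{lem:newton:lusin:rough} and for $p_{1,L^*}^-<p<2$ from the first bound of Lemma~\ref{lem:lusin:+}, and~\eqref{eqn:newton:lusin:regular} follows for $p=2$ from Lemma~\ref{lem:lusin:regular:2} and for $p_{1,L^*}^-<p<2$ from the second bound of Lemma~\ref{lem:lusin:+}; the range $2<p<p_{1,L}$ is then the additional piece to be supplied here, which will be handled by duality with~\eqref{eqn:newton:N:regular} together with the duality relation~\eqref{eqn:newton:adjoint} and~\eqref{eqn:CoiMS85}, in the same spirit as the proof of Lemma~\ref{lem:newton:lusin:rough}.

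Next I would treat~\eqref{eqn:newton:N:regular}. For $p_{j,L^*}^-<p\le 2$ this is Lemma~\ref{lem:newton:N:-}; what remains is $2<p<p_{1,L}^+$. Here the natural route is again duality: by~\eqref{eqn:CoiMS85} and~\eqref{eqn:newton:adjoint} one writes $\doublebar{\widetilde N_*(\nabla^{m-j}\partial_t^j\Pi^{L^*}\arr\Psi)}_{L^{p'}}$ as a supremum of inner products against test arrays, uses Lemma~\ref{lem:N>C} (or its area-integral analogue) to split off a factor, and then invokes the already-established $p'<2$ case of the companion bound. One must be slightly careful about the parameter $j$ and about normalizing $\nabla^{m-j}\partial_t^j$ correctly, but this is bookkeeping rather than genuine difficulty. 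In every case the dependence of $\widetilde C_p$ on the standard parameters, on $p$, and on $c(k,L,p,2)$ or $c(k,L^*,p',2)$ is inherited directly from the constants produced by the cited lemmas.

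The main obstacle, such as it is, is organizational rather than technical: one must check that the overlapping ranges of $p$ from the various lemmas genuinely cover the stated intervals, that the endpoint $p=2$ is not lost, and that the constants quoted in each lemma are expressed in the uniform form required by the statement (using Remark~\ref{rmk:Meyers} to replace $c(j,L,p,2)$ by a constant depending only on $p$ and the standard parameters when $p$ is small enough, and recalling that $p_{j,L}^+=p_{j,L^-}^+$ and $c(j,L,p,q)=c(j,L^-,p,q)$ from Section~\ref{sec:lower} when passing between the upper and lower half spaces). There is no new analytic input needed beyond the duality argument for the two ``reverse'' ranges, and that argument is a verbatim adaptation of the proof of Lemma~\ref{lem:newton:lusin:rough}. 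Thus the corollary should follow by a short proof that cites Lemmas~\ref{lem:newton:N:regular}, \ref{lem:newton:N:-}, \ref{lem:newton:lusin:rough}, \ref{lem:newton:N:rough:-}, \ref{lem:newton:N:rough}, \ref{lem:lusin:regular:2}, and~\ref{lem:lusin:+}, supplemented by one duality step for $2<p<p_{1,L}$ in the two area-integral-to-area-integral bounds and for $2<p<p_{1,L}^+$ in~\eqref{eqn:newton:N:regular}.
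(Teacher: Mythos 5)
Your assembly of the four bounds from Lemmas~\ref{lem:newton:N:rough:-}, \ref{lem:newton:N:rough}, \ref{lem:newton:lusin:rough}, \ref{lem:lusin:+}, \ref{lem:newton:N:-} and~\ref{lem:lusin:regular:2}, and your plan to obtain the remaining $p>2$ ranges by duality, is essentially the paper's proof. For the $p>2$ case of~\eqref{eqn:newton:N:regular} your outline is correct in substance (the paper notes one may simply take $j=0$ there, since $|\nabla^{m-j}\partial_t^j u|\leq|\nabla^m u|$ pointwise and $p_{j,L^*}^-\leq 2<p$, which disposes of your worry about ``bookkeeping'' in~$j$); just be aware that the roles of the two duality tools are the reverse of what you wrote: Lemma~\ref{lem:N<C} is what expresses the nontangential norm as a supremum of pairings, and it is the tent-space duality~\eqref{eqn:CoiMS85} that then splits off $\doublebar{\mathcal{A}_2^*\arr\Psi}_{L^{p'}}$, after which the needed factor is controlled by~\eqref{eqn:newton:lusin:rough} with $L$ and $L^*$ (and $p$, $p'$) interchanged.

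The genuine gap is in your treatment of the range $2<p<p_{1,L}$ of~\eqref{eqn:newton:lusin:regular}, which you propose to handle ``by duality with~\eqref{eqn:newton:N:regular}'' using~\eqref{eqn:newton:adjoint}. That pairing does not close: after using~\eqref{eqn:CoiMS85} to write $\doublebar{\mathcal{A}_2^-(t\nabla^m\partial_t\Pi^{L^*}(\1_+\arr\Psi))}_{L^{p'}}$ as a supremum of pairings $\langle\arr\Phi,\nabla^m\partial_t\Pi^{L^*}(\1_+\arr\Psi)\rangle$ and moving the Newton potential to the other side, the factor involving $\arr\Psi$ must be $\doublebar{\mathcal{A}_2^+\arr\Psi}_{L^{p'}}$, and so by tent-space duality the other factor has to be $\doublebar{\mathcal{A}_2^+(t\nabla^m\partial_t\Pi^{L}(\1_-\arr\Phi))}_{L^{p}}$; the correct dual partner is therefore the already-established $p<2$ case of~\eqref{eqn:newton:lusin:regular} itself (Lemma~\ref{lem:lusin:+}), not~\eqref{eqn:newton:N:regular}, which bounds derivatives of total order $m$ rather than $m+1$ and would in any case force you to control a Carleson-type norm of $\arr\Psi$ by $\mathcal{A}_2^+\arr\Psi$, which is false in general. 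Moreover, the plain adjoint relation~\eqref{eqn:newton:adjoint} does not accommodate the extra $\partial_t$: one needs the identity $\langle\partial_\dmn\nabla^m\Pi^{L^*}\arr G,\arr H\rangle_{\R^\dmn}=-\langle\arr G,\partial_\dmn\nabla^m\Pi^L\arr H\rangle_{\R^\dmn}$, proved first for $\arr G,\arr H\in L^2\cap\dot W^{1,2}$ by commuting $\partial_\dmn$ with $\Pi^{L^*}$ (formula~\eqref{eqn:Newton:vertical}, using $t$-independence) and then extended by density to $L^2$ inputs with disjoint compact supports via the Caccioppoli inequality. This vertical adjoint identity is a short but genuinely new step that your proposal omits, and without it the ``verbatim adaptation'' of the proof of Lemma~\ref{lem:newton:lusin:rough} does not go through for~\eqref{eqn:newton:lusin:regular}.
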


\begin{proof}
The bounds~\eqref{eqn:newton:N:rough} and~\eqref{eqn:newton:lusin:rough} were established in Lemmas~\ref{lem:newton:N:rough:-} and~\ref{lem:newton:N:rough} and in Lemmas \ref{lem:newton:lusin:rough} and~\ref{lem:lusin:+}, respectively.

The $p\leq 2$ case of the bound \cref{eqn:newton:N:regular} was established in Lemmas~\ref{lem:newton:N:-}.
To establish the $p>2$ case, we may take $j=0$.
The bound then follows from Lemma~\ref{lem:N<C}, formula~\eqref{eqn:newton:adjoint}, and the bounds \eqref{eqn:CoiMS85} and~\eqref{eqn:newton:lusin:rough}, as in the proof of Lemma~\ref{lem:newton:lusin:rough}. 

The $p\leq 2$ case of the bound~\eqref{eqn:newton:lusin:regular} was established in Lemmas~\ref{lem:lusin:regular:2}--\ref{lem:lusin:+}. As in the proof of the bound~\eqref{eqn:newton:N:regular}, we will establish the $p>2$ case by duality.
By formulas~\cref{eqn:Newton:vertical,eqn:newton:adjoint}, if $\arr G$ and $\arr H$ are in $L^2(\R^\dmn)\cap \dot W^{1,2}(\R^\dmn)$, then 
\begin{equation}
\label{eqn:newton:adjoint:vertical}
\langle \partial_\dmn\nabla^m\Pi^{L^*}\arr G,\arr H\rangle_{\R^\dmn}
=-\langle \arr G,\partial_\dmn\nabla^m\Pi^{L}\arr H\rangle_{\R^\dmn}
.\end{equation}
If $\arr G$ is supported in $J$ and $\arr H$ is supported in~$K$, where $J$ and $K$ are disjoint compact sets, then by the Caccioppoli inequality, both the left hand side and right hand side are at most $C_{J,K}\doublebar{\arr G}_{L^2(J)}\doublebar{\arr H}_{L^2(K)}$; thus, by density, formula~\eqref{eqn:newton:adjoint:vertical} is valid whenever $\arr G\in L^2(\R^\dmn)$ and $\arr H\in L^2(\R^\dmn)$ have disjoint compact support.

We may now see that the $p>2$ case of the bound~\eqref{eqn:newton:lusin:regular} follows from the bound \eqref{eqn:CoiMS85}, formula~\eqref{eqn:newton:adjoint:vertical}, and the $p<2$ case of the bound~\eqref{eqn:newton:lusin:regular} (that is, Lemma~\ref{lem:lusin:+}). 
\end{proof}

\begin{rmk}
\label{rmk:twosides}
The nontangential bounds \cref{eqn:newton:N:rough,eqn:newton:N:regular} and the area integral estimate~\eqref{eqn:newton:lusin:rough} involve the two-sided operators $\widetilde N_*$ and~$\mathcal{A}_2^*$, while the bound \eqref{eqn:newton:lusin:regular} involves one-sided operators $\mathcal{A}_2^+$ and~$\mathcal{A}_2^-$.

This restriction cannot be removed. 
Let $F$ be a function that is smooth and supported in the Whitney ball $B((0,1),1/4)$.
Let $\arr\Psi=\mat A\nabla^m F$. 
It follows from the definition of $\Pi^L$ in Section~\ref{sec:dfn:potential} that $F=\Pi^L(\mat A\nabla^m F)=\Pi^L\arr\Psi$. Thus,
\begin{equation*}\doublebar{\mathcal{A}_2^+(t\nabla^m \partial_t \Pi^L\arr\Psi)}_{L^p(\R^n)} =\doublebar{\mathcal{A}_2^+(t\nabla^m \partial_t F)}_{L^p(\R^n)},
.\end{equation*}
By the ellipticity condition~\eqref{eqn:elliptic} and the definition~\eqref{dfn:lusin:+} of $\mathcal{A}_2^+$, if $0<p<\infty$ then
\begin{equation*}
\doublebar{\mathcal{A}_2^+(\nabla^m F)}_{L^p(\R^n)}
\approx \doublebar{\nabla^m F}_{L^2(B((0,1),1/4))}\approx
\doublebar{\mathcal{A}_2^+\arr\Psi}_{L^p(\R^n)}
\end{equation*}
where the constants of approximation depend on~$p$. Thus, $\doublebar{\mathcal{A}_2^+(\nabla^m F)}_{L^p(\R^n)}
\leq C_p
\doublebar{\mathcal{A}_2^+\arr\Psi}_{L^p(\R^n)}
.$

But for any fixed number $\widetilde C$, we may choose $F$ so that
\begin{equation*}\doublebar{\mathcal{A}_2^+(t\nabla^m \partial_t F)}_{L^p(\R^n)}
\not\leq \frac{\widetilde C}{C_p}\doublebar{\mathcal{A}_2^+(\nabla^m F)}_{L^p(\R^n)}\end{equation*}
and so
\begin{equation*}\doublebar{\mathcal{A}_2^+(t\nabla^m \partial_t \Pi^L\arr\Psi)}_{L^p(\R^n)}
\not\leq \widetilde C\doublebar{\mathcal{A}_2^+\arr\Psi}_{L^p(\R^n)}.\end{equation*}
Thus, no two-sided analogue to the bound~\eqref{eqn:newton:lusin:regular} is possible.
\end{rmk}

\section{The double and single layer potentials}
\label{sec:p<2}

In this section we will prove Theorem~\ref{thm:potentials}.

We will establish estimates on the double and single layer potentials using the duality results of Lemma~\ref{lem:newton:dual} and the bounds on the Newton potential of Corollary~\ref{cor:newton:final}. Recall that Lemma~\ref{lem:newton:dual} involves the Dirichlet and Neumann boundary values of the Newton potential along $\R^n=\partial\R^\dmn_\pm$, while Corollary~\ref{cor:newton:final} yields nontangential and area integral bounds, that is, bounds in the interior of~$\R^\dmn_\pm$. Thus, we will need Fatou type theorems to pass from  Corollary~\ref{cor:newton:final} to useful estimates on boundary values. 

We will list three Fatou type theorems from \cite{BarHM19B,Bar19p} in Section~\ref{sec:fatou:neumann}. These theorems suffice to prove the bounds~\cref{eqn:S:N:intro,eqn:S:lusin:intro,eqn:S:lusin:rough:intro,eqn:D:N:intro,eqn:D:lusin:intro,eqn:D:lusin:rough:intro}; the arguments will be given in Section~\ref{sec:D:S:regular}.
The bounds \cref{eqn:S:lusin:intro,eqn:S:lusin:rough:intro} allow us to eliminate a technical assumption in certain results of \cite{BarHM19B}; these simplified theorems will be stated in Section~\ref{sec:fatou:dirichlet}, after the bounds \cref{eqn:S:lusin:intro,eqn:S:lusin:rough:intro} have been established, and will be used in Section~\ref{sec:D:S:rough} to establish the bounds \cref{eqn:S:N:rough:intro,eqn:D:N:rough:intro}.

\subsection{Fatou type theorems}
\label{sec:fatou:neumann}

In this section we list some known results concerning boundary values of functions that satisfy nontangential or area integral estimates.

We begin with the following theorem concerning Dirichlet boundary values.
\begin{lem}[{\cite[Lemma~5.1]{Bar19p}}]
\label{lem:fatou:N}
Let $\arr u$ be defined and locally square integrable in $\R^\dmn_+$. Suppose that $\widetilde N_+\arr u\in L^p(\R^n)$ for some $p$ with $1<p\leq\infty$.
Suppose that $\Trace^+ \arr u$ exists in the sense of formula~\eqref{eqn:trace}; that is, there is an array of functions $\Trace^+\arr u$ such that
\begin{equation*}\lim_{t\to 0^+} \int_K \abs{\arr u(x,t)-\Trace^+\arr u(x)}\,dx=0\end{equation*}
for any compact set $K\subset\R^n$.
Then $\Trace^+ \arr u$ satisfies
\begin{equation*}\doublebar{\Trace^+\arr u}_{L^p(\R^n)}\leq \doublebar{\widetilde N_+\arr u}_{L^p(\R^n)}.\end{equation*}
\end{lem}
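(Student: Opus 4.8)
\textbf{Proof proposal for Lemma~\ref{lem:fatou:N}.}
The plan is to exploit the averaging built into the definition~\eqref{dfn:NTM:modified:+} of $\widetilde N_+$ together with a dominated-convergence argument in the time variable. First I would fix a compact set $K\subset\R^n$ and note that, by the hypothesis that $\Trace^+\arr u$ exists, for each $x\in\R^n$ the average $\fint_{B((x,t),t/2)}\abs{\arr u(z,s)}^2\,dz\,ds$ converges to $\abs{\Trace^+\arr u(x)}^2$ as $t\to0^+$ for almost every $x$; indeed, the $L^1(K)$-convergence of $\arr u(\,\cdot\,,t)$ to $\Trace^+\arr u$ passes (along a subsequence, and then for the whole family by monotonicity-type arguments) to convergence of these Whitney averages, since $B((x,t),t/2)$ shrinks to $(x,0)$ as $t\to0$ and its projection onto $\R^n$ is a ball of radius comparable to $t$ centered near~$x$.

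The key pointwise inequality is that for almost every $x\in\R^n$,
\begin{equation*}
\abs{\Trace^+\arr u(x)}
=\lim_{t\to0^+}\biggl(\fint_{B((x,t),t/2)}\abs{\arr u(z,s)}^2\,dz\,ds\biggr)^{1/2}
\leq \widetilde N_+\arr u(x),
\end{equation*}
because the right-hand side of~\eqref{dfn:NTM:modified:+} is a supremum that in particular includes the choices $y=x$, $s=t$ for every $t>0$. Establishing the limit identity on the left is the one place that requires care: I would use that $\arr u(\,\cdot\,,t)\to\Trace^+\arr u$ in $L^1_{loc}$, upgrade to convergence of the $L^2$ Whitney averages by noting that $\widetilde N_+\arr u(x)<\infty$ for a.e.~$x$ (which follows from $\widetilde N_+\arr u\in L^p$) gives a uniform-in-$t$ $L^2$ bound on $\arr u$ over each fixed Whitney ball, so that a Vitali/uniform-integrability argument promotes the $L^1$ convergence of $\arr u(\,\cdot\,,t)$ to convergence of the squared averages. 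Once the displayed inequality holds a.e., taking $L^p(\R^n)$ norms of both sides immediately yields
\begin{equation*}
\doublebar{\Trace^+\arr u}_{L^p(\R^n)}\leq \doublebar{\widetilde N_+\arr u}_{L^p(\R^n)},
\end{equation*}
which is the assertion.

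The main obstacle, as indicated, is the passage from $L^1_{loc}$-convergence of the time slices to convergence of the $L^2$ Whitney averages $\fint_{B((x,t),t/2)}\abs{\arr u}^2$; this is where one genuinely uses the finiteness of $\widetilde N_+\arr u$ rather than just the trace hypothesis. I expect this can be handled by a standard argument: for fixed $x$ with $\widetilde N_+\arr u(x)<\infty$, all the relevant Whitney balls sit inside a fixed nontangential cone region where $\abs{\arr u}^2$ has uniformly bounded averages, so the family $\{\abs{\arr u(\,\cdot\,,s)}^2\}$ restricted to a neighborhood of $(x,0)$ is uniformly integrable; combined with a.e.\ convergence (from a subsequence of the $L^1$ convergence) this gives convergence of the averages. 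Everything else — the inclusion of $(y,s)=(x,t)$ in the defining supremum, and taking $L^p$ norms — is routine.
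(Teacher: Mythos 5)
Your overall skeleton — prove the pointwise bound $\abs{\Trace^+\arr u(x)}\leq \widetilde N_+\arr u(x)$ for a.e.\ $x$ (using that the centered Whitney balls $B((x,s),s/2)$ are admissible in the supremum~\eqref{dfn:NTM:modified:+}), then take $L^p$ norms to get the constant-$1$ inequality — is the right one. The gap is in the step you yourself flag as the delicate one: the claim that $\fint_{B((x,t),t/2)}\abs{\arr u}^2\to\abs{\Trace^+\arr u(x)}^2$ as $t\to0^+$ for a.e.\ $x$. This is false in general under the stated hypotheses. The trace hypothesis only controls slice integrals $\int_K\abs{\arr u(\cdot,t)-\Trace^+\arr u}\,dx$, which are $o(1)$ as $t\to0$, while a Whitney average at scale $s$ renormalizes by $s^{-(n+1)}$; a self-similar family of ``spikes'' of height $1$ and extent $\sim 2^{-k}$ placed at $\sim 2^{kn}/k$ well-spread locations at height $\sim2^{-k}$ has slices tending to $0$ in $L^1_{loc}$ (so $\Trace^+\arr u=0$) and $\widetilde N_+\arr u$ bounded with compact support (so in every $L^p$), yet for a.e.\ $x$ in the base cube the centered Whitney averages stay bounded below along infinitely many scales, so the limit you assert does not exist. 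The proposed repair via uniform integrability also does not work: finiteness of $\widetilde N_+\arr u(x)$ bounds solid $L^2$ averages over Whitney balls, not the integrands $\abs{\arr u(\cdot,s)}^2$ on individual time slices (no equation is assumed here, so there is no interior regularity to transfer solid bounds to slices), and a.e.\ convergence of a subsequence of slices says nothing about averages over the full time window $(s/2,3s/2)$.

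Fortunately you need much less than the full limit: a $\liminf$ lower bound along some sequence of scales suffices, and that can be proved correctly as follows. Write $\tilde f(z,t)=\Trace^+\arr u(z)$ and use Cauchy--Schwarz to pass to $L^1$ averages,
\begin{equation*}
\biggl(\fint_{B((x,s),s/2)}\abs{\arr u}^2\biggr)^{1/2}\geq \fint_{B((x,s),s/2)}\abs{\tilde f}-\fint_{B((x,s),s/2)}\abs{\arr u-\tilde f}.
\end{equation*}
The first term converges to $\abs{\Trace^+\arr u(x)}$ at every Lebesgue point of $\Trace^+\arr u$. For the second term, Fubini gives, for any compact $K\subset\R^n$,
\begin{equation*}
\int_K\fint_{B((x,s),s/2)}\abs{\arr u-\tilde f}\,dx\leq \frac{C}{s}\int_{s/2}^{3s/2}\int_{K'}\abs{\arr u(z,t)-\Trace^+\arr u(z)}\,dz\,dt\longrightarrow0
\end{equation*}
as $s\to0^+$, by the trace hypothesis; hence along a subsequence $s_j\to0$ the second term tends to $0$ for a.e.\ $x\in K$. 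Combining yields $\abs{\Trace^+\arr u(x)}\leq\widetilde N_+\arr u(x)$ a.e., and the $L^p$ bound with constant $1$ follows as you indicate. So the idea is salvageable, but the justification you give for the key convergence step is not, and it must be replaced by an argument of this Fubini-plus-subsequence type rather than by uniform integrability of the slices.
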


We will also need the following Fatou type theorems for Neumann boundary values. We remark that in \cite{BarHM19B}, these theorems are stated for solutions $v$, $w$ to $Lv=Lw=0$ in $\R^\dmn_+$ with $\mathcal{A}_2^+(t\nabla^m v)$, $\mathcal{A}_2^+(t\nabla^m\partial_t w)\in L^p(\R^n)$. We will usually apply these theorems to solutions $v$, $w$ to $L^*v=L^*w=0$ in $\R^\dmn_-$ with $\mathcal{A}_2^-(t\nabla^m v)$, $\mathcal{A}_2^-(t\nabla^m\partial_t w)\in L^{p'}(\R^n)$; we have modified the theorem statements accordingly.

\begin{thm}[{\cite[Theorem~6.1]{BarHM19B}}]
\label{thm:BHM17pB:rough:neumann}
Let $L$ be an operator of order~$2m$ of the form~\eqref{eqn:weak} associated to bounded $t$-independent coefficients~$\mat A$ that satisfy the ellipticity condition~\eqref{eqn:elliptic}.

Let $1<p<\infty$ and let $1/p+1/p'=1$.
Let $v$ satisfy $\mathcal{A}_2^-(t\nabla^m v)\in L^{p'}(\R^n)$ and $L^*v=0$ in $\R^\dmn_-$. If $p<2$, suppose further that $v\in \dot W^{m,2}(\R^n\times(-\infty,-\sigma))$ for all $\sigma>0$, albeit possibly with norm that approaches $\infty$ as $\sigma\to 0^+$.

Then for all $\varphi\in C^\infty_0(\R^\dmn)$ we have that
\begin{equation*}\abs{\langle \Tr_{m-1}\varphi,\M_{\mat A^*}^- v\rangle_{\R^n}}
\leq C_{p} \doublebar{\Tr_{m-1}\varphi}_{\dot W\!A^{1,p}_{m-1}(\R^n)} \doublebar{\mathcal{A}_2^-(t\nabla^m v)}_{L^{p'}(\R^n)}\end{equation*}
where $\M_{\mat A^*}^-v$ is as in \cite[Section~2.3.2]{BarHM19B}. In particular, if $v\in\dot W^{m,2}(\R^\dmn_-)$, then by  \cite[Lemma~2.4]{BarHM19B} $\M_{\mat A^*}^-v$ is as in formula~\eqref{eqn:neumann:W2}.
\end{thm}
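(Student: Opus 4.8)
The statement to be proved is a Fatou-type duality bound: for $v$ with $L^*v=0$ in $\R^\dmn_-$ and $\mathcal{A}_2^-(t\nabla^m v)\in L^{p'}(\R^n)$ (plus a Sobolev assumption off the boundary when $p<2$), one has $|\langle \Tr_{m-1}\varphi,\M_{\mat A^*}^- v\rangle_{\R^n}|\leq C_p\,\|\Tr_{m-1}\varphi\|_{\dot W\!A^{1,p}_{m-1}(\R^n)}\,\|\mathcal{A}_2^-(t\nabla^m v)\|_{L^{p'}(\R^n)}$. Since the excerpt attributes this to \cite[Theorem~6.1]{BarHM19B} and asks only for the statement, the natural approach is to reduce it to the definition of $\M_{\mat A^*}^- v$ given in \cite[Section~2.3.2]{BarHM19B}. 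First I would record that, by the change of variables $(x,t)\to(x,-t)$ of Section~\ref{sec:lower}, it suffices to treat the upper half space: $L^*v=0$ in $\R^\dmn_-$ with $\mathcal{A}_2^-(t\nabla^m v)\in L^{p'}$ corresponds to $(L^*)^- v^-=0$ in $\R^\dmn_+$ with $\mathcal{A}_2^+(t\nabla^m v^-)\in L^{p'}$, and the Neumann boundary data transform compatibly by formula~\eqref{eqn:neumann:lower}; so I may quote the half-space version directly.

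The core of the argument (as in \cite{BarHM19B}) is to pair $v$ against an extension of a test array. Given $\varphi\in C^\infty_0(\R^\dmn)$, write $\arr f=\Tr_{m-1}\varphi$ and choose a convenient extension $F$ with $\Tr_{m-1}^+ F=\arr f$ and $\nabla^m F$ controlled in the appropriate tent-space or Lebesgue norm by $\|\arr f\|_{\dot W\!A^{1,p}_{m-1}}$; the unmodified $\varphi$ itself works since it is smooth and compactly supported. The pairing $\langle \mat A^*\nabla^m v,\nabla^m F\rangle$ over a truncated region $\R^n\times(-\infty,-\sigma)$ equals $\langle \M_{\mat A^*}^- v,\Tr_{m-1}^-(F(\cdot,-\sigma+\cdot))\rangle$ up to a solid interior term, and one estimates the solid term by Cauchy--Schwarz after inserting the $\mathcal{A}_2^-$ weight: $|\langle \nabla^m v,\nabla^m F\rangle_{\R^n\times(-\infty,-\sigma)}|$ is bounded using the $T_2^{p}$--$T_2^{p'}$ duality pairing recalled in \eqref{eqn:CoiMS85}, which gives $C\,\|\mathcal{A}_2^-(t\nabla^m v)\|_{L^{p'}}\,\|\mathcal{A}_2^-(t^{-1}\nabla^m F)\|_{L^p}$, and the latter factor is $\lesssim\|\nabla^m\varphi\|$-type quantity controlled by $\|\arr f\|_{\dot W\!A^{1,p}_{m-1}}$ (here one uses that $\varphi$ is compactly supported so the relevant integrals converge). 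The Sobolev hypothesis $v\in\dot W^{m,2}(\R^n\times(-\infty,-\sigma))$ for $p<2$ is exactly what makes the truncated pairing and the integration by parts legitimate; letting $\sigma\to 0^+$ and invoking the precise $\sigma\to 0$ convergence built into the definition of $\M_{\mat A^*}^- v$ in \cite[Section~2.3.2]{BarHM19B} yields the claimed inequality.

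The main obstacle is not any single estimate but the bookkeeping around the definition of $\M_{\mat A^*}^- v$ when $\nabla^m v$ fails to be locally integrable up to the boundary: one must verify that the truncated-region pairings converge to the correct limiting functional and that the interior error terms vanish in the limit, uniformly in $\varphi$. This is precisely the content of \cite[Theorem~6.1]{BarHM19B}, and in the present paper the statement is invoked as a black box; accordingly, for the purposes of this paper it suffices to cite that theorem, after performing the $(x,t)\to(x,-t)$ reduction and observing via \eqref{eqn:neumann:lower} that the hypotheses and conclusion transfer to the lower half space verbatim. No new argument beyond this reduction is needed here.
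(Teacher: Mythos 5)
Your overall approach coincides with the paper's: this theorem is invoked as a known result, namely \cite[Theorem~6.1]{BarHM19B}, and the only adaptation performed here is the passage to the lower half space for the adjoint operator via the change of variables $(x,t)\to(x,-t)$ of Section~\ref{sec:lower}, using formula~\eqref{eqn:neumann:lower} to see that the Neumann data transform correctly; your sketch of the internal argument of \cite{BarHM19B} is not needed for this paper.

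There is, however, one point where your claim that ``the hypotheses and conclusion transfer \ldots verbatim'' and that ``no new argument beyond this reduction is needed'' is not quite accurate. As stated in \cite{BarHM19B}, the theorem requires $v\in\dot W^{m,2}(\R^n\times(-\infty,-\sigma))$ for all $\sigma>0$ for \emph{every} $p$, whereas the statement you are asked to prove imposes this hypothesis only when $p<2$. Thus for $p\geq 2$ a citation of \cite[Theorem~6.1]{BarHM19B} does not apply directly: one must first verify that the omitted hypothesis is automatic in that range. This is exactly the small bridging step the paper supplies: if $p\geq2$ then $p'\leq 2$, and Lemma~\ref{lem:lebesgue:lusin} (with $\kappa=1$, $r=2$, $\theta=p'$, after reflecting to the upper half space), or equivalently \cite[Remark~5.3]{BarHM19B}, shows that $\mathcal{A}_2^-(t\nabla^m v)\in L^{p'}(\R^n)$ already forces $\nabla^m v\in L^2(\R^n\times(-\infty,-\sigma))$ for every $\sigma>0$, so that the hypothesis of the cited theorem holds. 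With that observation added, your reduction is complete and matches the paper's treatment.
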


The theorem as stated in \cite{BarHM19B} requires that $v\in \dot W^{m,2}(\R^n\times(-\infty,-\sigma))$ for all~$p$; however, if $p\geq 2$ then this condition follows from Lemma~\ref{lem:lebesgue:lusin} or its predecessor \cite[Remark~5.3]{BarHM19B}.

\begin{thm}[{\cite[Theorem~6.2]{BarHM19B}}]
\label{thm:BHM17pB:regular:neumann}
Let $L$ be an operator of order $2m$ of the form~\eqref{eqn:weak} associated to bounded $t$-independent coefficients~$\mat A$ that satisfy the ellipticity condition~\eqref{eqn:elliptic}.

Let $1<p<\infty$ and let $1/p+1/p'=1$.
Let $w$ satisfy $\mathcal{A}_2^-(t\nabla^m \partial_t w)\in L^{p'}(\R^n)$, $\widetilde N_-(\nabla^m w)\in L^{p'}(\R^n)$, and 
$L^*w=0$ in~$\R^\dmn_-$.
If $p<2$, we impose the additional condition that   $\partial_\dmn w\in \dot W^{m,2}(\R^n\times(-\infty,-\sigma))$ for all $\sigma>0$. 

Then
for all $\varphi\in C^\infty_0(\R^\dmn)$ we have that
\begin{multline*}\abs{\langle \Tr_{m-1}\varphi,\M_{\mat A^*}^- w\rangle_{\R^n}}
\\\leq C_{p} \doublebar{\Tr_{m-1}\varphi}_{\dot W\!A^{0,p}_{m-1}(\R^n)} \bigl(\doublebar{\mathcal{A}_2^-(t\nabla^m \partial_t w)}_{L^{p'}(\R^n)} + \doublebar{\widetilde N_-(\nabla^m  w)}_{L^{p'}(\R^n)}\bigr)
\end{multline*}
where $\M_{\mat A^*}^-w$ is as in formula~\eqref{eqn:neumann:intro}.
\end{thm}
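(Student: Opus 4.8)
The plan is to deduce this statement from its established upper half space counterpart, \cite[Theorem~6.2]{BarHM19B}, via the change-of-variables dictionary of Section~\ref{sec:lower}, together with the observation --- parallel to the remark following Theorem~\ref{thm:BHM17pB:rough:neumann} --- that for $p\geq 2$ the auxiliary Sobolev hypothesis is automatic. No genuinely new argument is required; the substance is the bookkeeping. Note first that $L^*$ is itself an operator of the form~\eqref{eqn:weak} associated to bounded, $t$-independent coefficients $\mat A^*$ satisfying~\eqref{eqn:elliptic} with the same $\lambda$ and the same $L^\infty$ bound, so it is enough to see that the hypotheses on $w$ transform correctly under the reflection $(x,t)\mapsto(x,-t)$.

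I would set $W(x,t)=w(x,-t)$. By Section~\ref{sec:lower}, $W$ solves $(L^*)^-W=0$ in $\R^\dmn_+$, where $(L^*)^-=(L^-)^*$ is the operator associated to the coefficients $(\mat A^*)^-=(\mat A^-)^*$; these are again bounded, $t$-independent, and elliptic with the same constants. Because the reflection leaves the $L^2$ averages over Whitney balls unchanged and only changes the signs of the $\partial_\dmn$-components of $\nabla^m\partial_t W$ and $\nabla^m W$, we have $\mathcal{A}_2^+(t\nabla^m\partial_t W)=\mathcal{A}_2^-(t\nabla^m\partial_t w)$ and $\widetilde N_+(\nabla^m W)=\widetilde N_-(\nabla^m w)$ pointwise on $\R^n$, so the two $L^{p'}$-hypotheses on $w$ become the corresponding $L^{p'}$-hypotheses on $W$. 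For $p<2$ the condition $\partial_\dmn W\in\dot W^{m,2}(\R^n\times(\sigma,\infty))$ for all $\sigma>0$ is exactly the reflected form of the hypothesis imposed on $w$. For $p\geq 2$, so $p'\leq 2$, this condition is not needed: applying Lemma~\ref{lem:lebesgue:lusin} with $r=2$, $\kappa=1$, $\theta=p'$, and $\arr F=\nabla^m\partial_t W$ --- the requirement $\theta\pdmn<r(n+\theta\kappa)$ reduces to $n(p'-2)<p'$, which holds since $p'\leq 2$ --- yields $\nabla^m\partial_t W\in L^2(\R^n\times(\sigma,\infty))$ for every $\sigma>0$. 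Hence $W$ satisfies the hypotheses of \cite[Theorem~6.2]{BarHM19B} for every $1<p<\infty$.

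It then remains to match the boundary pairings. For $\varphi\in C^\infty_0(\R^\dmn)$ put $\psi=\varphi^-\in C^\infty_0(\R^\dmn)$; by Section~\ref{sec:lower}, $\Tr_{m-1}\psi=(\Tr_{m-1}\varphi)^-$, and since the sign factors $(-1)^{\gamma_\dmn}$ attached to the components of $\arr f^-$ and $\arr g^-$ cancel in the pairing of Section~\ref{sec:array}, one gets $\langle\Tr_{m-1}\psi,\arr g^-\rangle_{\R^n}=\langle\Tr_{m-1}\varphi,\arr g\rangle_{\R^n}$ for every array $\arr g$. Combining this with the Neumann transformation rule~\eqref{eqn:neumann:lower} --- which Section~\ref{sec:lower} records as valid also for the weak definition of Neumann boundary values from \cite[Section~2.3.2]{BarHM19B}, the pertinent notion here since $w$ need not lie in $\dot W^{m,2}(\R^\dmn_-)$ --- yields $\langle\Tr_{m-1}\varphi,\M_{\mat A^*}^-w\rangle_{\R^n}=\langle\Tr_{m-1}\psi,\M_{(\mat A^*)^-}^+W\rangle_{\R^n}$. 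Finally, $\arr f\mapsto\arr f^-$ is a norm-preserving automorphism of $\dot W\!A^{0,p}_{m-1}(\R^n)$, so $\doublebar{\Tr_{m-1}\psi}_{\dot W\!A^{0,p}_{m-1}(\R^n)}=\doublebar{\Tr_{m-1}\varphi}_{\dot W\!A^{0,p}_{m-1}(\R^n)}$. Applying \cite[Theorem~6.2]{BarHM19B} to $W$ --- the exponent there taken as $p'$, so that the Whitney--Lebesgue factor is $\dot W\!A^{0,p}_{m-1}(\R^n)$ and the two tent-space factors lie in $L^{p'}(\R^n)$, and its constant depending only on $p'$, hence on $p$ --- and substituting these identities produces exactly the claimed inequality.

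The only step requiring real care is the sign bookkeeping of Section~\ref{sec:lower}: verifying that the $(-1)^{\gamma_\dmn}$ and $(-1)^{\alpha_\dmn+\beta_\dmn}$ conventions propagate consistently through the reflection and cancel in the boundary pairing, and confirming that the weak Neumann data of \cite[Section~2.3.2]{BarHM19B} transforms by~\eqref{eqn:neumann:lower}. Both points are already dealt with in Section~\ref{sec:lower}, so I expect no genuine obstacle --- the argument is a reformulation of \cite[Theorem~6.2]{BarHM19B}, not a new proof.
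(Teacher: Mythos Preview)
Your proposal is correct and is essentially the paper's own approach made explicit: the paper does not prove this theorem but simply cites \cite[Theorem~6.2]{BarHM19B} and remarks that the statement has been ``modified accordingly'' for the lower half space, $L^*$, and the exchanged exponents, relying on the reflection dictionary of Section~\ref{sec:lower}; your argument carries out precisely that bookkeeping, including the observation (parallel to the paper's remark after Theorem~\ref{thm:BHM17pB:rough:neumann}) that for $p\geq 2$ the auxiliary $\dot W^{m,2}$ condition follows from Lemma~\ref{lem:lebesgue:lusin}.
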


\subsection{The bounds 
(\ref{eqn:S:N:intro}--\ref{eqn:D:lusin:rough:intro})
}
\label{sec:D:S:regular}

In this section we will prove most of Theorem~\ref{thm:potentials}; specifically, we will establish the estimates~\cref{eqn:S:N:intro,eqn:S:lusin:intro,eqn:S:lusin:rough:intro,eqn:D:N:intro,eqn:D:lusin:intro,eqn:D:lusin:rough:intro}.
Throughout this section we will let $L$ and $\mat A$ be as in Theorem~\ref{thm:potentials}; that is, $L$ will be an operator of the form~\eqref{eqn:weak} of order~$2m$ associated to bounded $t$-independent coefficients $\mat A$ that satisfy the ellipticity condition~\eqref{eqn:elliptic}.

\subsubsection*{The estimate~\eqref{eqn:S:N:intro}}
By Lemma~\ref{lem:N<C},  if $1<p<\infty$ then
\begin{align*}
\doublebar{\widetilde N_*(\nabla^m \s^L\arr g)}_{L^p(\R^n)}
&\leq 
	C_p\sup_{\arr H} \frac{\abs{\langle \arr H,\nabla^m \s^L\arr g\rangle_{\R^\dmn}}} {\doublebar{\widetilde{\mathfrak{C}}_1^*(t\arr H)}_{L^{p'}(\R^n)}}
\end{align*}
where the supremum is over all $\arr H\in L^2(\R^\dmn)$ supported in a compact subset of $\R^\dmn_+\cup\R^\dmn_-$ such that the denominator is positive. 
By formula~\eqref{eqn:S:Newton:dual}, if $\arr g\in \dot B^{-1/2,2}_2(\R^n)$ then
\begin{align*}
\doublebar{\widetilde N_*(\nabla^m \s^L\arr g)}_{L^p(\R^n)}
&\leq 
	C_p\sup_{\arr H} \frac{\abs{\langle \Tr_{m-1} \Pi^{L^*}\arr H,\arr g\rangle_{\R^n} }} {\doublebar{\widetilde{\mathfrak{C}}_1^*(t\arr H)}_{L^{p'}(\R^n)}}
.\end{align*}
Because $\Pi^{L^*}\arr H\in \dot W^{m,2}(\R^\dmn)$, we have that $\Tr_{m-1}\Pi^{L^*}\arr H$ exists in the sense of Sobolev spaces, and thus in the sense of formulas \cref{eqn:trace,eqn:Dirichlet}. 
By Lemma~\ref{lem:fatou:N},
\begin{equation*}\doublebar{\Tr_{m-1} \Pi^{L^*}\arr H}_{L^{p'}(\R^n)}
\leq \doublebar{\widetilde N_*(\nabla^{m-1} \Pi^{L^*}\arr H)}_{L^{p'}(\R^n)}\end{equation*}
and so by Lemmas~\ref{lem:newton:N:rough:-} and~\ref{lem:newton:N:rough}, if $p_{1,L^*}^-<p<p_{0,L}^+$ and $\arr g\in \dot B^{-1/2,2}_2(\R^n)\cap L^p(\R^n)$ then
\begin{align*}
\doublebar{\widetilde N_*(\nabla^m \s^L\arr g)}_{L^p(\R^n)}
&\leq 
	\widetilde C_p \doublebar{\arr g}_{L^p(\R^n)}
\end{align*}
where $\widetilde C_p$ is as in Corollary~\ref{cor:newton:final}.
By density, the bound~\eqref{eqn:S:N:intro} is valid.

\subsubsection*{The estimate~\eqref{eqn:D:N:intro}}
By Lemma~\ref{lem:N<C} and formula~\eqref{eqn:D:Newton:dual}, if $\arr\varphi\in \dot W\!A^{1/2,2}_{m-1}(\R^n)$ then
\begin{align*}
\doublebar{\widetilde N_+(\nabla^m \D^{\mat A}\arr \varphi)}_{L^p(\R^n)}
&\leq C_p
	\sup_{\arr H} \frac{\abs{\langle \arr H,\nabla^m \D^{\mat A}\arr \varphi\rangle_{\R^\dmn_+}}} {\doublebar{\widetilde{\mathfrak{C}}_1^+(t\arr H)}_{L^{p'}(\R^n)}}
\\&= C_p
	\sup_{\arr H} \frac{\abs{\langle \M_{\mat A^*}^- \Pi^{L^*}(\1_+\arr H) , \arr \varphi\rangle_{\R^n} }} {\doublebar{\widetilde{\mathfrak{C}}_1^+(t\arr H)}_{L^{p'}(\R^n)}}
.\end{align*}
By Theorem~\ref{thm:BHM17pB:rough:neumann}, 
if $\arr\varphi=\Tr_{m-1}\Phi$ for some $\Phi\in C^\infty_0(\R^\dmn)$, then
\begin{align*}
\doublebar{\widetilde N_+(\nabla^m \D^{\mat A}\arr \varphi)}_{L^p(\R^n)}
&\leq
	C_p\sup_{\arr H} \frac{\doublebar{\arr\varphi}_{\dot W\!A^{1,p}_{m-1}(\R^n)} \doublebar{\mathcal{A}_2^-(t\nabla^m \Pi^{L^*}(\1_+\arr H))}_{L^{p'}(\R^n)}} {\doublebar{\widetilde{\mathfrak{C}}_1^+(t\arr H)}_{L^{p'}(\R^n)}}
.\end{align*}
By Lemmas~\ref{lem:newton:lusin:rough} and~\ref{lem:lusin:+}, if $p_{1,L^*}^-<p<p_{0,L}^+$ then
\begin{align*}
\doublebar{\widetilde N_+(\nabla^m \D^{\mat A}\arr \varphi)}_{L^p(\R^n)}
&\leq
	\widetilde C_p\doublebar{\arr\varphi}_{\dot W\!A^{1,p}_{m-1}(\R^n)}
.\end{align*}
We establish a bound on ${\widetilde N_-(\nabla^m \D^{\mat A}\arr \varphi)}$ using Section~\ref{sec:lower} and extend to all $\arr\varphi\in \dot W\!A^{1,p}_{m-1}(\R^n)$ by density.
This completes the proof of the bound~\eqref{eqn:D:N:intro}.

\subsubsection*{The estimate~\eqref{eqn:S:lusin:intro}}
By the bound~\eqref{eqn:CoiMS85} and formula~\eqref{eqn:S:Newton:dual:vertical}, if $1<p<\infty$ and $\arr g\in \dot B^{-1/2,2}_2(\R^n)$, then
\begin{align*}
\doublebar{\mathcal{A}_2^*(t\nabla^m\partial_t\s^L\arr g)}_{L^p(\R^n)}
&\leq C_p
	\sup_{\arr \Psi}
	\frac{\abs{\langle \arr \Psi, \nabla^m\partial_\dmn\s^L\arr g\rangle_{\R^\dmn}}
	}{\doublebar{\mathcal{A}_2^*\arr \Psi}_{L^{p'}(\R^n)}}
\\&= C_p
	\sup_{\arr \Psi}
	\frac{\abs{\langle \Tr_{m-1} \partial_\dmn\Pi^{L^*}\arr\Psi,\arr g\rangle_{\R^n}}
	}{\doublebar{\mathcal{A}_2^*\arr \Psi}_{L^{p'}(\R^n)}}
.\end{align*}
We may take $\arr\Psi$ to be supported away from $\partial\R^\dmn_\pm$. By Lemma~\ref{lem:slices}, $\Tr_m\Pi^{L^*}\arr\Psi$ exists in the sense of formula~\eqref{eqn:Dirichlet}, and so by Lemma~\ref{lem:fatou:N}, 
\begin{align*}
\doublebar{\mathcal{A}_2^*(t\nabla^m\partial_t\s^L\arr g)}_{L^p(\R^n)}
&\leq
	C_p\sup_{\arr \Psi}
	\frac{\doublebar{\widetilde N_-( \nabla^{m-1}\partial_\dmn\Pi^{L^*}(\1_+\arr\Psi))}_{L^{p'}(\R^n)}
	\doublebar{\arr g}_{L^p(\R^n)}
	}{\doublebar{\mathcal{A}_2^+\arr \Psi}_{L^{p'}(\R^n)}}
.\end{align*}
By the bound~\eqref{eqn:newton:N:regular} with $j=1$, if $p^-_{1,L^*}<p<p_{1,L}^+$ and if $\arr g\in \dot B^{-1/2,2}_2(\R^n)\cap L^p(\R^n)$, then
\begin{align}
\label{eqn:S:lusin:-}
\doublebar{\mathcal{A}_2^*(t\nabla^m\partial_t\s^L\arr g)}_{L^p(\R^n)}
&\leq
	\widetilde C_p\doublebar{\arr g}_{L^p(\R^n)}
	, & p^-_{1,L^*}<p<p_{1,L}^+
.\end{align}
By density, the bound~\eqref{eqn:S:lusin:intro} is valid.

\subsubsection*{The estimate~\eqref{eqn:D:lusin:intro}}
By the bound~\eqref{eqn:CoiMS85} and formula~\eqref{eqn:D:Newton:dual:vertical}, if $\arr\varphi\in \dot W\!A^{1/2,2}_{m-1}(\R^n)$, then
\begin{align*}
\doublebar{\mathcal{A}_2^+(t\nabla^m\partial_t\D^{\mat A}\arr \varphi)}_{L^p(\R^n)}
&\leq C_p
	\sup_{\arr \Psi}
	\frac{\abs{\langle \arr \Psi, \nabla^m\partial_t\D^{\mat A}\arr \varphi\rangle_{\R^\dmn_+}}
	}{\doublebar{\mathcal{A}_2^+\arr \Psi}_{L^{p'}(\R^n)}}
\\&= C_p
	\sup_{\arr \Psi}
	\frac{\abs{\langle \M_{\mat A^*}^- \partial_\dmn\Pi^{L^*}(\1_+\arr\Psi),\arr \varphi\rangle_{\R^n} }
	}{\doublebar{\mathcal{A}_2^+\arr \Psi}_{L^{p'}(\R^n)}}
.\end{align*}
By 
Theorem~\ref{thm:BHM17pB:rough:neumann} and the bound~\eqref{eqn:newton:lusin:regular}, if $p_{1,L^*}^-<p<p_{1,L}^+$ and $\arr\varphi=\Tr_{m-1}^+\Phi$ for some $\Phi\in C^\infty_0(\R^\dmn)$, then
\begin{align*}
\doublebar{\mathcal{A}_2^+(t\nabla^m\partial_t\D^{\mat A}\arr \varphi)}_{L^p(\R^n)}
&\leq C_p
	\sup_{\arr \Psi}
	\frac{\doublebar{\mathcal{A}_2^-(t\nabla^m \partial_t\Pi^{L^*}(\1_+\arr\Psi))}_{L^{p'}(\R^n)} \doublebar{\arr\varphi}_{\dot W\!A^{1,p}_{m-1}(\R^n)}
	}{\doublebar{\mathcal{A}_2^+\arr \Psi}_{L^{p'}(\R^n)}}
\\&\leq C_p\doublebar{\arr\varphi}_{\dot W\!A^{1,p}_{m-1}(\R^n)}
.\end{align*}
As before, we may use density arguments and Section~\ref{sec:lower} to complete the proof of the bound~\eqref{eqn:D:lusin:intro}.

\subsubsection*{The estimate~\eqref{eqn:S:lusin:rough:intro}}
By the bound~\eqref{eqn:CoiMS85}, formula~\eqref{eqn:S:Newton:rough:dual}, and  Lemma~\ref{lem:fatou:N}, if $1<p<\infty$ and $\arr h\in L^2(\R^n)\cap L^p(\R^n)$, then
\begin{align*}
\doublebar{\mathcal{A}_2^*(t\nabla^m\s^L_\nabla\arr h)}_{L^p(\R^n)}
&\leq C_p
	\sup_{\arr \Psi}
	\frac{\abs{\langle \arr \Psi, \nabla^m\s^L_\nabla\arr h\rangle_{\R^\dmn}}
	}{\doublebar{\mathcal{A}_2^*\arr \Psi}_{L^{p'}(\R^n)}}
= C_p
	\sup_{\arr \Psi}
	\frac{\langle \Tr_m \Pi^{L^*}\arr\Psi,\arr h\rangle_{\R^n}
	}{\doublebar{\mathcal{A}_2^*\arr \Psi}_{L^{p'}(\R^n)}}
\\&\leq C_p
	\sup_{\arr \Psi}
	\frac{
	\doublebar{\widetilde N_*(\nabla^m\Pi^{L^*}\arr\Psi)}_{L^{p'}(\R^n)} \doublebar{\arr h}_{L^p(\R^n)}
	}{\doublebar{\mathcal{A}_2^*\arr \Psi}_{L^{p'}(\R^n)}}
.\end{align*}
By density and the bound~\eqref{eqn:newton:N:regular} with $j=0$, if $p_{0,L^*}^-<p<p_{1,L}^+$ then
\begin{align}
\label{eqn:S:lusin:rough:-}
\doublebar{\mathcal{A}_2^*(t\nabla^m\s^L_\nabla\arr h)}_{L^p(\R^n)}
&\leq \widetilde C_p \doublebar{\arr h}_{L^p(\R^n)}
,&p_{0,L^*}^-<p<p_{1,L}^+
.\end{align}
Thus, the bound~\eqref{eqn:S:lusin:rough:intro} is valid.

\subsubsection*{The estimate~\eqref{eqn:D:lusin:rough:intro}}
By the bound~\eqref{eqn:CoiMS85} and formula~\eqref{eqn:D:Newton:dual}, if $1<p<\infty$ and $\arr f\in \dot W\!A^{1/2,2}_{m-1}(\R^n)$ then
\begin{align*}
\doublebar{\mathcal{A}_2^+(t\nabla^m\D^{\mat A}\arr f)}_{L^p(\R^n)}
&\approx
	\sup_{\arr \Psi}
	\frac{\abs{\langle \arr \Psi, \nabla^m\D^{\mat A}\arr f\rangle_{\R^\dmn_+}}
	}{\doublebar{\mathcal{A}_2^+\arr \Psi}_{L^{p'}(\R^n)}}
=
	\sup_{\arr \Psi}
	\frac{\abs{\langle \M_{\mat A^*}^- \Pi^{L^*}(\1_+\arr\Psi),\arr f\rangle_{\R^n} }
	}{\doublebar{\mathcal{A}_2^+\arr \Psi}_{L^{p'}(\R^n)}}
.\end{align*}
By Theorem~\ref{thm:BHM17pB:regular:neumann}, if $\arr f=\Tr_{m-1}F$ for some $F\in C^\infty_0(\R^\dmn)$ then
\begin{multline*}{\abs{\langle \M_{\mat A^*}^- \Pi^{L^*}(\1_+\arr\Psi),\arr f\rangle_{\R^n} }
	}
\\\leq C_p {
	\doublebar{\widetilde N_-(\nabla^m\Pi^{L^*}(\1_+\arr\Psi)) + \mathcal{A}_2^-(t\nabla^m\partial_t\Pi^{L^*}(\1_+\arr\Psi))}_{L^{p'}(\R^n)}
	\doublebar{\arr f}_{\dot W\!A^{0,p}_{m-1}(\R^n)}}\end{multline*}
provided the right hand side is finite.
Thus, by the bounds~\eqref{eqn:newton:N:regular} and~\eqref{eqn:newton:lusin:regular}, if $p_{0,L^*}^-<p<p_{1,L}^+$ then
\begin{equation*}
\doublebar{\mathcal{A}_2^+(t\nabla^m\D^{\mat A}\arr f)}_{L^p(\R^n)}
\leq C\doublebar{\arr f}_{\dot W\!A^{0,p}_{m-1}(\R^n)}
.\end{equation*}
By density and by Section~\ref{sec:lower}, we have that the bound~\eqref{eqn:D:lusin:rough:intro} is valid.

\subsection{Further Fatou type theorems}
\label{sec:fatou:dirichlet}

In order to establish the bounds~\cref{eqn:S:N:rough:intro,eqn:D:N:rough:intro}, we will need further Fatou type theorems.

The Fatou theorems \cite[Theorem~5.1 and~5.2]{BarHM19B} contain a technical assumption involving the single layer potential. As observed in \cite[Remark~5.3]{BarHM19B}, this technical assumption is true if $p\geq 2$; given that the bounds~\cref{eqn:S:lusin:intro,eqn:S:lusin:rough:intro} have been established (see the bounds~\cref{eqn:S:lusin:-,eqn:S:lusin:rough:-} above), we have that this technical assumption is true for a broader range of~$p$. Thus, we will now restate the parts of {\cite[Theorems~5.1, 5.2 and 6.2]{BarHM19B}} necessary for the proofs of the bounds~\cref{eqn:S:N:rough:intro,eqn:D:N:rough:intro}. As in Section~\ref{sec:fatou:neumann}, we have interchanged the roles of $L$ and $L^*$, $p$ and $p'$, and $\R^\dmn_+$ and $\R^\dmn_-$ relative to their roles in \cite{BarHM19B}.

In \cite{BarHM19B}, $p_j^+$ is defined as $p_j^+=\min(p_{j,L}^+,p_{j,L^*}^+)$; however, a careful examination of the proofs in \cite{BarHM19B} yields that the results are valid for $p_{j,L}^\pm$ and $p_{j,L^*}^\pm$  as indicated below.

\begin{thm}[{\cite[Theorem~5.1]{BarHM19B}}]
\label{thm:BHM17pB:rough:dirichlet}
Let $L$ be an operator of order~$2m$ of the form~\eqref{eqn:weak} associated to bounded $t$-independent coefficients~$\mat A$ that satisfy the ellipticity condition~\eqref{eqn:elliptic}.

Let $p_{1,L^*}^-<p<\infty$ and let $1/p+1/p'=1$.
Let $v$ satisfy $\mathcal{A}_2^-(t\nabla^m v)\in L^{p'}(\R^n)$ and $L^*v=0$ in $\R^\dmn_-$. If $p<2$, suppose further that $v\in \dot W^{m,2}(\R^n\times(-\infty,-\sigma))$ for all $\sigma>0$, albeit possibly with norm that approaches $\infty$ as $\sigma\to 0^+$.

Then $\Tr_{m-1}^- v$ exists in the sense of formula~\eqref{eqn:trace}, and there is some constant array $\arr c$ such that
\begin{equation*}\doublebar{\Tr_{m-1}^- v-\arr c}_{L^{p'}(\R^n)}\leq C(1,L^*,p')\doublebar{\mathcal{A}_2^-(t\nabla^m v)}_{L^{p'}(\R^n)}.\end{equation*}
\end{thm}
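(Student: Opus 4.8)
The statement to prove is Theorem~\ref{thm:BHM17pB:rough:dirichlet}, which is quoted from \cite[Theorem~5.1]{BarHM19B}. Since it is quoted verbatim from a previous paper, the proof here should explain how the statement follows from that reference together with the new ingredients just established, rather than reproving it from scratch.

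\begin{proof}[Proof sketch]
The plan is to invoke \cite[Theorem~5.1]{BarHM19B} and explain why the technical hypothesis appearing there is satisfied in the present range of~$p$. That theorem was stated with an additional assumption on the single layer potential $\s^{L^*}$ (specifically, a bound of the form $\doublebar{\mathcal{A}_2^-(t\nabla^m\partial_t\s^{L^*}\arr g)}_{L^{p'}}\lesssim\doublebar{\arr g}_{L^{p'}}$, or an analogous estimate on the rough single layer potential). As noted in \cite[Remark~5.3]{BarHM19B}, this hypothesis is automatic when $p\geq 2$; what we must do is observe that the bounds~\eqref{eqn:S:lusin:-} and~\eqref{eqn:S:lusin:rough:-} established in Section~\ref{sec:D:S:regular} above provide exactly this estimate for the wider range $p_{1,L^*}^-<p<p_{1,L}^+$, which in particular covers all $p$ with $p_{1,L^*}^-<p<\infty$ once one recalls that $p_{1,L}^+$ can be taken larger than~$2$ and that for $p\geq2$ the hypothesis was already known. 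First I would state that with these bounds in hand, the proof of \cite[Theorem~5.1]{BarHM19B} goes through unchanged.

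Next I would address the remark made in the paragraph preceding the theorem statement: in \cite{BarHM19B} the relevant numbers were written as $p_1^+=\min(p_{1,L}^+,p_{1,L^*}^+)$, whereas here we claim validity for the asymmetric range governed only by $p_{1,L^*}^-$ on the left and (implicitly) the appropriate Meyers exponent on the right. The point is that the only place the solution regularity of $v$ (a solution of $L^*v=0$) enters is through interior estimates for $L^*$, so only $p_{j,L^*}^\pm$ is relevant for $v$; the companion exponents $p_{j,L}^\pm$ enter only through auxiliary estimates on objects built from $L$ (such as the single layer potential $\s^{L^*}$ whose inputs are paired against Dirichlet data), and those were supplied above in the sharper form. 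I would simply point to the proof in \cite{BarHM19B} and note that it respects this distinction.

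The main steps in order: (i) recall the conclusion and hypotheses of \cite[Theorem~5.1]{BarHM19B}, isolating the technical single-layer assumption; (ii) verify that assumption using~\eqref{eqn:S:lusin:-} and~\eqref{eqn:S:lusin:rough:-} for $p_{1,L^*}^-<p<2$, and using \cite[Remark~5.3]{BarHM19B} for $2\leq p<\infty$; (iii) observe, by inspecting the argument of \cite{BarHM19B}, that the exponent restrictions may be read off as stated, with only $p_{1,L^*}^-$ constraining the left endpoint; (iv) note that the condition $v\in\dot W^{m,2}(\R^n\times(-\infty,-\sigma))$ for $p<2$ is exactly the one imposed, and for $p\geq2$ follows from Lemma~\ref{lem:lebesgue:lusin}, so no further work is needed; (v) conclude. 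The one genuinely delicate point — the ``main obstacle'' — is step~(iii): one must be confident that the proof in \cite{BarHM19B} never secretly uses $p_{1,L}^+$ as an upper bound for the exponent of~$v$ itself, but only ever as an upper bound for exponents attached to $L$-objects. I would therefore phrase this as ``a careful examination of the proofs in \cite{BarHM19B} yields'' the asymmetric ranges, which is precisely the hedge already used in the surrounding text.
\end{proof}
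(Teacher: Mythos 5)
Your proposal is correct and follows essentially the same route as the paper: the paper likewise just quotes \cite[Theorem~5.1]{BarHM19B}, observes that its technical hypothesis involving the single layer potential is automatic for $p\geq 2$ by \cite[Remark~5.3]{BarHM19B} and is supplied for $p_{1,L^*}^-<p<2$ by the newly established bounds \eqref{eqn:S:lusin:-} and \eqref{eqn:S:lusin:rough:-}, and justifies the asymmetric exponent range by ``a careful examination of the proofs'' in \cite{BarHM19B}. The only (cosmetic) slip is that, since the roles of $L,L^*$ and $p,p'$ are interchanged relative to \cite{BarHM19B}, the estimates actually invoked are those on $\s^{L}$ and $\s^{L}_\nabla$ in $L^{p}$, exactly as in \eqref{eqn:S:lusin:-}--\eqref{eqn:S:lusin:rough:-}, rather than on $\s^{L^*}$ in $L^{p'}$ as you wrote.
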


\begin{thm}[{\cite[Theorems~5.2 and~6.2]{BarHM19B}}]
\label{thm:BHM17pB:regular:dirichlet}
Let $L$ be an operator of order $2m$ of the form~\eqref{eqn:weak} associated to bounded $t$-independent coefficients~$\mat A$ that satisfy the ellipticity condition~\eqref{eqn:elliptic}.

Let $p_{0,L^*}^-<p<\infty$ and let $1/p+1/p'=1$.
Let $w\in \dot W^{m,2}_{loc}(\R^\dmn_-)$ satisfy 
$L^*w=0$ in $\R^\dmn_-$ and $\mathcal{A}_2^-(t\nabla^m \partial_t w)\in L^{p'}(\R^n)$.
If $p<2$, we impose the additional condition that   $\partial_\dmn w\in \dot W^{m,2}(\R^n\times(-\infty,\sigma))$ for all $\sigma>0$. 

If there is some $t<0$ such that $\nabla^m w(\,\cdot\,,t)\in L^{p'}(\R^n)$, then $\Tr_m^- w$ exists in the sense of formula~\eqref{eqn:trace} and satisfies 
\begin{equation*}
\doublebar{\Tr_{m}^- w}_{L^{p'}(\R^n)}
\leq C(0,L^*,p')\doublebar{\mathcal{A}_2^-(t\nabla^m \partial_t w)}_{L^{p'}(\R^n)}
.\end{equation*}
We also have the uniform bound
\begin{equation*}
\sup_{t>0} \doublebar{\nabla^m w(\,\cdot\,,t)}_{L^{p'}(\R^n)}
\leq C(0,L^*,p')\doublebar{\mathcal{A}_2^-(t\nabla^m \partial_t w)}_{L^{p'}(\R^n)}
\end{equation*}
and the limits
\begin{equation*}
\lim_{t\to\infty} \doublebar{\nabla^m w(\,\cdot\,,t)}_{L^{p'}(\R^n)}
=
\lim_{t\to 0^+} \doublebar{\nabla^m w(\,\cdot\,,t)-\Tr_{m}^- w}_{L^{p'}(\R^n)} 
= 0
.\end{equation*}
Finally, we have that $\M_{\mat A^*}^- w$ exists in the sense of formula~\eqref{eqn:neumann:intro}, and that
\begin{equation*}
\abs{\langle \Tr_{m-1}\varphi,\M_{\mat A^*}^- w\rangle_{\R^n}}
\leq C(0,L^*,p') \doublebar{\Tr_{m-1}\varphi}_{\dot W\!A^{0,p}_{m-1}(\R^n)} \doublebar{\mathcal{A}_2^-(t\nabla^m \partial_t w)}_{L^{p'}(\R^n)}
\end{equation*}
for every $\varphi\in C^\infty_0(\R^\dmn)$.
\end{thm}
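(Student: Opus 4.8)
The plan is to obtain Theorem~\ref{thm:BHM17pB:regular:dirichlet} as a restatement of \cite[Theorems~5.2 and~6.2]{BarHM19B}, once two technical points are disposed of. As proven in \cite{BarHM19B}, those theorems rest on an a priori hypothesis: that a suitable single layer potential for the operator under consideration obeys an area integral estimate at the exponent at hand. By \cite[Remark~5.3]{BarHM19B} this hypothesis is automatic once the relevant Lebesgue exponent is at least~$2$, so in our setting it remains only to supply it in the range $p_{0,L^*}^-<p<2$; but there $p<2<p_{1,L}^+$, so the hypothesis is exactly the bound~\eqref{eqn:S:lusin:rough:-} (equivalently the bound~\eqref{eqn:S:lusin:rough:intro} of Theorem~\ref{thm:potentials}), which was established in Section~\ref{sec:D:S:regular}. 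Thus the hypothesis holds for all $p$ with $p_{0,L^*}^-<p<\infty$, and the conclusions of \cite[Theorems~5.2 and~6.2]{BarHM19B}---the existence of $\Tr_m^- w$ and of $\M_{\mat A^*}^- w$, the estimate on $\doublebar{\Tr_m^- w}_{L^{p'}(\R^n)}$, the uniform bound and the two limits for $\doublebar{\nabla^m w(\,\cdot\,,t)}_{L^{p'}(\R^n)}$, and the pairing estimate against $\Tr_{m-1}\varphi$---carry over.

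The second point is the range of the exponent. The theorems of \cite{BarHM19B} are phrased with the threshold $p_j^+=\min(p_{j,L}^+,p_{j,L^*}^+)$, but, as noted in the paragraph preceding Theorem~\ref{thm:BHM17pB:rough:dirichlet}, a re-reading of their proofs shows that the only regularity of solutions used is interior regularity of solutions of the single equation $L^* u=0$ in $\R^\dmn_-$: Caccioppoli's inequality (Lemma~\ref{lem:Caccioppoli}), the slice estimates of Lemma~\ref{lem:slices}, and the self-improvement of the Meyers bound~\eqref{eqn:Meyers}. Hence the governing threshold is $p_{j,L^*}^+$ alone, and since $p_{j,L^*}^+$ may be~$\infty$ the upper restriction on $p$ disappears, leaving $p_{0,L^*}^-<p<\infty$ as stated. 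Finally, the interchange of the roles of $L$ and $L^*$, of $p$ and $p'$, and of the upper and lower half spaces relative to \cite{BarHM19B} is justified by the change-of-variables relations of Section~\ref{sec:lower}: the map $\arr f\to\arr f^-$ is an automorphism of every Whitney space of Definition~\ref{dfn:Whitney}, and Neumann boundary values transform according to~\eqref{eqn:neumann:lower}.

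The main obstacle is not a new estimate but the verification that the arguments of \cite[Theorems~5.1, 5.2 and~6.2]{BarHM19B} are genuinely compatible with both relaxations: that the single layer hypothesis is invoked there only as a black box, so that replacing the regime ``$p\geq 2$'' by ``$p>2$, or $p<2$ together with the bound~\eqref{eqn:S:lusin:rough:-}'' costs nothing, and that no estimate on $L$-solutions (as opposed to $L^*$-solutions) is ever used in deriving the Dirichlet trace and the Neumann pairing estimates. Granting this, Theorem~\ref{thm:BHM17pB:regular:dirichlet} follows with no further work, since each of its conclusions is one of the conclusions of the cited theorems.
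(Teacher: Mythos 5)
Your proposal matches the paper's treatment: the paper likewise obtains this theorem directly from \cite[Theorems~5.2 and~6.2]{BarHM19B}, supplying the single-layer technical hypothesis for $p_{0,L^*}^-<p<2$ via the bound~\eqref{eqn:S:lusin:rough:-} (it being automatic for $p\geq 2$ by \cite[Remark~5.3]{BarHM19B}), refining the exponent threshold from $\min(p_{j,L}^+,p_{j,L^*}^+)$ to $p_{0,L^*}^+$ by inspection of the cited proofs, and interchanging $L$ with $L^*$, $p$ with $p'$, and the two half-spaces as in Section~\ref{sec:lower}. There is no gap beyond what the paper itself delegates to the cited reference.
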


\subsection{The bounds (\ref{eqn:S:N:rough:intro}) and~(\ref{eqn:D:N:rough:intro})}
\label{sec:D:S:rough}

In this section we will complete the proof of Theorem~\ref{thm:potentials} by establishing the bounds \cref{eqn:S:N:rough:intro,eqn:D:N:rough:intro}. As in Section~\ref{sec:D:S:regular}, throughout this section we will let $L$ and $\mat A$ be as in Theorem~\ref{thm:potentials}.

We begin with the bound~\eqref{eqn:S:N:rough:intro}.
Let $\arr h\in L^2(\R^n)\cap L^p(\R^n)$ for some $p$ with $p_{0,L^*}^-<p<2$.
By the bound~\eqref{eqn:S:N:rough:+} with $p=2$, we may apply  Lemma~\ref{lem:N<C:vertical} with $u=\s^L_\nabla\arr h$; by Lemma~\ref{lem:N<C:vertical} and formula~\eqref{eqn:S:Newton:rough:dual},
\begin{align*}
\doublebar{\widetilde N_+(\nabla^{m-1}\s^L_\nabla\arr h)}_{L^p(\R^n)}
&\leq C_p\sup_{\arr \Psi}
\frac{\abs{\langle \arr \Psi,\nabla^{m}\s^L_\nabla\arr h\rangle_{\R^\dmn_+}}} {\doublebar{\widetilde{\mathfrak{C}}_1^+(t\,\partial_t\arr\Psi)} _ {L^{p'}(\R^n)}}
\\&= C_p\sup_{\arr \Psi}
\frac{\abs{\langle \Tr_m^- \Pi^{L^*}(\1_+\arr\Psi),\arr h\rangle_{\R^n}}} {\doublebar{\widetilde{\mathfrak{C}}_1^+(t\,\partial_t\arr\Psi)} _ {L^{p'}(\R^n)}}
\end{align*}
where the supremum is over all $\arr\Psi\in L^2(\R^\dmn_+)$ that are supported in a compact subset of $\R^\dmn_+$ and have a weak vertical derivative in $L^2(\R^\dmn_+)$.

By definition of the Newton potential and the Caccioppoli inequality, we have that $\partial_\dmn\Pi^{L^*}(\1_+\arr\Psi)\in \dot W^{m,2}(\R^\dmn_-)$. By Lemma~\ref{lem:slices} and the bound~\eqref{eqn:newton:N:regular}, we have that $\nabla^m \Pi^{L^*}(\1_+\arr\Psi)(\,\cdot\,,t)\in L^{p'}(\R^n)$ for any (hence some) $t<0$. Thus, we may apply Theorem~\ref{thm:BHM17pB:regular:dirichlet} with $w=\Pi^{L^*}(\1_+\arr\Psi)$ and see that
\begin{align*}
{\abs{\langle \Tr_m^- \Pi^{L^*}(\1_+\arr\Psi),\arr h\rangle_{\R^n}}}&\leq 
	C(0,L^*,p')
{\doublebar{\mathcal{A}_2^-(t\nabla^m \partial_t \Pi^{L^*}(\1_+\arr\Psi))}_{L^{p'}(\R^n)} \doublebar{\arr h}_{L^p(\R^n)}} 
.\end{align*}
By formula~\eqref{eqn:Newton:vertical} and  the bound~\eqref{eqn:newton:lusin:rough},
\begin{align*}
\doublebar{\mathcal{A}_2^-(t\nabla^m \partial_t \Pi^{L^*}(\1_+\arr\Psi))}_{L^{p'}(\R^n)}
&=
	\doublebar{\mathcal{A}_2^-(t\nabla^m \Pi^{L^*}(\1_+\partial_t \arr\Psi))}_{L^{p'}(\R^n)}
\\&\leq C(1,L^*,p'){\doublebar{\widetilde{\mathfrak{C}}_1^+(t\,\partial_t\arr\Psi)} _ {L^{p'}(\R^n)}}
.\end{align*}
Thus, if $p_{0,L^*}^-<p<2$, then
\begin{align*}
\doublebar{\widetilde N_+(\nabla^{m-1}\s^L_\nabla\arr h)}_{L^p(\R^n)}
&\leq C(0,L^*,p')\doublebar{\arr h}_{L^p(\R^n)} 
.\end{align*}
By density and Section~\ref{sec:lower}, the bound~\eqref{eqn:S:N:rough:intro} is valid.

Similarly, let $\arr f=\Tr_{m-1} F$ for some $F\in C^\infty_0(\R^\dmn)$. By the bound~\eqref{eqn:D:N:rough:+} with $p=2$,  Lemma~\ref{lem:N<C:vertical}, and formula~\eqref{eqn:D:Newton:dual},  if $1<p<2$ then
\begin{align*}
\doublebar{\widetilde N_+(\nabla^{m-1}\D^{\mat A}\arr f)}_{L^p(\R^n)}
&\leq 
	C_p\sup_{\arr \Psi}
	\frac{\abs{\langle \arr \Psi,\nabla^{m}\D^{\mat A}\arr f\rangle_{\R^\dmn_+}}} {\doublebar{\widetilde{\mathfrak{C}}_1^+(t\,\partial_t\arr\Psi)} _ {L^{p'}(\R^n)}}
\\&=
	C_p\sup_{\arr \Psi}
	\frac{\abs{\langle \M_{\mat A^*}^-\Pi^{L^*}(\1_+\arr \Psi),\arr f\rangle_{\R^n}}} {\doublebar{\widetilde{\mathfrak{C}}_1^+(t\,\partial_t\arr\Psi)} _ {L^{p'}(\R^n)}}
.\end{align*}
By Theorem~\ref{thm:BHM17pB:regular:dirichlet}, formula~\eqref{eqn:Newton:vertical} and the bound~\eqref{eqn:newton:lusin:rough}, if $p_{0,L^*}^-<p<2$ then
\begin{align*}
\abs{\langle \M_{\mat A^*}^-\Pi^{L^*}(\1_+\arr \Psi),\arr f\rangle_{\R^n}}
&\leq 
	C(0,L^*,p')
	\doublebar{\mathcal{A}_2^-(t\nabla^m\partial_t \Pi^{L^*}(\1_+\arr\Psi))}_{L^{p'}(\R^n)} \doublebar{\arr f}_{L^p(\R^n)}
\\&\leq 
	C(0,L^*,p')
	\doublebar{\widetilde{\mathfrak{C}}_1^+(t\,\partial_t\arr\Psi)} _{L^{p'}(\R^n)} \doublebar{\arr f}_{L^p(\R^n)}
.\end{align*}
By density and Section~\ref{sec:lower}, the bound~\eqref{eqn:D:N:rough:intro} is valid. This completes the proof of Theorem~\ref{thm:potentials}.

\section{The Green's formula}
\label{sec:green}

A useful tool in the theory of higher order equations, and one of the reasons layer potentials are of interest, is the Green's formula
\begin{equation}
\label{eqn:green}
\1_+ \nabla^m u=-\nabla^m \D^{\mat A}(\Tr_{m-1}^+  u) + \nabla^m \s^L(\M_{\mat A}^+  u) 
.\end{equation}
This formula is valid for all $u\in \dot W^{m,2}(\R^\dmn_+)$ that satisfy $Lu=0$ in $\R^\dmn_+$. See \cite[Lemma~5.2]{Bar17} or \cite[formula~(2.26)]{BarHM17}. It is also valid if $Lu=0$ in $\R^\dmn_+$, $\mathcal{A}_2^+(t\nabla^m \partial_t u)\in L^2(\R^n)$ and $\nabla^m u(\,\cdot\,,t)\in L^2(\R^n)$ for some $t>0$; see \cite[Theorem~4.3]{BarHM18}. This Green's formula was used in \cite{BarHM18} to establish uniqueness of solutions to the $L^2$ Neumann problem~\eqref{eqn:neumann:regular:2}; the corresponding formula in the lower half space was used to prove Lemma~\ref{lem:lusin:regular:2} above.

In this section, we will show that the Green's formula is still valid if $Lu=0$ in $\R^\dmn_+$, $\mathcal{A}_2^+(t\nabla^m \partial_t u)\in L^p(\R^n)$ and $\sup_{t>0}\doublebar{\nabla^m u(\,\cdot\,,t)}_{L^p(\R^n)}<\infty$ for some $p$ with $p_{1,L^*}^-<p\leq 2$. The Green's formula for such solutions will be used in Section~\ref{sec:neumann} to establish uniqueness of solutions to the Neumann problem~\eqref{eqn:neumann:regular:p:selfadjoint}.

We will begin with some useful auxiliary lemmas. Specifically, recall from Theorem~\ref{thm:BHM17pB:regular:dirichlet} that $\nabla^m w(\,\cdot\,,t)\to \Tr_m w$ as $t\to 0^+$ and $\nabla^m w(\,\cdot\,,t)\to 0$ as $t\to \infty$. We wish to prove a similar result for Neumann boundary values. Our argument will follow the proof of \cite[Lemma~4.2]{BarHM18}.

\begin{lem}\label{lem:lusin:translate}
Let $L$ be an operator of the form~\eqref{eqn:weak} of order~$2m$ associated to bounded coefficients~$\mat A$ that satisfy the ellipticity condition~\eqref{eqn:elliptic}.

Let $p$ satisfy $0<p\leq 2$. Let $j$ be an integer with $0\leq j\leq m$. Let $u\in \dot W^{m,2}_{loc}(\R^\dmn_+)$ be such that $Lu=0$ in $\R^\dmn_+$ and $\mathcal{A}_2^+(t\nabla^j u)\in L^p(\R^n)$. Define $u_\varepsilon(x,t)=u(x,t+\varepsilon)$. If $\varepsilon>0$, then
\begin{equation*}
\doublebar{\mathcal{A}_2^+ (t\nabla^j u_\varepsilon)}_{L^p(\R^n)}
\leq C \doublebar{\mathcal{A}_2^+ (t\nabla^j u)}_{L^p(\R^n)},
\end{equation*}
and
\begin{equation*}
\lim_{\varepsilon\to 0^+} \doublebar{\mathcal{A}_2^+ (t\nabla^j (u-u_\varepsilon))}_{L^p(\R^n)} = \lim_{T\to \infty} \doublebar{\mathcal{A}_2^+ (t\nabla^j u_T)}_{L^p(\R^n)} = 0.
\end{equation*}
\end{lem}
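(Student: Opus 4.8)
The statement has the form of a standard ``translations are bounded and converge'' fact for area integral operators acting on solutions, and the natural route is the one used in \cite[Lemma~4.2]{BarHM18}: realize the translated area integral as an area integral over a shifted half space, and then compare. Concretely, the plan is to write, for $\varepsilon>0$,
\begin{equation*}
\mathcal{A}_2^+(t\nabla^j u_\varepsilon)(x)^2
= \int_0^\infty\int_{\abs{x-y}<t} \abs{\nabla^j u(y,t+\varepsilon)}^2\,t^{2}\,\frac{dy\,dt}{t^\dmn}
\leq \int_\varepsilon^\infty\int_{\abs{x-y}<s} \abs{\nabla^j u(y,s)}^2\,s^{2}\,\frac{dy\,ds}{s^\dmn},
\end{equation*}
after the change of variables $s=t+\varepsilon$ and the observation that $\abs{x-y}<t<s$ together with $s-\varepsilon<s$ makes the domain of the second integral larger (here one uses $t\le s$ inside the weight $t^2\le s^2$ as well). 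This gives the pointwise bound $\mathcal{A}_2^+(t\nabla^j u_\varepsilon)(x)\le \mathcal{A}_2^+(t\nabla^j u)(x)$, which is stronger than the claimed $L^p$ inequality with $C=1$; the analogous estimate holds for $u_T$, giving the first display. (If one prefers to keep track of the cone aperture precisely, one instead dominates the translated cone by a slightly wider cone and invokes the standard equivalence of area integrals with different apertures, at the cost of an absolute constant $C$ depending only on $\dmn$; either way the first inequality is routine.)

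For the limits, the plan is to use the pointwise domination just obtained together with the dominated convergence theorem. Set $G_\varepsilon(x)=\mathcal{A}_2^+(t\nabla^j(u-u_\varepsilon))(x)$. Since $u\in\dot W^{m,2}_{loc}(\R^\dmn_+)$ and (by the Caccioppoli inequality, Lemma~\ref{lem:Caccioppoli}, applied on Whitney balls) $\nabla^j u$ is controlled by $\nabla^{j+1}u$, the integrand $\abs{\nabla^j(u-u_\varepsilon)(y,t)}^2 t^{2-\dmn}\1_{\abs{x-y}<t}$ tends to $0$ pointwise as $\varepsilon\to0^+$ for a.e.\ $(y,t)$, because translation is continuous in $L^2_{loc}$; hence $G_\varepsilon(x)\to0$ for each fixed $x$. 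The triangle inequality for the $\ell^2$-type norm defining $\mathcal{A}_2^+$ gives $G_\varepsilon(x)\le \mathcal{A}_2^+(t\nabla^j u)(x)+\mathcal{A}_2^+(t\nabla^j u_\varepsilon)(x)\le 2\,\mathcal{A}_2^+(t\nabla^j u)(x)$ by the first part, and $\mathcal{A}_2^+(t\nabla^j u)\in L^p(\R^n)$ by hypothesis; so dominated convergence in $L^p$ yields $\doublebar{G_\varepsilon}_{L^p}\to0$. The same argument handles $\doublebar{\mathcal{A}_2^+(t\nabla^j u_T)}_{L^p}\to0$ as $T\to\infty$: here $\mathcal{A}_2^+(t\nabla^j u_T)(x)^2\le \int_T^\infty\int_{\abs{x-y}<s}\abs{\nabla^j u(y,s)}^2 s^{2-\dmn}\,dy\,ds\to0$ pointwise as $T\to\infty$ by monotone convergence (the tail of a convergent integral), and it is dominated by $\mathcal{A}_2^+(t\nabla^j u)(x)^2$; dominated convergence in $L^{p/2}$ (equivalently, $L^p$ for the square root) finishes it.

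\textbf{Main obstacle.} Nothing here is deep; the one point requiring a little care is the pointwise convergence $G_\varepsilon(x)\to0$, i.e.\ passing from ``$u(\cdot,\cdot+\varepsilon)\to u$ in $L^2_{loc}(\R^\dmn_+)$'' to ``$\nabla^j u(\cdot,\cdot+\varepsilon)\to\nabla^j u$ inside a.e.\ cone, with the weight $t^{2-\dmn}$''. The cleanest way to justify this is to fix $x$ and note that the relevant cone truncated away from the boundary, $\{(y,t):\abs{x-y}<t,\ \delta<t<1/\delta\}$, is a compact subset of $\R^\dmn_+$ on which $\nabla^j u$ and its translates converge in $L^2$ by continuity of translation; the contributions from $0<t<\delta$ and $t>1/\delta$ are uniformly small (in the pointwise-in-$x$ sense) because they are tails of the convergent integral defining $\mathcal{A}_2^+(t\nabla^j u)(x)^2$, which is finite for a.e.\ $x$. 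One should also record that it suffices to prove everything for a.e.\ $x\in\R^n$, since that is all that is needed for the $L^p$ statements. Finally, if one wants $C=1$ in the first display this follows from the strict pointwise domination above; if the cone-aperture bookkeeping is done via the aperture-change lemma instead, one simply absorbs the resulting dimensional constant into $C$, consistent with the statement as written.
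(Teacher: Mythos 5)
There is a genuine gap, and it is at the very first step. The claimed pointwise domination $\mathcal{A}_2^+(t\nabla^j u_\varepsilon)(x)\le \mathcal{A}_2^+(t\nabla^j u)(x)$ is false when $n\ge 2$. Writing out the definition, $\mathcal{A}_2^+(t\nabla^j u_\varepsilon)(x)^2=\int_0^\infty\int_{\abs{x-y}<t}\abs{\nabla^j u(y,t+\varepsilon)}^2\,t^{1-n}\,dy\,dt$, and the substitution $s=t+\varepsilon$ turns the weight into $(s-\varepsilon)^{1-n}$, which is \emph{larger} than $s^{1-n}$ (the exponent $1-n$ is nonpositive), with the discrepancy blowing up as $s\downarrow\varepsilon$; your remark ``$t\le s$ inside the weight'' has the inequality going the wrong way. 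The shrinking of the cone from $\{\abs{x-y}<s\}$ to $\{\abs{x-y}<s-\varepsilon\}$ does not compensate: if $\abs{\nabla^j u}^2$ has most of its mass in a small ball centred at $(x,\varepsilon+\delta)$, the translated square function at $x$ is of order $\delta^{1-n}$ times that mass while $\mathcal{A}_2^+(t\nabla^j u)(x)^2$ is only of order $\varepsilon^{1-n}$ times it, so no fixed constant works. The fallback via change of aperture does not repair this, because the defect is in the weight (after translation, $t$ measures distance to the shifted hyperplane rather than to $\partial\R^\dmn_+$), not in the aperture. The same mismatch undoes the second half of your argument: the contribution of $0<t<\delta$ to $\mathcal{A}_2^+(t\nabla^j(u-u_\varepsilon))(x)^2$ is \emph{not} a tail of the convergent integral defining $\mathcal{A}_2^+(t\nabla^j u)(x)^2$, and without the (false) uniform majorant $2\,\mathcal{A}_2^+(t\nabla^j u)$ the dominated convergence step has nothing to dominate with. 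A further warning sign is that your argument never uses $Lu=0$.

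The region $t\lesssim\varepsilon$ has to be treated by an $L^p$ estimate rather than a pointwise one, and this is exactly what the paper's proof does. It splits the area integral at height $\varepsilon/\sqrt{4n}$. For the near part it covers $\R^n$ by a grid of cubes $Q$ of side $\varepsilon/\sqrt{4n}$, applies H\"older's inequality on each cube (this is where $p\le 2$ enters; note $p$ may be $\le 1$, so maximal-function arguments are unavailable), and uses the geometric fact that the whole slab $3Q\times(\varepsilon,\varepsilon+\varepsilon/\sqrt{4n})$ lies inside the cone of every $x\in Q$; this yields, in the notation of that proof, $\doublebar{\mathcal{A}_n^{\varepsilon/\sqrt{4n}}(t\nabla^j u_\varepsilon)}_{L^p}\le C_n\min\bigl(\doublebar{\mathcal{A}_n^{2\varepsilon}(t\nabla^j u)}_{L^p},\doublebar{\mathcal{A}_f^{\varepsilon}(t\nabla^j u)}_{L^p}\bigr)$. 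For the far part of the difference it writes $\nabla^j(u-u_\varepsilon)(y,t)=-\int_t^{t+\varepsilon}\nabla^j\partial_s u(y,s)\,ds$, applies Cauchy--Schwarz, the Caccioppoli inequality (here the hypothesis $Lu=0$ is used), and the change-of-aperture theorem, obtaining control by $\doublebar{\mathcal{A}_f^{\varepsilon/\sqrt{16n}}(\varepsilon\nabla^j u)}_{L^p}$. The uniform bound and both limits then follow by dominated convergence applied to these truncated square functions of $u$ itself in $L^p(\R^n)$, not from pointwise domination of the translated area integral. Your treatment of the far region (where $t\approx t+\varepsilon$) is essentially correct; to salvage the outline you must supply an argument of the above type for $t\lesssim\varepsilon$.
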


\begin{proof}
We define 
\begin{align*}
\mathcal{A}_f^{\ell} H(x) &= \biggl(\int_{\ell}^\infty \int_{\abs{x-y}<t} \abs{H(y, t)}^2 \frac{dy\,dt}{t^\dmn}\biggr)^{1/2}
,\\
\mathcal{A}_n^{\ell} H(x) &= \biggl(\int_0^{\ell} \int_{\abs{x-y}<t} \abs{H(y, t)}^2 \frac{dy\,dt}{t^\dmn}\biggr)^{1/2}
,\end{align*}
so that
$
\mathcal{A}_2^+ H(x)^2 = \mathcal{A}_f^{\ell}H(x)^2 + \mathcal{A}_n^{\ell}H(x)^2
$.

Let $c>1$ be a constant to be chosen later.
We begin by analyzing $\mathcal{A}_n^{\varepsilon/c} (t\nabla^j u_\varepsilon)$.
Let $\mathcal{G}$ be a grid of pairwise-disjoint open cubes in $\R^n$ of side length $\varepsilon/c$ whose union is almost all of~$\R^n$. Then
\begin{align*}\doublebar{\mathcal{A}_n^{\varepsilon/c}(t\nabla^j u_\varepsilon)}_{L^p(\R^n)}^p
&=
\sum_{Q\in\mathcal{G}}
\int_Q \mathcal{A}_n^{\varepsilon/c}(t\nabla^j u_\varepsilon)(x)^p\,dx
.\end{align*}
By H\"older's inequality,
\begin{align*}\doublebar{\mathcal{A}_n^{\varepsilon/c}(t\nabla^j u_\varepsilon)}_{L^p(\R^n)}^p
&\leq
\sum_{Q\in\mathcal{G}}
\abs{Q}^{1-p/2}
\biggl(\int_Q \mathcal{A}_n^{\varepsilon/c}(t\nabla^j u_\varepsilon)(x)^2\,dx\biggr)^{p/2}
.\end{align*}
By definition of $u_\varepsilon$ and of $\mathcal{A}_n^\ell$,
\begin{align*}\doublebar{\mathcal{A}_n^{\varepsilon/c}(t\nabla^j u_\varepsilon)}_{L^p(\R^n)}^p
&\leq
\sum_{Q\in\mathcal{G}}
\abs{Q}^{1-p/2}
\biggl(\int_Q \int_0^{\varepsilon/c} \!\int_{\abs{x-y}<t} \abs{\nabla^j u(y,t+\varepsilon)}^2\frac{dy\,dt}{t^{n-1}}\,dx\biggr)^{p/2}
.\end{align*}
Changing the order of integration and evaluating the integral~$dx$, we have that
\begin{align*}\doublebar{\mathcal{A}_n^{\varepsilon/c}(t\nabla^j u_\varepsilon)}_{L^p(\R^n)}^p
&\leq
\alpha_n^{p/2}
\sum_{Q\in\mathcal{G}}
\abs{Q}^{1-p/2}
\biggl(\int_0^{\varepsilon/c} \int_{3Q} t\abs{\nabla^j u(y,t+\varepsilon)}^2\,dy\,dt\biggr)^{p/2}
\end{align*}
where $\alpha_n$ is the area of the unit disk in~$\R^n$.

Making a change of variables, we see that
\begin{align*}\doublebar{\mathcal{A}_n^{\varepsilon/c}(t\nabla^j u_\varepsilon)}_{L^p(\R^n)}^p
&\leq
\alpha_n^{p/2}
\sum_{Q\in\mathcal{G}}
\abs{Q}^{1-p/2}
\biggl(\int_{\varepsilon}^{\varepsilon+\varepsilon/c} \int_{3Q} (t-\varepsilon)\abs{\nabla^j u(y,t)}^2\,dy\,dt\biggr)^{p/2}
.\end{align*}
Let $c=2\sqrt{n}=\sqrt{4n}$. If $x\in Q$, $y\in 3Q$, and $t\in (\varepsilon,\varepsilon+\varepsilon/c)$, then $\abs{x-y}<2\sqrt{n}\,\ell(Q)=\varepsilon<t$. Thus, if $x\in Q$ then
\begin{multline*} \biggl(\int_{\varepsilon}^{\varepsilon+\varepsilon/\sqrt{4n}} \int_{3Q} (t-\varepsilon)\abs{\nabla^j u(y,t)}^2\,dy\,dt\biggr)^{p/2}
\\\leq
\biggl((\varepsilon+\varepsilon/\sqrt{4n})^{n} \int_{\varepsilon}^{\varepsilon+\varepsilon/\sqrt{4n}} \int_{\abs{x-y}<t} \abs{t\nabla^j u(y,t)}^2\,\frac{dy\,dt}{t^{n+1}}\biggr)^{p/2}
.\end{multline*}
The right hand side is at most 
\begin{equation*}(C_n \abs{Q})^{p/2} \min\bigl(\mathcal{A}_n^{\varepsilon+\varepsilon/\sqrt{4n}}(t\nabla^j u)(x), \mathcal{A}_f^{\varepsilon}(t\nabla^j u)(x)\bigr)^p
\end{equation*}
For ease of notation we replace ${\varepsilon+\varepsilon/\sqrt{4n}}$ with $2\varepsilon$.
Thus,
\begin{align*}\doublebar{\mathcal{A}_n^{\varepsilon/\sqrt{4n}}(t\nabla^j u_\varepsilon)}_{L^p(\R^n)}^p
&\leq
C_n^{p/2}
\sum_{Q\in\mathcal{G}} 
\int_Q\min\bigl(\mathcal{A}_n^{2\varepsilon} (t\nabla^j u)(x), \mathcal{A}_f^{\varepsilon}(t\nabla^j u)(x)\bigr)^p\,dx
.\end{align*}
Summing over $Q$, we have that 
\begin{equation}
\label{eqn:green:proof:n}
\doublebar{\mathcal{A}_n^{\varepsilon/\sqrt{4n}}(t\nabla^j u_\varepsilon)}_{L^p(\R^n)}
\leq
C_n
\min\bigl( \doublebar{\mathcal{A}_n^{2\varepsilon}(t\nabla^j u)}_{L^p(\R^n)}, 
\doublebar{\mathcal{A}_f^{\varepsilon}(t\nabla^j u)}_{L^p(\R^n)} \bigr)
.\end{equation}

We now turn to $\mathcal{A}_f^{\varepsilon/\sqrt{4n}}$.
By definition of $u_\varepsilon$,
\begin{align*}
\mathcal{A}_f^{\varepsilon/\sqrt{4n}}(t\nabla^j(u-u_\varepsilon))(x)
&= 
\biggl(\int_{\varepsilon/\sqrt{4n}}^\infty \int_{\abs{x-y}<t} \abs{\nabla^j(u(y,t)-u(y,t+\varepsilon))}^2 \frac{dy\,dt}{t^{n-1}}\biggr)^{1/2}
\\&= 
\biggl(\int_{\varepsilon/\sqrt{4n}}^\infty \int_{\abs{x-y}<t} \abs[bigg]{\int_t^{t+\varepsilon} \nabla^j\partial_s u(y,s)\,ds}^2 \frac{dy\,dt}{t^{n-1}}\biggr)^{1/2}
.\end{align*}
Applying H\"older's inequality and changing the order of integration, we have that
\begin{align*}
\mathcal{A}_f^{\varepsilon/\sqrt{4n}}(t\nabla^j(u-u_\varepsilon))(x)
&\leq
\biggl(\int_{\varepsilon/\sqrt{4n}}^\infty \int_{\abs{x-y}<t} 
\varepsilon \int_t^{t+\varepsilon} \abs{\nabla^j\partial_s u(y,s)}^2\,ds \frac{dy\,dt}{t^{n-1}}\biggr)^{1/2}
\\&\leq
C_n\biggl(\varepsilon^2
\int_{\varepsilon/\sqrt{4n}}^\infty 
\int_{\abs{x-y}<s} 
\abs{\nabla^j\partial_s u(y,s)}^2 \frac{dy\,ds}{s^{n-1}}\biggr)^{1/2}
.\end{align*}
By the Caccioppoli inequality,
\begin{align*}
\mathcal{A}_f^{\varepsilon/\sqrt{4n}}(t\nabla^j(u-u_\varepsilon))(x)
&\leq
C\biggl(
\int_{\varepsilon/\sqrt{16n}}^\infty \frac{\varepsilon^2}{s^2}
\int_{\abs{x-y}<2s} 
\abs{\nabla^j u(y,s)}^2 \frac{dy\,ds}{s^{n-1}}\biggr)^{1/2}
.\end{align*}

Now, define 
\begin{equation*}\mathcal{A}_2^{r} H(x) =  \biggl(\int_0^\infty \int_{\abs{x-y}<rt} \abs{H(y, t)}^2 \frac{dy\,dt}{t^\dmn}\biggr)^{1/2}\end{equation*}
for any $r>0$, so that $\mathcal{A}_2^+ H=\mathcal{A}_2^1 H$. It is well known (see \cite[Proposition~4]{CoiMS85} or \cite[Theorem~3.4]{CalT75}) that if $0<p<\infty$, then $\doublebar{\mathcal{A}_2^2 H}_{L^p(\R^n)} \leq C_p \doublebar{\mathcal{A}_2^1 H}_{L^p(\R^n)}$. Thus,
\begin{equation}
\label{eqn:green:proof:f}
\doublebar{\mathcal{A}_f^{\varepsilon/\sqrt{4n}}(t\nabla^j(u-u_\varepsilon))}_{L^p(\R^n)}
\leq C_p
\doublebar{\mathcal{A}_f^{\varepsilon/\sqrt{16n}}(\varepsilon\nabla^j u)}_{L^p(\R^n)}.\end{equation}

The bound $\doublebar{\mathcal{A}_2^+(t\nabla^j u_\varepsilon)}_{L^p(\R^n)}\leq C_p \doublebar{\mathcal{A}_2^+(t\nabla^j u)}_{L^p(\R^n)}$ follows from the bounds \cref{eqn:green:proof:n,eqn:green:proof:f}.
We now use these bounds to bound $\mathcal{A}_2^+(t\nabla^j u_T)$ as $T\to \infty$ and $\mathcal{A}_2^+(t\nabla^j (u-u_\varepsilon))$ as $\varepsilon\to 0^+$.

First, by definition of $\mathcal{A}_n^\ell$ and $\mathcal{A}_f^\ell$,
\begin{equation*}\doublebar{\mathcal{A}_2^+(t\nabla^j u_T)}_{L^p(\R^n)}
\leq \doublebar{\mathcal{A}_n^{T/\sqrt{4n}}(t\nabla^j u_T)}_{L^p(\R^n)}
+ \doublebar{\mathcal{A}_f^{T/\sqrt{4n}}(t\nabla^j u_T)}_{L^p(\R^n)}.
\end{equation*}
Next, by the bounds \cref{eqn:green:proof:n,eqn:green:proof:f},
\begin{equation*}\doublebar{\mathcal{A}_2^+(t\nabla^j u_T)}_{L^p(\R^n)}
\leq C_p\doublebar{\mathcal{A}_f^{T/\sqrt{16n}}(t\nabla^j u)}_{L^p(\R^n)}
.\end{equation*}
If $\mathcal{A}_2^+ (t\nabla^j u)(x)<\infty$, then $\mathcal{A}_f^{T/\sqrt{16n}} (t\nabla^j u)(x)\to 0$ as $T\to \infty$, and so by the dominated convergence theorem, if $\mathcal{A}_2^+ (t\nabla^j u)\in L^p(\R^n)$, then $\mathcal{A}_f^{T/\sqrt{16n}} (t\nabla^j u)\to 0$ in $L^p(\R^n)$ as $T\to \infty$. Thus, $\doublebar{\mathcal{A}_2^+(t\nabla^j u_T)}_{L^p(\R^n)}\to 0$ as $T\to \infty$, as desired.

We now turn to $u-u_\varepsilon$. By definition of $\mathcal{A}_n^\ell$ and $\mathcal{A}_f^\ell$,
\begin{align*}\doublebar{\mathcal{A}_2^+(t\nabla^j (u-u_\varepsilon))}_{L^p(\R^n)}
&\leq \doublebar{\mathcal{A}_n^{\varepsilon/\sqrt{4n}}(t\nabla^j u)}_{L^p(\R^n)}
+  \doublebar{\mathcal{A}_n^{\varepsilon/\sqrt{4n}}(t\nabla^j u_\varepsilon)}_{L^p(\R^n)}
\\&\qquad+ \doublebar{\mathcal{A}_f^{\varepsilon/\sqrt{4n}}(t\nabla^j (u-u_\varepsilon))}_{L^p(\R^n)}.
\end{align*}
By the bounds \cref{eqn:green:proof:n,eqn:green:proof:f},
\begin{equation*}\doublebar{\mathcal{A}_2^+(t\nabla^j (u-u_\varepsilon))}_{L^p(\R^n)}
\leq 
C\doublebar{\mathcal{A}_n^{2\varepsilon}(t\nabla^j u)}_{L^p(\R^n)}
+ C_p\doublebar{\mathcal{A}_f^{\varepsilon/\sqrt{16n}} (\varepsilon\nabla^j u)}_{L^p(\R^n)}.
\end{equation*}
Both terms converge to zero by the dominated convergence theorem and the proof is complete.
\end{proof}

Combining Lemma~\ref{lem:lusin:translate} with Theorem~\ref{thm:BHM17pB:regular:dirichlet} (or, for more notational convenience, \cite[Theorem~6.2]{BarHM19B}) yields the following corollary.

\begin{cor}\label{cor:neumann:limit} 
Let $L$ be an operator of the form~\eqref{eqn:weak} of order~$2m$ associated to bounded $t$-independent coefficients~$\mat A$ that satisfy the ellipticity condition~\eqref{eqn:elliptic}.

Suppose that $w\in \dot W^{m,2}_{loc}(\R^\dmn_+)$ satisfies $Lw=0$ in $\R^\dmn_+$, that $\mathcal{A}_2^+(t\nabla^m\partial_t w)\in L^p(\R^n)$ for some $p$ with $1<p\leq 2$, and that $\nabla^m w(\,\cdot\,,t)\in L^p(\R^n)$ for some $t>0$.

Let $w_\varepsilon(x,t)=w(x,t+\varepsilon)$. Then
\begin{equation*}\lim_{T\to \infty} \doublebar{\M_{\mat A}^+ w_T}_{(\dot W\!A^{0,p'}_{m-1}(\R^n))^*} = 0,
\quad
\lim_{\varepsilon \to 0^+} \doublebar{\M_{\mat A}^+ (w-w_\varepsilon)}_{(\dot W\!A^{0,p'}_{m-1}(\R^n))^*} = 0.\end{equation*}
\end{cor}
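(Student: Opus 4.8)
The plan is to reduce everything to one estimate: for any $v\in\dot W^{m,2}_{loc}(\R^\dmn_+)$ with $Lv=0$ in $\R^\dmn_+$, with $\mathcal{A}_2^+(t\nabla^m\partial_t v)\in L^p(\R^n)$, with $\nabla^m v(\,\cdot\,,t)\in L^p(\R^n)$ for some $t>0$, and (only when $p<2$) with $\partial_\dmn v\in\dot W^{m,2}(\R^n\times(\sigma,\infty))$ for every $\sigma>0$, the boundary operator $\M_{\mat A}^+ v$ is well defined in the sense of~\eqref{eqn:neumann:intro} and
\begin{equation*}
\doublebar{\M_{\mat A}^+ v}_{(\dot W\!A^{0,p'}_{m-1}(\R^n))^*}
\leq C_p\doublebar{\mathcal{A}_2^+(t\nabla^m\partial_t v)}_{L^p(\R^n)}.
\end{equation*}
This is exactly \cite[Theorem~6.2]{BarHM19B} (equivalently Theorem~\ref{thm:BHM17pB:regular:dirichlet} transported from $\R^\dmn_-$ to $\R^\dmn_+$ via Section~\ref{sec:lower}; the index restriction is harmless since $p'\geq2$ exceeds $p_{0,L}^-\leq 2-\widetilde\varepsilon$): the cited result yields the pairing bound $\abs{\langle\Tr_{m-1}\varphi,\M_{\mat A}^+v\rangle_{\R^n}}\leq C_p\doublebar{\Tr_{m-1}\varphi}_{\dot W\!A^{0,p'}_{m-1}(\R^n)}\doublebar{\mathcal{A}_2^+(t\nabla^m\partial_t v)}_{L^p(\R^n)}$ for all $\varphi\in C^\infty_0(\R^\dmn)$, and the displayed inequality follows by taking the supremum over such $\varphi$ and using the density of $\mathfrak{D}$ in $\dot W\!A^{0,p'}_{m-1}(\R^n)$ (Definition~\ref{dfn:Whitney}).

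Next I would check that $w$, $w_T$, and $w-w_\varepsilon$ all satisfy these hypotheses. For $w$ they are the hypotheses of the corollary, save for the $p<2$ condition on $\partial_\dmn w$, which follows from Lemma~\ref{lem:lebesgue:lusin} applied to $\arr F=\nabla^m\partial_t w$ with $r=2$, $\theta=p$, $\kappa=1$ (the required inequality $\theta\pdmn<r(n+\theta\kappa)$ reads $p(n-1)<2n$, valid for $p\leq2$), which in fact gives $\nabla^m\partial_t w\in L^2(\R^n\times(\sigma,\infty))$ for all $\sigma>0$. Applying the estimate to $v=w$ shows $\M_{\mat A}^+w$ is defined and $\sup_{t>0}\doublebar{\nabla^m w(\,\cdot\,,t)}_{L^p(\R^n)}<\infty$, so $\nabla^m w_T(\,\cdot\,,s)$ and $\nabla^m(w-w_\varepsilon)(\,\cdot\,,s)$ lie in $L^p(\R^n)$ for every $s>0$. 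Since $\mat A$ is $t$-independent, $\partial_t w$ is itself a solution of $L(\partial_t w)=0$ in $\dot W^{m,2}_{loc}(\R^\dmn_+)$, so Lemma~\ref{lem:lusin:translate} applies with $u=\partial_t w$, $j=m$: it bounds $\doublebar{\mathcal{A}_2^+(t\nabla^m\partial_t w_\varepsilon)}_{L^p(\R^n)}$ by $C\doublebar{\mathcal{A}_2^+(t\nabla^m\partial_t w)}_{L^p(\R^n)}$, whence by sublinearity $\mathcal{A}_2^+(t\nabla^m\partial_t w_T)$ and $\mathcal{A}_2^+(t\nabla^m\partial_t(w-w_\varepsilon))$ also lie in $L^p(\R^n)$; the $p<2$ condition for $w_T$ and $w-w_\varepsilon$ follows from Lemma~\ref{lem:lebesgue:lusin} as before (or directly from $\nabla^m\partial_t w\in L^2(\R^n\times(\sigma,\infty))$ by translation and subtraction).

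Finally I would combine the pieces. Applying the displayed estimate to $v=w_T$ and to $v=w-w_\varepsilon$ gives
\begin{align*}
\doublebar{\M_{\mat A}^+ w_T}_{(\dot W\!A^{0,p'}_{m-1}(\R^n))^*}
&\leq C_p\doublebar{\mathcal{A}_2^+(t\nabla^m\partial_t w_T)}_{L^p(\R^n)},
\\
\doublebar{\M_{\mat A}^+ (w-w_\varepsilon)}_{(\dot W\!A^{0,p'}_{m-1}(\R^n))^*}
&\leq C_p\doublebar{\mathcal{A}_2^+(t\nabla^m\partial_t(w-w_\varepsilon))}_{L^p(\R^n)},
\end{align*}
and the right-hand sides tend to $0$ as $T\to\infty$ and as $\varepsilon\to0^+$ respectively, by the two limits in Lemma~\ref{lem:lusin:translate} with $u=\partial_t w$, $j=m$ (noting $\partial_t w_T=(\partial_t w)_T$ and $\partial_t(w-w_\varepsilon)=\partial_t w-(\partial_t w)_\varepsilon$). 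The only real work lies in the first step — lining up the two Lebesgue exponents under duality and under the reflection of Section~\ref{sec:lower}, and observing that the $p<2$ technical hypothesis of the Fatou theorem is automatic here — so I do not expect a genuine obstacle beyond keeping the notation straight.
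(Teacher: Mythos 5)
Your argument is correct and takes essentially the same route as the paper, which proves the corollary exactly by combining Theorem~\ref{thm:BHM17pB:regular:dirichlet} (that is, \cite[Theorem~6.2]{BarHM19B}, transported via Section~\ref{sec:lower}) with Lemma~\ref{lem:lusin:translate} applied to $u=\partial_t w$, $j=m$. The only cosmetic point is that in Theorem~\ref{thm:BHM17pB:regular:dirichlet} the extra $\dot W^{m,2}$-on-slabs hypothesis is attached to the case where the area-integral exponent exceeds $2$, so with $p\leq 2$ it is not actually required here; since you verify it anyway via Lemma~\ref{lem:lebesgue:lusin}, nothing in your argument is affected.
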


We are now in a position to prove the Green's formula.

\begin{thm}\label{thm:green}
Let $L$ be an operator of the form~\eqref{eqn:weak} of order~$2m$ associated to bounded $t$-independent coefficients~$\mat A$ that satisfy the ellipticity condition~\eqref{eqn:elliptic}.

Let $p$ satisfy $p_{1,L^*}^-<p\leq 2$, where $p_{1,L^*}^-$ is as in formula~\eqref{eqn:Meyers:p-}.
Suppose that $w\in \dot W^{m,2}_{loc}(\R^\dmn_+)$ satisfies $Lw=0$ in $\R^\dmn$,
$\mathcal{A}_2^+(t\nabla^m\partial_t w)\in L^p(\R^n)$, and $\nabla^m w(\,\cdot\,,t)\in L^p(\R^n)$ for some $t>0$. 

Then we have the Green's formula
\begin{equation*}\1_+\nabla^m w=-\nabla^m\D^{\mat A}(\Tr_{m-1}^+ w)+\nabla^m\s^L(\M_{\mat A}^+ w).\end{equation*}
\end{thm}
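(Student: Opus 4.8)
The plan is to reduce the statement to already-known forms of the Green's formula by approximation. When $p=2$ the hypotheses are exactly those of \cite[Theorem~4.3]{BarHM18}, so the formula holds; thus I may assume $p_{1,L^*}^-<p<2$. For $\varepsilon>0$ set $w_\varepsilon(x,t)=w(x,t+\varepsilon)$. Then $Lw_\varepsilon=0$ in $\R^n\times(-\varepsilon,\infty)$ and $w_\varepsilon\in\dot W^{m,2}_{loc}(\R^\dmn_+)$. I will check that each $w_\varepsilon$ satisfies the hypotheses of \cite[Theorem~4.3]{BarHM18}, namely $\mathcal{A}_2^+(t\nabla^m\partial_t w_\varepsilon)\in L^2(\R^n)$ and $\nabla^m w_\varepsilon(\,\cdot\,,t)\in L^2(\R^n)$ for some $t>0$, so that the Green's formula holds for $w_\varepsilon$; then I will let $\varepsilon\to0^+$.

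\textbf{The $L^2$-verification for $w_\varepsilon$.} This is the heart of the argument. Applying Lemma~\ref{lem:lebesgue:lusin} with $\arr F=\nabla^m\partial_t w$, $\kappa=1$, $\theta=p$, $r=2$ (the condition $p\pdmn<2(n+p)$ holds for every $p\le2$) gives $\nabla^m\partial_t w\in L^2(\R^n\times(\sigma,\infty))$ for all $\sigma>0$ together with the dyadic decay bound $\doublebar{\nabla^m\partial_t w}_{L^2(\R^n\times(2^j,\infty))}\le C\,2^{-j(1/2+n(2-p)/(2p))}\doublebar{\mathcal{A}_2^+(t\nabla^m\partial_t w)}_{L^p(\R^n)}$. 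Summing the corresponding dyadic pieces — this is precisely where $p<2$ is used, so that $n(2-p)/p>0$ and the resulting geometric series converges — yields $\int_\sigma^\infty t^{1+\delta}\doublebar{\nabla^m\partial_t w(\,\cdot\,,t)}_{L^2(\R^n)}^2\,dt<\infty$ for some small $\delta>0$ and every $\sigma>0$. Combined with the Fubini identity $\doublebar{\mathcal{A}_2^+(tH)}_{L^2(\R^n)}^2\approx\int_0^\infty t\doublebar{H(\,\cdot\,,t)}_{L^2(\R^n)}^2\,dt$ (valid once $H(\,\cdot\,,t)\in L^2(\R^n)$ a.e.), this gives $\mathcal{A}_2^+(t\nabla^m\partial_t w_\varepsilon)\in L^2(\R^n)$. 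It also gives $\int_\sigma^\infty\doublebar{\nabla^m\partial_t w(\,\cdot\,,t)}_{L^2(\R^n)}\,dt<\infty$ by Cauchy--Schwarz; since the analogue in the upper half space of Theorem~\ref{thm:BHM17pB:regular:dirichlet} (valid by the change of variables of Section~\ref{sec:lower}, applied with its free exponent equal to $p'\ge2$, so that its extra hypothesis is vacuous) shows that $\nabla^m w(\,\cdot\,,t)\to0$ in $L^p(\R^n)$ as $t\to\infty$, we may write $\nabla^m w(\,\cdot\,,t)=-\int_t^\infty\nabla^m\partial_s w(\,\cdot\,,s)\,ds$ with the integral converging in $L^2(\R^n)$, whence $\nabla^m w_\varepsilon(\,\cdot\,,t)=\nabla^m w(\,\cdot\,,t+\varepsilon)\in L^2(\R^n)$ for every $t>0$.

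\textbf{Convergence of boundary data and passage to the limit.} The same application of the upper–half–space form of Theorem~\ref{thm:BHM17pB:regular:dirichlet} shows that $\Tr_m^+ w$ and $\M_{\mat A}^+ w$ exist, with $\Tr_{m-1}^+ w\in\dot W\!A^{1,p}_{m-1}(\R^n)$ and $\M_{\mat A}^+ w\in(\dot W\!A^{0,p'}_{m-1}(\R^n))^*$, and that $\nabla^m w(\,\cdot\,,\varepsilon)\to\Tr_m^+ w$ in $L^p(\R^n)$ as $\varepsilon\to0^+$; hence $\Tr_{m-1}^+ w_\varepsilon=\nabla^{m-1}w(\,\cdot\,,\varepsilon)\to\Tr_{m-1}^+ w$ in $\dot W\!A^{1,p}_{m-1}(\R^n)$. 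By Corollary~\ref{cor:neumann:limit} and linearity of Neumann boundary values, $\M_{\mat A}^+ w_\varepsilon\to\M_{\mat A}^+ w$ in $(\dot W\!A^{0,p'}_{m-1}(\R^n))^*$. Now I apply $\nabla^m\D^{\mat A}$ and $\nabla^m\s^L$ to these convergences: by the bounds \eqref{eqn:D:N:intro} and \eqref{eqn:S:N:intro} of Theorem~\ref{thm:potentials}, valid on exactly the range $p_{1,L^*}^-<p<2$, the maps $\arr\varphi\mapsto\widetilde N_+(\nabla^m\D^{\mat A}\arr\varphi)$ and $\arr g\mapsto\widetilde N_+(\nabla^m\s^L\arr g)$ are bounded into $L^p(\R^n)$, and a nontangential-maximal-function bound controls $L^2$ averages over compact subsets of $\R^\dmn_+$, so $\nabla^m\D^{\mat A}(\Tr_{m-1}^+ w_\varepsilon)\to\nabla^m\D^{\mat A}(\Tr_{m-1}^+ w)$ and $\nabla^m\s^L(\M_{\mat A}^+ w_\varepsilon)\to\nabla^m\s^L(\M_{\mat A}^+ w)$ in $L^2_{\mathrm{loc}}(\R^\dmn_+)$. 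Since also $\nabla^m w_\varepsilon\to\nabla^m w$ in $L^2_{\mathrm{loc}}(\R^\dmn_+)$ (continuity of translations), letting $\varepsilon\to0^+$ in the Green's formula $\1_+\nabla^m w_\varepsilon=-\nabla^m\D^{\mat A}(\Tr_{m-1}^+ w_\varepsilon)+\nabla^m\s^L(\M_{\mat A}^+ w_\varepsilon)$ produces the asserted identity for $w$.

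\textbf{Main obstacle.} The delicate point is upgrading the integrability of $\nabla^m\partial_t w$ from the $L^p$-based area-integral hypothesis to the $L^2$-based hypotheses needed for $w_\varepsilon$; the dyadic-summation estimate there is exactly summable for $p<2$ and just fails at the endpoint $p=2$, which is why that case must be (and is) disposed of separately via \cite[Theorem~4.3]{BarHM18}. Everything else is a matter of quoting Lemma~\ref{lem:lebesgue:lusin}, Corollary~\ref{cor:neumann:limit}, Theorem~\ref{thm:BHM17pB:regular:dirichlet}, and the layer potential bounds of Theorem~\ref{thm:potentials}.
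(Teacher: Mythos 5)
Your proof is correct, and it reaches the Green's formula by a route that is related to, but genuinely distinct from, the paper's. Both arguments translate upward and pass to the limit using the same tools (Theorem~\ref{thm:BHM17pB:regular:dirichlet} in its upper-half-space form, Corollary~\ref{cor:neumann:limit}, and the bounds \eqref{eqn:D:N:intro} and~\eqref{eqn:S:N:intro}); the difference is the approximate identity. The paper works with the two-parameter difference $w_{\varepsilon,T}=w_\varepsilon-w_T$, which lies in $\dot W^{m,2}(\R^\dmn_+)$ (also via Lemma~\ref{lem:lebesgue:lusin}), so that only the weakest form~\eqref{eqn:green} of the Green's formula is needed; the price is a double limit and the extra step that $w_T\to 0$ in $\dot W^{m,2}$ of Whitney balls, obtained from the $L^p$-to-$L^2$ upgrade of the bound~\eqref{eqn:Meyers}. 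You instead keep the single translate $w_\varepsilon$ and verify the hypotheses of the stronger Green's formula of \cite[Theorem~4.3]{BarHM18} by dyadically summing the conclusion of Lemma~\ref{lem:lebesgue:lusin}; your exponent arithmetic is right, the summation genuinely requires $p<2$, and your separate disposal of the endpoint $p=2$ via \cite[Theorem~4.3]{BarHM18} is consistent with how the paper itself regards that case. What your route buys is a single limit and an explicit decay estimate for $\doublebar{\nabla^m\partial_t w(\,\cdot\,,t)}_{L^2(\R^n)}$; what the paper's route buys is a treatment of all $p_{1,L^*}^-<p\leq 2$ in one stroke without verifying any $L^2$ area-integral hypothesis. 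Two small points to tighten: the asserted identity lives on both half-spaces, so in the limit passage you should note that \eqref{eqn:D:N:intro} and~\eqref{eqn:S:N:intro} involve the two-sided operator $\widetilde N_*$ and hence give $L^2$ convergence on Whitney balls of $\R^\dmn_-$ as well (where $\1_+\nabla^m w_\varepsilon$ vanishes identically); and the representation $\nabla^m w(\,\cdot\,,t)=-\int_t^\infty \nabla^m\partial_s w(\,\cdot\,,s)\,ds$ should be justified (for almost every $t$, which suffices since only one good slice is needed) by the local absolute continuity of vertical slices of $\nabla^m w$, using $\nabla^m\partial_t w\in L^2_{loc}$.
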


\begin{proof}

Let $w_\varepsilon(x,t)=w(x,t+\varepsilon)$, and let $w_{\varepsilon,T}=w_\varepsilon-w_T$. If $\mat A$ is $t$-independent then $Lw_{\varepsilon,T}=0$ in $\R^\dmn_+$ for any $T>\varepsilon>0$. 
By Lemma~\ref{lem:lebesgue:lusin} or \cite[Remark~5.3]{BarHM19B}, if $T>\varepsilon>0$ then $w_{\varepsilon,T}\in \dot W^{m,2}(\R^\dmn_+)$.

Recall that formula~\eqref{eqn:green} is valid for all solutions in $\dot W^{m,2}(\R^\dmn_+)$. Thus, we have that
\begin{equation*}\1_+\nabla^mw_{\varepsilon,T} = -\nabla^m \D^{\mat A}(\Tr_{m-1}^+ w_{\varepsilon,T}) + \nabla^m \s^L(\M_{\mat A}^+ w_{\varepsilon,T}).\end{equation*}

Let $B=B((x_0,t_0),\abs{t_0}/2)$ be a Whitney ball in $\R^\dmn_\pm$. By Theorem~\ref{thm:BHM17pB:regular:dirichlet}, we have that $\Tr_{m-1}^+ w_{\varepsilon,T} \to \Tr_{m-1}^+ w$ in $\dot W\!A^{1,p}_{m-1}(\R^n)$ as $\varepsilon\to 0^+$ and $T\to \infty$, and by Corollary~\ref{cor:neumann:limit}, $\M_{\mat A}^+ w_{\varepsilon,T} \to \M_{\mat A}^+ w$ in $(\dot W\!A^{0,p'}_{m-1}(\R^n))^*$ as $\varepsilon\to 0^+$ and $T\to \infty$. By the bounds~\cref{eqn:D:N:intro,eqn:S:N:intro} established in Section~\ref{sec:D:S:regular}, we have that
\begin{equation*}-\nabla^m \D^{\mat A}(\Tr_{m-1}^+ w_{\varepsilon,T}) + \nabla^m \s^L(\M_{\mat A}^+ w_{\varepsilon,T})
\to 
-\nabla^m \D^{\mat A}(\Tr_{m-1}^+ w) + \nabla^m \s^L(\M_{\mat A}^+ w)\end{equation*}
in $L^2(B)$ as $T\to\infty$ and $\varepsilon\to 0^+$. 

Because $\mathcal{A}_2^+(t\nabla^m\partial_t w)\in L^p(\R^n)$, we have that $w_\varepsilon\to w$ as $\varepsilon\to 0^+$ in $\dot W^{m,2}(B)$. 
By Theorem~\ref{thm:BHM17pB:regular:dirichlet}, $\nabla^m w_T(\,\cdot\,,t)\to 0$ in $L^p(\R^n)$ for any fixed $t>0$, uniformly for $t$ in $(-3\abs{t_0}/2,3\abs{t_0}/2)$. Therefore, $w_T\to 0$  in $\dot W^{m,p}(B)$; by the bound~\eqref{eqn:Meyers}, $w_T\to 0$  in $\dot W^{m,2}(B)$ as $T\to \infty$. 

Thus, taking appropriate limits yields the Green's formula, as desired.
\end{proof}

\section{The Neumann problem}
\label{sec:neumann}

In this section we will prove Theorem~\ref{thm:neumann:selfadjoint}, that is, will establish well posedness of the Neumann problem with boundary data in $L^p(\R^n)$ for operators with bounded elliptic $t$-independent self-adjoint coefficients.

Our proof of Theorem~\ref{thm:neumann:selfadjoint} will be based on a duality argument. That is, we will show that well posedness of the Neumann problem with boundary data in $\dot W^{-1,p'}(\R^n)$ implies well posedness with boundary data in $L^p(\R^n)$ for adjoint coefficients; as well posedness of the subregular Neumann problem was established in \cite{Bar19p}, this implies well posedness of the $L^p$ Neumann problem.

We begin by precisely stating the well posedness result of \cite{Bar19p}.

\begin{thm}[\cite{Bar19p}]\label{thm:neumann:subregular}
Let $L$ and $\mat A$ satisfy the conditions given in  Theorem~\ref{thm:neumann:selfadjoint}.

Then there is some $\varepsilon_1>0$, depending only on the standard parameters $\dmnMinusOne$,~$m$, $\lambda$, and~$\doublebar{\mat A}_{L^\infty(\R^n)}$, with the following significance. If 
\begin{equation}\label{eqn:p:subregular}
\max\biggl(0,\frac{1}{2}-\frac{1}{\dmnMinusOne}-\varepsilon_1\biggr) < \frac{1}{p'}\leq \frac{1}{2},\end{equation}
then for every $\arr h$ in $\dot W^{-1,2}(\R^n)\cap \dot W^{-1,p'}(\R^n)$, there is a solution~$v$, unique up to adding polynomials of degree~$m-2$, to the subregular Neumann problem 
\begin{equation}
\label{eqn:neumann:rough:2q}
\left\{\begin{gathered}\begin{aligned}
L^*v&=0 \text{ in }\R^\dmn_+
,\\
\M_{\mat A^*}^+ v &\owns \arr h,
\end{aligned}\\
\doublebar{\mathcal{A}_2^+(t\nabla^m v)}_{L^2(\R^n)} + \doublebar{\widetilde N_+(\nabla^{m-1}v)}_{L^2(\R^n)}
\leq C_2\doublebar{\arr h}_{\dot W^{-1,2}(\R^\dmnMinusOne)}
,\\
\doublebar{\mathcal{A}_2^+(t\nabla^m v)}_{L^{p'}(\R^n)} + \doublebar{\widetilde N_+(\nabla^{m-1}v)}_{L^{p'}(\R^n)}
\leq C_{p'}\doublebar{\arr h}_{\dot W^{-1,p'}(\R^\dmnMinusOne)}
.\end{gathered}\right.\end{equation}
The numbers $C_2$ and $C_{p'}$ depend only on $\dmnMinusOne$, $m$, $\lambda$, $\doublebar{\mat A}_{L^\infty(\R^n)}$, and~$p'$.
\end{thm}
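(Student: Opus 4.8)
The plan is to recognize that Theorem~\ref{thm:neumann:subregular} is simply the main well posedness result of \cite{Bar19p} for the subregular Neumann problem, transcribed into the notation used here, so that the proof consists of citing the appropriate theorem (or pair of theorems) of \cite{Bar19p} and checking that hypotheses and conclusions line up. First I would isolate from \cite{Bar19p} the existence statement: for an operator $L$ of the form~\eqref{eqn:divergence} with bounded $t$-independent self-adjoint coefficients $\mat A$ obeying~\eqref{eqn:elliptic:slices}, and for boundary data in $\dot W^{-1,p'}(\R^n)$ with $p'$ in the relevant range, \cite{Bar19p} constructs a function $v$ with $L^*v=0$ in $\R^\dmn_+$, $\M_{\mat A^*}^+ v\owns\arr h$, and the estimate $\doublebar{\mathcal{A}_2^+(t\nabla^m v)}_{L^{p'}(\R^n)}+\doublebar{\widetilde N_+(\nabla^{m-1}v)}_{L^{p'}(\R^n)}\leq C_{p'}\doublebar{\arr h}_{\dot W^{-1,p'}(\R^n)}$. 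The number $\varepsilon_1$ furnished there depends only on $\dmn$, $m$, $\lambda$, and $\doublebar{\mat A}_{L^\infty(\R^n)}$, which are exactly the standard parameters, so the stated dependence of $\varepsilon_1$ (and of $C_{p'}$) is immediate.

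Second, I would match the exponent range. The hypothesis~\eqref{eqn:p:subregular} says that $1/p'$ lies in the half-open interval $\bigl(\max(0,\tfrac12-\tfrac1{\dmnMinusOne}-\varepsilon_1),\,\tfrac12\bigr]$; the $0$ inside the $\max$ encodes the endpoint $p'=\infty$ permitted in low dimensions, and the right endpoint $1/p'=\tfrac12$ is $p'=2$. A short computation shows this is the same set of exponents for which \cite{Bar19p} proves well posedness (there phrased, for $\dmn\geq4$, as $2\leq p'<\tfrac{2\dmnMinusOne}{\dmnMinusOne-2}+\varepsilon_1$, with the evident modifications when $\dmn\leq3$), and passing to the dual exponent $p$ with $1/p+1/p'=1$ recovers exactly the range $\max(1,\tfrac{2n}{n+2}-\varepsilon)<p\leq2$ that appears in Theorem~\ref{thm:neumann:selfadjoint}; I would record this arithmetic identity (with $\varepsilon_1$, $\varepsilon$ adjusted appropriately) but not belabor it.

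Third, I would handle the simultaneous $L^2$ and $L^{p'}$ estimates together with the uniqueness clause. For $\arr h\in\dot W^{-1,2}(\R^n)\cap\dot W^{-1,p'}(\R^n)$, the two displayed estimates in~\eqref{eqn:neumann:rough:2q} refer to one and the same function $v$: the $p'=2$ bound is the $L^2$ subregular result established in \cite{BarHM18,BarHM18p} and re-derived in \cite{Bar19p}, and since the solutions constructed for the two data classes agree by density together with the uniqueness assertion, $v$ inherits both bounds. Uniqueness up to polynomials of degree $m-2$ is the uniqueness half of the well posedness statement of \cite{Bar19p}; I would quote it directly, observing that for the subregular problem the relevant null space consists of functions $v$ for which $\nabla^{m-1}v$ is a constant array, i.e. polynomials of degree at most $m-2$, so that $v$ is determined modulo such polynomials. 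The main (indeed essentially the only) obstacle here is bookkeeping rather than analysis: one must confirm that the normalization modulo polynomials, the space $\dot W\!A^{1,p}_{m-1}(\R^n)$ on which the Neumann operator $\M_{\mat A^*}^+$ is considered, and the fact that a single $v$ satisfies both the $L^2$ and the $L^{p'}$ bound are all extracted faithfully from \cite{Bar19p}, since the proof of Theorem~\ref{thm:neumann:selfadjoint} in Section~\ref{sec:neumann} relies on precisely this combined formulation.
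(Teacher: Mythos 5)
Your proposal matches the paper: Theorem~\ref{thm:neumann:subregular} is presented there as an imported result, with no proof beyond the citation of \cite{Bar19p} (and the attribution of the $p'=2$ case to \cite{BarHM18,BarHM18p}), which is exactly the citation-plus-bookkeeping argument you outline, including the exponent arithmetic relating \eqref{eqn:p:subregular} to \eqref{eqn:p:regular}. One small slip in your uniqueness discussion: the null class consists of polynomials of degree at most $m-2$, i.e.\ those $v$ with $\nabla^{m-1}v\equiv 0$ (a nonzero constant array $\nabla^{m-1}v$ would correspond to a polynomial of degree $m-1$, which is ruled out by $\widetilde N_+(\nabla^{m-1}v)\in L^{p'}(\R^n)$ for finite $p'$), but this does not affect the conclusion.
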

We remark that the ${p'}=2$ case, like the $L^2$ Neumann problem~\eqref{eqn:neumann:regular:2}, is from \cite{BarHM18,BarHM18p}. Here $\M_{\mat A^*}^+ v$ is as given in \cite[Section~2.3.2]{BarHM19B}.

If $\mat A$ is self-adjoint, then $\mat A=\mat A^*$ and $L=L^*$; however, we have phrased the problem~\eqref{eqn:neumann:rough:2q} in terms of $\mat A^*$ and $L^*$ for ease of notation for duality arguments. 
We now state our duality theorem; Theorem~\ref{thm:neumann:selfadjoint} will follow easily from Theorem~\ref{thm:neumann:p:regular}.

\begin{thm}\label{thm:neumann:p:regular}
Suppose that $L$ is an elliptic operator of the 
form~\eqref{eqn:weak} 
of order~$2m$ associated with coefficients $\mat A$ that are bounded, $t$-independent in the sense of formula~\eqref{eqn:t-independent}, and satisfy the ellipticity condition~\eqref{eqn:elliptic}.

Let $p$ and $p'$ satisfy $p_{1,L^*}^-<p<2$ and $1/p+1/p'=1$, where $p_{1,L^*}^-$ is as in formulas \cref{eqn:Meyers:p-,eqn:Meyers}. 
Suppose that for every $\arr h\in \dot W^{-1,2}(\R^n)\cap \dot W^{-1,p'}(\R^n)$ there is a unique solution $v$ to the Neumann problem~\eqref{eqn:neumann:rough:2q} for $L^*$.

Then for every $\arr g\in L^p(\R^n)$, there is a solution $w$, unique up to adding polynomials of degree at most $m-1$, to the $L^p$-Neumann problem
\begin{equation}
\label{eqn:neumann:regular:p}
\left\{\begin{gathered}\begin{aligned}
Lv&=0 \text{ in }\R^\dmn_+
,\\
\M_{\mat A}^+ v &\owns \arr g,
\end{aligned}\\
\doublebar{\mathcal{A}_2^+(t\nabla^m \partial_t w)}_{L^p(\R^n)} + \doublebar{\widetilde{N}_+(\nabla^{m}w)}_{L^p(\R^n)}
\leq C_p\doublebar{\arr g}_{L^p(\R^\dmnMinusOne)}
\end{gathered}\right.\end{equation}
where $C_p$ depends only on $p$, $n$, $m$, $\lambda$, $\doublebar{\mat A}_{L^\infty(\R^n)}$, the number $c(1,L^*,p',2)$ in formula~\eqref{eqn:Meyers}, and the constants $C_2$ and $C_{p'}$ in the problem~\eqref{eqn:neumann:rough:2q}.

\end{thm}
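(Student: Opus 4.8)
The plan is to prove Theorem~\ref{thm:neumann:p:regular} via a duality argument, constructing the solution to the $L^p$ Neumann problem for $L$ as an adjoint to the solution operator for the subregular Neumann problem for $L^*$. The setup is as follows: given $\arr g\in L^p(\R^n)$, I want to produce a solution $w$ with $Lw=0$, $\M_{\mat A}^+w\owns\arr g$, and the stated nontangential/area integral estimates. The key observation is that if $v$ solves the subregular Neumann problem~\eqref{eqn:neumann:rough:2q} for $L^*$ with data $\arr h\in\dot W^{-1,p'}(\R^n)$, then $v$ induces (via the Green's formula of Theorem~\ref{thm:green}, applied in the lower half space after the change of variables of Section~\ref{sec:lower}, or directly) a bounded linear functional, and the solvability assumption gives a bounded solution operator $\arr h\mapsto v$ from $\dot W^{-1,p'}$ into the space with norm $\|\mathcal{A}_2^+(t\nabla^m v)\|_{L^{p'}}+\|\widetilde N_+(\nabla^{m-1}v)\|_{L^{p'}}$. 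Taking the adjoint of this operator (using that $\dot W^{-1,p'}(\R^n)$ is dual to $\dot W^{1,p}(\R^n)$, and the tent-space duality of \cite{CoiMS85} reproduced in~\eqref{eqn:CoiMS85} together with the duality Lemmas~\ref{lem:N>C} and~\ref{lem:N<C}) should produce the desired solution operator for $L$ at the level $L^p$. Concretely, I would first establish existence.

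For existence, I would proceed as follows. First reduce to $\arr g$ in a dense subclass, say $\arr g\in L^p(\R^n)\cap L^2(\R^n)$ (or even $\arr g\in L^p\cap\dot B^{-1/2,2}_2$), so that layer potentials are classically defined. I would set $w=\s^L\arr g - \D^{\mat A}(\text{something})$ or, more in the spirit of the layer potential method sketched in the introduction, attempt to write $w=\D^{\mat A}((\M_{\mat A}^+\D^{\mat A})^{-1}\arr g)$; however, since invertibility of $\M_{\mat A}^+\D^{\mat A}$ is exactly what is hard, I would instead argue by duality directly. Define, for $\arr\Psi$ an admissible test array, the pairing $\langle \arr\Psi,\nabla^m w\rangle_{\R^\dmn_+}$ by the formula dictated by the Neumann problem for $L^*$: namely given $\arr\Psi\in L^2_c(\R^\dmn_+)$, set $\arr h=\arr h(\arr\Psi)$ to be (roughly) the distributional object obtained from $\Pi^{L^*}$ or from $\nabla^m$ of the $L^{p'}$ subregular solution with inputs built from $\arr\Psi$; one shows $\arr h\in\dot W^{-1,2}\cap\dot W^{-1,p'}$, let $v$ be the corresponding unique subregular solution, and then define $w$ by $\langle\arr g,\Tr_{m-1}^+v\rangle$-type formulas. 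The bounds in~\eqref{eqn:neumann:rough:2q} for $v$ at exponent $p'$, combined with the duality estimates~\eqref{eqn:CoiMS85}, Lemma~\ref{lem:N>C}, and Theorem~\ref{thm:BHM17pB:regular:neumann} (Fatou for Neumann boundary values), then yield $\|\mathcal{A}_2^+(t\nabla^m\partial_t w)\|_{L^p}+\|\widetilde N_+(\nabla^m w)\|_{L^p}\leq C_p\|\arr g\|_{L^p}$. The fact that $\M_{\mat A}^+w\owns\arr g$ is checked by testing against $\Tr_{m-1}\varphi$ for $\varphi\in C^\infty_0(\R^\dmn)$ and unwinding the definitions~\eqref{eqn:neumann:intro}, \eqref{eqn:neumann:W2}, using the weak formulation of the subregular problem for $v$.

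For uniqueness, I would argue as follows. Suppose $w$ solves~\eqref{eqn:neumann:regular:p} with $\arr g=\arr 0$; I must show $w$ is a polynomial of degree $\leq m-1$, i.e. $\nabla^m w\equiv 0$. Here the Green's formula of Theorem~\ref{thm:green} is the crucial tool: since $p_{1,L^*}^-<p\leq 2$ and $\sup_{t>0}\|\nabla^m w(\cdot,t)\|_{L^p(\R^n)}<\infty$ (which follows from $\widetilde N_+(\nabla^m w)\in L^p$ via Lemma~\ref{lem:fatou:N}-type reasoning, or from Theorem~\ref{thm:BHM17pB:regular:dirichlet}), Theorem~\ref{thm:green} gives $\1_+\nabla^m w=-\nabla^m\D^{\mat A}(\Tr_{m-1}^+w)+\nabla^m\s^L(\M_{\mat A}^+w)$. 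With $\M_{\mat A}^+w\owns\arr 0$ the single layer term can be arranged to vanish (choosing the right representative), so $\1_+\nabla^m w=-\nabla^m\D^{\mat A}(\Tr_{m-1}^+w)$; extending $\D^{\mat A}(\Tr_{m-1}^+w)$ to a global solution that vanishes in $\R^\dmn_-$ and using a pairing against a subregular solution $v$ of $L^*v=0$ in $\R^\dmn_+$ with suitably chosen $\dot W^{-1,p'}$ Neumann data — then invoking the \emph{uniqueness} half of the hypothesis (unique solvability of the subregular problem for $L^*$) — forces $\nabla^m w=0$. This pairing/duality step is where the assumed uniqueness for the subregular $L^*$ problem gets used, mirroring how existence for $L^*$ gave existence for $L$.

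The main obstacle I anticipate is making the duality pairing rigorous: one must verify that the object $\arr h(\arr\Psi)$ built from test arrays $\arr\Psi$ genuinely lands in $\dot W^{-1,2}(\R^n)\cap\dot W^{-1,p'}(\R^n)$ (so that Theorem~\ref{thm:neumann:subregular}/the hypothesis applies), with norm control in terms of $\|\widetilde{\mathfrak{C}}_1^+(t\,\partial_t\arr\Psi)\|_{L^{p'}}$ or $\|\mathcal{A}_2^+\arr\Psi\|_{L^{p'}}$, and that the resulting bilinear form is both well-defined (independent of representatives, using integration-by-parts arguments as in the definition~\eqref{eqn:neumann:intro}) and actually represents $\nabla^m$ of a function $w$ solving $Lw=0$. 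The compatibility of the two regularity scales — area integral of $t\nabla^m\partial_t w$ versus nontangential maximal of $\nabla^m w$ — must be tracked carefully, and the Fatou theorems of Section~\ref{sec:fatou:neumann} together with Lemma~\ref{lem:lusin:translate} and Corollary~\ref{cor:neumann:limit} are exactly what permit passing between interior estimates and boundary trace/Neumann data. A secondary technical point is the density reduction: extending from $\arr g\in L^p\cap L^2$ to all of $L^p$ requires the a priori estimate to be uniform, which it is by construction, but one must also confirm the limiting $w$ still satisfies $Lw=0$ and has the claimed boundary behavior, again via the Fatou-type theorems.
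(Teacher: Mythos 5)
Your outline does diverge from the paper's argument: the paper runs the duality at the level of the boundary operators rather than of solution operators. Concretely, it verifies the hypotheses of the abstract method of layer potentials of \cite{Bar17} on the $\delta$-weighted sum/intersection spaces $\DD_\delta$, $\NN_\delta$, $\widetilde\DD_\delta$, $\widetilde\NN_\delta$ (boundedness of $\Tr_{m-1}^\pm$ and $\M_{\mat A}^\pm$ via the Fatou theorems, boundedness of all four layer potentials via Theorem~\ref{thm:potentials} and the $p\geq2$ bounds, the Green's formulas of \cite{BarHM18} and Theorem~\ref{thm:green}, and the jump relations), concludes from the assumed well posedness of the subregular problem that $\M_{\mat A^*}^+\D^{\mat A^*}$ is invertible $\widetilde\DD_\delta\to\widetilde\NN_\delta$ uniformly in~$\delta$, dualizes the \emph{operator} (\cite[Lemma~5.3]{Bar17}) to invert $\M_{\mat A}^+\D^{\mat A}:\DD_\delta\to\NN_\delta$, sets $w=\D^{\mat A}((\M_{\mat A}^+\D^{\mat A})^{-1}\arr g)$, and lets $\delta\to0^+$ to extract the pure $L^p$ estimate. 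In your scheme the candidate solution exists only through the pairing $\arr\Psi\mapsto\langle\arr g,\Tr_{m-1}^+v(\arr h(\arr\Psi))\rangle$, and that is where the genuine gap lies: tent-space duality (Lemmas~\ref{lem:N>C}, \ref{lem:N<C} and the bound~\eqref{eqn:CoiMS85}) only returns an \emph{array} with $\widetilde N_+$ or $\mathcal{A}_2^+$ control, and nothing in your sketch shows that this array is $\nabla^m w$ (respectively $t\nabla^m\partial_t w$) of a single function $w$, that $Lw=0$ in $\R^\dmn_+$, or that $\M_{\mat A}^+w\owns\arr g$; neither the curl-type compatibility nor the equation follows from duality, and you acknowledge this as ``the main obstacle'' without resolving it. You also never specify $\arr h(\arr\Psi)$ or prove the needed bound $\doublebar{\arr h(\arr\Psi)}_{\dot W^{-1,p'}(\R^n)}\lesssim$ (tent-space norms of $\arr\Psi$); the paper's Newton-potential bound~\eqref{eqn:newton:neumann} lives in the $(\dot W\!A^{0,p}_{m-1}(\R^n))^*$ scale and does not serve as is. Finally, your existence argument never engages the compatibility of the exponent-$2$ and exponent-$p'$ statements in the hypothesis (the \emph{same} $v$ satisfies both estimates in~\eqref{eqn:neumann:rough:2q}), which is precisely what the $\delta$-norms are built to exploit; by contrast, in the paper's route the candidate $\D^{\mat A}\arr f$ is automatically a null solution with the estimates of Theorem~\ref{thm:potentials}, so all difficulty is concentrated in an operator invertibility that the hypothesis delivers.

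The uniqueness half also has the duality pointing the wrong way. In a direct-duality scheme one shows a null solution $w$ with $\M_{\mat A}^+w\owns\arr 0$ satisfies $\nabla^m w\equiv0$ by testing against subregular solutions $v$ with \emph{arbitrary} data $\arr h\in\dot W^{-1,2}(\R^n)\cap\dot W^{-1,p'}(\R^n)$ to force $\Tr_{m-1}^+w$ to be constant, and then the Green's formula of Theorem~\ref{thm:green} with vanishing Cauchy data kills $\nabla^m w$; this uses the \emph{existence} half of the hypothesis for $L^*$, not the uniqueness half as you assert, and the pairing identity for solutions merely in the tent-space classes must itself be justified through the Fatou theorems and the translation arguments of Lemma~\ref{lem:lusin:translate} and Corollary~\ref{cor:neumann:limit}. (The paper instead applies $\M_{\mat A}^+$ to the Green's formula and invokes injectivity of $\M_{\mat A}^+\D^{\mat A}$ on $\dot W\!A^{1,p}_{m-1}(\R^n)$ via \cite[Theorem~6.13]{Bar17}.) A small additional point: no ``choice of representative'' is needed to make the single layer term vanish; $\s^L$ acts on the functional $\M_{\mat A}^+w$, which is the zero functional as soon as $\arr 0$ is a representative. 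As written, then, the proposal is a sketch with two substantive gaps: the existence construction lacks the step that produces an actual solution with the stated boundary data, and the uniqueness step appeals to the wrong part of the hypothesis.
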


\begin{proof}
Fix some such $p$ and~$p'$. We will use the method of layer potentials of \cite{Ver84,BarM13,BarM16A}, specifically as formulated in \cite{Bar17}.

We let $\XX^\pm_p$ and $\widetilde \XX^\pm_{p'}$ be the space of all equivalence classes of functions such that the appropriate norm
\begin{align*}
\doublebar{w}_{\XX^\pm_p} &= \doublebar{\widetilde N_\pm(\nabla^{m} w)}_{L^p(\R^n)}  +\doublebar{\mathcal{A}_2^\pm(t\nabla^m \partial_t w)}_{L^p(\R^n)} 
,\\
\doublebar{v}_{\widetilde \XX^\pm_{p'}} &= \doublebar{\widetilde N_\pm(\nabla^{m-1} v)}_{L^{p'}(\R^n)}  +\doublebar{\mathcal{A}_2^\pm(t\nabla^m v)}_{L^{p'}(\R^n)} 
\end{align*}
is finite.

We define the following function spaces. 
\begin{align*}
\YY^\pm&=\{w_p+w_2:w_p\in \XX^\pm_p,\> w_2\in \XX^\pm_2,\>Lw_p=Lw_2=0\text{ in }\R^\dmn_\pm\}
,\\
\widetilde\YY^\pm&=\{v\in \widetilde\XX^\pm_{p'}\cap \widetilde\XX^\pm_2:L^*v=0\text{ in }\R^\dmn_\pm\}
,\\
\DD&= \dot W\!A^{1,p}_{m-1}(\R^n)+ \dot W\!A^{1,2}_{m-1}(\R^n)
,\\
\widetilde \DD&= \dot W\!A^{0,{p'}}_{m-1}(\R^n)\cap \dot W\!A^{0,2}_{m-1}(\R^n)
,\\
\NN &= (\dot W\!A^{0,{p'}}_{m-1}(\R^n))^*+(\dot W\!A^{0,2}_{m-1}(\R^n))^*
,\\
\widetilde \NN &= (\dot W\!A^{1,p}_{m-1}(\R^n))^* \cap (\dot W\!A^{1,2}_{m-1}(\R^n))^*
.\end{align*}
We will be interested in a family of norms on these function spaces. 
For each number $\delta>0$, let
\begin{align*}
\doublebar{w}_{ \YY_\delta^\pm} &= \inf\Bigl\{\doublebar{w_p}_{ \XX_p^\pm} + \frac{1}{\delta} \doublebar{w_2}_{ \XX_2^\pm}:w=w_{p}+w_2, \> Lw_{p}=Lw_2=0\Bigr\}
,\\
\doublebar{\arr \varphi}_{\DD_\delta} 
&= 
	\inf\Bigl\{\doublebar{\arr \varphi_p}_{\dot W\!A^{1,p}_{m-1}(\R^n)} + \frac{1}{\delta}\doublebar{\arr \varphi_2}_{\dot W\!A^{1,2}_{m-1}(\R^n)}
	:\arr \varphi=\arr \varphi_p+\arr \varphi_2\Bigr\}
,\\
\doublebar{\arr G}_{\NN_\delta}
&=
	\inf \Bigl\{\doublebar {\arr G_p}_{(\dot W\!A^{0,{p'}}_{m-1}(\R^n))^*} + \frac{1}{\delta} \doublebar {\arr G_2}_{(\dot W\!A^{0,2}_{m-1}(\R^n))^*}:\arr G=\arr G_p+\arr G_2 \Bigr\}
,\\
\doublebar{v}_{\widetilde\YY^\pm_\delta} &= \doublebar{v}_{\widetilde\XX^\pm_{p'}}+\delta \doublebar{v}_{\widetilde\XX^\pm_2}
,\\
\doublebar{\arr f}_{\widetilde \DD_\delta} 
&= 
	\doublebar{\arr f}_{\dot W\!A^{0,{p'}}_{m-1}(\R^n)} + \delta\doublebar{\arr f}_{\dot W\!A^{0,2}_{m-1}(\R^n)}
,\\
\doublebar{\arr H}_{\widetilde\NN_\delta}
&=
	\doublebar {\arr H}_{(\dot W\!A^{1,p}_{m-1}(\R^n))^*} + \delta\doublebar {\arr H}_{(\dot W\!A^{1,2}_{m-1}(\R^n))^*}
.\end{align*}
Then $\NN_\delta=(\widetilde \DD_\delta)^*$ and $\widetilde \NN_\delta=(\DD_\delta)^*$. See \cite[formula~(1.3) and Theorem~1.7]{LiuV69}.

By Theorems~\ref{thm:BHM17pB:rough:neumann}, \ref{thm:BHM17pB:regular:neumann}, \ref{thm:BHM17pB:rough:dirichlet} and~\ref{thm:BHM17pB:regular:dirichlet}, the operators
\begin{align*}
\Tr_{m-1}^\pm&:\YY^\pm_\delta\to \DD_\delta
,&
\Tr_{m-1}^\pm&:\widetilde \YY^\pm_\delta\to \widetilde \DD_\delta
,&
\M_{\mat A}^\pm&:\YY^\pm_\delta\to \NN_\delta
,&
\M_{\mat A^*}^\pm&:\widetilde \YY^\pm_\delta\to \widetilde \NN_\delta
\end{align*}
are bounded with bounds depending only on $p$ and the standard parameters, and in particular, not on~$\delta$ provided $\delta>0$. By Theorem~\ref{thm:potentials} and the bounds~\cref{eqn:D:N:rough:+,eqn:S:N:rough:+,eqn:D:lusin:rough:+,eqn:S:lusin:rough:+} we have that the double and single layer potentials are bounded
\begin{align*}
\D^{\mat A}&:\DD_\delta\to \YY^\pm_\delta
,&
\D^{\mat A^*}&:\widetilde\DD_\delta\to \widetilde\YY^\pm_\delta
,&
\s^L&:\NN_\delta\to \YY^\pm_\delta
,&
\s^{L^*}&:\widetilde\NN_\delta\to \widetilde\YY^\pm_\delta
\end{align*}
with bounds independent of~$\delta$.

By \cite[Theorem~4.3]{BarHM18}, and by Theorem~\ref{thm:green} and Section~\ref{sec:lower}, we have that if $v\in \widetilde \YY^\pm_\delta$ and $w\in\YY^\pm_\delta$, then the Green's formulas
\begin{align*}
\1_\pm\nabla^m v&=\mp \nabla^m \D^{\mat A^*}(\Tr_{m-1}^\pm v)+\nabla^m \s^{L^*}(\M_{\mat A^*}^\pm v)
,\\
\1_\pm\nabla^m w&=\mp \nabla^m \D^{\mat A^*}(\Tr_{m-1}^\pm w)+\nabla^m \s^{L^*}(\M_{\mat A^*}^\pm w)
\end{align*}
are valid.

Finally, the jump relations 
\begin{align}
\label{eqn:jump:dirichlet}
\Tr_{m-1}^+\D^{\mat A}\arr f-\Tr_{m-1}^-\D^{\mat A} \arr f&=-\arr f
,&
\Tr_{m-1}^+\s^L\arr g-\Tr_{m-1}^-\s^L \arr g&=\arr 0
,\\
\label{eqn:jump:neumann}
\M_{\mat A}^+\D^{\mat A}\arr f+\M_{\mat A}^-\D^{\mat A} \arr f&\owns\arr 0
,&
\M_{\mat A}^+\s^L\arr g+\M_{\mat A}^-\s^L \arr g&= \arr g
\end{align}
of \cite[Conditions 6.19--6.22]{Bar17} are valid for all $\arr f\in \dot W\!A^{1/2,2}_{m-1}(\R^n)$ and all $\arr g \in (\dot W\!A^{1/2,2}_{m-1}(\R^n))^*$; see \cite[Lemma~5.4]{Bar17}. 
By density, the relations~\cref{eqn:jump:dirichlet,eqn:jump:neumann} are true for all $\arr f$ in $\DD_\delta$ or $\widetilde \DD_\delta$ and all $\arr g$ in $\NN_\delta$ or $\widetilde \NN_\delta$.

Thus, \cite[Conditions 6.14--6.22]{Bar17} are valid for the spaces $\widetilde \YY^\pm_\delta$, $\widetilde \DD_\delta$ and $\widetilde \NN_\delta$, and so by \cite[Theorems 6.23 and~6.24]{Bar17} and well posedness of the Neumann problem~\eqref{eqn:neumann:rough:2q}, we have that $\M_{\mat A^*}^+ \D^{\mat A^*}$ is invertible $\widetilde \DD_\delta\to \widetilde \NN_\delta$, and that $\|(\M_{\mat A^*}^+\D^{\mat A^*})^{-1}\|$ is independent of~$\delta$. By duality (see \cite[Lemma~5.3]{Bar17}), $\M_{\mat A}^+\D^{\mat A}$ is invertible $\DD_\delta \to  \NN_\delta$. Furthermore, the norm is independent of~$\delta$, and the value of $(\M_{\mat A}^+\D^{\mat A})^{-1}\arr g$ is independent of~$\delta$.

Let $w=\D^{\mat A}((\M_{\mat A}^+\D^{\mat A})^{-1}\arr G)$, $\arr G\in L^p(\R^n)\subset  \NN_\delta$.

Then $w\in \YY^+_\delta$ and so $w=w_p^\delta+w_2^\delta$ for some $w_p^\delta\in \XX_p^+$, $w_2^\delta\in\XX_2^+$ with $Lw_p^\delta=Lw_2^\delta=0$ in $\R^\dmn_+$ and with 
\begin{equation*}
\doublebar{ w_p^\delta}_{\XX^+_p} 
	+ \frac{1}{\delta}\doublebar{w_2^\delta}_{\XX^+_2}
\leq C \doublebar{\arr G}_{\widetilde \NN_\delta}
\leq C\doublebar{\arr G}_{L^p(\R^n)}.\end{equation*}
Taking the limit as $\delta\to 0^+$, we see that $w_2^\delta\to 0$ in $\dot W^{m,2}_{loc}(\R^\dmn_+)$. Thus $w=\lim_{\delta \to 0^+} w_p^\delta$ and so 
\begin{equation*}\doublebar{\widetilde N_\pm(\nabla^{m} w) +\mathcal{A}_2^\pm(t\nabla^m \partial_t w)}_{L^{p'}(\R^n)} 
\leq C\doublebar{\arr G}_{L^p(\R^n)}\end{equation*}
as desired.

Thus, solutions to the Neumann problem~\eqref{eqn:neumann:regular:p} exist. We have seen that $\M_{\mat A}^+\D^{\mat A}$ is one-to-one on $\DD=\dot W\!A^{1,p}_{m-1}(\R^n)+\dot W\!A^{1,2}_{m-1}(\R^n)$, and so it is also one-to-one on the subspace $\dot W\!A^{1,p}_{m-1}(\R^n)$. The Green's formula of Theorem~\ref{thm:green} allows us to apply \cite[Theorem~6.13]{Bar17} to see that solutions to the problem~\eqref{eqn:neumann:regular:p} are unique, as desired.
\end{proof}

We conclude the paper by proving Theorem~\ref{thm:neumann:selfadjoint}.
\begin{proof}[Proof of Theorem~\ref{thm:neumann:selfadjoint}]
The ellipticity condition~\eqref{eqn:elliptic:slices} in Theorem~\ref{thm:neumann:selfadjoint} implies that the condition~\eqref{eqn:elliptic} in Theorem~\ref{thm:neumann:p:regular} is valid. Thus, if $L$ and $\mat A$ satisfy the conditions of Theorem~\ref{thm:neumann:selfadjoint}, then they satisfy the conditions of Theorems~\ref{thm:neumann:subregular} and~\ref{thm:neumann:p:regular}.

There is a $\varepsilon>0$, depending only on $\dmnMinusOne$ and the number $\varepsilon_1$ in formula~\eqref{eqn:p:subregular}, such that if $p$ satisfies the bound~\eqref{eqn:p:regular}, then $p'$ satisfies the bound~\eqref{eqn:p:subregular}. Thus, if $\varepsilon>0$ is small enough and the conditions of Theorem~\ref{thm:neumann:selfadjoint} are satisfied, then the subregular Neumann problem~\eqref{eqn:neumann:rough:2q} is well posed.

Recall from formula~\eqref{eqn:Meyers:bound} that there is some $\widetilde\varepsilon>0$ depending on the standard parameters such that 
\begin{equation*}
p_{1,L^*}^-\leq \max\biggl(1,\frac{2n}{n+2}-\widetilde\varepsilon\biggr)
.\end{equation*}
By Remark~\ref{rmk:Meyers}, if $\max\bigl(1,\frac{2n}{n+2}-\widetilde\varepsilon\bigr)<p<2$ then $c(1,L^*,p,2)$ depends only on $p$ and the standard parameters.

Thus, if $\varepsilon$ is small enough and $p$ satisfies the condition~\eqref{eqn:p:regular} of Theorem~\ref{thm:neumann:selfadjoint}, then $p$ and $L$ also satisfy the conditions of Theorem~\ref{thm:neumann:p:regular}. Thus, the Neumann problem~\eqref{eqn:neumann:regular:p} (or~\eqref{eqn:neumann:regular:p:selfadjoint}) is well posed.
\end{proof}


\newcommand{\etalchar}[1]{$^{#1}$}
\providecommand{\bysame}{\leavevmode\hbox to3em{\hrulefill}\thinspace}
\providecommand{\MR}{\relax\ifhmode\unskip\space\fi MR }
\providecommand{\MRhref}[2]{%
  \href{http://www.ams.org/mathscinet-getitem?mr=#1}{#2}
}
\providecommand{\href}[2]{#2}

\end{document}